\newtheorem{PARA}{}[section]
\newtheorem{theorem}[PARA]{Theorem}
\newtheorem{corollary}[PARA]{Corollary}
\newtheorem{lemma}[PARA]{Lemma}
\newtheorem{proposition}[PARA]{Proposition}
\newtheorem{definition}[PARA]{Definition}
\theoremstyle{definition}
\newtheorem{remark}[PARA]{Remark}
\newtheorem{example}[PARA]{Example}
\newcommand{\para}{\begin{PARA}\rm}
\newcommand{\arap}{\end{PARA}\rm}
\newcommand{\dfn}{\begin{definition}\rm}
\newcommand{\nfd}{\end{definition}\rm}
\newcommand{\rmk}{\begin{remark}\rm}
\newcommand{\kmr}{\end{remark}\rm}
\newcommand{\xmpl}{\begin{example}\rm}
\newcommand{\lpmx}{\end{example}\rm}
\newcommand{\cA}{\mathcal{A}}
\newcommand{\cM}{\mathcal{M}}
\newcommand{\cP}{\mathcal{P}}
\newcommand{\one}
{{{\mathchoice \mathrm{ 1\mskip-4mu l} \mathrm{ 1\mskip-4mu l}
\mathrm{ 1\mskip-4.5mu l} \mathrm{ 1\mskip-5mu l}}}}
\newcommand{\C}{{\mathbb{C}}}
\newcommand{\Q}{{\mathbb{Q}}}
\newcommand{\R}{{\mathbb{R}}}
\newcommand{\Z}{{\mathbb{Z}}}
\newcommand{\im}{\mathrm{ im }}        
\newcommand{\id}{\mathrm{ id}}         
\newcommand{\eps}{{\varepsilon}}
\newcommand{\om}{{\omega}}
\newcommand{\Om}{{\Omega}}
\newcommand{\tp}{{\widetilde{p}}}
\newcommand{\tPhi}{{\widetilde{\Phi}}}
\newcommand{\tSH}{{\widetilde{SH}}}
\newcommand{\CZ}{\mathrm{CZ}}
\def\NABLA#1{{\mathop{\nabla\kern-.5ex\lower1ex\hbox{$#1$}}}}
\def\Nabla#1{\nabla\kern-.5ex{}_{#1}}
\def\Tabla#1{\Tilde\nabla\kern-.5ex{}_{#1}}
\renewcommand{\Tilde}{\widetilde}
\newcommand{\p}{{\partial}}
\newcommand{\wh}{\widehat}
\newcommand{\MM}{\mathcal{M}}
\newcommand{\Ad}{{\rm Ad}}
\newcommand{\Sp}{{\rm Sp}}
\newcommand{\into}{\hookrightarrow}
\newcommand{\Spec}{{\rm Spec}}
\newcommand{\Crit}{{\rm Crit}}
\newcommand{\LL}{{\mathcal{L}}}
\begin{document}

\title{Rabinowitz Floer homology and symplectic homology}

\author{Kai {\sc Cieliebak}}
\address{Ludwig-Maximilians-Universit\"at, D-80333, M\"unchen,
Germany}
\email{kai@mathematik.uni-muenchen.de}

\author{Urs {\sc Frauenfelder}}
\address{Ludwig-Maximilians-Universit\"at, D-80333, M\"unchen,
Germany}
\email{frauenf@math.lmu.de}

\author{Alexandru \sc{Oancea}}
\address{IRMA, Universit\'e Louis Pasteur, F-67084, Strasbourg, France}
\email{oancea@math.u-strasbg.fr}

\date{18 February 2009}

\maketitle

\setcounter{tocdepth}{1}
\tableofcontents


\begin{abstract} The first two authors have recently defined
  Rabinowitz-Floer homology groups $RFH_*(M,W)$ associated to an exact
  embedding of a contact manifold $(M,\xi)$ into a symplectic manifold
  $(W,\omega)$. These depend only on the bounded component $V$ of
  $W\setminus M$. We construct a long exact sequence in which
  symplectic cohomology of $V$ maps to symplectic homology of $V$,
  which in turn maps to Rabinowitz-Floer homology $RFH_*(M,W)$, which
  then maps to symplectic cohomology of $V$. We 
  compute $RFH_*(ST^*L,T^*L)$, where $ST^*L$ is the unit
  cosphere bundle of a closed manifold $L$. As an application, we
  prove that the image of an exact contact embedding of $ST^*L$
  (endowed with the standard contact structure) cannot be displaced
  away from itself by a Hamiltonian isotopy, provided $\dim\,L\ge 4$
  and the embedding induces an injection on $\pi_1$. In particular,
  $ST^*L$ does not admit an exact contact embedding into a subcritical
  Stein manifold if $L$ is simply connected. We also prove that
  Weinstein's conjecture holds in symplectic manifolds which admit
  exact displaceable codimension $0$ embeddings. 
\end{abstract}


\section{Introduction} 

Let $(W,\lambda)$ be a complete convex exact symplectic manifold, with
symplectic form $\omega=d\lambda$ (see Section~\ref{sec:RFH} for the
precise definition). An embedding $\iota :M \hookrightarrow W$ of a
contact manifold $(M,\xi)$ is called {\it exact contact embedding} if
there exists a 1-form $\alpha$ on $M$ such that such that
$\ker\alpha=\xi$ and $\alpha-\lambda|_M$ is exact.
We identify $M$ with its image $\iota(M)$. We assume that $W\setminus
M$ consists of two connected components and denote the bounded
component of $W\setminus M$ by $V$. 
One can classically~\cite{V} associate to such an exact contact
embedding the {\it symplectic (co)homology groups} $SH_*(V)$ and
$SH^*(V)$. We refer to Section~\ref{sec:SH} for the definition and
basic properties, and
to~\cite{Se} for a recent survey. 

The first two authors have recently defined for such an
exact contact embedding Floer homology groups $RFH_*(M,W)$ for the
Rabinowitz action functional~\cite{CF}. We refer to Section~\ref{sec:RFH} for a
recap of the definition and of some useful properties. We will show
in particular that these groups do not depend on $W$, but only on $V$
(the same holds for $SH_*(V)$ and $SH^*(V)$). We
shall use in this paper the notation $RFH_*(V)$ and call them {\it
Rabinowitz Floer homology} groups. 

\begin{remark}\label{rem:grading}
All (co)homology groups are taken with field coefficients.
Without any further hypotheses on the first Chern class $c_1(V)$ of
the tangent bundle, the
symplectic (co)homology and Rabinowitz Floer homology groups are
$\Z_2$-graded. If $c_1(V)=0$ they are $\Z$-graded, and if $c_1(V)$
vanishes on $\pi_2(V)$ the part constructed from contractible loops is
$\Z$-graded. This $\Z$-grading on Rabinowitz Floer homology differs
from the one in~\cite{CF} (which takes values in $\frac 1 2 + \Z$) by
a shift of $1/2$ (see Remark~\ref{rmk:grading}).
\end{remark}

Our purpose is to relate these two constructions. The relevant 
object is a new version of symplectic homology, denoted $\Check{SH}_*(V)$,
associated to ``$\bigvee$-shaped'' Hamiltonians like the one in
Figure~\ref{fig:Vshape} on page~\pageref{fig:Vshape} below.
This version of symplectic homology is related to the usual ones via
the long exact sequence in the next theorem. 

\begin{theorem} \label{thm:main_exact}
There is a long exact sequence 
\begin{equation} \label{eq:long_seq}
\xymatrix{
\ldots \ar[r] &  \!SH^{-*}(V)\! \ar[r] & \!SH_*(V)\! \ar[r] &
\!\Check{SH}_*(V)\! \ar[r]
& \!SH^{-*+1}(V)\! \ar[r] & \ldots
}
\end{equation}
\end{theorem}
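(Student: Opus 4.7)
My plan is to realize the Floer chain complex $CF_*(\Check{H})$ of a V-shaped Hamiltonian as the mapping cone of a Floer-theoretic chain map from a complex computing $SH^{-*}(V)$ to a complex computing $SH_*(V)$; the asserted long exact sequence is then the standard long exact sequence of a mapping cone.

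The starting point is an orbit analysis. The 1-periodic orbits of $\Check{H}$ split into two families determined by the radial coordinate $r$ in the Liouville collar of $M$. On the outer slope $\{r>1\}$ the Hamiltonian is linear with positive derivative and its orbits coincide with those of a standard positive-slope Hamiltonian $H^+$ computing $SH_*(V)$. On the inner slope $\{r<1\}$ the slope is negative and its orbits are Reeb orbits of $M$ traversed with the opposite orientation; one identifies them with the generators of the cochain complex of a negative-slope Hamiltonian $H^-$ whose cohomology is $SH^*(V)$. After a small time-dependent Morse--Bott perturbation the critical set at the minimum of the V along $M$ splits into generators that can be distributed to the two sides.

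The key input is a maximum principle for the $r$-coordinate of Floer trajectories of $\Check{H}$: with an almost complex structure adapted to the Liouville structure, any solution to Floer's equation asymptotic at $s=-\infty$ to an outer orbit remains in $\{r\geq 1\}$ for all $s$. This forces the outer orbits to span a subcomplex $CF_*(H^+)\subset CF_*(\Check{H})$ whose homology is $SH_*(V)$, once one recognizes that Floer trajectories between outer orbits confined to $\{r\geq 1\}$ coincide with those of $H^+$. The quotient complex is generated by inner orbits; after accounting for the degree conventions one identifies it, up to a shift of one, with the chain complex underlying $SH^*(V)$, so its homology is $SH^{-*+1}(V)$. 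The off-diagonal part of the Floer differential of $\Check{H}$ --- Floer cylinders starting at an inner orbit and ending at an outer one --- defines the desired chain map $f$, whose induced map on homology realizes the arrow $SH^{-*}(V)\to SH_*(V)$. Passing to homology in the short exact sequence of chain complexes
\[
0\to CF_*(H^+)\to CF_*(\Check{H})\to Q_*\to 0,
\]
with $Q_*$ the quotient, yields the claimed long exact sequence.

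The principal technical obstacle is the maximum principle. The standard contact-type argument controls the $r$-coordinate in regions of constant slope, but the kink of $\Check{H}$ at $r=1$ requires care. A natural strategy is to smooth $\Check{H}$ in a thin collar of $M$ and apply the standard maximum principle separately in $\{r\geq 1\}$ (where the slope is positive) and in $\{r\leq 1\}$ (where it is negative), then control the behavior in the transition region by a width estimate and pass to a limit as the smoothing shrinks. Transversality for the moduli spaces of Floer cylinders contributing to $f$ is then standard.
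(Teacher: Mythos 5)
The mapping-cone idea is the right one --- the paper does realize $\Check{SH}_*(V)$ as the cone of $SH^{-*}(V)\to SH_*(V)$ --- but your implementation misassigns all three pieces. The paper filters the Floer complex of a V-shaped Hamiltonian by \emph{action}, not by radial position. For $a<0<b$ and slope $\mu\gg\max(|a|,|b|)$ the short exact sequence $0\to CF^{(-\infty,a)}\to CF^{(-\infty,b)}\to CF^{(a,b)}\to 0$ is identified, in homology, with $SH^{-*}_{(-\infty,-a)}(V)\to SH_*^{(-\infty,b)}(V)\to \Check{SH}_*^{(a,b)}(V)$. Thus it is the \emph{full} complex (truncated only above) that computes $SH_*(V)$: the very-negative-action inner orbits get flattened away within the window $(-\infty,b)$. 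The group $\Check{SH}_*$ is the \emph{high-action quotient}, and the low-action subcomplex computes $SH^{-*}$. You instead claim the total complex computes $\Check{SH}_*$ and the outer orbits form a subcomplex computing $SH_*$; neither holds. In particular the outer generators consist of constants on $M$ and positively parametrized Reeb orbits, which after Morse-Bott perturbation compute something like $\Check{SH}_*^{\geq 0}(V)$ and not $SH_*(V)$ (cf.~Proposition~\ref{prop:main2}, where $H_{*+n-1}(M)$ appears instead of the interior-constant contribution $H_{*+n}(V,\p V)$). You also elide that $\Check{SH}_*$ is defined by first an inverse limit over $a$ and then a direct limit over $b$ (Remark~\ref{rmk:exactness}); this ordering is what lets truncated exact sequences pass to the limit.

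The spatial maximum principle you invoke is also both misoriented and unavailable. Under the paper's conventions ($\p x=\sum\#\cM(y,x)\,y$ counts positive gradient trajectories from $y$ to $x$, so action increases from $y$ to $x$), the statement ``Floer cylinders asymptotic at $s=-\infty$ to an outer orbit stay in $\{r\geq 1\}$'' would imply that the \emph{inner} orbits span a subcomplex (if $y$ is outer then $x$ is outer; equivalently inner targets $x$ admit only inner inputs $y$), the opposite of what your cone needs. More fundamentally, the radial Laplacian estimate (Lemma~\ref{lem:lap1}) gives a \emph{lower} bound $\Delta\rho\geq -\p_s\bigl(h'(r)b(\eta)\bigr)$; since $h'<0$ on $\{r<1\}$ and $h''\geq 0$ at the kink, this yields subharmonicity controlling $r$ from \emph{above} but no confinement from below, and smoothing the kink does not produce one in the limit. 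The action filtration is used precisely to sidestep this. Finally, identifying the low-action/inner part with symplectic cohomology is not merely ``accounting for degree conventions'': it rests on Poincar\'e duality at the Floer-chain level (Proposition~\ref{prop:Poincare}, via the reflection $u(s,t)\mapsto u(-s,-t)$ and the Robbin-Salamon index computation of Lemma~\ref{lem:iRS}) together with the description of $SH$ via Hamiltonians supported in $V$ (Section~\ref{sec:HamsuppV}, Proposition~\ref{prop:alternative}). Those are substantive ingredients of the actual proof and cannot be absorbed into a remark about degree shifts.
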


The long exact sequence~\eqref{eq:long_seq} can be seen as measuring
the defect from being an isomorphism of the canonical map
$SH^{-*}(V)\to SH_*(V)$, which we define in Section~\ref{sec:Vshaped}. An
interesting fact is that we have a very
precise description of this map. To state it, let us recall that
there are canonical morphisms induced by truncation of the range of
the action $H_{*+n}(V,\partial V)\stackrel {c_*} \longrightarrow
SH_*(V)$ and $SH^*(V)\stackrel {c^*} \longrightarrow
H^{*+n}(V,\partial V)$ (see~\cite{V} or Lemma~\ref{lem:sing} below).  

\begin{proposition} \label{prop:map}
 The map $SH^{-*}(V)\to SH_*(V)$ fits into a commutative diagram 
\begin{equation} \label{eq:map}
\xymatrix
@R=15pt
{
SH^{-*}(V) \ar[r] \ar[d]_{c^*} & SH_*(V) \\
H^{-*+n}(V,\partial V) \ar[r] & H_{*+n}(V,\partial V) \ar[u]_{c_*}
}
\end{equation}
in which the bottom arrow is the composition of the map induced by the 
inclusion $V\hookrightarrow (V,\partial V)$ with the Poincar\'e duality
isomorphism 
$$
H^{-*+n}(V,\partial
V)\stackrel {PD} \longrightarrow H_{*+n}(V) \stackrel {incl_*}
\longrightarrow H_{*+n}(V,\partial V).
$$
\end{proposition}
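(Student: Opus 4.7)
The plan is to identify the top horizontal map of the diagram on a ``low-action'' subcomplex common to both $SH^{-*}(V)$ and $SH_*(V)$, and to recognize that this identification coincides with the prescribed map $\mathrm{incl}_*\circ PD$ from singular (co)homology. First, I would fix a $C^2$-small Morse function $h$ on $V$, together with suitable linear extensions on the cylindrical end, to produce chain-level representatives of $SH^{-*}(V)$, $SH_*(V)$, and the $\bigvee$-shaped complex from Section~\ref{sec:Vshaped}. For such Hamiltonians the constant $1$-periodic orbits are exactly the critical points of $h$, and the Floer differential between two such orbits reduces to the Morse differential of $h$. This yields a Morse subcomplex of $CF_*$ (resp.\ a Morse quotient cochain complex of $CF^*$) whose (co)homology is $H_{*+n}(V,\p V)$ (resp.\ $H^{-*+n}(V,\p V)$). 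By the construction of the truncation maps, $c_*$ is the inclusion of this subcomplex into $SH_*(V)$ and $c^*$ is the projection $SH^{-*}(V)\twoheadrightarrow H^{-*+n}(V,\p V)$.

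The essential step is the chain-level computation of the top arrow. I would choose the interpolating Hamiltonians in the $\bigvee$-shaped construction to agree with $h$ on an open subset $V'\Subset V$ containing all critical points of $h$, confining the varying $\bigvee$-profile to a collar of $\p V$ and to the cylindrical end. An integrated maximum principle combined with an action-energy bound then shows that every continuation cylinder connecting two constant orbits is trapped inside $V'$, and therefore reduces to a negative gradient trajectory of $h$. Consequently the top arrow, restricted to the Morse part, is determined entirely by Morse data: it is the chain-level map between the Morse cochain and Morse chain complexes of $h$ given by the identity on the common set of generators (the critical points of $h$).

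It remains to identify this tautological Morse map with $\mathrm{incl}_*\circ PD$. The identity-on-generators map $C^{-*+n}_{\mathrm{Morse}}(h)\to C_{*+n}^{\mathrm{Morse}}(h)$ is the standard chain-level realization of Poincar\'e--Lefschetz duality for the pair $(V,\p V)$: on (co)homology it becomes the composition of the isomorphism $PD\colon H^{-*+n}(V,\p V)\to H_{*+n}(V)$ with the inclusion $H_{*+n}(V)\to H_{*+n}(V,\p V)$ induced by the change of boundary condition (gradient inward vs.\ outward) between the two Morse complexes. This is exactly the bottom arrow of the diagram, so commutativity follows. The principal obstacle lies in the middle step: because the interpolating family of Hamiltonians is not globally monotone, one cannot deduce its behaviour on the Morse subcomplex from a simple action filtration, and must instead design the homotopy carefully and apply a maximum principle to confine all relevant continuation cylinders to the region where the Hamiltonian equals $h$, where the comparison reduces to the Morse-theoretic statement above.
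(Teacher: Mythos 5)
The broad strategy of your proposal -- factor the top arrow through the low-action truncations and identify the induced map as $\mathrm{incl}_*\circ PD$ -- matches the spirit of the paper's argument, but the mechanism you use to achieve the two key steps differs from the paper's and contains genuine gaps.

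The first gap is the factorization. At the chain level (for a fixed $\bigvee$-shaped Hamiltonian $H$ and a fixed action window) the map $SH^{-*}\to SH_*$ is not a continuation map: it is the inclusion of subcomplexes $CF_*^{(-\infty,a)}(H)\hookrightarrow CF_*^{(-\infty,b)}(H)$ from the short exact sequence underlying Proposition~\ref{prop:SH_exact}, followed by the chain of identifications (Poincar\'e duality of Proposition~\ref{prop:Poincare}, Proposition~\ref{prop:alternative}, and continuations) that exhibit $FH_*^{(-\infty,a)}(H)\simeq SH^{-*}_{(-\infty,-a)}(V)$ and $FH_*^{(-\infty,b)}(H)\simeq SH_*^{(-\infty,b)}(V)$. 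The factorization of the limit map through $SH^{-*}_{(-\infty,\rho)}(V)\to SH_*^{(-\infty,\rho)}(V)$ is then a purely algebraic statement, namely the commutative square~\eqref{eq:cd}, which the paper extracts from the compatibility of truncation maps in~\eqref{eq:big_diag}. Your proposal instead treats the top arrow as a continuation from a negative-slope Hamiltonian to a positive-slope one, and relies on an integrated maximum principle plus an action-energy bound to confine continuation cylinders. This is a genuinely different interpretation, and it creates a difficulty you do not address: such an interpolation is monotone increasing on the cylindrical end (which gives compactness) but monotone decreasing in the interior of $V$, so it is not a globally monotone homotopy. The usual action-decreasing property that makes truncated continuation maps well-defined, and which would allow you to conclude that the continuation respects the Morse subcomplex, therefore fails a priori. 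Confining the cylinders to $V'$ controls where they live but not what they do to the action filtration, and it says nothing about cylinders emanating from non-constant (high-action) orbits, which is exactly what you need to rule out in order to obtain the factorization.

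The second gap is the identification of the bottom arrow. You assert that the ``identity on generators'' map $C^{-*+n}_{\mathrm{Morse}}(h)\to C_{*+n}^{\mathrm{Morse}}(h)$ is the standard chain-level realization of $\mathrm{incl}_*\circ PD$. But the two complexes appear with different boundary behavior (outgoing versus incoming gradient at $\p V$, which is what distinguishes $H_*(V)$ from $H_*(V,\p V)$), so ``identity on generators'' is not a chain map between them and cannot be invoked without further work. The paper avoids this by proving the identification indirectly: in Proposition~\ref{prop:main2} it passes to a $\bigvee$-shaped Hamiltonian of small slope where the Floer complex reduces to the Morse complex of $-H$, invokes Schwarz's result (\cite[Prop.~4.45]{Schwarz}) to identify the truncated exact sequence with the long exact sequence of a triple of sublevel sets, and then applies excision and Poincar\'e duality to rewrite this as the exact sequence of $(V,M)$. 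That route, rather than a direct chain-level computation of continuation cylinders, is what pins down the bottom arrow as $\mathrm{incl}_*\circ PD$. Your proposal would need to either reproduce that argument or supply an equally careful substitute.
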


We also define in
Section~\ref{sec:Vshaped} truncated versions
$\Check{SH}_*^{\ge 0}(V)$ and $\Check{SH}_*^{\le 0}(V)$ of the
symplectic homology groups $\Check{SH}_*(V)$. 

\begin{proposition} \label{prop:main2}
There are commuting diagrams of long exact sequences as below, where
$PD$ denotes Poincar\'e duality and the top exact sequence is the
(co)homological long exact sequence of the pair $(V,M)$:
$$
\xymatrix
@C=10pt
{
\ldots \ar[r] &  \!H_{*+n}(V)\! \ar[r] \ar@{=}[d]^{\textrm{PD}} &
\!H_{*+n}(V,M)\! \ar[r] \ar[d] & \!H_{*+n-1}(M)\! \ar[r] \ar[d] &
\!H_{*+n-1}(V)\! \ar[r] \ar@{=}[d]^{\textrm{PD}} & 
\ldots \\ 
\ldots \ar[r] &  \!H^{-*+n}(V,M)\! \ar[r] & \!SH_*(V)\! \ar[r] &
\!\Check{SH}_*^{\ge 0}(V)\! \ar[r] & \!H^{-*+1+n}(V,M)\! \ar[r] & \ldots
}
$$
and
$$
\xymatrix
@C=10pt
{
\ldots \ar[r] &  H^{-*+n}(V,M) \ar[r] &
H^{-*+n}(V) \ar[r] \ar@{=}[d]^{\textrm{PD}} & H^{-*+n}(M) \ar[r] &
H^{-*+n+1}(V) \ar[r] & \ldots \\ 
\ldots \ar[r] &  SH^{-*}(V) \ar[r] \ar[u] & H_{*+n}(V,M) \ar[r] &
\Check{SH}_*^{\le 0}(V) \ar[r] \ar[u] & SH^{-*+1}(V) \ar[r] \ar[u] & \ldots
}
$$
\end{proposition}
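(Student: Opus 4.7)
The plan is to identify the truncated groups $\Check{SH}_*^{\ge 0}(V)$ and $\Check{SH}_*^{\le 0}(V)$ with (shifts of) the positive-action parts of Viterbo's symplectic homology and cohomology respectively, and then to read both diagrams off the standard Viterbo exact triangle together with Poincar\'e--Lefschetz duality.

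The first step is the identification $\Check{SH}_*^{\ge 0}(V) \cong SH_*^{>0}(V)$. By definition (Section~\ref{sec:Vshaped}), $\Check{SH}_*^{\ge 0}(V)$ is the homology of the subquotient of the $\vee$-shape Floer complex spanned by orbits of action $\ge 0$. I would arrange $H^\vee$ so that its outer slope coincides with that of a Viterbo Hamiltonian $H^{\text{Vit}}$, and so that the vertex height together with the Morse perturbation on $V$ place all inner Reeb orbits and all constants in the strictly negative action window. Then the action $\ge 0$ generators of the $\vee$-shape complex are precisely the outer Reeb orbits, and a continuation from $H^\vee$ to $H^{\text{Vit}}$ which is strictly monotone in time near the action cutoff induces a chain isomorphism to the quotient complex $CF_*^{>0}(H^{\text{Vit}})$ computing $SH_*^{>0}(V)$. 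Dually, $\Check{SH}_*^{\le 0}(V)$ is identified with a shift of the positive-action part of symplectic cohomology, using the time-reversal correspondence between inner Reeb orbits of a $\vee$-shape and positive-action generators of the cohomology complex.

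The second step invokes Viterbo's exact triangle. The action filtration $0 \to CF_*^{\le 0}(H^{\text{Vit}}) \to CF_*(H^{\text{Vit}}) \to CF_*^{>0}(H^{\text{Vit}}) \to 0$, combined with the Morse--Floer identification $H_*(CF^{\le 0}(H^{\text{Vit}})) \cong H_{*+n}(V, M)$ from Lemma~\ref{lem:sing}, yields the long exact sequence
$$\cdots \to H_{*+n}(V, M) \to SH_*(V) \to SH_*^{>0}(V) \to H_{*+n-1}(V, M) \to \cdots.$$
Applying Poincar\'e--Lefschetz duality $H_{*+n}(V, M) \cong H^{-*+n}(V, M)$ together with the identification from the first step gives the bottom row of the first diagram. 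The second diagram follows dually by applying the cohomological Viterbo triangle and the corresponding identification for $\Check{SH}_*^{\le 0}(V)$.

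Commutativity of the diagrams is verified square by square. The outer squares with PD columns commute by naturality of Poincar\'e duality. The square involving $H_{*+n}(V, M) \to SH_*(V)$ commutes by the definition of the map $c_*$ (Lemma~\ref{lem:sing}). The remaining squares involve the natural map $H_{*+n-1}(M) \to \Check{SH}_*^{\ge 0}(V) \cong SH_*^{>0}(V)$, induced by the Morse--Bott identification of the low-action part of $SH^{>0}$ with the Morse complex of an autonomous function on $M$ shifted by $n-1$; commutativity with the pair-connecting morphism $H_{*+n}(V, M) \to H_{*+n-1}(M)$ reduces to the geometric fact that both connecting morphisms count objects ``touching'' $M$. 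The main technical obstacle will be the identification $\Check{SH}_*^{\ge 0}(V) \cong SH_*^{>0}(V)$ of the first step: one must control the action filtration under the continuation between $H^\vee$ and $H^{\text{Vit}}$, which forces the continuation to be strictly monotone in time near the action cutoff $0$ so that continuation trajectories cannot cross it. This is standard but requires careful coordination with the Morse perturbation, and the analogous care must be taken for the cohomological dual governing the $\le 0$ case.
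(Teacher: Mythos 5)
Your first step is incorrect, and the error is structural rather than a missing technical estimate. You claim to identify $\Check{SH}_*^{\ge 0}(V)$ with $SH_*^+(V)$ by arranging the $\vee$-shaped Hamiltonian so that ``all inner Reeb orbits and all constants'' have strictly negative action, leaving only the outer Reeb orbits in the action window $[0,\infty)$. But this overlooks the constant $1$-periodic orbits along the critical manifold $\{1\}\times M$ (the ``vertex'' of the $\vee$, Type~IV in the notation of the proof of Proposition~\ref{prop:SH_exact}). A constant orbit on a level $r$ where $h'(r)=0$ has Hamiltonian action $\cA_H = r h'(r)-h(r) = -h(1) = \eps$, and the definition of $\Check{\Ad}^0(\wh V)$ forces $H\le 0$ near $M$, hence $\eps>0$. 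Thus this Morse--Bott family $M$ always sits at strictly positive action and cannot be pushed below the cutoff. It contributes a copy of $H_{*+n-1}(M)$ to the $\vee$-shape complex in the window $[0,\infty)$, and this is precisely what distinguishes $\Check{SH}_*^{\ge 0}(V)$ from $SH_*^+(V)$. Indeed, the vertical map $H_{*+n-1}(M)\to\Check{SH}_*^{\ge 0}(V)$ appearing in the very statement you are trying to prove is the inclusion of this vertex subcomplex, and it would have no analogue if $\Check{SH}_*^{\ge 0}(V)$ were $SH_*^+(V)$.

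The discrepancy is already visible for the ball $V=B^{2n}$, $M=S^{2n-1}$, $n\ge 2$: since $SH_*(B^{2n})=0$, the bottom row of the first diagram gives $\Check{SH}_*^{\ge 0}(B^{2n})\cong H^{-*+n+1}(B^{2n},S^{2n-1})$, concentrated in degree $*=1-n$, whereas Viterbo's sequence gives $SH_*^+(B^{2n})\cong SH_{*-1}^-(B^{2n})$, concentrated in degree $*=n+1$. So your Step~1 is false, and with it the plan of ``reading both diagrams off the Viterbo triangle.'' The same objection applies to the cohomological dual in the $\le 0$ case: $\Check{SH}_*^{\le 0}(V)$ still sees the Type~IV constants (their action $\eps$ lies below any small positive cutoff $b$) and is therefore not a shift of the positive-action symplectic cohomology.

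The paper's actual argument avoids this trap by never leaving the $\vee$-shaped world: it sets $a=-\rho$, $b=\rho$ with $\rho$ smaller than the minimal Reeb period in the three-row filtration diagram~\eqref{eq:big_diag}, takes the limits $a'\to-\infty$, $b'\to\infty$, and then identifies the \emph{middle} long exact sequence — which involves $\Check{SH}_*^{(-\rho,\rho)}(V)$ and retains the vertex constants — with the long exact sequence of the pair $(V,M)$. That identification is not a continuation to a Viterbo Hamiltonian; it is carried out at \emph{small} slope $\mu=\rho$, where the Floer complex collapses to the Morse complex of $-H$ (via the Hofer--Salamon theorem), and the short exact sequence~\eqref{eq:short_Morse} of Morse subcomplexes is matched with the long exact sequence of the triple $(\wh V^{2\rho},\wh V^0,\wh V^{-2\rho})$ by Schwarz's Proposition~4.45, followed by excision and Poincar\'e--Lefschetz duality. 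If you want to repair your argument, you should keep the vertex constants in play and, instead of an identification $\Check{SH}_*^{\ge 0}\cong SH_*^+$, prove a map-of-exact-sequences statement that explicitly features the $H_*(M)$ contribution — which is exactly what the proposition asserts and what the paper's small-slope Morse-theoretic reduction delivers.
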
 

The main result of this paper is the following. 

\begin{theorem} \label{thm:main1}
 We have an isomorphism 
$$
RFH_*(V) \simeq \Check{SH}_*(V).
$$
\end{theorem}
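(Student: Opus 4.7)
The plan is to realize both $RFH_*(V)$ and $\Check{SH}_*(V)$ as direct limits of a common family of Morse--Bott Floer complexes indexed by a slope parameter $\mu$, and to match these filtered objects level by level. Concretely, fix a defining Hamiltonian $F$ with $F^{-1}(0)=M$ and $X_F|_M$ equal to the Reeb vector field $R$ of $\alpha$; for each $\mu>0$ outside the action spectrum of $R$, set $H_\mu=h_\mu\circ F$ for a smooth $\bigvee$-shaped profile $h_\mu$ with slopes $\pm\mu$ on the two sides of $0$, as in Figure~\ref{fig:Vshape}. By construction, $\Check{SH}_*(V)=\varinjlim_\mu HF_*(H_\mu)$ with respect to the monotone continuation maps. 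On the Rabinowitz side, filtering by the absolute value of the Lagrange multiplier yields $RFH_*(V)=\varinjlim_\mu RFH_*^{|\eta|<\mu}(V)$. The proof will consist of exhibiting a chain-level isomorphism $\Phi_\mu:CF_*(H_\mu)\xrightarrow{\sim}RFC_*^{|\eta|<\mu}$ compatible with these limit structures, and then passing to the limit.

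At the level of generators the bijection is forced. The 1-periodic orbits of $X_{H_\mu}=h_\mu'(F)X_F$ are either constants on $M$ (a Morse--Bott family parametrized by $M$) or reparametrized closed Reeb orbits of period $\eta\in(-\mu,\mu)\setminus\{0\}$ sitting on the level $\{F=c\}$ where $h_\mu'(c)=\eta$; these match the critical points $(\gamma,\eta)$ of $\cA^F$ with $|\eta|<\mu$ recalled in Section~\ref{sec:RFH}. A short computation in the Liouville collar near $M$, using the normal form $\lambda=e^r\alpha$, shows that the Hamiltonian action of such an orbit agrees with the Rabinowitz action $\cA^F(\gamma,\eta)$ up to an error controlled by the width of the V, hence vanishing along suitable approximations in which $h_\mu$ is sharpened at $0$; the Conley--Zehnder-type indices on the two sides agree modulo the $1/2$-shift of Remark~\ref{rem:grading}. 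After a small Morse--Bott perturbation by an auxiliary Morse function on $M$, one obtains an identification of generators that is action-preserving in the limit.

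The heart of the argument is to match the two differentials. A Floer cylinder $u$ for $H_\mu=h_\mu\circ F$ is governed by the effective slope $\eta(s)=h_\mu'(F(u(s,\cdot)))$, which is constant along the asymptotic ends but varies in $s$; a negative gradient line of $\cA^F$ is precisely a pair $(u,\eta)$ in which $\eta:\R\to\R$ satisfies $\dot\eta=-\int_0^1 F(u)\,dt$. Collapsing the transition region of $h_\mu$ onto $M$ produces, via an adiabatic neck-stretching argument, a one-parameter family of moduli spaces interpolating between Floer cylinders for $H_\mu$ and Rabinowitz trajectories constrained to $|\eta|<\mu$, and hence the desired chain isomorphism $\Phi_\mu$. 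Because the $\Phi_\mu$ are compatible with continuation in $\mu$ on the left and with inclusion of action windows on the right, passing to the direct limit yields $\Check{SH}_*(V)\simeq RFH_*(V)$. The main obstacle is the analytic content of this adiabatic step: one must establish uniform $L^\infty$-bounds on the Lagrange multiplier along trajectories in a fixed action window, since the strongly indefinite nature of $\cA^F$ allows a priori for $\eta$ to escape to $\pm\infty$, and one must then prove Gromov-type compactness for Floer cylinders of $H_\mu$ as the slope of $h_\mu$ becomes sharp at $M$. These bounds ultimately rest on a maximum principle applied to $F\circ u$, exploiting the exactness of $\lambda$ and the convexity of $h_\mu$.
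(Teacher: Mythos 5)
Your identification of generators and of the filtered limit structure agrees with the paper, but the key step---matching the differentials---follows a genuinely different and, as sketched, incomplete route. You propose an ``adiabatic neck-stretching'' degeneration of moduli spaces: sharpening $h_\mu$ at $M$ so that Floer cylinders for $H_\mu$ converge to Rabinowitz trajectories. The paper does nothing of the sort. It never degenerates moduli spaces. Instead it introduces a two-parameter family of \emph{perturbed Rabinowitz functionals} $A_{H,b,c}(x,\eta)=\int x^*\lambda - b(\eta)\int H(x)\,dt + c(\eta)$, where $b,c:\R\to\R$ deform the role of the Lagrange multiplier, and passes from $RFH_*$ to $\Check{SH}_*$ entirely by action-filtered continuation through this family. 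The crucial step (Theorem~\ref{thm:flatten}) replaces $b(\eta)=\eta$ by a compactly supported $b_A$; once $b$ and $c$ are both constant for $|\eta|$ large, five further explicit homotopies (Steps~1--5 of Section~\ref{sec:proof}) rewrite the resulting complex as the ordinary Floer complex of a $\bigvee$-shaped Hamiltonian $|K|$. At no point are Floer cylinders for a fixed Hamiltonian asymptotically identified with Rabinowitz trajectories.

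There is a concrete gap in your version. A Floer cylinder for $H_\mu=h_\mu\circ F$ has effective slope $h_\mu'\bigl(F(u(s,t))\bigr)$, a function of \emph{both} $s$ and $t$, whereas a Rabinowitz trajectory has an honest $t$-independent multiplier $\eta(s)$ obeying its own ODE $\dot\eta=-\int_0^1 F(u)\,dt$. That the first degenerates to the second as $h_\mu$ sharpens is a nontrivial SFT-type compactness statement; you assert it but do not indicate how to prove it. Moreover, the uniform bound on the multiplier that you flag as the main obstacle is not, in the paper, obtained from a single maximum principle applied to $F\circ u$. It requires a combination of: a dichotomy argument (Proposition~\ref{dicho}) showing that along a trajectory $|\eta|$ stays either uniformly small or uniformly large, a Kazdan--Warner type upper bound for the $r$-coordinate (Proposition~\ref{upb}, via the Laplace estimate of Lemma~\ref{lem:lap1}), and the ``tameness'' bounds of Section~\ref{sec:nearcrit}. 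These are precisely the estimates that make the interpolation through $A_{H,b,c}$ admissible; absent an analogous analysis, the adiabatic route remains a plausible heuristic rather than a proof.
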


Theorem~\ref{thm:main1} is proved in Section~\ref{sec:proof}. It
follows that the Rabinowitz Floer homology groups fit into a long
exact sequence    
\begin{equation} \label{eq:RFHseq}
\xymatrix{
\ldots \ar[r] &  \!SH^{-*}(V)\! \ar[r] & \!SH_*(V)\! \ar[r] &
\!RFH_*(V)\! \ar[r]
& \!SH^{-*+1}(V)\! \ar[r] & \ldots
}
\end{equation}

We also recall the following vanishing result for Rabinowitz Floer homology
from~\cite{CF}. 

\begin{theorem}[{\cite[Theorem~1.2]{CF}}] \label{thm:vanishingRFH}
If $M=\p V$ is Hamiltonianly displaceable in $W$, then 
$$
RFH_*(V)=0.
$$
\end{theorem}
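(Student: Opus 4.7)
My strategy is the standard Floer-theoretic displacement argument: perturb the Rabinowitz action functional by a displacing Hamiltonian, observe that the perturbed functional has no critical points, and conclude via a continuation argument that $RFH_*(V)$ vanishes.

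First, I will set up the perturbed functional. Let $F\in C^\infty_c([0,1]\times W)$ be a compactly supported Hamiltonian whose time-one map $\phi_F^1$ displaces $M$, i.e.\ $\phi_F^1(M)\cap M=\emptyset$, and let $H$ be a defining Hamiltonian for $M$ so that the Rabinowitz functional has the form $\cA^H(\gamma,\eta)=\int_0^1\gamma^*\lambda-\eta\int_0^1 H(\gamma)\,dt$ on pairs $(\gamma,\eta)$ of loops and a Lagrange multiplier. I form $\cA^H_F$ by adding $-\int_0^1 F_t(\gamma(t))\,dt$, after first reparametrizing time so that $H$ is supported in $[\tfrac12,1]$ and $F$ in $[0,\tfrac12]$. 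With this arrangement, critical points of $\cA^H_F$ correspond to points $x\in M$ satisfying $\phi_F^1(x)\in M$, of which there are none by assumption; hence the chain complex of $\cA^H_F$ is trivially zero.

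Second, I will prove compactness for a linear monotone homotopy between $\cA^H$ and $\cA^H_F$. The aim is an energy estimate of the form $E(u)\le |\cA^H(u^-)-\cA^H(u^+)|+\|F\|_{\mathrm{Hofer}}$, combined with an $L^\infty$ bound on the Lagrange multiplier $\eta$ along negative gradient flow lines. The $\eta$-bound exploits the defining property of $H$: on a tubular neighborhood of $M$ one has $|dH|\ge c>0$, which yields an estimate of the form $|\eta|\le C_1\|\nabla\cA^H_F\|_{L^2}+C_2$ at each time slice. A maximum principle on the convex end of $(W,\lambda)$ then confines trajectories to a compact subset, and standard Gromov-type compactness handles breaking.

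Third, I will invoke the invariance of Floer homology under continuation. The monotone chain map, well defined thanks to the compactness above, identifies the Floer homology of $\cA^H$ with that of $\cA^H_F$; since the latter complex is zero, we obtain $RFH_*(V)=0$.

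The main obstacle is the uniform $L^\infty$-bound on the Lagrange multiplier $\eta$ along continuation trajectories. Unlike in ordinary Floer homology, the Rabinowitz functional is strongly indefinite and its domain is non-compact in the $\eta$-direction, so without such control trajectories can drift to infinity and the continuation argument collapses. Once this estimate is in hand, the remaining steps follow standard Floer-theoretic templates.
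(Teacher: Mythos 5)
The paper cites this statement from \cite[Theorem~1.2]{CF} without giving its own proof, and your proposal reproduces the displacement argument given there: perturb the Rabinowitz functional by a displacing Hamiltonian (time-split so that critical points correspond to intersection points $M\cap\phi_F^1(M)=\emptyset$), and conclude by a continuation argument whose technical heart is the uniform $L^\infty$-bound on the Lagrange multiplier $\eta$, i.e.\ the Fundamental Lemma of \cite{CF}. Your sketch matches the cited proof in all essentials, including the correct identification of the $\eta$-bound as the main obstacle.
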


To state the next corollary, we recall that the symplectic (co)homology
and Rabinowitz Floer homology groups decompose as direct sums 
$$
SH_*(V)=\oplus _c SH_*^c(V), \ \  SH^*(V)=\oplus_c SH^*_c(V), \ \
RFH_*(V)=\oplus_c RFH_*^c(V) 
$$
indexed over free homotopy classes of loops \emph{in $V$}. We denote
the free homotopy class of the constant loops by $c=0$. 

\begin{corollary} \label{cor:vanishingSH}
 Assume $M=\partial V$ is Hamiltonianly displaceable in $W$. 
\begin{itemize}
\item For $c\neq 0$ we have  
$$
SH_*^c(V)=0, \quad SH^*_c(V)=0.
$$
\item Suppose that $c_1(W)|_{\pi_2(W)}=0$. Then for $c=0$ we have 
\begin{equation} \label{eq:vanishingc=0}
SH_*^{c=0}(V)=0, \quad SH^*_{c=0}(V)=0 
\end{equation}
if $*\ge n$ or $*\le -n$. Moreover, if $V$ is Stein
then~\eqref{eq:vanishingc=0} holds for $*\neq 0$, and if $V$ is Stein
subcritical then~\eqref{eq:vanishingc=0} holds for all $*\in \Z$. 
\end{itemize}
\end{corollary}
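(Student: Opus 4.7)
The plan is to feed $RFH_*(V)=0$ into the long exact sequence~\eqref{eq:RFHseq} and then read off vanishing of symplectic (co)homology summands from the concrete factorization of the connecting isomorphism supplied by Proposition~\ref{prop:map}. Since $M=\p V$ is displaceable in $W$, Theorem~\ref{thm:vanishingRFH} gives $RFH_*(V)=0$, so~\eqref{eq:RFHseq} collapses to an isomorphism
\begin{equation*}
SH^{-*}(V)\xrightarrow{\ \cong\ }SH_*(V)
\end{equation*}
which coincides, via Theorem~\ref{thm:main1}, with the canonical map of Proposition~\ref{prop:map}. Because every morphism in~\eqref{eq:RFHseq} comes from continuation data preserving the free homotopy class of loops in $V$, this isomorphism splits as a direct sum of isomorphisms $SH^{-*}_c(V)\cong SH_*^c(V)$ indexed by $c$.

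For a non-trivial free homotopy class $c\ne 0$, I would apply Proposition~\ref{prop:map}: the canonical map factors as
\begin{equation*}
SH^{-*}(V)\xrightarrow{c^*} H^{-*+n}(V,\p V)\xrightarrow{\mathrm{PD}} H_{*+n}(V)\to H_{*+n}(V,\p V)\xrightarrow{c_*} SH_*(V).
\end{equation*}
The truncation morphisms $c^*$ and $c_*$ of Lemma~\ref{lem:sing} are obtained from an action window containing only constant loops, hence factor through the $c=0$ summand. Consequently the canonical map vanishes on every $c\ne 0$ summand, and combined with the isomorphism of the previous paragraph this forces $SH_*^c(V)=0=SH^*_c(V)$, which is the first bullet.

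For the second bullet, restrict to $c=0$ under the hypothesis $c_1(W)|_{\pi_2(W)}=0$, so that the summand is $\Z$-graded by Remark~\ref{rem:grading}. By the same factorization, it suffices to show that the bottom composite vanishes in the relevant degree. Since $V$ is connected with non-empty boundary, both $H^k(V,\p V)$ and $H_k(V,\p V)$ vanish for $k\le 0$, which immediately handles $*\ge n$ (the leftmost cohomology group is zero) and $*\le -n$ (the rightmost homology group is zero). If $V$ is Stein, $V$ has the homotopy type of a CW complex of real dimension $\le n$, so $H_{*+n}(V)=0$ for $*>0$ and, by Lefschetz duality, $H_{*+n}(V,\p V)\cong H^{n-*}(V)=0$ for $*<0$; either way the composite vanishes whenever $*\ne 0$. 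If $V$ is in addition subcritical, the CW dimension improves to $<n$, upgrading the two vanishing ranges to $*\ge 0$ and $*\le 0$ and therefore covering every $*\in\Z$.

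The principal technical point is the compatibility asserted at the start of the second paragraph: one needs that the canonical map of Proposition~\ref{prop:map} really is the connecting map appearing in~\eqref{eq:RFHseq}, and that the truncation morphisms $c^*,c_*$ are concentrated on the $c=0$ summand. Both are naturality statements built into the Floer-theoretic set-up; once they are in place, everything else reduces to Poincar\'e--Lefschetz duality together with the CW dimension bound for (subcritical) Stein domains.
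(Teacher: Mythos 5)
Your proposal is correct and follows essentially the same route as the paper: feed the vanishing of $RFH_*(V)$ from Theorem~\ref{thm:vanishingRFH} into the long exact sequence~\eqref{eq:RFHseq} to force $SH^{-*}_c(V)\cong SH_*^c(V)$, then use the factorization of Proposition~\ref{prop:map} together with the fact that $c_*$ and $c^*$ land in the $c=0$ summand, and finally deduce the graded vanishing from Poincar\'e--Lefschetz duality and the Stein (resp.\ subcritical Stein) CW dimension bound. The only difference is cosmetic: where the paper invokes vanishing of the middle groups $H_{*+n}(V)$ and $H_{*+n}(V,\p V)\cong H^{n-*}(V)$ in the chain, you use the Poincar\'e-dual groups $H^{-*+n}(V,\p V)$ and $H_{*+n}(V,\p V)$, which amounts to the same calculation.
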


\begin{proof} The long exact sequence~\eqref{eq:RFHseq}
splits into a direct sum of long exact sequences, indexed over free
homotopy classes of loops in $V$. The assumption that $M$ is
Hamiltonianly displaceable implies $RFH_*(V)=0$, hence the map
$SH^{-*}_{c}(V)\to SH_*^c(V)$ is an isomorphism for any $c$.  

We now use the commutative diagram in Proposition~\ref{prop:map} and
the fact that the canonical map $c_*:H_{*+n}(V,\p V)\to SH_*(V)$
takes values into the direct summand $SH_*^{c=0}(V)$, and similarly,
the map $c^*:SH^*(V)\to H^{*+n}(V,\p V)$ factors through 
$SH^*_{c=0}(V)$ (see Lemma~\ref{lem:sing} and Lemma~\ref{lem:sing-coh}
below).  

Let us assume $c\neq 0$. Then the above discussion shows that the map 
$SH^{-*}_{c}(V)\to SH_*^c(V)$ is at the same time an isomorphism and
vanishes. This implies the conclusion. 

Let us now assume $c=0$ and $c_1(W)|_{\pi_2(W)}=0$, so that all
homology groups are $\Z$-graded. By Proposition~\ref{prop:map}, the
map $SH^{-*}_{c=0}(V)\to SH_*^{c=0}(V)$ is the composition 
$$
SH^{-*}_{c=0}(V)\to H^{-*+n}(V,\p V)\simeq H_{*+n}(V)\to H_{*+n}(V,\p
V)\to SH_*^{c=0}(V),
$$
and therefore vanishes if $H_{*+n}(V)=0$ or $H_{*+n}(V,\p
V)\cong H^{n-*}(V)=0$. This is always the case if $*\ge n$ or $*\le
-n$. If $V$ is Stein, this holds if $*\neq 0$, and if $V$ is Stein
subcritical, this holds for all $*\in \Z$. The conclusion follows. 
\end{proof} 

\begin{corollary}[\cite{C}] \label{cor:Kai} 
If $V$ is Stein subcritical and $c_1(V)|_{\pi_2(V)}=0$,
then $SH_*(V)=0$.  
\end{corollary}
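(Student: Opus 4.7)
The plan is to reduce to Corollary~\ref{cor:vanishingSH} by constructing an ambient symplectic manifold $W$ in which $M = \partial V$ is Hamiltonianly displaceable; then Theorem~\ref{thm:vanishingRFH} forces $RFH_*(V) = 0$ and the long exact sequence~\eqref{eq:RFHseq} drives the vanishing of $SH_*(V)$. Since the symplectic homology groups $SH_*(V)$ depend only on $V$ (as emphasized in the introduction), we are free to choose $W$ conveniently.

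The construction of $W$ rests on the structural theory of subcritical Stein manifolds: up to Stein deformation, $V$ admits an exact symplectic embedding as a bounded domain in a product $W = V' \times \mathbb{C}$, where $V'$ is a Stein manifold of complex dimension one less than $V$ and $\mathbb{C}$ carries its standard symplectic form. In such a product, $M$ projects to a bounded subset of the $\mathbb{C}$ factor, so a sufficiently large translation in that factor, realized by a compactly supported Hamiltonian on $W$, displaces $M$ from itself. The product structure also forces $c_1(W)|_{\pi_2(W)} = 0$, given the hypothesis $c_1(V)|_{\pi_2(V)} = 0$, so all the $\mathbb{Z}$-grading assumptions needed for the last bullet of Corollary~\ref{cor:vanishingSH} are in place.

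With this $W$ in hand, Corollary~\ref{cor:vanishingSH} applies to every free homotopy class $c$ of loops in $V$: for $c \neq 0$ we obtain $SH_*^c(V) = 0$ directly, while the Stein subcritical clause of the same corollary yields $SH_*^{c=0}(V) = 0$ for all $* \in \mathbb{Z}$. Summing the decomposition $SH_*(V) = \bigoplus_c SH_*^c(V)$ over free homotopy classes gives the desired vanishing. The main nontrivial input is therefore the splitting-and-displacement step coming from the structural theory of subcritical Stein manifolds; once this is available, the remainder of the argument is a direct assembly of the long exact sequence and vanishing results already proved in the excerpt.
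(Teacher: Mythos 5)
Your proof is correct, but it routes through heavier machinery than the paper does. The paper simply takes $W=\widehat V$: since $\widehat V$ is again subcritical Stein, the Biran--Cieliebak result~\cite{BC} says any compact subset (in particular $M=\p V$) is Hamiltonianly displaceable, and Corollary~\ref{cor:vanishingSH} then gives $SH_*(V)=0$ immediately. You instead invoke the structural splitting theorem for subcritical Stein manifolds to realize $V$ (up to Stein deformation) inside $W=V'\times\C$ and displace by translation in the $\C$-factor. This works, but it effectively re-derives the ingredient of~\cite{BC} that the paper quotes as a black box, and in addition it silently uses invariance of $SH_*$ under Stein deformations, which is not the same statement as independence of the ambient filling $W$ (Proposition~\ref{prop:SHinW}) that you cite. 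Both invariances are standard and true, so the argument goes through, but the extra deformation step is unnecessary: once you know compact sets in a subcritical Stein manifold are displaceable, you can stay inside $\widehat V$ itself and avoid the splitting theorem entirely. The only genuine input shared by both arguments is the displaceability of the subcritical skeleton and the Floer-theoretic vanishing mechanism of Corollary~\ref{cor:vanishingSH}, and both require the hypothesis $c_1(V)|_{\pi_2(V)}=0$ because the degree-by-degree vanishing argument uses the $\Z$-grading.
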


\begin{proof} Any compact set in a subcritical Stein
manifold is Hamiltonianly displaceable~\cite{BC}. Thus $V$ is
displaceable in $\widehat V$, and therefore $SH_*(V)=0$ by 
Corollary~\ref{cor:vanishingSH}. 
\end{proof}

\noindent {\bf Remark.} The original proof of Corollary~\ref{cor:Kai}
in~\cite{C} uses a handle decomposition for $W$. The proof given above
only uses the fact that the subcritical skeleton can be displaced from
itself~\cite{BC}. On the other hand, the proof given above uses the
grading in an essential way and hence only works under the hypothesis
$c_1(V)|_{\pi_2(V)}=0$, whereas the original proof does not need this
assumption.  

\begin{corollary}[Weinstein conjecture in displaceable manifolds]
  Assume that $V$ is Hamiltonianly displaceable 
in~$W$ and $c_1(W)|_{\pi_2(W)}=0$. Then any hypersurface of contact
type $\Sigma\subset V$ carries a closed characteristic. 
\end{corollary}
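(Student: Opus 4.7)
The plan is to derive a contradiction from the assumption that $\Sigma$ carries no closed characteristic, by juxtaposing Theorem~\ref{thm:vanishingRFH} with a Morse--Bott computation of Rabinowitz Floer homology.

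First I would verify that the contact-type hypersurface $\Sigma \subset V$ fits the formalism of Section~\ref{sec:RFH}: the Liouville vector field of $W$ is transverse to $\Sigma$, so a tubular neighborhood of $\Sigma$ in $W$ is symplectomorphic to a piece of the symplectization and $\Sigma \hookrightarrow W$ is an exact contact embedding, bounding a relatively compact region $U\subset V$. Since $V$ is Hamiltonianly displaceable in $W$, so is the subset $\Sigma$, and Theorem~\ref{thm:vanishingRFH} then yields $RFH_*(U) = 0$.

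Now suppose for contradiction that $\Sigma$ has no closed characteristic. Let $H$ be a defining Hamiltonian for $\Sigma$ and consider the Rabinowitz action functional
\[
\cA^H(v,\eta) = \int_0^1 v^*\lambda \;-\; \eta \int_0^1 H(v(t))\,dt
\]
on $\LL W \times \R$. Its critical points are pairs $(v,\eta)$ where $v$ parametrizes a closed Reeb orbit on $\Sigma$ of period $\eta$, with $\eta = 0$ corresponding to constant loops on $\Sigma$. Under the no-closed-characteristic hypothesis the critical set therefore reduces to the clean Morse--Bott manifold $\Sigma \times \{0\}$. The Morse--Bott computation carried out in~\cite{CF}, made $\Z$-graded by the assumption $c_1(W)|_{\pi_2(W)} = 0$, then identifies $RFH_*(U)$ with the singular homology of $\Sigma$ up to a degree shift; since $\Sigma \neq \emptyset$ we get $RFH_*(U) \neq 0$, contradicting the vanishing above.

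The main obstacle is the last step: the Morse--Bott identification of $RFH_*(U)$ with $H_*(\Sigma)$ in the absence of nonconstant Reeb orbits, which requires the transversality and spectral sequence arguments underlying the proof of Theorem~1.2 of~\cite{CF}. I would therefore invoke~\cite{CF} directly rather than reproduce those technicalities here.
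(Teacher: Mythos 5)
Your proof takes a genuinely different route from the paper's. The paper deduces the corollary from the \emph{symplectic cohomology} vanishing $SH^n_{c=0}(V)=0$ established in Corollary~\ref{cor:vanishingSH} (itself a consequence of the long exact sequence~\eqref{eq:RFHseq} and $RFH_*(V)=0$), and then invokes Viterbo's Main Theorem on the Strong Algebraic Weinstein Conjecture. Your argument instead works directly on the hypersurface $\Sigma$: apply Theorem~\ref{thm:vanishingRFH} to the exact contact embedding $\Sigma\into W$, then contradict the vanishing by the Morse--Bott computation $RFH_*(U)\simeq H_{*+n-1}(\Sigma)$ in the absence of nonconstant Reeb orbits. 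This is essentially the original argument from~\cite{CF}, and as you note correctly, it does not even need the hypothesis $c_1(W)|_{\pi_2(W)}=0$, whereas the paper's proof via the $\Z$-graded long exact sequence does. What the paper's route buys is a demonstration of the strength of the new exact sequence and the link to Viterbo's SAWC formalism; what your route buys is directness and independence of the grading hypothesis.

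There is, however, a genuine gap in your first step. You assert that ``the Liouville vector field of $W$ is transverse to $\Sigma$.'' For a hypersurface merely of \emph{contact type}, one is only given \emph{some} local Liouville vector field defined near $\Sigma$ and transverse to it, i.e.~some local primitive $\alpha'$ of $\omega$ with $\alpha'|_\Sigma$ a contact form; there is no reason for this to agree with the global $\lambda$, and in general $\lambda|_\Sigma$ need not be a contact form at all. For $\Sigma$ to fit the RFH framework of Section~\ref{sec:RFH} one must check that $\Sigma\into W$ is an \emph{exact contact embedding}, i.e.~that the closed form $\alpha'|_\Sigma-\lambda|_\Sigma$ is exact on $\Sigma$. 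This holds automatically if $\Sigma$ is of restricted contact type or if $H^1(\Sigma;\R)=0$, but not in general. You also tacitly assume $\Sigma$ is separating, so that it bounds a compact region $U\subset V$; this too must be part of the hypotheses. These are precisely the sort of hypotheses Viterbo's Main Theorem packages for you in the paper's proof, so to make your more direct argument complete you should either restrict to restricted contact type hypersurfaces or supply a separate reduction to that case.
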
 

\begin{proof}
This follows from the fact that $SH^n_{c=0}(V)=0$, as proved in
Corollary~\ref{cor:vanishingSH} above. In particular the canonical map
$SH^n_{c=0}(V)\to H^{2n}(V,\partial V)$ vanishes, and thus $V$
satisfies the Strong Algebraic Weinstein Conjecture in the sense of
Viterbo~\cite{V}. The conclusion is then a consequence of the Main
Theorem in~\cite{V} (see also~\cite[Theorem~4.10]{Osurvey} for details). 
\end{proof} 

We now turn to the computation of the Rabinowitz Floer homology groups for
cotangent bundles. Let $L$ be a connected closed Riemannian manifold, and let
$DT^*L\subset T^*L$ be the unit disc bundle with its canonical
symplectic structure. Note that $c_1(T^*L)=0$, so its symplectic
(co)homology and Rabinowitz Floer homology groups are $\Z$-graded. 
Given a free homotopy
class $c$ of loops in $L$, we denote by $\Lambda^c L$ the corresponding
connected component of the free loop space of $L$. Rabinowitz Floer
homology $RFH_*(DT^*L)$ decomposes as a direct sum of homology groups
$RFH_*^c(DT^*L)$ which only take into account loops in the class
$c$. We denote the free homotopy class of the constant loops by $c=0$. We
denote the Euler number of the cotangent bundle $T^*L\to L$ by
$e(T^*L)$ (if $L$ is non-orientable we work with $\Z/2$-coefficients).

\begin{theorem} \label{thm:cotangent}
In degrees $*\neq 0,1$ the Rabinowitz Floer homology of $DT^*L$ is given by 
$$
RFH_*(DT^*L) = 
\left\{\begin{array}{rl} 
H_*(\Lambda L), & *> 1, \\
H^{-*+1}(\Lambda L), & *< 0. 
\end{array} \right.
$$
In degree $0$ we have 
$$
RFH_0^c(DT^*L)=\left\{\begin{array}{ll}
H_0(\Lambda^c L)\oplus H^1(\Lambda^c L), & if \ c\neq 0,\\
H_0(\Lambda^0 L)\oplus H^1(\Lambda^0 L), & if \ c=0 \ and \ e(T^*L)= 0,\\
H^1(\Lambda^0 L), & if \ c=0 \ and \ e(T^*L)\neq 0.
\end{array}\right.
$$
In degree $1$ we have 
$$
RFH_1^c(DT^*L)=\left\{\begin{array}{ll}
H_1(\Lambda^c L)\oplus H^0(\Lambda^c L), & if \ c\neq 0,\\
H_1(\Lambda^0 L)\oplus H^0(\Lambda^0 L), & if \ c=0 \ and \ e(T^*L)= 0,\\
H_1(\Lambda^0 L), & if \ c=0 \ and \ e(T^*L)\neq 0.
\end{array}\right.
$$
\end{theorem}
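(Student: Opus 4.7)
The plan is to combine Theorem~\ref{thm:main1}, the long exact sequence~\eqref{eq:RFHseq}, and Viterbo's isomorphism (refined by Abbondandolo--Schwarz and Salamon--Weber) $SH_*(DT^*L) \cong H_*(\Lambda L)$ together with its cohomological counterpart $SH^*(DT^*L) \cong H^*(\Lambda L)$, both respecting the free-homotopy-class splitting. (For non-orientable $L$ we use $\Z/2$-coefficients as in the theorem statement.) Substituting into~\eqref{eq:RFHseq} yields, for each class $c$, a long exact sequence
\begin{equation*}
\ldots \to H^{-*}(\Lambda^c L) \stackrel{\delta}{\to} H_*(\Lambda^c L) \to RFH_*^c(DT^*L) \to H^{-*+1}(\Lambda^c L) \to \ldots
\end{equation*}
so the task reduces to identifying $\delta$ and reading off the sequence.

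By Proposition~\ref{prop:map}, $\delta$ factors as
$$H^{-*}(\Lambda^c L) \stackrel{c^*}{\to} H^{-*+n}(V,\partial V) \to H_{*+n}(V,\partial V) \stackrel{c_*}{\to} H_*(\Lambda^c L),$$
the middle arrow being Poincar\'e duality followed by the inclusion-induced map $H_{*+n}(V) \to H_{*+n}(V,\partial V)$. By Lemmas~\ref{lem:sing} and~\ref{lem:sing-coh}, $c^*$ and $c_*$ are supported on the $c=0$ component; under Viterbo's isomorphism, and the Thom/Lefschetz identifications $H^{-*+n}(V,\partial V) \cong H^{-*}(L)$ and $H_{*+n}(V,\partial V) \cong H_*(L)$, they correspond to restriction to and inclusion of constant loops, $H^*(\Lambda^0 L) \to H^*(L)$ and $H_*(L) \to H_*(\Lambda^0 L)$. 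The middle map becomes the composition $H^{-*}(L) \stackrel{\cup e(T^*L)}{\longrightarrow} H^{-*+n}(L) \stackrel{PD}{\cong} H_*(L)$; since $L$ is closed connected of dimension $n$, this vanishes except at $*=0$, where it equals multiplication by the Euler number $e(T^*L)$ on $H^0(L) = \F = H_0(L)$.

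The computation is then mechanical. For $c \neq 0$ both $c^*$ and $c_*$ vanish, so $\delta = 0$ and the long exact sequence of vector spaces splits, giving $RFH_*^c \cong H_*(\Lambda^c L) \oplus H^{-*+1}(\Lambda^c L)$; separating by degree produces the three cases listed for $c \neq 0$. The same argument applies when $c=0$ and $e(T^*L)=0$ in the base field. When $c=0$ and $e(T^*L)\neq 0$, $\delta$ is zero outside degree $0$, while in degree $0$ it is the isomorphism $H^0(\Lambda^0 L) \stackrel{\cong}{\to} H_0(\Lambda^0 L)$ (both one-dimensional, since $\mathrm{ev}_0$ retracts $\Lambda^0 L$ onto $L$). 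A short diagram chase near $*=0$ and $*=1$ then gives $RFH_0^0 = H^1(\Lambda^0 L)$ and $RFH_1^0 = H_1(\Lambda^0 L)$, the remaining degrees being unaffected. The hard part is the identification of $c^*$ and $c_*$ with restriction to and inclusion of constants under Viterbo's isomorphism; this chain-level compatibility between the action-truncation maps and the Morse theory of the energy functional on $\Lambda L$ is the main obstacle, and everything else is formal.
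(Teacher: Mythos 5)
Your proposal is correct and follows essentially the same route as the paper: substitute Viterbo's isomorphism into the long exact sequence~\eqref{eq:RFHseq} obtained from Theorem~\ref{thm:main1}, reduce the degree-$0,1$ computation to identifying the connecting map $SH^0_c \to SH_0^c$, and show via Proposition~\ref{prop:map} and the Thom/Poincar\'e-duality factorization that this map is zero for $c\neq 0$ and multiplication by $e(T^*L)$ for $c=0$ (this is precisely the content of the paper's Lemma~\ref{lem:deg0}). You also correctly flag that the chain-level compatibility of $c_*$, $c^*$ with the inclusion/restriction of constant loops under Viterbo's isomorphism is the step that is outsourced (to~\cite{V}), just as in the paper.
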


The proof is based on the isomorphisms 
\begin{equation} \label{eq:SHcotangent}
SH_*^c(DT^*L)\simeq H_*(\Lambda^c L), \qquad SH^*_c(DT^*L)\simeq
H^*(\Lambda^c L), 
\end{equation}
proved in~\cite{Vcotangent,AS,SW}. In particular $SH_0^c(DT^*L)$ and
$SH^0_c(DT^*L)$ are isomorphic to the ground field. 
We also need the following Lemma.

\begin{lemma} \label{lem:deg0}
 The map $SH^0_c(DT^*L)\to SH_0^c(DT^*L)$ in the exact sequence of
Theorem~\ref{thm:main_exact} vanishes if $c\neq 0$, and
is multiplication by the Euler number $e(T^*L)$ if $c=0$. 
\end{lemma}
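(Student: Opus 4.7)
The plan is to apply Proposition~\ref{prop:map} at degree zero and reduce the computation to singular (co)homology via the cotangent bundle isomorphisms $SH^*_c(DT^*L)\simeq H^*(\Lambda^c L)$ and $SH_*^c(DT^*L)\simeq H_*(\Lambda^c L)$ of~\eqref{eq:SHcotangent}.

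\textbf{Case $c\neq 0$.} Both the long exact sequence of Theorem~\ref{thm:main_exact} and the diagram of Proposition~\ref{prop:map} split along free homotopy classes of loops in $V=DT^*L$. Since $c^*$ and $c_*$ factor through the $c=0$ summand (Lemma~\ref{lem:sing} and its cohomological analogue~\ref{lem:sing-coh}), the bottom route of the diagram of Proposition~\ref{prop:map} already vanishes on $SH^0_c(V)$ for $c\neq 0$, and by commutativity so does the top arrow.

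\textbf{Case $c=0$.} I would read the diagram of Proposition~\ref{prop:map} as the composition
$$
SH^0_{c=0}(V) \xrightarrow{c^*} H^n(V,\partial V) \xrightarrow{PD_V} H_n(V) \xrightarrow{\mathrm{incl}_*} H_n(V,\partial V) \xrightarrow{c_*} SH_0^{c=0}(V)
$$
and identify each factor in turn. The Thom isomorphism for the rank-$n$ bundle $T^*L\to L$ gives $H^n(V,\partial V)\simeq H^0(L)$ and $H_n(V,\partial V)\simeq H_0(L)$. Under this identification, Poincar\'e--Lefschetz duality $PD_V$ corresponds to the usual $PD_L: H^0(L)\to H_n(L)$ sending $1$ to $[L]$, while the connecting map $\mathrm{incl}_*:H_n(V)\to H_n(V,\partial V)$ becomes cap product with the Euler class --- this is precisely the content of the Gysin sequence of the sphere bundle $ST^*L\to L$ --- so $[L]$ is sent to $\langle e(T^*L),[L]\rangle\cdot[\mathrm{pt}]=e(T^*L)\cdot[\mathrm{pt}]$. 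Finally, under~\eqref{eq:SHcotangent} together with the Thom identifications, the outer maps $c^*$ and $c_*$ on the $c=0$ summand correspond to the maps on $H^0$ and $H_0$ induced by the inclusion of constant loops $L\hookrightarrow\Lambda^0 L$; both are isomorphisms of the ground field since $L$ and $\Lambda^0 L$ are connected. Composing the four identifications yields multiplication by $e(T^*L)$.

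\textbf{Main obstacle.} The crux is the identification of $c^*$ and $c_*$ with the maps induced by the inclusion of constant loops under the Abbondandolo--Schwarz / Viterbo / Salamon--Weber isomorphism. This is not formal: it requires returning to the chain-level construction of that isomorphism, in which constant loops arise as critical points of a Morse function on $L$ modelling the singular (co)homology truncation factor. The Gysin-sequence identification of $\mathrm{incl}_*$ with $\cap\, e(T^*L)$ and the Thom-compatibility of Poincar\'e duality are standard, and the $c\neq 0$ case is essentially formal once Proposition~\ref{prop:map} is in hand.
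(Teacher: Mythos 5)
Your proof is correct and follows the paper's approach essentially verbatim: both split via Proposition~\ref{prop:map}, dispose of $c\neq 0$ by the factorization of $c^*$ and $c_*$ through the contractible summand, and for $c=0$ use the Thom isomorphism to reduce the bottom row of diagram~\eqref{eq:map} to the Euler number. The paper traces the generator directly as $1\mapsto\tau\mapsto[0_L]\mapsto e(T^*L)$ whereas you phrase the middle map $H_n(V)\to H_n(V,\partial V)$ as cap product with the Euler class via the Gysin sequence, but these are the same projection-formula computation; and the compatibility of $c^*$, $c_*$ with the inclusion of constant loops under the isomorphisms~\eqref{eq:SHcotangent}, which you flag as the crux, is a point the paper likewise takes for granted.
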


\begin{proof}
That the map $SH^0_c(DT^*L)\to SH_0^c(DT^*L)$ vanishes if 
$c\neq 0$ follows from the same argument as in
Corollary~\ref{cor:vanishingSH}. 

Let us focus on the map $SH^0_{c=0}(DT^*L)\to
SH_0^{c=0}(DT^*L)$. Modulo the isomorphisms with $H_0(\Lambda^0 L)$,
$H^0(\Lambda^0 L)$, the commutative diagram~\eqref{eq:map} becomes 
\begin{equation*} 
\xymatrix
@R=15pt
{
H^0(\Lambda^0 L) \ar[r] \ar[d] & H_0(\Lambda^0 L) \\
H^{n}(DT^*L,ST^*L) \ar[r] & H_{n}(DT^*L,ST^*L) \ar[u]
}
\end{equation*}
The vertical map on the left factors as 
$$
H^0(\Lambda^0L)\stackrel {incl^*}\longrightarrow H^0(L) \stackrel
\simeq \longrightarrow H^0(DT^*L) \stackrel
{\cup \tau} \longrightarrow H^n(DT^*L,ST^*L),
$$
where $incl^*$ is the isomorphism induced by the inclusion of constant
loops, and $\cup \tau$ is the Thom isomorphism given by cup-product
with the Thom class $\tau\in H^n(DT^*L,ST^*L)$ (see~\cite{V}). The
vertical map on the right factors as 
$$
H_n(DT^*L,ST^*L) \stackrel {\cap\tau} \longrightarrow H_0(DT^*L) \stackrel
{\simeq} \longrightarrow H_0(L)
\stackrel {incl_*} \longrightarrow H_0(\Lambda^0 L),
$$
where $\cap\tau$ is the isomorphism given by cap-product with the Thom
class, and $incl_*$ is the isomorphism induced by the inclusion of
constant loops. We also recall that the bottom map is the composition 
$$
H^n(DT^*L,ST^*L)\stackrel {PD} \longrightarrow H_n(DT^*L)\stackrel
{incl_*} \longrightarrow H_n(DT^*L,ST^*L). 
$$
The Poincar\'e dual of the Thom class is the fundamental class
$[0_L]$, and the evaluation of the Thom class on the fundamental class
is the Euler number $e(T^*L)$~\cite[Ch.VI,~\S11-12]{Bredon}. The
successive images of the generator 
$1\in H^0(\Lambda^0 L)$ via the maps described above are therefore
$$
1\mapsto \tau \mapsto [0_L] \mapsto e(T^*L),
$$
and the conclusion of the Lemma follows.
\end{proof}

\begin{proof}[Proof of Theorem~\ref{thm:cotangent}]
Inserting~\eqref{eq:SHcotangent} in the long exact
sequence~\eqref{eq:RFHseq} we obtain 
$$
\def\objectstyle{\scriptstyle}
\xymatrix
@C=10pt
{
\ldots \ar[r] &  \!H^{-*}(\Lambda^c L)\! \ar[r] & \!H_*(\Lambda^c L)\! \ar[r] &
\!RFH_*^c(DT^*L)\! \ar[r]
& \!SH^{-*+1}(\Lambda^cL)\! \ar[r] & \! SH_{*-1}(\Lambda^c L) \ar[r] & \ldots
}
$$
This immediately implies the result for $*\neq 0,1$. 
For small values of the
degree the above long exact sequence takes the form 
$$
\def\objectstyle{\scriptstyle}
\xymatrix
@C=7pt
{
0 \ar[r] & H_1(\Lambda^c L) \ar[r] &  RFH_1^c(DT^*L) \ar[r] &
H^0(\Lambda^c L) \ar[r] & H_0(\Lambda^c L) \ar[r] & RFH_0^c(DT^*
L) \ar[r] & H^1(\Lambda^c L) \ar[r] & 0.
}
$$
The middle map is given by Lemma~\ref{lem:deg0}, and the conclusion follows. 
\end{proof}

\begin{corollary} \label{cor:nonzero}
If $\dim L\geq 1$ we have 
$$
   RFH^{c=0}_*(DT^*L)\neq 0.
$$
\end{corollary}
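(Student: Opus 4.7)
The plan is to read off nonvanishing directly from the explicit computation in Theorem~\ref{thm:cotangent}, splitting into two cases according to whether the Euler number $e(T^*L)$ vanishes. In both cases I will identify a specific degree in which the Rabinowitz Floer group of the component of contractible loops is manifestly nonzero, using only the fact that $\Lambda^0 L$ retracts onto $L$ via evaluation at $0$.

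Suppose first that $e(T^*L) = 0$. Then Theorem~\ref{thm:cotangent} gives
$$
RFH_0^{c=0}(DT^*L) \;=\; H_0(\Lambda^0 L)\oplus H^1(\Lambda^0 L).
$$
The component $\Lambda^0 L$ is nonempty and path connected (it contains the image of $L$ under the inclusion of constant loops), so $H_0(\Lambda^0 L)$ is the ground field and in particular nonzero. Hence $RFH_0^{c=0}(DT^*L)\neq 0$.

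Next suppose $e(T^*L)\neq 0$. The only closed connected $1$-manifold is $S^1$, and $e(T^*S^1)=\chi(S^1)=0$, so this case forces $n:=\dim L\geq 2$. In particular $n>1$, so Theorem~\ref{thm:cotangent} yields
$$
RFH_n^{c=0}(DT^*L) \;=\; H_n(\Lambda^0 L).
$$
The evaluation map $\ev_0\colon\Lambda^0 L\to L$ is a left inverse to the inclusion $j\colon L\hookrightarrow\Lambda^0 L$ of $L$ as constant loops, so $j_*\colon H_n(L)\to H_n(\Lambda^0 L)$ is split injective. Since $L$ is a closed connected $n$-manifold, $H_n(L)$ is nonzero (over $\Z/2$ if $L$ is non-orientable, in accordance with the paper's convention), so $RFH_n^{c=0}(DT^*L)\neq 0$.

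There is no real obstacle to carry out: the corollary reduces to case-checking the formulas in Theorem~\ref{thm:cotangent} and the elementary topological observation that the fundamental class of $L$ (or its $\Z/2$-analogue) persists inside $H_n(\Lambda^0 L)$ via the retraction furnished by $\ev_0$. The only mild point to watch is that $e(T^*L)\neq 0$ automatically excludes the dimension $n=1$, which is exactly what is needed so that the formula $RFH_n^{c=0}(DT^*L)=H_n(\Lambda^0 L)$ from the ``$*>1$'' branch of Theorem~\ref{thm:cotangent} becomes applicable.
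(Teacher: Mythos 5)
Your proof is correct and follows essentially the same approach as the paper's: both read nonvanishing directly from Theorem~\ref{thm:cotangent} using the split injection $i_*\colon H_*(L)\hookrightarrow H_*(\Lambda^0 L)$ induced by the inclusion of constant loops. Your case split on $e(T^*L)$ is a small refinement that makes the low-degree bookkeeping cleaner (the paper's argument via the fundamental class of $L$ tacitly covers the case $n=1$, where that class lands in degree $1$ rather than the ``$*>1$'' range, by noting that $H_1(\Lambda^0 L)$ still appears as a summand of $RFH_1^{c=0}$), but the underlying idea is identical.
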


\begin{proof} 
Denote by $p:\Lambda^0L\to L$ the evaluation at $t=0$ and by
$i:L\to\Lambda^0L$ the inclusion as constant loops. Since $p\circ
i=\id$, the induced map $i_*:H_*(L)\to H_*(\Lambda^0L)$ is injective. 
So $\Lambda^0L$ has nonvanishing homology in some positive degree
(take for example the image under $i_*$ of the fundamental class of
$L$), and the corollary follows from Theorem~\ref{thm:cotangent}. 
\end{proof}

\begin{remark}
If $L$ is simply connected the homology of $\Lambda^0L$, hence by
Theorem~\ref{thm:cotangent} also $RFH^{c=0}_*(DT^*L)$, is nontrivial
in an infinite number of degrees. This follows from Sullivan's minimal
model for $\Lambda L$, as explained for example in~\cite{VS}. 

If $L$ is not simply connected this need not be the case. For example,
if the universal cover of $L$ is contractible the inclusion
$i:L\to\Lambda^cL$ induces an isomorphism $i_*:H_*(L)\to
H_*(\Lambda^0L)$. On the other hand, $H_0(\Lambda^cL)$, and hence also
$RFH^c_*(DT^*L)$, is nonzero for each nontrivial free homotopy class
$c$.  
\end{remark}

Corollary~\ref{cor:nonzero} is used in~\cite{CFP} to study the
dynamics of magnetic flows. In order to apply it to exact contact
embeddings, we need the a criterion for independence of Rabinowitz
Floer homology of the symplectic filling $V$ given in the following
result.  

\begin{theorem} \label{thm:independence}
Let $V$ be an exact symplectic manifold of dimension $2n$ with
convex boundary $M=\p V$ and $c_1(V)|_{\pi_2(V)}=0$.  
Then $RFH_*(V)$ is independent of $V$ if $M$ admits a contact form for
which the closed characteristics $\gamma$ which are contractible in $V$
are nondegenerate and satisfy
$$
   CZ(\gamma)>3-n. 
$$
Here $CZ(\gamma)$ denotes the Conley-Zehnder index of $\gamma$ with
respect to the trivialization of $\gamma^*TV$ that extends over a
spanning disk in $V$. 
\end{theorem}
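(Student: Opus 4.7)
The plan is to prove filling-independence by a neck-stretching argument that reduces the Floer differential to a count of configurations in the symplectization $\R\times M$, using the Conley-Zehnder hypothesis to exclude holomorphic bubble components in the filling. Given two fillings $V_0$ and $V_1$ of $(M,\alpha)$ both satisfying the hypotheses, I would construct a chain homotopy equivalence between the Floer chain complexes computing $RFH_*(V_0)$ and $RFH_*(V_1)$. The generators of these complexes --- critical points of the Rabinowitz action functional, namely closed Reeb orbits on $M$ with signed periods together with an auxiliary Morse complex on the Morse-Bott manifold of constant loops in $M$ --- are intrinsic to $(M,\alpha)$. Only the differential, defined via gradient trajectories of the Rabinowitz functional in the completion $\widehat V_i = V_i\cup_M M\times[1,\infty)$, depends a priori on the filling, through the choice of $\omega$-compatible almost complex structure.

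For each filling, fix $J_i$ on $\widehat V_i$ of SFT type in a collar of $M$, and introduce a one-parameter family $J_i^R$ obtained by inserting a cylindrical neck $M\times[-R,R]$ along $M$; let $R\to\infty$. By SFT compactness, sequences of Floer trajectories with uniformly bounded energy converge to broken configurations consisting of a cylindrical \emph{spine} in the symplectization $\R\times M$ together with, possibly, $J_i$-holomorphic \emph{cap} components in $V_i$ attached at Reeb orbit punctures of the spine. Since the Floer cylinder's domain is a sphere with two marked points (arithmetic genus zero), the tree of components in the limit is a tree, and each maximal filling subtree attaches to the spine at a single Reeb orbit --- i.e., each cap has precisely one external ``root'' puncture at a Reeb orbit $\gamma$ which is necessarily contractible in $V_i$, the cap itself supplying the null-homotopy.

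The crucial index computation is now the following: using $c_1(V_i)|_{\pi_2(V_i)}=0$ and the trivialization of $\gamma^*TV_i$ extending over a spanning disk, a root-puncture cap that is simply a plane in $V_i$ has Fredholm index
$$
\mathrm{ind}(u) = (n-3) + CZ(\gamma),
$$
which the hypothesis $CZ(\gamma) > 3-n$ forces to be strictly positive. Hence no such plane appears rigidly in a broken configuration contributing to the Floer differential. The same principle extends to multi-bubble caps (sub-trees of components in the filling) by iterating the index formula $\mathrm{ind}(u) = (n-3)(2-2g-k) + \sum_j CZ(\gamma_j)$ over components and accounting for matching conditions at internal Reeb orbit nodes --- all such internal orbits are themselves contractible in $V_i$ and hence controlled by the same bound.

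With caps excluded, the Floer differential at $R=\infty$ counts only configurations in the symplectization $\R\times M$, which is entirely determined by $(M,\alpha)$. A continuation argument using an interpolating family of almost complex structures between the two setups on $\widehat V_0$ and $\widehat V_1$, with the same cap-exclusion applied to the parametrized Floer equation, produces the required chain homotopy equivalence, and passing to homology yields $RFH_*(V_0)\cong RFH_*(V_1)$. The main obstacle I expect is not the plane case, where the index computation is immediate, but the combinatorial/index analysis needed to handle multi-bubble caps in full generality; the hypothesis $CZ(\gamma) > 3-n$ is calibrated precisely so that the accumulated virtual dimensions remain strictly positive, which is consistent with the sharpness of the $\dim L\geq 4$ condition in the application to $V=DT^*L$ (where $CZ$ equals the Morse index of the underlying closed geodesic).
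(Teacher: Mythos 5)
Your proposal takes a genuinely different route from the paper, and it contains a gap worth flagging.

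\textbf{The paper's route.} The paper does \emph{not} neck-stretch directly on the Rabinowitz Floer complex. Instead it invokes the paper's main theorem, $RFH_*(V)\simeq\Check{SH}_*(V)$, and then applies a stretch-of-the-neck argument to $\Check{SH}_*(V)$: the generators of the $\bigvee$-shaped Floer complex sit near $\partial V$, the index hypothesis $CZ(\gamma)>3-n$ forces the virtual dimension $CZ(\gamma)+n-3$ of $J$-holomorphic planes in $\widehat V$ to be strictly positive, hence no rigid planes exist, and the Bourgeois--Oancea compactness result (\cite[\S5.2]{BOexact}) for Hamiltonian Floer cylinders then shows $\Check{SH}_*(V)$ depends only on $\partial V$. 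The detour through $\Check{SH}_*$ is what makes the argument a two-line deduction: the compactness under neck-stretching is an off-the-shelf theorem for Hamiltonian Floer trajectories.

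\textbf{Your route and where it breaks.} You propose to stretch the neck directly on the Rabinowitz Floer chain complex. The index computation excluding caps is identical to the paper's. The gap is in your invocation of ``SFT compactness'' for gradient trajectories of the Rabinowitz functional. Those trajectories are \emph{not} solutions of a perturbed Cauchy--Riemann equation in the standard sense: they are pairs $(x,\eta)$ solving the coupled system
$$
\partial_s x+J_t(x)\bigl(\partial_t x-\eta X_H(x)\bigr)=0,\qquad
\partial_s\eta+\int_0^1 H(x(t))\,dt=0,
$$
in which the Lagrange multiplier $\eta(s)$ appears as a nonconstant zeroth-order term and carries its own evolution equation. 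The SFT compactness theorem and the Bourgeois--Oancea degeneration analysis are stated for $J$-holomorphic curves or for Hamiltonian Floer cylinders with a fixed (or $s$-dependent, but prescribed) Hamiltonian term; neither covers this coupled system as stated. One would need to re-establish uniform $C^0$-bounds on $\eta$, energy quantization and bubbling control for the coupled equation \emph{during the degeneration $R\to\infty$}, and prove a compactness theorem tailored to the Rabinowitz setting before the cap/spine decomposition you describe is available. This is substantive analytic work, not a citation; it is precisely what the paper circumvents by first passing to $\Check{SH}_*$, where the Floer equation is standard. (A secondary issue you do not address: changing the filling changes the set of free homotopy classes of loops and the set of contractibility-determined trivializations; the paper's formulation through $\Check{SH}_*$ and the fact that the relevant generators are concentrated near $\partial V$ is what keeps this bookkeeping consistent.)

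In short, your index argument and overall strategy are correct and match the paper's key idea, but the compactness step you cite does not exist in the form you need it, whereas the paper's reduction to $\Check{SH}_*$ converts the problem into one where the needed compactness is already in the literature.
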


\begin{proof} Let $J$ be a time-independent cylindrical almost complex
  structure on $\widehat V$, as defined in Section~\ref{sec:RFH}. The virtual
  dimension of the moduli space of $J$-holomorphic planes in $\widehat
  V$ asymptotic to a closed Reeb 
  orbit $\gamma$ is $CZ(\gamma) +n-3$, so that our 
assumption guarantees that it is strictly positive. Thus, no rigid
holomorphic planes exist in $\widehat V$. Since the generators of the
  complex giving rise to $\Check{SH}_*(V)$ are located near $\p V$, it
  is a consequence of the stretch-of-the-neck argument
  in~\cite[\S5.2]{BOexact} that, in this situation, the symplectic
  homology groups $\Check{SH}_*(V)$ depend only on $\p V$. Using
  Theorem~\ref{thm:main1} we infer that the same is true for the
  Rabinowitz Floer homology groups $RFH_*(V)$. 
\end{proof}

\begin{corollary}\label{cor:independence}
Let $V$ be an exact symplectic manifold of dimension $2n$ with
convex boundary $M=\p V$ and $c_1(V)|_{\pi_2(V)}=0$.  
Assume in addition that the inclusion map $\iota:M\into V$ induces an
{\em injective} map $\iota_\#:\pi_1(M)\to\pi_1(V)$. 
Then $RFH_*(V)$ is independent of $V$ if $M$ admits a contact form for
which the closed characteristics $\gamma$ which are contractible in $M$
are nondegenerate and satisfy
$$
   CZ(\gamma)>3-n. 
$$
Here $CZ(\gamma)$ denotes the Conley-Zehnder index of $\gamma$ with
respect to the trivialization of $\gamma^*T(\R\times M)$ that extends
over a spanning disk in $M$. 
\end{corollary}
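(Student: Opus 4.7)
The plan is to reduce Corollary~\ref{cor:independence} to Theorem~\ref{thm:independence}. Under the additional injectivity hypothesis on $\iota_\#\colon\pi_1(M)\to\pi_1(V)$, two points need to be verified: (i) the closed characteristics on $M$ which are contractible in $V$ are precisely those which are contractible in $M$, and (ii) for such a characteristic $\gamma$, the Conley--Zehnder index computed from a spanning disk in $V$ (with trivialization of $\gamma^*TV$) coincides with the one computed from a spanning disk in $M$ (with trivialization of $\gamma^*T(\R\times M)$).

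Point (i) is immediate from injectivity of $\iota_\#$: a loop $\gamma\subset M$ is contractible in $V$ if and only if it is contractible in $M$, and nondegeneracy is an intrinsic condition on the linearised Reeb flow on $M$.

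For point (ii) I would use the convex boundary hypothesis to identify a neighborhood of $M$ in $\widehat V$ symplectomorphically with the Liouville collar $((-\varepsilon,\infty)\times M,\,d(e^r\alpha))$. A spanning disk $D\subset M$ for $\gamma$ is simultaneously a spanning disk for $\gamma$ in $V$, and along $D$ one has the canonical splitting
$$
T\widehat V|_D \;=\; \R\partial_r \,\oplus\, \R R \,\oplus\, \xi|_D,
$$
where $R$ is the Reeb vector field and $\xi=\ker\alpha$. A unitary trivialization of $\xi|_D$ therefore extends canonically, by adjoining the frame $\{\partial_r,R\}$, to a trivialization of $T\widehat V|_D$; restricted to $\gamma$ this produces simultaneously the two trivializations appearing in the two statements. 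The hypothesis $c_1(V)|_{\pi_2(V)}=0$ guarantees that the resulting homotopy class of trivialization of $\gamma^*TV$ is independent of the choice of disk; since the collar deformation-retracts onto $M$, one has $c_1(T(\R\times M))|_{\pi_2(M)}=0$ as well, so the analogous independence holds for $\gamma^*T(\R\times M)$. In these compatible trivializations the linearised Hamiltonian flow along $\gamma$ is block diagonal: the linearised Reeb flow on $\xi$, plus a Maslov-null shear on $\R\partial_r\oplus\R R$ (the latter is a path in the parabolic subgroup fixing $\partial_r$, whose Maslov-type contribution vanishes). Consequently the two Conley--Zehnder indices agree.

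With (i) and (ii) established, the hypothesis of Corollary~\ref{cor:independence} translates verbatim to the hypothesis of Theorem~\ref{thm:independence}, whose conclusion yields the desired independence of $RFH_*(V)$. The main delicate point is the trivialization comparison in step (ii): it is geometrically natural but requires a short explicit calculation in the Liouville collar to check that the $\R\partial_r\oplus\R R$ block contributes no Maslov index, and that "extending over $D$" as a disk in $M$ versus as a disk in $V$ produces the same homotopy class of symplectic trivialization along $\gamma$.
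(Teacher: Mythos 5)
Your proposal is correct and supplies the (unwritten) reduction of Corollary~\ref{cor:independence} to Theorem~\ref{thm:independence}: (i) $\pi_1$-injectivity of $\iota$ shows a closed Reeb orbit on $M$ is contractible in $V$ iff it is contractible in $M$; (ii) a spanning disk $D\subset M$ is automatically a spanning disk in $V$, so the two trivializations agree, and $c_1(V)|_{\pi_2(V)}=0$ (hence also $c_1$ evaluated on spheres in $M$, since $T(\R\times M)|_M\cong TV|_M$) ensures the indices do not depend on which disk is chosen. This is the intended argument.

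One remark on your step (ii): the excursion into the block decomposition $\R\partial_r\oplus\R R\oplus\xi$ and the claimed vanishing of the Maslov contribution of the shear block is not needed and is actually a bit misleading. A shear $\begin{pmatrix}1 & ct\\0&1\end{pmatrix}$ is \emph{not} Robbin--Salamon trivial in general. The reason this does not matter is simpler: you are not comparing $CZ^\xi$ to a $2n$-dimensional $CZ$; you are comparing the same path of linearized flow, trivialized once over a disk in $M$ and once over a disk in $V$. After the canonical identification $T(\R\times M)|_\gamma\cong TV|_\gamma$ on the collar, the \emph{same} disk $D\subset M$ can be used for both trivializations, so the two paths of symplectic matrices are literally equal and no block-by-block Maslov computation is required. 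The only input that needs $c_1|_{\pi_2}=0$ is independence of the homotopy class of trivialization from the choice of $D$, which you correctly invoke.
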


\begin{remark} \label{rmk:ST*L} 
Corollary~\ref{cor:independence} is the most useful consequence of
Theorem~\ref{thm:independence} because here the condition on the
Conley-Zehnder indices can be verified entirely in $M$.
For example, let $M=ST^*L=\{v\in T^*L \ : \ |v|=1\}$ be the unit
cosphere bundle in $T^*L$. Then the Conley-Zehnder indices (in $ST^*L$)
of contractible closed characteristics are equal to the Morse indices
of the underlying closed geodesics. Hence the condition
$CZ(\gamma)>3-n$ in Corollary~\ref{cor:independence} is satisfied if
either of the conditions below holds:  
\begin{itemize}
\item $\dim L \ge 4$;
\item $\dim L = 3$, and $L$ admits a nondegenerate metric such that
the Morse index of each contractible closed geodesic is at least $1$;
\item $\dim L =2$ and $L$  admits a nondegenerate metric such that the
Morse index of each contractible closed geodesic is at least $2$. 
\end{itemize}
For $\dim L=2$ the condition is satisfied for all closed surfaces
except $S^2$ and $\R P^2$. For $\dim L=3$ the condition holds e.g.~for
all manifolds which admit a metric of nonpositive sectional curvature,
as well as for the 3-sphere. 
\end{remark}

Let us call an embedding $\iota:M\into W$ {\em $\pi_1$-injective} if
the induced map $\iota_\#:\pi_1(M)\to\pi_1(W)$ is injective. Note that
this condition is automatically satisfied if $M$ is simply connected. 

\begin{theorem} \label{thm:ST*L}
Let $L$ be a closed Riemannian manifold satisfying one of the
conditions in Remark~\ref{rmk:ST*L}. 
Then any $\pi_1$-injective exact contact embedding of $ST^*L$ into
an exact symplectic manifold $W$ with $c_1(W)|_{\pi_2(W)}=0$ is
non-displaceable. In particular, the image of a $\pi_1$-injective
exact contact embedding of $ST^*L$ into the cotangent bundle of a
closed manifold must intersect every fiber. 
\end{theorem}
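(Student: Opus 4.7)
The plan is to argue by contradiction, combining Theorem~\ref{thm:vanishingRFH}, Corollary~\ref{cor:independence}, and Corollary~\ref{cor:nonzero}. Suppose that the $\pi_1$-injective exact contact embedding $\iota:ST^*L\hookrightarrow W$ is Hamiltonianly displaceable, and let $V$ denote the bounded component of $W\setminus \iota(ST^*L)$. Theorem~\ref{thm:vanishingRFH} then forces $RFH_*(V)=0$.

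I would next verify that the hypotheses of Corollary~\ref{cor:independence} are satisfied by $V$. Since $V\subset W$ inherits the symplectic form, $TV\cong TW|_V$, so $c_1(V)|_{\pi_2(V)}=0$ follows from the assumption on $W$. The inclusion $\iota:M\hookrightarrow W$ factors through $V$, and $\pi_1$-injectivity of $\iota_\#$ therefore passes to $\pi_1$-injectivity of $M\hookrightarrow V$. Finally, Remark~\ref{rmk:ST*L} guarantees that, under any of the listed conditions on $L$, a suitable Riemannian metric on $L$ produces a contact form on $ST^*L$ whose contractible closed Reeb orbits are nondegenerate and satisfy $CZ(\gamma)>3-n$, with $n=\dim L$, since these indices coincide with the Morse indices of the underlying closed geodesics.

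Corollary~\ref{cor:independence} then yields
\begin{equation*}
RFH_*(V)\cong RFH_*(DT^*L),
\end{equation*}
and Corollary~\ref{cor:nonzero} shows that $RFH^{c=0}_*(DT^*L)\neq 0$ whenever $\dim L\ge 1$. This contradicts $RFH_*(V)=0$, proving the first assertion.

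For the second statement, assume $\iota:ST^*L\hookrightarrow T^*N$ is a $\pi_1$-injective exact contact embedding whose image misses some fiber $T_x^*N$. Compactness then places $\iota(ST^*L)$ inside $T^*(N\setminus\{x\})$. On the open connected manifold $N\setminus\{x\}$ there exists a complete nowhere-vanishing vector field $X$ whose flow pushes any compact set off itself in finite time, and a standard construction lifts $X$ to a compactly supported Hamiltonian on $T^*N$ whose time-one flow displaces $\iota(ST^*L)$. Since $T^*N$ is an exact symplectic manifold with $c_1(T^*N)=0$, this contradicts the first part of the theorem; hence the image must intersect every fiber.

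The main technical point I anticipate is the compatibility of the Conley--Zehnder index conventions: the hypothesis in Corollary~\ref{cor:independence} uses a trivialization of $\gamma^*T(\mathbb{R}\times M)$ extending over a spanning disk in $M$, whereas Remark~\ref{rmk:ST*L} compares such indices to Morse indices of geodesics in $L$. The $\pi_1$-injectivity of $\iota$ is precisely what guarantees that every Reeb orbit contractible in $V$ is already contractible in $M$, so the spanning disks may be chosen inside $M$ and the two index conventions agree up to the intrinsic shift.
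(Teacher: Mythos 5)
Your argument for the main (non-displaceability) assertion is correct and is exactly the paper's proof: invoke Theorem~\ref{thm:vanishingRFH} to get $RFH_*(V)=0$ from displaceability, check the hypotheses of Corollary~\ref{cor:independence} (with your correct observation that $\pi_1$-injectivity of $M\hookrightarrow W$ passes to $M\hookrightarrow V$, and that this is what lets one compute Conley--Zehnder indices using spanning disks in $M$), conclude $RFH_*(V)\simeq RFH_*(DT^*L)$ via Remark~\ref{rmk:ST*L}, and contradict Corollary~\ref{cor:nonzero}.

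Your argument for the second assertion (intersection with every fiber), however, has a gap. You claim that on the open manifold $N\setminus\{x\}$ there exists a complete nowhere-vanishing vector field whose flow pushes any compact set off itself in finite time. This is a strong dynamical statement (no non-wandering points) that is not justified: the obvious candidates, gradient-like flows of a proper Morse function on $N\setminus\{x\}$, have critical points and therefore fix some compact sets. Even granting the existence of a nowhere-vanishing vector field (which follows from openness), such a field can easily have recurrence, so the displacement claim does not follow. The paper instead quotes Biran's Lemma~2.4.A: a compact subset of a Stein manifold avoiding the critical coskeleton is Hamiltonianly displaceable, and in $T^*N$ the critical coskeleton can be taken to be a single fiber. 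A correct repair of your route along the same lines would be to observe that $T^*(N\setminus\{x\})$ is a subcritical Stein manifold (since $N\setminus\{x\}$ is an open $n$-manifold and hence admits a Morse function with no critical points of index $n$), so every compact subset is displaceable by~\cite{BC}; this then displaces $\iota(ST^*L)$ inside $T^*N$. Either of these replaces the vector-field claim with an established lemma.
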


\begin{proof} Assume there exists a displaceable exact contact
embedding of $ST^*L$ into an exact symplectic manifold $W$ with
  $c_1(W)|_{\pi_2(W)}=0$, and denote
  by $V$ the bounded component with boundary $ST^*L$. Then
  $RFH_*(V)=0$ by Theorem~\ref{thm:vanishingRFH}. On the other hand,
  our assumptions on $L$ guarantee via Corollary~\ref{cor:independence}
  and Remark~\ref{rmk:ST*L} that $RFH_*(V)$ depends only on $\p
  V=ST^*L$. Therefore $RFH_*(V)\simeq RFH_*(ST^*L)$ and is nonzero by
  Corollary~\ref{cor:nonzero}, a contradiction. 
 
For the last assertion we use a result of
Biran, stating that
a compact set which avoids the critical coskeleton of a Stein
manifold is displaceable~\cite[Lemma~2.4.A]{Biran} (the critical
coskeleton is the union of the unstable manifolds of critical points
of index equal to half the dimension for an exhausting Morse plurisubharmonic
function). In a cotangent 
bundle the critical co-skeleton can be taken to be one given fiber,
and the result follows. This argument has already appeared in~\cite{CF}.    
\end{proof}

\begin{corollary} \label{cor:nonex}
  Let $L$ be a closed Riemannian manifold satisfying one of the
conditions in Remark~\ref{rmk:ST*L}.
Then $ST^*L$ does not admit any $\pi_1$-injective exact contact
  embedding into a subcritical Stein manifold $W$, or more generally
  into a stabilization $W=V\times \C$, with $c_1(W)|_{\pi_2(W)}=0$. 
\end{corollary}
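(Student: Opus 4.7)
The plan is to derive Corollary~\ref{cor:nonex} as an immediate consequence of Theorem~\ref{thm:ST*L}, by verifying that in both of the listed classes of target manifolds any compact subset is Hamiltonianly displaceable. More precisely, I would argue by contradiction: assume there exists a $\pi_1$-injective exact contact embedding $\iota : ST^*L \hookrightarrow W$ with $c_1(W)|_{\pi_2(W)} = 0$ and $W$ either subcritical Stein or of the form $V \times \mathbb{C}$. The image $\iota(ST^*L)$ is a compact subset of $W$, so if I can show it is Hamiltonianly displaceable, then Theorem~\ref{thm:ST*L} supplies the contradiction.

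The first step handles the subcritical Stein case: by the result of Biran--Cieliebak~\cite{BC} (already invoked in the proof of Corollary~\ref{cor:Kai}), every compact subset of a subcritical Stein manifold is Hamiltonianly displaceable, so $\iota(ST^*L)$ is displaceable in $W$.

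The second step handles the stabilization case $W = V \times \mathbb{C}$. Here displaceability of any compact set $K \subset V \times \mathbb{C}$ is produced by an explicit Hamiltonian isotopy: choose $R > 0$ so large that $K$ is contained in $V \times B_R$, where $B_R \subset \mathbb{C}$ is the disc of radius $R$, pick a compactly supported function $\rho$ on $\mathbb{C}$ equal to a linear function on a neighborhood of $B_R \cup (B_R + 3R)$, and take the Hamiltonian $H(x,z) = \rho(z)$ on $V \times \mathbb{C}$. Its time-one flow translates $K$ out of itself in the $\mathbb{C}$-direction, so $K$ is displaceable.

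With displaceability in hand, the hypotheses of Theorem~\ref{thm:ST*L} are met ($\pi_1$-injectivity and $c_1(W)|_{\pi_2(W)} = 0$ are given; $L$ satisfies one of the conditions in Remark~\ref{rmk:ST*L}), and the non-displaceability conclusion of that theorem contradicts the displacement just constructed. There is no real technical obstacle here: the only mild point is keeping the almost complex / exact symplectic structure on $V \times \mathbb{C}$ straight and noting that the displacing Hamiltonian above is compactly supported, hence defines an honest Hamiltonian isotopy of the complete convex exact symplectic manifold $W$.
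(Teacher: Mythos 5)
Your proposal is correct and follows exactly the paper's argument: cite the Biran--Cieliebak result for displaceability of compact sets in subcritical Stein manifolds, note (with a short explicit construction) that the same holds in a stabilization $V\times\C$, and then invoke Theorem~\ref{thm:ST*L} to get the contradiction. The only difference is that you spell out the displacing Hamiltonian in the $V\times\C$ case, which the paper leaves implicit.
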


\begin{proof} Any compact set in a subcritical Stein manifold is
  Hamiltonianly displaceable~\cite{BC}, and this also holds in a
  stabilization $V\times \C$. The conclusion then follows from
  Theorem~\ref{thm:ST*L}. 
\end{proof}

\begin{remark}
Let $L$ be a closed Riemannian manifold satisfying one of the
conditions in Remark~\ref{rmk:ST*L}. 
We explain in this
remark an alternative approach to proving Corollary~\ref{cor:nonex},
using the multiplicative structure in symplectic homology
investigated by McLean~\cite{McLean}. 

Assume there is a $\pi_1$-injective exact contact embedding
$ST^*L\hookrightarrow W$ into a subcritical Stein manifold $W$ with
$c_1(W)|_{\pi_2(W)}=0$, and let $V$ be the bounded component with
boundary $ST^*L$. The exact inclusion $f:V\hookrightarrow W$ induces a
transfer morphism~\cite{V}  
$$
f_!:SH_*(W)\to SH_*(V). 
$$ 
Symplectic homology carries a unital ring structure, with
multiplication given by the pair of pants product~\cite{Se}.
McLean showed that the transfer morphism $f_!$ is a 
{\it unital} ring homomorphism~\cite{McLean}. Since $W$ is subcritical
we have $SH_*(W)=0$~\cite{C}, so that the unit vanishes in $SH_*(W)$. Therefore 
$1=f_!(1)=f_!(0)=0\in SH_*(V)$ and we obtain as in~\cite{McLean} that
\begin{equation} \label{eq:vanishing} 
SH_*(V)=0.
\end{equation} 

Arguing as in Remark~\ref{rmk:ST*L} that there are no rigid
holomorphic planes in $\widehat V$, we deduce from the
stretch-of-the-neck argument in~\cite[\S5.2]{BOexact} that positive
symplectic homology $SH_*^+(V)$ defined in Section~\ref{sec:SH}
depends only on $\p V=ST^* L$, i.e. 
$$
SH_*^+(V)=SH_*^+(DT^*L)=H_*(\Lambda L,L). 
$$ 
Since $SH_*(V)=0$, it follows from the tautological exact
sequence~\eqref{eq:tautological} in Section~\ref{sec:SH} that
$$
H_*(\Lambda L,L)\simeq H_{*+n-1}(V,\p V).
$$ 

If $\pi_1(L)$ is finite, it follows from Sullivan's minimal model for the
free loop space that $H_*(\Lambda L)$ is supported in an infinite set
of degrees, hence the same holds for $H_*(\Lambda L,L)$, a contradiction. 

If $\pi_1(L)$ is infinite and contains an infinite number of conjugacy
classes, we see that $H_0(\Lambda L,L)$ is infinite dimensional, again
a contradiction. 

If $\pi_1(L)$ is infinite but contains only a finite number of conjugacy
classes, we still obtain a contradiction as follows. Note that this
situation can only arise if $\dim L\geq 3$ and thus $\pi_1(ST^*L)\cong
\pi_1(L)$. Pick a nontrivial conjugacy class $c\in\tilde\pi_1(L)$ and
denote by $d$ its image under the injective map
$i:\tilde\pi_1(L)\to\tilde\pi_1(ST^*L)\to \tilde\pi_1(V)$.   
Then $0=SH_*^d(V)\cong SH_*^{d,+}(V)\cong H_*(\Lambda^c L,L)$, but the
latter group is nonzero since $H_0(\Lambda^c L,L)=\Q$. 
\end{remark}

\begin{remark}
All our results remain true if one replaces the hypothesis
$c_1(V)|_{\pi_2(V)}=0$ by the stronger one $c_1(V)=0$  (and
likewise for $W$), and $\pi_1$-injectivity by the weaker assumption
that every contrctible loop in $V$ (resp.~$W$) is null-homologous in
$M$. E.g.~this assumption is automatically satisfied if
$H_1(M;\Z)=0$. For a unit cotangent bundle $M=ST^*L$ the conditions in
Remark~\ref{rmk:ST*L} then need to be replaced by the same conditions
on null-homologous instead of contractible geodesics. 
\end{remark}

The structure of the paper is the following. Section~\ref{sec:SH}
contains all the results concerned exclusively with symplectic
homology. We first recall its definition and main properties,
including the case of autonomous Hamiltonians~\cite{BO}, then
prove a Poincar\'e duality result for Floer homology and cohomology
(Proposition~\ref{prop:Poincare}). We study in 
Section~\ref{sec:HamsuppV} a version of symplectic
homology defined using Hamiltonians which vanish outside $V$, in the
spirit of~\cite{CFHW}. We define in Section~\ref{sec:Vshaped} the
groups $\Check{SH}_*(V)$ and prove Theorem~\ref{thm:main_exact},
Proposition~\ref{prop:map} and Proposition~\ref{prop:main2}. We discuss briefly
in Section~\ref{sec:independence} the fact that symplectic homology
does not depend on the ambient manifold $W$. Section~\ref{sec:RFH}
recalls the definition of Rabinowitz Floer homology. We prove that it
is also independent of the ambient manifold $W$ in
Proposition~\ref{prop:indep}. Sections~\ref{sec:perturbations}
and~\ref{sec:prep} are of a technical nature. We exhibit admissible
deformations of the defining data for Rabinowitz Floer homology, which 
are crucial for relating Rabinowitz Floer homology to symplectic
homology. Most technical work goes into deriving bounds on the
Lagrange multiplier in the Rabinowitz action functional, for which we
establish a maximum principle for a Kazdan-Warner type inequality in
Section~\ref{ss:KW}.   
Our main result, Theorem~\ref{thm:main1}, is proved in
Section~\ref{sec:proof}, and we give in the beginning of that section
a detailed outline.

\section{Symplectic homology} \label{sec:SH}

We use Viterbo's definition of symplectic homology
groups~\cite{V}. We follow the sign conventions in~\cite{C}, 
which match those in~\cite{CF}. We consider an exact
manifold $(V,\lambda)$ with symplectic form $\om:=d\lambda$ and
convex boundary $M=\p V$. 

That $M$ is convex means
that $\lambda|_M$ is a positive contact form when $M$ is oriented as
the boundary of $V$, or, equivalently, that the Liouville vector
field $X$ defined by $\lambda=\iota_X d\lambda$ points
outwards along $M$. We denote by $R_\lambda$ the
Reeb vector field on $M$ defined by $\iota_{R_\lambda} d\lambda|_M=0$ and
$\lambda(R_\lambda)=1$. The set of (positive) periods of closed Reeb
orbits is called the {\bf action spectrum} and is denoted by
$$
   \mathrm{Spec}(M,\lambda).
$$ 
Let $\phi_X^t$ be the flow of the Liouville vector field. We can embed
the negative symplectization of $M$ onto a neighbourhood of $M$ in $V$
by the map  
$$
\left((0,1]\times M,d(r\lambda)\right) \to
\left(V,\om\right), \qquad (r,x)\mapsto \phi_X^{\ln r}(x). 
$$
We denote by $\widehat V$ the symplectic completion of $V$, obtained
by attaching the positive symplectization $\left([1,\infty)\times
M,d(r\lambda)\right)$ along the boundary $M$ identified with
$\{1\}\times M$.  

\subsection{Sign and grading conventions}\label{ss:grading}
Given a Hamiltonian
$H_t:\wh V\to \R$, $t\in S^1=\R/\Z$ the
{\bf Hamiltonian vector field} $X^t_H$ is defined by 
$$
   dH_t=-i_{X^t_H}\om.
$$
An almost complex structure $J$ on $\wh V$ is {\bf $\om$-compatible} 
if $\langle\cdot ,\cdot \rangle \ := \ \om(\cdot,J\cdot)$ 
is a Riemannian metric. The gradient with respect to this
metric is related to the symplectic vector field by $X_H=J\nabla H$. 
The {\bf Hamiltonian action} of a loop $x:S^1\to\wh V$ is 
$$
   \cA_H(x) := \int_0^1x^*\lambda - \int_0^1H\bigl(t,x(t)\bigr)dt. 
$$
A {\bf positive} gradient flow line $u:\R\times S^1\to\wh V$ of
$\cA_H$ satisfies the perturbed Cauchy-Riemann equation
\begin{equation}\label{eq:CR}
   u_s+J(t,u)u_t+\nabla H(t,u) = u_s+J(t,u)\Bigl(u_t-X_H(t,u)\Bigr) =
   0.  
\end{equation}
The {\bf Conley-Zehnder index} $\CZ(x;\tau)\in\Z$ of a nondegenerate
1-periodic orbit $x$ of $X_H$ with respect to a symplectic
trivialization $\tau:x^*T\hat V\to S^1\times\R^{2n}$ is defined as
follows. The linearized Hamiltonian flow along $x$ defines via $\tau$
a path of symplectic matrices $\Phi_t$, $t\in[0,1]$, with $\Phi_0=\id$
and $\Phi_1$ not having $1$ in its spectrum. Then $\CZ(x;\tau)$ is the
Maslov index of the path $\Phi_t$ as defined in~\cite{RS,Sa}.
For a critical point $x$ of a $C^2$-small Morse function $H$ the
Conley-Zehnder index (with respect to the constant trivialization
$\tau$) is related to the Morse index by
\begin{equation}\label{eq:CZ-Morse}
   \CZ(x;\tau) = n - {\rm Morse}(x). 
\end{equation}
If $c_1(V)=0$ we define integer valued Conley-Zehnder indices of all
1-periodic orbits as follows. In each homology class $c\in H_1(V;\Z)$
we choose a loop $\gamma_c$ and a trivialization $\gamma_c^*TV\to
S^1\times\R^{2n}$. This induces trivializations of $T\hat V$ along all
1-periodic orbits $x$ by extension over a 2-chain connecting $x$ to
the reference loop $\gamma_c$ in its homology class and hence
well-defined Conley-Zehnder indices $\CZ(x)\in\Z$. 

If $c_1(V)|_{\pi_2(V)}=0$ we can still define integer valued
Conley-Zehnder indices for contractible 1-periodic orbits with respect
to trivializations that extend over spanning disks. 

Without any hypothesis on $c_1(V)$ we still have well-defined
Conley-Zehnder indices in $\Z_2$ and all the following results hold
with respect to this $\Z_2$-grading.

\subsection{Floer homology} Let $\cP(H)$ be the set of $1$-periodic
orbits of $X_H$. Given 
$x^\pm\in\cP(H)$ we denote by $\widehat \cM(x_-,x_+)$ the space of
solutions of~\eqref{eq:CR} with
$\lim_{s\to\pm\infty}u(s,t)=x_\pm(t)$. Its quotient by the $\R$-action
$s_0\cdot (s,t):=(s+s_0,t)$ on the cylinder is called the {\bf moduli
space of Floer trajectories} and is denoted by 
$$
   \cM(x_-,x_+) := \wh \cM(x_-,x_+)/\R.
$$
Assume now that all elements of $\cP(H)$ are nondegenerate and
contained in a compact set, and also that solutions of~\eqref{eq:CR}
are contained in a compact set. Assume further that the almost complex
structure $J=(J_t)$, $t\in S^1$ is generic, so that $\cM(x_-,x_+)$ is
a smooth manifold of dimension  
$$
\dim \, \cM(x_-,x_+) = \CZ(x^+) - \CZ(x^-) - 1. 
$$
For $k\in\Z$ and $a\in\R\cup\{\pm\infty\}$ the {\bf Floer chain
group} $CF_k^{<a}(H)$ is the $\Q$-vector space generated by the
1-periodic orbits of Conley-Zehnder index $k$ and action less that
$a$. We abbreviate $CF_k(H):=CF_k^\infty(H)$. The boundary operators
$\p:CF_k(H)\to CF_{k-1}(H)$ defined by 
$$
   \p x := \sum_{\CZ(y)=k-1}\#\MM(y,x)y
$$
decrease action and satisfy $\p^2=0$. (Note the reversed order of the
arguments in $\MM(y,x)$, which reflects the fact that we define
homology rather than cohomology.) So for $-\infty\leq
a<b\leq\infty$ they descend to boundary operators $\p^{(a,b)}$ on
$$
   CF^{(a,b)}(H) := CF^{<b}(H)/CF^{<a}(H)
$$  
which give rise to the {\bf filtered Floer homology groups} 
\footnote{\, Here and in the following we tacitly assume that the
  values $a,b,\dots$ are not in the action spectrum.}
$$
   FH_k^{(a,b)}(H) := \ker\p^{(a,b)}/\im \, \p^{(a,b)}. 
$$
For $a<b<c$ we have long exact filtration sequences
$$
   \ldots\to FH^{(a,b)}_*(H) \to FH^{(a,c)}_*(H) \to FH^{(b,c)}_*(H)
   \to FH^{(a,b)}_{*-1}(H) \to\ldots
$$

\subsection{Symplectic homology} Consider a time-independent
Hamiltonian on $(0,\infty)\times M$ of the form $H(r,x)=h(r)$. Then
$X_h=h'(r)R_\lambda$, so that $1$-periodic orbits of $X_h$ on level $r$
are in one-to-one correspondence with closed characteristics on $M$ of
period $h'(r)$. 

Let $\Ad(\wh V)$ be the class of admissible Hamiltonians $H$ which
satisfy $H\le 0$ on $V$, which have only 
nondegenerate $1$-periodic orbits, and which have the form
$H(r,x)=ar+b$ for $r$ large enough, with
$0<a\notin\mathrm{Spec}(M,\lambda)$ and $b\in \R$. The $1$-periodic
orbits of 
such a Hamiltonian are contained in a compact set and, if the almost
complex structure is invariant under homotheties at infinity, 
solutions of~\eqref{eq:CR} are also contained in a compact
set~\cite{Osurvey,V}.  

A monotone increasing homotopy $\wh H$ from $H_-$ to $H_+$ induces
chain maps 
$$
   \sigma_k^{(a,b)}(\wh H):CF^{(a,b)}(H_-)\to CF^{(a,b)}(H_+). 
$$
A standard argument shows that the induced maps $\sigma_k^{(a,b)}$ on
homology are independent of the chosen monotone homotopy $\hat H$.   
We introduce a partial order on Hamiltonians by saying $H\leq K$
iff $H(t,x)\leq K(t,x)$ for all $(t,x)\in S^1\times\wh V$. 
The Floer homologies $FH_k^{(a,b)}(H)$ of Hamiltonians $H\in\Ad(\wh
V)$ form a directed system via the maps $\sigma_k^{(a,b)}$. 
The {\bf symplectic homology groups} of $V$ are the direct limits as
$H\to\infty$ in $\Ad(\wh V)$, 
$$
   SH_k^{(a,b)}(V) := \underrightarrow{\lim} \ FH_k^{(a,b)}(H).
$$
We will be interested in the following groups:
\begin{align*}
   SH_k(V) &:= SH_k^{(-\infty,\infty)}(V), \cr
   SH_k^+(V) &:= \lim_{a\searrow 0}SH_k^{(a,\infty)}(V), \cr
   SH_k^-(V) &:= \lim_{b\searrow 0}SH_k^{(-\infty,b)}(V).
\end{align*}
Here the limits are to be understood as inverse limits
with respect to canonical maps $SH^{(a,b)}\to SH^{(a',b')}$ for $a\leq
a',b\leq b'$. The corresponding directed systems stabilize for $a$ 
respectively $b$ small enough. 
The groups $SH_*^\dagger(V)$, $\dagger=\emptyset,+,-$
are independent of the contact form on $M$. Equivalently, they
are invariant upon replacing $V$ by the subset in $\widehat V$ below
the graph of a function $f:M\to (0,\infty)$. 

For $a<b<c$ we have long exact filtration sequences
$$
   \ldots\to SH^{(a,b)}_*(V) \to SH^{(a,c)}_*(V) \to SH^{(b,c)}_*(V)
   \to SH^{(a,b)}_{*-1}(V) \to\ldots,
$$
hence in particular
\begin{equation} \label{eq:tautological} 
   \ldots\to SH^-_*(V) \to SH_*(V) \to SH^+_*(V)
   \to SH^-_{*-1}(V) \to\ldots
\end{equation} 
As a matter of fact, we have 
\begin{lemma}[\cite{V}]\label{lem:sing}
$$
   SH_*^-(V) \simeq H^{n-*}(V) \simeq H_{*+n}(V,\p V), \qquad
   n=\frac 1 2 \dim \, V. 
$$
\end{lemma}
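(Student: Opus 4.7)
The plan is to compute $SH_*^{(-\infty,b)}(V)$ for small $b>0$ via a cofinal subfamily of Hamiltonians for which the Floer complex reduces to a Morse complex on $V$. Concretely, I would pick $H\in \Ad(\widehat V)$ whose restriction to $V$ is a $C^2$-small Morse function satisfying $H\le 0$, $H|_{\partial V}=0$, and $H(r,x)=a(r-1)$ on $[1,\infty)\times M$ outside a narrow collar, with $0<a\notin\mathrm{Spec}(M,\lambda)$ chosen below the minimal positive Reeb period $T_{\min}$. For such $H$ the $1$-periodic orbits are exactly the critical points of $H|_V$: the slope condition excludes orbits on the cylindrical end since $h'(r)=T\in \mathrm{Spec}(M,\lambda)$ has no solution there, and $C^2$-smallness forces all orbits inside $V$ to be constant. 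Each critical point $x$ has action $\cA_H(x)=-H(x)$, which can be made arbitrarily small in $(0,b)$ by shrinking $\|H\|_{C^0}$.

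Next, I would identify $CF_*^{<b}(H)$ with a regrading of the Morse complex of $H|_V$. By the standard Floer-to-Morse reduction for autonomous $C^2$-small Morse Hamiltonians, all finite-energy Floer cylinders between critical points are $s$-independent and reduce to negative gradient flow lines of $H$. Since $H$ increases outward across $\partial V$, the vector field $-\nabla H$ points into $V$, so these trajectories stay inside $V$ and the resulting Morse complex computes $H_*(V)$. Combined with the index relation~\eqref{eq:CZ-Morse}, this gives
$$
FH_k^{<b}(H) \;\cong\; H_{n-k}(V).
$$
I would then show that the continuation maps along my cofinal family are isomorphisms on this low-action part: for a monotone homotopy to a Hamiltonian with a larger slope, the only new $1$-periodic orbits sit on the cylindrical end and have action bounded below by a positive constant determined by $T_{\min}$, so they do not contribute below level $b$. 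Passing to the direct limit in $H$ and then the inverse limit as $b\searrow 0$ (which stabilizes once $b$ is smaller than this constant) yields $SH_*^-(V)\cong H_{n-*}(V)$. With field coefficients this equals $H^{n-*}(V)$, and Poincaré-Lefschetz duality for the compact $2n$-manifold $V$ gives the remaining isomorphism $H^{n-*}(V)\cong H_{n+*}(V,\partial V)$.

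The main difficulty is the analytic control underlying the Floer-to-Morse identification: a maximum-principle argument is required to confine low-action Floer cylinders to $V$ (so that the cylindrical end plays no role), and the $C^2$-smallness of $H|_V$ must be calibrated against the geometry of the compatible metric to guarantee $s$-independence of such cylinders. Both ingredients are standard in the symplectic homology literature (see~\cite{V}), but constitute the essential analytic input; once they are in place the direct/inverse limit arguments and the duality identification are formal.
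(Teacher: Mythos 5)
Your proposal is correct and takes essentially the same route as the paper's proof: reduce to a $C^2$-small autonomous Morse Hamiltonian, observe that in the relevant low-action window only constant orbits contribute, identify the Floer complex with the Morse (co)chain complex of $H|_V$ via the relation $\CZ = n - \mathrm{Morse}$, and pass to the direct limit over $H$ and the inverse limit over the action cutoff. (Two small slips worth fixing: the Floer cylinders become \emph{$t$}-independent, not $s$-independent, and the resulting complex is the Morse \emph{cochain} complex, giving $H^{n-*}(V)$ rather than $H_{n-*}(V)$ -- though over a field, which is the standing assumption, these coincide.)
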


\begin{proof}
Pick $\eps>0$ smaller than the action of all closed Reeb orbits on
$M$. Let $H\in\Ad(\wh V)$ be time-independent, a $C^2$-small Morse
function on $V$, and 
linear on $(1,\infty)\times M$. Then the only 1-periodic orbits with
action in $(-\infty,\eps)$ are
critical points in $V$, and Floer gradient flow lines $u(s,t)$ are
$t$-independent and satisfy the equation $u_s+\nabla H(u)=0$. Thus the
Floer chain complex agrees with the Morse cochain complex, with the
gradings related by equation~\ref{eq:CZ-Morse}. It follows that
$$
   FH^{(-\infty,\eps)}_*(H)\simeq H^{n-*}(V) \simeq H_{*+n}(V,\p V) 
$$
and the lemma follows by taking the direct limit over $H$, followed by
the limit $\eps\to 0$. 
\end{proof}

\subsection{Autonomous Hamiltonians} \label{sec:autonomous}
Floer or symplectic homology can be defined using autonomous (i.e.~
time-independent) Hamiltonians~\cite{BO}. Let $H:\wh
V\to \R$ be a Hamiltonian whose $1$-periodic orbits are either
constant and nondegenerate, denoted by $\gamma_\tp$ for
$\tp\in\mathrm{Crit}(H)$, or nonconstant and transversally
nondegenerate, denoted by $\gamma$. The geometric images of the latter
are circles $S_\gamma$, which we view as $1$-parameter families of
orbits via the correspondence $\gamma\mapsto \gamma(0)$. Assume
further that the nonconstant orbits appear in the region
$(0,\infty)\times M$, and $H=h(r)$ in their neighbourhood with
$h''(r)\neq 0$. 

Let us choose for each circle $S_\gamma$ a perfect Morse function
$f_\gamma:S_\gamma\to \R$ with two critical points $min$ and $Max$,
and denote by $\gamma_{min}$, $\gamma_{Max}$ the orbits starting at
these critical points. For $a>0$ the Floer chain groups are 
$$
CF_k^{<a}(H)=\bigoplus _{\tp\in\mathrm{Crit}(H), |\gamma_\tp|=k}
\langle \gamma_\tp \rangle \quad \oplus \quad \bigoplus 
_{S_\gamma, p\in\mathrm{Crit}(f_\gamma), |\gamma_p|=k} \langle
\gamma_p \rangle,
$$
with the direct sum running over orbits with action less than $a$. 
The degree $|\gamma_\tp|$ is given by the Conley-Zehnder index,
whereas the degree $|\gamma_p|$ for $p\in\mathrm{Crit}(f_\gamma)$ is
defined by~\cite[Lemma~3.4]{BO}
\begin{equation} \label{eq:degMB}
|\gamma_p|:= \left\{\begin{array}{ll} 
\CZ^\xi(\gamma)+ \frac 1 2 \big(1+sign(h''(r))\big), & p=min, \\
\CZ^\xi(\gamma)+\frac 1 2 \big((1-sign(h''(r))\big), & p=Max. \\
\end{array}\right. 
\end{equation} 
Here $\CZ^\xi(\gamma)$ is the Conley-Zehnder index of the linearized
Hamiltonian flow along $\gamma$ restricted to $\xi$, and
$sign(h''(r))=\pm 1$ is the sign of $h''(r)$ at the level $r$ on which
lives $\gamma$. The 
differential $\p:CF_k^{<a}(H)\to CF_{k-1}^{<a}(H)$ is given by 
$$
\p\gamma_p = \sum_{|\gamma'_{\widetilde q}|=|\gamma_p|-1}
\#\cM(\gamma'_{\widetilde q},\gamma_p)\gamma'_{\widetilde q} + 
\sum_{|\gamma'_q|=|\gamma_p|-1} \#\cM(\gamma'_q,\gamma_p)\gamma'_q.
$$
Here $\cM(\gamma'_{\widetilde q},\gamma_p)$ consists of rigid tuples
$(u_m,u_{m-1},\dots,u_1)$ whose components are solutions
of~\eqref{eq:CR} and satisfy  
\begin{itemize}
\item $\lim_{s\to-\infty} u_m = \gamma'_{\widetilde q}$,
and $\lim_{s\to+\infty} u_1 \in S_\gamma$ belongs to the stable
manifold $W^s(p,f_\gamma)$;
\item for $i=m-1,\dots,1$, the limit orbits
  $\lim_{s\to+\infty}u_{i+1}$ and $\lim_{s\to-\infty} u_i$ belong to
  the same $S_{\gamma_i}$ and are connected (in this order) by a
  positive flow line of $f_{\gamma_i}$.
\end{itemize} 
Similarly, $\cM(\gamma'_q,\gamma_p)$ consists of rigid tuples
$(u_m,u_{m-1},\dots,u_1)$ whose components are solutions
of~\eqref{eq:CR} and satisfy the same conditions as above, except the
first one which is replaced by the requirement that 
\begin{itemize}
\item $\lim_{s\to-\infty} u_m \in
S_{\gamma'}$ belongs to the unstable manifold $W^u(q,f_{\gamma'})$.
\end{itemize} 

To define the symplectic homology groups, one first 
perturbs $M$ inside $\wh V$ so that the closed characteristics are
transversally nondegenerate. The new class of admissible
Hamiltonians, denoted by $\Ad^0(\wh V)$, consists of functions
$H:\wh V\to \R$ which are strictly negative and $C^2$-small on $V$,
and which, on the region $\{r\ge 1\}$, are of the form 
$h(r)$ with $h$ linear at infinity of slope $0<a\notin
\mathrm{Spec}(M,\lambda)$, and $h$ strictly convex elsewhere. The
symplectic homology groups are the direct limits over $H\to\infty$ in
$\Ad^0(\wh V)$.

\subsection{Symplectic cohomology} 

Symplectic cohomology is defined by dualizing the homological chain
complex. More precisely, we denote by $CF^k_{>a}(H)$ the $\Q$-vector
space generated by the $1$-periodic orbits of $H$ of degree $k$ and
action bigger than $a$. In the case of time-dependent Hamiltonians
with nondegenerate $1$-periodic orbits the degree is given by the
Conley-Zehnder index, and the differential $\delta:CF^k_{>a}(H)\to
CF^{k+1}_{>a}(H)$ is 
$$
\delta x := \sum _{\CZ(y)=k+1} \# \cM(x,y) y.
$$
In the case of autonomous Hamiltonians, the degree is given
by~\eqref{eq:degMB} and the differential $\delta:CF^k_{>a}(H)\to 
CF^{k+1}_{>a}(H)$ is 
$$
\delta \gamma_p = \sum_{|\gamma'_q|=|\gamma_p|+1} \#\cM(\gamma_p,\gamma'_q)\gamma'_q,
$$
with $\cM(\gamma_p,\gamma'_q)$ having the same meaning as for
homology. We deduce quotient complexes $CF^k_{(a,b)}(H)$, Floer
cohomology groups $FH^k_{(a,b)}(H)$, and truncation maps
$FH_{(a',b')}^k\to FH_{(a,b)}^k$ for $a\le a'$ and $b\le b'$. 

The {\bf
symplectic cohomology groups} are defined as inverse limits for $H\to
\infty$ in $\mathrm{Ad}(\wh V)$, or $\mathrm{Ad}^0(\wh V)$,
$$
SH^k_{(a,b)}(V):=\underleftarrow{\lim} \, FH^k_{(a,b)}(H).
$$
The inverse limit is considered with respect to the continuation maps
$$
\sigma^k_{(a,b)}:CF^k_{(a,b)}(H_+)\to CF^k_{(a,b)}(H_-), \qquad H_-\le
H_+.
$$

\begin{proposition}[Poincar\'e duality] \label{prop:Poincare} 
For $-\infty\le a < b \le
\infty$ and $H\in \Ad^0(\wh V)$ there is a canonical isomorphism 
$$
PD: FH_k^{(a,b)}(H) \longrightarrow FH^{-k}_{(-b,-a)}(-H) .
$$
Given $H_-\le H_+$ these isomorphisms fit into a commutative diagram 
$$
\xymatrix
@C=50pt
{
FH_k^{(a,b)}(H_-) \ar[r]^{\sigma_k^{(a,b)}} \ar[d]_{PD}^\simeq &
FH_k^{(a,b)}(H_+) \ar[d]^{PD}_\simeq \\ 
FH^{-k}_{(-b,-a)}(-H_-) \ar[r]_{\sigma^{-k}_{(-b,-a)}} &
FH^{-k}_{(-b,-a)}(-H_+) 
}
$$
\end{proposition}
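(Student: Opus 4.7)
The plan is to construct $PD$ from the time-reversal involution
$x(t)\mapsto \bar x(t):=x(-t)$ on loops, combined with the involution
$u(s,t)\mapsto \bar u(s,t):=u(-s,-t)$ on Floer cylinders. First I
would verify that $x\mapsto \bar x$ is a bijection between 1-periodic
orbits of $X_H$ and $X_{-H}$: if $\dot x=X_H(x)$ then
$\dot{\bar x}(t)=-\dot x(-t)=-X_H(\bar x(t))=X_{-H}(\bar x(t))$.
A change of variable shows $\cA_{-H}(\bar x)=-\cA_H(x)$, so orbits of
$H$ with action in $(a,b)$ correspond to orbits of $-H$ with action
in $(-b,-a)$. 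Linearising the flow along $\bar x$ yields the
reverse symplectic path, so the Maslov index formula of Robbin--Salamon
gives $\CZ(\bar x;\bar\tau)=-\CZ(x;\tau)$ for the induced trivialisation;
this is consistent with~\eqref{eq:CZ-Morse}, since reversing $H$
replaces the Morse index of a critical point by $2n$ minus itself.

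The second step is to match Floer moduli spaces. A direct substitution
shows that if $u$ satisfies~\eqref{eq:CR} for $(H,J_t)$ and connects
$x_-$ to $x_+$, then $\bar u$ satisfies~\eqref{eq:CR} for
$(-H,J_{-t})$ and connects $\bar x_+$ to $\bar x_-$. For autonomous
Hamiltonians in $\Ad^0(\wh V)$ and autonomous $J$, the involution
sends unparameterised Floer trajectories, with their coherent
orientations, to unparameterised trajectories with the opposite
orientation of the source; a book-keeping of orientations shows that
the signed count $\#\cM(y,x)$ of the boundary $\partial_H x$ equals
the signed count $\#\cM(\bar x,\bar y)$ appearing in the coboundary
$\delta_{-H}\bar x^*$. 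Defining $PD(x):=\bar x^*$ therefore yields a
chain isomorphism $CF^{(a,b)}_k(H)\to CF_{(-b,-a)}^{-k}(-H)$, which
descends to the desired isomorphism on (co)homology.

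For the Morse--Bott circles $S_\gamma$ of non-constant orbits occurring
for Hamiltonians in $\Ad^0(\wh V)$, the involution sends $S_\gamma$
bijectively to $S_{\bar\gamma}$ (reversing the $S^1$-parametrisation);
I would choose the auxiliary Morse function on $S_{\bar\gamma}$ to be
$f_\gamma$ transported by this involution. One then checks that the
signs $\mathrm{sign}(h''(r))$ and $\CZ^\xi(\gamma)$ both change under
$h\mapsto -h$ and $\gamma\mapsto\bar\gamma$ in such a way that
$|\gamma_p|_H + |\bar\gamma_{\bar p}|_{-H}=0$ after adopting this
compatible choice of perfect Morse function and cascade conventions.
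The matching of cascade moduli spaces in~\cite{BO} under
$(s,t)\mapsto(-s,-t)$ (which reverses the order of the building
blocks $u_m,\dots,u_1$ and reverses the Morse flow lines of
$f_\gamma$ connecting them) then carries $\partial_H$ to $\delta_{-H}$
as before.

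Finally, naturality under monotone homotopies follows because, if
$(H_s)_{s\in\R}$ is a monotone increasing homotopy from $H_-$ to
$H_+$, then $s\mapsto -H_{-s}$ is a monotone increasing homotopy from
$-H_+$ to $-H_-$. The involution $u(s,t)\mapsto u(-s,-t)$ interchanges
the continuation moduli spaces defining $\sigma_k^{(a,b)}(\wh H)$ and
$\sigma^{-k}_{(-b,-a)}(-\wh H_{-\cdot})$, yielding the required
commutative square. The main obstacle will be verifying the
Morse--Bott grading identity in step three and organising signs and
coherent orientations so that the involution on cascade moduli spaces
induces the correct sign on continuation counts; in the
non-degenerate, time-dependent setting everything is a direct
substitution, but in $\Ad^0(\wh V)$ one has to track carefully how
the split degree formula~\eqref{eq:degMB} and the
\cite{BO} cascade differentials transform under time reversal.
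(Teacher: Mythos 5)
Your approach is essentially the paper's: both use the involution $u(s,t)\mapsto u(-s,-t)$ to match Floer data for $(H,J_t)$ with that for $(-H,J_{-t})$, conclude a chain-level identification, and then track the degree change. The one place where the paper does real work that you only flag as an obstacle is the grading verification: the paper isolates this into a clean, self-contained lemma (Lemma~\ref{lem:iRS}) showing that if $\tPhi(t)=\Phi(1-t)\Phi(1)^{-1}$ then $i_{RS}(\tPhi)=-i_{RS}(\Phi)$, and then applies it to the linearized flow and combines it with the perturbation formulas from~\cite{CFHW} to deduce $\CZ(\gamma_{H,p})=-\CZ(\gamma_{-H,p})$ for both critical points $p$ of $f_\gamma$. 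A minor bookkeeping difference: the paper dualises the auxiliary Morse data by replacing $f_\gamma$ with $-f_\gamma$ rather than transporting $f_\gamma$ by the orientation-reversing identification of $S_\gamma$ with $S_{-\gamma}$; either choice works since the homology is independent of the $\{f_\gamma\}$, but the paper's convention makes the min/max swap transparent, which is exactly what makes the index computation go through.
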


\begin{proof} 
Let $\oplus_{*\in\Z} CF_*^{(a,b)}(H,\{f_\gamma\},(J_t)_{t\in S^1})$ be
the homological Floer complex of $H$, with the collection of perfect
Morse functions $\{f_\gamma\}$ and the time-dependent almost complex
structure $J_t$, $t\in S^1=\R/\Z$. A Floer trajectory $u:\R\times
S^1\to\wh V$ satisfies equation~\eqref{eq:CR}, namely 
\begin{equation} \label{eq:CR1}
\p_s u + J_t(u)\p_t u - J_t(u) X_H(u)=0.
\end{equation} 
Define $v:\R\times S^1\to\wh V$ by $v(s,t):=u(-s,-t)$, so that
it satisfies the equation $\p_s v + J_{-t}(v)\p_t v + J_{-t}(v) X_H(v)=0$.
Denoting $\overline J_t:=J_{-t}$ this equation can be rewritten in the
form~\eqref{eq:CR}, namely 
\begin{equation*} \label{eq:CR2}
\p_s v + \overline J_t(v)\p_t v - \overline J_t(v) X_{-H}(v)=0.
\end{equation*}
The correspondence $u\leftrightarrow v$ therefore determines a canonical
identification 
$$
\oplus_{*\in\Z} CF_*^{(a,b)}(H,\{f_\gamma\},(J_t)_{t\in S^1}) \cong
\oplus_{*\in\Z} CF^*_{(-b,-a)}(-H,\{-f_\gamma\},(\overline J_t)_{t\in
  S^1}). 
$$
The homology groups do not depend on the choice of almost complex
structure, nor of auxiliary Morse functions $\{f_\gamma\}$, so that we
obtain  
$$
\oplus_{*\in\Z} FH_*^{(a,b)}(H) \simeq
\oplus_{*\in\Z} FH^*_{(-b,-a)}(-H). 
$$

This identification is clearly compatible with the
continuation morphisms, and the only issue is to identify the
change in grading under the correspondence
$(H,\{f_\gamma\})\mapsto(-H,\{-f_\gamma\})$. 

\begin{lemma} \label{lem:iRS} 
Let $\Phi:[0,1]\to \Sp(2n)$ be a continuous path
satisfying $\Phi(0)=\one$, and
denote $\tPhi(t):=\Phi(1-t)\Phi(1)^{-1}$. The Robbin-Salamon indices
of $\Phi$ and $\tPhi$ satisfy the relation
$$
i_{RS}(\widetilde \Phi) = -i_{RS}(\Phi). 
$$
\end{lemma} 
\begin{proof} Following~\cite[Proposition~2.2]{CFHW}, a path
$\chi(t)\psi(t)$, $t\in [0,1]$ is homotopic with fixed endpoints to
the catenation $\chi(t)\psi(0)$ and $\chi(1)\psi(t)$, so that 
$$
i_{RS}(\chi(t)\psi(t))=i_{RS}(\chi(t)\psi(0))+ i_{RS}(\chi(1)\psi(t)).
$$
Let $\Phi_-(t):=\Phi(1-t)$. Then $i_{RS}(\Phi_-)=-i_{RS}(\Phi)$
since the crossings of $\Phi_-$ are in one-to-one correspondence with
those of $\Phi$, and the 
crossing forms have opposite signatures~\cite{RS}. Since
$\Phi_-(t)\Phi_-^{-1}(t)=\one$, we obtain 
\begin{eqnarray*}
0&=& i_{RS}(\Phi_-(t)\Phi_-^{-1}(0)) + i_{RS}(\Phi_-(1)\Phi_-^{-1}(t))
\\
&=& i_{RS}(\Phi_-\Phi(1)^{-1}) + i_{RS}(\Phi_-^{-1}) \\
&=& i_{RS}(\tPhi) - i_{RS}(\Phi_-) \\
&=& i_{RS}(\tPhi) + i_{RS}(\Phi). 
\end{eqnarray*} 
The second equality uses that $\Phi_-(1)=\one$. The third equality
uses that, upon replacing a path with its inverse, 
the Robbin-Salamon index changes sign, which follows from the {\it
  (Homotopy)} and {\it (Catenation)} axioms in~\cite{RS}. 
\end{proof}

{\it Proof of Proposition~\ref{prop:Poincare} (continued).} Let 
$\varphi_t$ be the Hamiltonian flow of $H$. Given a periodic point
$x\in\wh V$ such that $\varphi_1(x)=x$, let $\Phi(t):=d\varphi_t(x)$. 
The flow of $-H$ is $\varphi_{-t}$ and we denote
$\tPhi(t):=d\varphi_{-t}(x)$. By differentiating the identity
$\varphi_{1-t}=\varphi_{-t}(\varphi_1(x))$ with respect to $t$ we obtain 
$d\varphi_{1-t}(x)
=d\varphi_{-t}(\varphi_1(x))d\varphi_1(x)=d\varphi_{-t}(x)d\varphi_1(x)$, 
so that $\widetilde \Phi(t)=\Phi(1-t)\Phi(1)^{-1}$. It follows from
Lemma~\ref{lem:iRS} that $i_{RS}(\tPhi)=-i_{RS}(\Phi)$. This proves in
particular that the grading changes sign at a nondegenerate critical
point of $H$. 

Let $\gamma_H$ be a nonconstant orbit of $H$, let $\gamma$ be the
underlying closed characteristic on $M=\partial V$, let
$\gamma_{-H}$ be the same orbit with reverse orientation, 
viewed as an orbit of $-H$, let $-\gamma$ be the underlying closed
characteristic with reverse orientation, and denote by $S_{\pm\gamma}$ the
circle of periodic orbits obtained by reparametrizing
$\gamma_{\pm H}$. Then
$$
i_{RS}(\gamma_H) = \CZ^\xi(\gamma)+\frac 1 2
$$
by~\cite[Lemma~3.4]{BO}, and from Lemma~\ref{lem:iRS} applied on
$\gamma_H$ or $\gamma$ we obtain
$$
i_{RS}(\gamma_{-H})=-\CZ^\xi(\gamma)-\frac 1 2. 
$$
We now perturb $H$ to $H+\delta f_\gamma$ for
some small $\delta>0$, and $-H$ to $-H-\delta f_\gamma$. It is proved
in~\cite[Proposition~2.2]{CFHW} that precisely two orbits survive in
$S_{\pm\gamma}$, 
corresponding to the critical points of $f_\gamma$, and we denote them
by $\gamma_{\pm H,p}$ for
$p\in\mathrm{Crit}(f_\gamma)$. Moreover, 
$$
\CZ(\gamma_{H,p})=\left\{\begin{array}{ll} 
i_{RS}(\gamma_H) + \frac 1 2, & p=min, \\
i_{RS}(\gamma_H) - \frac 1 2, & p=Max.
\end{array}\right.
$$
It follows that
$$
\CZ(\gamma_{-H,p})=\left\{\begin{array}{ll} 
i_{RS}(\gamma_{-H}) + \frac 1 2, & p=Max, \\
i_{RS}(\gamma_{-H}) - \frac 1 2, & p=min.
\end{array}\right.
$$
We obtain $\CZ(\gamma_{H,p})=-\CZ(\gamma_{-H,p})$ for any critical
point $p\in\mathrm{Crit}(f_\gamma)$.
Since the grading in the Morse-Bott description of Floer homology is
precisely the Conley-Zehnder index after perturbation, the conclusion
follows.  
\end{proof}

The following lemma is proved in the same way as Lemma~\ref{lem:sing}.
\begin{lemma}[\cite{V}]\label{lem:sing-coh}
For $\eps>0$ sufficiently small, 
$$
   SH^{-*}_{(-\infty,\eps)}(V) \simeq H_{*+n}(V) \simeq H^{-*+n}(V,\p
   V), \qquad n=\frac 1 2 \dim \, V. 
$$
\end{lemma}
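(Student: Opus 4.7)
The plan is to transcribe the proof of Lemma~\ref{lem:sing} into its cohomological counterpart. Fix $\eps>0$ smaller than every element of $\mathrm{Spec}(M,\lambda)$, and pick a time-independent $H\in\Ad(\wh V)$ which is a $C^2$-small Morse function on $V$ and has the form $H=h(r)$ with $h$ linear at infinity of slope not in $\mathrm{Spec}(M,\lambda)$. Exactly as in Lemma~\ref{lem:sing}, the only $1$-periodic orbits with action below $\eps$ are the critical points of $H|_V$, and Floer cylinders between them are $t$-independent solutions of $u_s+\nabla H(u)=0$.

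That same argument identifies the Floer chain complex $CF_\bullet^{(-\infty,\eps)}(H)$ with the Morse cochain complex of $H|_V$ via \eqref{eq:CZ-Morse}, namely $\CZ(x)=n-\mathrm{Morse}(x)$. The Floer cochain complex is by definition the field-dual of the chain complex, and the field-dual of a Morse cochain complex is the corresponding Morse chain complex. Inspecting the cochain differential $\delta x=\sum_{\CZ(y)=k+1}\#\cM(x,y)y$ directly, one checks that for $t$-independent trajectories it counts exactly the rigid negative gradient lines used in the Morse boundary, once the $\CZ$-indices are translated back to Morse indices. Hence a generator of $CF^{k}_{(-\infty,\eps)}(H)$ is a critical point of Morse index $n-k$, and the differential is the ordinary Morse boundary.

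Since $\nabla H$ points outward along $\partial V$, the descending gradient flow is inward-pointing on $\partial V$, so the Morse chain complex of $H|_V$ computes the singular homology $H_*(V)$. Setting $k=-*$ yields
$$
FH^{-*}_{(-\infty,\eps)}(H)\simeq H_{n+*}(V),
$$
and Poincaré-Lefschetz duality $H_{n+*}(V)\simeq H^{n-*}(V,\partial V)$ supplies the second isomorphism. To conclude for $SH^{-*}_{(-\infty,\eps)}(V)=\underleftarrow{\lim}\,FH^{-*}_{(-\infty,\eps)}(H)$, note that for $\eps$ small and $H$ sufficiently $C^2$-small on $V$ the action window $(-\infty,\eps)$ sees only the critical points of $H|_V$; enlarging $H$ only affects behavior at infinity, so the continuation maps are isomorphisms on this window and the inverse limit is attained on a single such $H$. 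The only point requiring genuine care is keeping track of the sign of the degree shift under dualization; everything else is a direct transcription of Lemma~\ref{lem:sing}.
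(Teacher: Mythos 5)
Your proof is correct and follows essentially the same route as the paper's (the paper merely states that the lemma ``is proved in the same way as Lemma~\ref{lem:sing}''). Your transcription to the cohomological side is exactly what is intended: with $\eps$ below the action spectrum and $H$ a $C^2$-small Morse function on $V$, the Floer cochain differential $\delta x=\sum_{\CZ(y)=\CZ(x)+1}\#\cM(x,y)\,y$ counts $t$-independent solutions of $u_s=-\nabla H(u)$ from $x$ (at $s=-\infty$) to $y$ (at $s=+\infty$) with ${\rm Morse}(y)={\rm Morse}(x)-1$, which is the Morse \emph{boundary}; since $-\nabla H$ is inward-pointing along $\partial V$ this Morse complex computes $H_*(V)$, and the degree bookkeeping $\CZ=n-{\rm Morse}$ together with Poincar\'e--Lefschetz duality gives $FH^{-*}_{(-\infty,\eps)}(H)\simeq H_{*+n}(V)\simeq H^{-*+n}(V,\partial V)$. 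Your concluding remark about the inverse limit stabilizing is also correct, since increasing the slope at infinity only creates orbits of action $>\eps$.
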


\subsection{Hamiltonians supported in $V$} \label{sec:HamsuppV}

We explain in this section an alternative definition for symplectic
homology/cohomology, using Hamiltonians which are supported in $V$. 
We denote by $\Ad(\widehat V,V)$
the class of Hamiltonians $H:\widehat
V\to \R$ which vanish outside $V$, which satisfy $H\le 0$ on $V$, and
whose $1$-periodic orbits contained in the interior of $V$ are
nondegenerate. For technical simplicity, we also assume that $H$ is a
function of the first coordinate $r$ on a collar neighbourhood
$((1-\delta,1]\times M,d(r\lambda))$ of $M=\p V$. 
We introduce on $\Ad(\widehat V,V)$ an order $\preceq$ defined by 
$$
H\preceq K \quad \mbox{iff} \quad H(\theta,x)\ge K(\theta,x) \mbox{ for
all } (\theta,x)\in S^1\times \widehat V.
$$
Given $-\infty\le a<b\le\infty$ such that $|a|,|b|\notin
\mathrm{Spec}(M,\lambda)$, we define  
$$
\tSH_*^{(a,b)}(V):=\lim_{\stackrel \longleftarrow {\Ad(\widehat V,V)}} FH_{*,V}^{(a,b)}(H), 
\qquad 
\tSH^*_{(a,b)}(V):=\lim_{\stackrel \longrightarrow {\Ad(\widehat V,V)}} FH^{*,V}_{(a,b)}(H).
$$
The subscript/superscript $V$
for the Floer homology groups indicates
that we consider as generators only those $1$-periodic orbits which
are contained in the interior of $V$. 
Lemma~\ref{lem:lap1} below shows that the first coordinate $r$
satisfies the maximum principle along Floer cylinders (here we use
that $H$ is a function of $r$ near $\p V$). It follows that Floer
cylinders connecting orbits in the interior of $V$ cannot break at
constant orbits outside the interior, so these Floer
homology groups are well-defined. 
Moreover, the inverse/direct limits
are considered with respect to the order $\preceq$ on the space of
admissible Hamiltonians $\Ad(\widehat V,V)$.  

As before, we can also give the definition of $\tSH$ using the class
$\Ad^0(\widehat V,V)$ of autonomous Hamiltonians $H:\widehat V\to \R$
which vanish outside $V$, which satisfy $H\le 0$ on $V$, and whose
$1$-periodic orbits in $V$ are either constant and 
nondegenerate, or nonconstant and 
transversally nondegenerate. In this case we define the
Floer homology groups via the Morse-Bott construction
of Section~\ref{sec:autonomous}. 

\begin{proposition} \label{prop:alternative}
 For any $A>0$ such that $A\notin\mathrm{Spec}(M,\lambda)$, we have 
$$
\tSH_*^{(A,\infty)}(V)\simeq SH_*^{(-\infty,A)}(V), \qquad 
\tSH^*_{(A,\infty)}(V)\simeq SH^*_{(-\infty,A)}(V).
$$
\end{proposition}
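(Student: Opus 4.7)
The plan is to identify the two limits via a concrete extension of Hamiltonians. Given $H\in\Ad(\widehat V,V)$ with $a:=h'(1)\notin\mathrm{Spec}(M,\lambda)$, I would consider the canonical extension $\widetilde H\in\Ad(\widehat V)$ defined by $\widetilde H|_V=H|_V$ and $\widetilde H(r,x)=a(r-1)$ for $r\ge 1$. Since $a\notin\mathrm{Spec}(M,\lambda)$, there are no $1$-periodic orbits of $\widetilde H$ outside $V$; moreover, by Lemma~\ref{lem:lap1}, Floer cylinders of $\widetilde H$ connecting orbits in the interior of $V$ are trapped in $V$. This gives a chain-level identification
$$
   CF^{(A,\infty)}_{*,V}(H) \,=\, CF^{(A,\infty)}_*(\widetilde H),
$$
so that $\tSH_*^{(A,\infty)}(V)$ is computed as the inverse limit of $FH^{(A,\infty)}_*(\widetilde H)$ along the family of extensions in $\Ad(\widehat V)$ parameterized by $H$ becoming increasingly negative in $V$ and $a\to\infty$.

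Next, I would compare this inverse limit with the direct limit $SH_*^{(-\infty,A)}(V)$. For each sufficiently deep $\widetilde H$ I would construct a monotone homotopy to a standard Hamiltonian $K\in\Ad(\widehat V)$ of the same asymptotic slope but $C^2$-small Morse on $V$, and use the induced continuation morphisms to set up an explicit comparison of filtered chain complexes. Under this comparison, the bulk critical points of $\widetilde H$ (of action $\approx -\min\widetilde H$) should correspond to the $C^2$-small Morse critical points of $K$ (of action near $0$) via the Poincar\'e--Lefschetz identification of Lemma~\ref{lem:sing}, while the collar orbits of $\widetilde H$ of Reeb period $T$ should correspond to the transition orbits of $K$ of the same period, with matching Conley--Zehnder indices from the Morse--Bott formula~\eqref{eq:degMB}.

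The main obstacle is controlling the action filtration throughout this interpolation: the depth of the well in $\widetilde H$ induces an action shift that must be absorbed by the continuation maps so that the window $(A,\infty)$ on the $\widetilde H$ side corresponds precisely to the window $(-\infty,A)$ on the $K$ side. This is delicate because the action of a bulk critical point of $\widetilde H$ diverges as $H\to-\infty$, yet in the inverse limit over $\Ad(\widehat V,V)$ these divergent generators must organize themselves into the compact topological contribution $H_{*+n}(V,\p V)$ together with the low-period Reeb-orbit contributions of $SH_*^{(-\infty,A)}(V)$. Passing to the limits and verifying compatibility of the chain maps with the continuation morphisms then yields the claimed isomorphism. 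The cohomological statement follows by dualizing these chain-level identifications and reversing the direction of the limits.
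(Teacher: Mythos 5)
Your roadmap---extend $H\in\Ad(\widehat V,V)$ linearly past $\p V$ to an admissible Hamiltonian on $\widehat V$, argue via the maximum principle that the Floer complexes agree, and then compare against a Hamiltonian $K$ that is $C^2$-small Morse on $V$ and linear of the same slope at infinity---is the same strategy the paper follows, and the chain-level identification $CF^{(A,\infty)}_{*,V}(H)=CF^{(A,\infty)}_*(\widetilde H)$ is correct. However, the proposal leaves the decisive step as an acknowledged difficulty rather than resolving it, and that step cannot be deferred to ``compatibility of continuation maps.''

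The concrete gap: you observe that the well depth of $\widetilde H$ forces its bulk critical points to have action near $-\min H$, diverging as $H$ becomes deep, while the corresponding Morse generators of $K$ sit at action near $0$. You say this shift ``must be absorbed by the continuation maps,'' but monotone continuation maps do not realign action windows---they only interact compatibly with a \emph{fixed} truncation. The paper's resolution is an explicit algebraic move you never state: it replaces $\widetilde H$ by $\widetilde H+\text{const}$ and simultaneously shifts the action interval by the same constant, which tautologically gives an isomorphism of filtered Floer homologies; after this shift the bulk value of the Hamiltonian is $0$ and the homotopy to $K$ can be executed with all newly created orbits staying outside the (now fixed) action interval. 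This requires working with a cofinal family of a specific ``plateau'' shape (constant $\mu(\delta-1)$ on most of $V$, slope $\mu$ on a collar $[\delta,1]\times M$, zero outside), so that orbit types and their actions can be enumerated and controlled during the deformation; a generic $H\in\Ad(\widehat V,V)$ is too unstructured for this bookkeeping. The other thing your sketch glosses over: the inverse limit over $\Ad(\widehat V,V)$ and the direct limit in $SH_*^{(-\infty,A)}$ both \emph{stabilize} once the slope exceeds $A$, so the comparison is between two specific finite-stage Floer groups rather than a delicate limit-vs.-limit argument. Your framing of ``the inverse limit of $FH^{(A,\infty)}_*(\widetilde H)$ along the family of extensions in $\Ad(\widehat V)$'' also conflates the two orderings: $\Ad(\widehat V,V)$ is ordered so that the limit is inverse (Hamiltonians more negative), whereas $\Ad(\widehat V)$ yields direct limits, and the extensions $\widetilde H$ are not monotone for either order as $H$ deepens. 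Both confusions evaporate once one pins down explicit stabilizing representatives, but as written the proposal does not supply the mechanism that makes the action windows match up.
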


\begin{proof}
We give the proof only for cohomology, the other case
being similar. We embed the symplectization $M\times
\R_+\hookrightarrow \widehat V$ and, to simplify the discussion, we
consider only autonomous Hamiltonians. We define a cofinal family in
$\Ad^0(\widehat V,V)$ consisting of Hamiltonians
$H=H_{\mu,\delta}:\widehat V\to \R$, $\mu>0$, $0<\delta \le 1$ which, up
to a smoothing, satisfy the following conditions:
\begin{itemize}
 \item $H_{\mu,\delta}=0$ on $\widehat V\setminus V$,
 \item $H_{\mu,\delta}(r,x)=\mu(r-1)$ on $[\delta,1]\times M$, 
 \item $H_{\mu,\delta}$ is a $C^2$-small Morse perturbation of the
constant function $\mu(\delta -1)$ on $\widehat V\setminus
[\delta,\infty)\times M$. 
\end{itemize} 

(this Hamiltonian coincides on $V$ with the Hamiltonian $K$ depicted in
Figure~\ref{fig:HK} on page~\pageref{fig:HK}). 
Given $\mu>0$ and $0<\delta \le 1$, we denote by $K_{\mu,\delta}$ the 
Hamiltonian which is equal to $\mu(r-\delta)$ on $[\delta,\infty)\times M$, and
which is a $C^2$-small Morse perturbation of the constant function $0$
on $\widehat V\setminus [\delta,\infty)\times M$. We denote
$K_\mu:=K_{\mu,1}$. 

Let $A>0$ be fixed as in the statement of the Proposition, and denote
by $\eta_A>0$ the distance to $\mathrm{Spec}(M,\lambda)$. Let us 
choose $0<\eps\le \eta_A/2$, and $0<\delta<\eps/(A+\eps)$. 
We claim the following sequence of isomorphisms
\begin{eqnarray} \label{eq:isos}
FH^*_{(A-\eps,\infty)}(H_{A,\delta}) & \simeq &
FH^*_{(\delta A-\eps,\infty)}(H_{A,\delta}+A(1-\delta)) \\
& \simeq & FH^*_{(\delta A-\eps,\infty)}(K_{A,\delta}) \nonumber \\
& \simeq & FH^*_{(\delta A-\eps,\infty)}(K_A) \nonumber \\
& \simeq & FH^*_{(-\infty,A)}(K_A). \nonumber
\end{eqnarray}
Let us first examine the $1$-periodic orbits of $H_{A,\delta}$. Note
that the action of a 1-periodic orbit on level $r$ of a Hamiltonian
$H(r,y)=h(r)$ is given by
$$
   \cA_H(r,y) = rh'(r)-h(r). 
$$
Denote by $T_0>0$ the minimal period of a closed Reeb orbit on $M$. 
An easy computation shows that the 1-periodic orbits of $H_{A,\delta}$
fall in four classes as follows:  
\begin{enumerate}
\item[(I)] constants in $\widehat V\setminus [\delta,\infty)\times M$,
with action close to $A-\delta A$,
\item[(II)] nonconstant orbits around $\{\delta\}\times M$, with
action in the interval $A(1-\delta)+[\delta
T_0,\delta(A-\eta_A)]=[A-\delta A+\delta T_0,A-\delta\eta_A]$,
\item[(III)] nonconstant orbits around $\{1\}\times M$, with action in
the interval $[T_0,A-\eta_A]$, 
\item[(IV)] constants in $[1,\infty)\times M$, with action $0$. 
\end{enumerate}
Under our assumption $0<\delta A<\eps<\eta_A/2$, the types of orbits
are ordered by action as 
$$
   IV < III < A-\eps < I < II. 
$$   
In particular, the Floer
complex $CF^*_{(A-\eps,\infty)}(H_{A,\delta})$ involves precisely the
orbits of Type~(I) and~(II). 

We can now explain the isomorphisms involved in~\eqref{eq:isos}. The
first isomorphism follows directly from the definitions (the
Hamiltonian and the action interval are simultaneously shifted by
a constant). The second 
isomorphism holds because the obvious increasing homotopy from
$H_{A,\delta}+A(1-\delta)$ to $K_{A,\delta}$ given by convex
combinations is such that the newly created orbits appear outside the
relevant action interval. The third isomorphism holds because one can
deform $K_{A,\delta}$ to $K_A=K_{A,1}$ through $K_{A,\sigma}$,
$\delta\le \sigma\le 1$, keeping the actions positive or very close to
zero. The last isomorphism holds because $K_A=K_{A,1}$ has no $1$-periodic
orbits with action outside the interval $(\delta A-\eps,A)$. 

To conclude, we notice now the sequence of isomorphisms 
\begin{eqnarray*}
 \tSH^*_{(A,\infty)}(V) &\simeq
 &FH^*_{(A,\infty)}(H_{A+\eps,\delta}) \\
 & \simeq & FH^*_{(A-\eps,\infty)}(H_{A,\delta}) \\
 & \simeq & FH^*_{(-\infty,A)}(K_A) \\
 & \simeq & SH^*_{(-\infty,A)}(V).
\end{eqnarray*}
We again use standard continuation arguments. For the first
isomorphism we deform $H_{A+\eps,\delta}$ within the cofinal class
of Hamiltonians of the form $H_{\mu,\sigma}$ such that $\mu\geq A$ and
the condition $\mu(1-\sigma)>A$ always holds. Note that for
$(\mu,\sigma)=(A+\eps,\delta)$ this condition holds due to our
assumption $\delta<\eps(A+\eps)$. During this deformation orbits of
types (I) and (II) always have action $>A$, those of type (IV) have
action $<A$, and new orbits of type (III) appear with action $>A$, so
$FH^*_{(A,\infty)}(H_{\mu,\sigma})$ does not change and converges to
$\tSH^*_{(A,\infty)}(V)$ as $(\mu,\sigma)\to(\infty,1)$. 
The second isomorphism follows by simultaneously shifting the
Hamiltonian and the action interval and the third isomorphism in
equation~\eqref{eq:isos}. For the fourth isomorphism we
use that, upon deforming within the cofinal class of Hamiltonians of
the form $K_\mu$, new orbits have action bigger than $A$. 
\end{proof}

\subsection{$\bigvee$-shaped Hamiltonians in $\wh V$} \label{sec:Vshaped}

In this section we assume that the closed characteristics on $M$ are
transversally nondegenerate. 
We consider the class $\Check\Ad^0(\wh V)$ of 
Hamiltonians $H:\wh V\to\R$ which satisfy the following conditions: 
\begin{itemize} 
\item The $1$-periodic orbits of $H$ are either constant or
  transversally nondegenerate,
\item $H\le 0$ in some tubular neighbourhood of $M\equiv\{1\}\times M$, and
  $H>0$ elsewhere (see Figure~\ref{fig:Vshape}),
\item $H=h(r)$ in the region $\{r\ge 1\}$, with $h(r)=ar+b$
outside a compact set, $0<a\notin\mathrm{Spec}(M,\lambda)$, 
$b\in\R$, and $h$ strictly convex in the region where it is not
linear. 
\end{itemize} 
We define $\Check{SH}_k(V)$ as follows, with limits over $H$ being
taken with respect to the usual partial order on $\Check{\Ad}^0(\wh
V)$. Given $-\infty < a < b < \infty$ we set 

\begin{equation} \label{eq:CheckSHab}
 \Check{SH}_k^{(a,b)}(V) := \lim_{\stackrel
\longrightarrow H} FH_k^{(a,b)}(H), 
\end{equation}

\begin{equation} \label{eq:CheckSH}
\Check{SH}_k(V) := \lim_{\stackrel \longrightarrow b} \lim_{\stackrel
\longleftarrow a} SH_k^{(a,b)}(V). 
\end{equation} 
The last two limits have to be understood as $a\to -\infty$ and $b\to
+\infty$.  We also define 
$$
\Check{SH}_k^{(-\infty,b)}(V) := \lim_{\stackrel
\longleftarrow a} SH_k^{(a,b)}(V), \qquad 
\Check{SH}_k^{(a,\infty)}(V) :=\lim_{\stackrel \longrightarrow b}
SH_k^{(a,b)}(V).  
$$

\begin{remark}
 Let $a<b$ be fixed. It follows from the proof of
Proposition~\ref{prop:SH_exact} below that, if $H$ is the Hamiltonian
in Figure~\ref{fig:Vshape} and the slope $\mu$ is much larger than
$\max \{|a|,|b|\}$, only orbits of types III-V are involved in the
computation of $FH_k^{(a,b)}(H)$. 
\end{remark}

\begin{remark} \label{rmk:exactness}
 We chose to define $\Check{SH}_k(V)$ by first using an inverse limit
and then a direct limit so that the inverse limit is applied to finite
dimensional vector spaces. In this case it is an
exact functor~\cite{ES}, so that truncated exact sequences pass to
the limit.  
\end{remark} 

\begin{remark}
 Whereas we have by definition $\displaystyle \Check{SH}_k(V)=\lim_{\stackrel
\longrightarrow b} \Check{SH}_k^{(-\infty,b)}(V)$, it is a priori not
true that $\displaystyle\Check{SH}_k(V)=\lim_{\stackrel \longleftarrow a}
\Check{SH}_k^{(a,\infty)}(V)$. The universal property of
direct/inverse limits only provides an arrow 
$$
\lim_{\stackrel \longrightarrow b} \lim_{\stackrel
\longleftarrow a} SH_k^{(a,b)}(V) \longrightarrow 
\lim_{\stackrel \longleftarrow a}\lim_{\stackrel \longrightarrow b}
SH_k^{(a,b)}(V).
$$
\end{remark} 

\begin{figure}[htp] 
\centering
\input{vshape.pstex_t}
\caption{A $\bigvee$-shaped Hamiltonian.} \label{fig:Vshape}
\end{figure} 

\begin{proposition} \label{prop:SH_exact}
For any $-\infty<a<0<b<\infty$ such that
$-a,b\notin\mathrm{Spec}(M,\lambda)$, there is a long exact sequence 
\begin{equation} \label{eq:longextrunc}
\def\objectstyle{\scriptstyle}
\xymatrix
@C=15pt
{
\ldots \ar[r] &  \!SH^{-*}_{(-\infty,-a)}(V)\! \ar[r] & \!SH_*^{(-\infty,b)}(V)\! \ar[r] &
\!\Check{SH}_*^{(a,b)}(V)\! \ar[r]
& \!SH^{-*+1}_{(-\infty,-a)}(V)\! \ar[r] & \ldots
}
\end{equation}
\end{proposition}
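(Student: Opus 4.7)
My plan is to derive \eqref{eq:longextrunc} from an action-filtered short exact sequence of Floer complexes for a $\bigvee$-shaped Hamiltonian $H \in \Check{\Ad}^0(\wh V)$ with slope $\mu$ chosen much larger than $\max(|a|, b)$, then pass to the direct limit over $H$.

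I would first analyse the $1$-periodic orbits of $H$ contributing to $CF_*^{(a,b)}(H)$. After a Morse--Bott perturbation, these split cleanly by the sign of the Hamiltonian action. The positive-action orbits are the constants in a Morse--Bott family over $\{r = 1\}$ (with action $\approx \eps$) together with the nonconstant right-leg orbits at levels where $h'(r) = T \in \Spec(M,\lambda)$, whose action $rT - h(r)$ is positive. The negative-action orbits are the nonconstant left-leg orbits at levels where $h'(r) = -T$, whose action $-rT - h(r)$ is negative because $h > 0$ outside the tubular collar of $M$ where $H \le 0$. A suitable choice of $\mu$ and of the auxiliary Morse function on the interior of $V$ ensures that no other orbit contributes to the window $(a,b)$.

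Because the Floer differential decreases action, the negative-action generators span a subcomplex $CF_*^{(a,0)}(H)$ of $CF_*^{(a,b)}(H)$ with quotient $CF_*^{(0,b)}(H)$, producing the tautological long exact sequence
\begin{equation*}
\cdots \to FH_*^{(a,0)}(H) \to FH_*^{(a,b)}(H) \to FH_*^{(0,b)}(H) \to FH_{*-1}^{(a,0)}(H) \to \cdots.
\end{equation*}
Exactness of direct limits lets me pass to the limit over $H \in \Check{\Ad}^0(\wh V)$; the middle term becomes $\Check{SH}_*^{(a,b)}(V)$ by \eqref{eq:CheckSHab}. For the quotient, the positive-action generators are precisely the orbits of a standard admissible Hamiltonian $\tilde H \in \Ad^0(\wh V)$ obtained by replacing the left leg of $H$ with a $C^2$-small negative Morse function on $V$; a continuation and cofinality argument identifies $\lim_H FH_*^{(0,b)}(H)$ with $SH_*^{(-\infty,b)}(V)$. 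For the subcomplex, Proposition~\ref{prop:Poincare} yields $FH_*^{(a,0)}(H) \cong FH^{-*}_{(0,-a)}(-H)$, and a parallel cofinality argument, invoking Proposition~\ref{prop:alternative} to pass from the window $(0,-a)$ for $-H$ to the window $(-\infty,-a)$ for admissible Hamiltonians, identifies its limit with $SH^{-*}_{(-\infty,-a)}(V)$. Reassembling the resulting long exact sequence and identifying the connecting maps, checking compatibility with the natural map of Proposition~\ref{prop:map}, produces \eqref{eq:longextrunc}.

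The main obstacle will be making the two cofinality arguments precise: one must construct explicit monotone homotopies between the $\bigvee$-shaped and the admissible Hamiltonians whose induced continuation maps are quasi-isomorphisms on the truncated complexes, which amounts to ruling out the creation of extra generators in the relevant action windows. This is a maximum-principle analysis in the spirit of the proof of Proposition~\ref{prop:alternative} and of~\cite{V,BOexact}, together with careful bookkeeping of the grading shifts under the duality $H \leftrightarrow -H$.
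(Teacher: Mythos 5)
There is a genuine gap, and it is structural: the short exact sequence you choose places $\Check{SH}_*^{(a,b)}(V)$ in the \emph{middle} of the resulting long exact sequence, whereas Proposition~\ref{prop:SH_exact} asserts an exact sequence in which $\Check{SH}_*^{(a,b)}(V)$ is the \emph{quotient} (third) term, i.e.\ the map goes $SH_*^{(-\infty,b)}(V)\to\Check{SH}_*^{(a,b)}(V)$, not the other way around. Your short exact sequence $0\to CF_*^{(a,0)}(H)\to CF_*^{(a,b)}(H)\to CF_*^{(0,b)}(H)\to 0$ is a valid filtration of the complex, but after passing to homology and applying the identifications you propose it yields
$\cdots\to SH^{-*}_{(-\infty,-a)}\to\Check{SH}_*^{(a,b)}\to SH_*^{(-\infty,b)}\to\cdots$,
which is a different exact sequence than \eqref{eq:longextrunc}. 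The decomposition that actually produces \eqref{eq:longextrunc} is the filtration of $CF_*^{(-\infty,b)}(H)$ by action, namely
$0\to CF_*^{(-\infty,a)}(H)\to CF_*^{(-\infty,b)}(H)\to CF_*^{(a,b)}(H)\to 0$,
whose long exact sequence has $FH_*^{(a,b)}(H)$ in the quotient slot, as required.

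The two identifications of the outer terms are also off. The quotient $CF_*^{(0,b)}(H)$ contains only the Morse--Bott constants sitting over $\{1\}\times M$ (type IV) and the short right-leg orbits (type $V_-$); the constants in the deep interior of $V$ have action close to $-\mu$ and are excluded. Hence $\lim_H FH_*^{(0,b)}(H)$ sees $H_*(M)$ rather than $H_{*+n}(V,\p V)$ in low degrees, and so it cannot be $SH_*^{(-\infty,b)}(V)$: you need the whole lower tail $CF_*^{(-\infty,b)}(H)$, which includes the bottom constants, and then a deformation argument flattening the left leg of $H$ to a $C^2$-small negative Morse function on $V$ does give $SH_*^{(-\infty,b)}(V)$. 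Similarly, $CF_*^{(a,0)}(H)$ sees only the left-leg orbits of type $III_+$ (negatively parametrized Reeb orbits with action in $(a,0)$) and misses both the bottom constants and the type~II orbits near $\{\delta\}\times M$, so after Poincar\'e duality its limit is not $SH^{-*}_{(-\infty,-a)}(V)$. That identification requires the full lower truncation $CF_*^{(-\infty,a)}(H)$, from which one reaches $SH^{-*}_{(-\infty,-a)}(V)$ by the chain of isomorphisms $FH_*^{(-\infty,a)}(H)\simeq FH_*^{(-\infty,a)}(H_{\mu,\delta})\simeq FH^{-*}_{(-a,\infty)}(-H_{\mu,\delta})\simeq\widetilde{SH}^{-*}_{(-a,\infty)}(V)\simeq SH^{-*}_{(-\infty,-a)}(V)$ using Proposition~\ref{prop:Poincare} and Proposition~\ref{prop:alternative}. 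Once you replace your filtration with the one above, the rest of your plan (cofinality, exactness of the relevant limits, compatibility with continuation) goes through as intended.
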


\begin{proof} 
  We consider a cofinal family in $\Check{\Ad}^0(\wh V)$ consisting of
Hamiltonians $H$ which, up to a smooth approximation, satisy the
following requirements (see Figure~\ref{fig:Vshape}): there exist
constants $\eps>0$, $0<\delta<1$ and
$0<\mu\notin\mathrm{Spec}(M,\lambda)$ such that  
\begin{itemize} 
   \item $H\equiv \mu(1-\delta)-\eps$ on $V\setminus
[\delta,1]\times M$; 
   \item $H=h(r)$ on $[\delta,+\infty)\times M$, where 
$$
\left\{\begin{array}{ll} 
h'(r)=-\mu, & \delta \le r \le 1, \\
h(1)= -\eps, & \\
h'(r)=\mu, & r\ge 1.
\end{array}\right.
$$
\end{itemize} 

Let $\eta_\mu >0$ be the distance between $\mu$ and
$\textrm{Spec}(M,\lambda)$, and let $T_0>0$ be the 
minimal period of a closed characteristic on $M$. The $1$-periodic
orbits of $H$ fall into five classes as follows.  
\renewcommand{\theenumi}{\Roman{enumi}}
\begin{enumerate} 
\item constants in $V\setminus [\delta,1]\times M$, with action
  $-\mu(1-\delta) +\eps = -\mu + \delta\mu+\eps$; 
\item nonconstant orbits in the neighbourhood of $\{\delta\}\times M$,
  corresponding to negatively parametrized closed characteristics on
  $M$ of period at most $\mu-\eta_\mu$, with action in the interval
  $[-\mu(1-\delta)+\eps 
  -\delta(\mu-\eta_\mu),-\mu(1-\delta)+\eps-\delta T_0]
=[-\mu +\eps+\delta\eta_\mu,-\mu +\eps+ \delta\mu-\delta T_0]$;
\item nonconstant orbits in the neighbourhood of $\{1\}\times M$ on
levels $r<1$, corresponding to negatively parametrized closed
characteristics on $M$ of period at most $\mu-\eta_\mu$, with action
in the interval  
  $[-(\mu-\eta_\mu)+\eps,-T_0+\eps]=[-\mu + \eta_\mu+\eps,
-T_0+\eps]$;  
\item constant orbits on $\{1\}\times M$, with action $\eps$; 
\item nonconstant orbits in the neighbourhood of $\{1\}\times M$ on
levels $r>1$, corresponding to positively parametrized closed
characteristics on $M$ of period at most $\mu-\eta_\mu$, with action in
the interval $[T_0+\eps,\mu-\eta_\mu+\eps]$. 
\end{enumerate}

Let $-\infty < a < 0< b < \infty$ be fixed, with
$-a,b\notin\mathrm{Spec}(M,\lambda)$. Let us choose 
$$
\mu\ge \max(|a|,|b|)+1,\qquad \delta\le \min(\eta_\mu/2\mu,1/3) \qquad
\eps\le\min(b,1/3,\eta_{|a|}/2).
$$
The condition $\delta\mu<\eta_\mu$ ensures that the above types of
orbits are ordered by the action as  
$$
   II < I < III_- < a < III_+ < IV < V_- < b < V_+. 
$$
Here the symbols $III_-$, $III_+$ stand for orbits of Type
III which have action smaller resp.~bigger than $a$, and $V_-$,
$V_+$ stand for orbits of Type V which have action smaller
resp.~bigger than $b$. We infer 
the short exact sequence of complexes 
$$
0 \to CF_*^{(-\infty,a)}(H)
\to CF_*^{(-\infty,b)}(H) \to CF_*^{(a,b)}(H) \to 0 
$$
which, in terms of the types of orbits involved, can be rewritten as
$$
0\to CF_*^{I,II,III_-} \to CF_*^{I-V_-} \to
CF_*^{III_+,IV,V_-} \to 0. 
$$
The associated long exact sequence has the form 
$$
\dots \to\!FH_*^{(-\infty,a)}(H)\!
\to \!FH_*^{(-\infty,b)}(H) \!\to\! FH_*^{(a,b)}(H) \!\to\!
FH_{*-1}^{(-\infty,a)}(H) \!\to \dots 
$$
We claim that its entries are isomorphic with the ones of the long
exact sequence in the statement of
Proposition~\ref{prop:SH_exact}. This implies the conclusion of the
proposition, since continuation maps are
compatible with truncation exact sequences and become isomorphisms
if $\mu\ge \max(|a|,|b|)+1$ and $\delta, \eps>0$ are sufficiently
small (depending on $\mu$). So it remains to prove the claim. 

$FH_*^{(-\infty,b)}(H)\simeq SH_*^{(-\infty,b)}(V)$: This
holds because $\mu>b>0$, and because the restriction to $V$ of the
Hamiltonian $H$ can be deformed to the constant Hamiltonian $-\eps$,
in such a way that the action of the newly created $1$-periodic orbits does not
cross the boundary of the action interval $(-\infty,b)$ during the deformation. 

$FH_*^{(a,b)}(H)\simeq \Check{SH}_*^{(a,b)}(V)$: This holds
because $\mu>\max(|a|,|b|)$, and because, upon increasing
the slope in a cofinal family of $\Check\Ad^0(\wh V)$, the action of
the newly created $1$-periodic orbits does not cross the boundary of the
action interval $(a,b)$ during the corresponding homotopies of
Hamiltonians.  

$FH_*^{(-\infty,a)}(H)\simeq SH^{-*}_{(-\infty,-a)}(V)$: To
prove this isomorphism, we denote by 
$H_{\mu,\delta}$ a Hamiltonian which, up to a
smoothing, satisfies the following conditions: 
\begin{itemize}
\item $H_{\mu,\delta}$ vanishes outside $V$;
\item $H_{\mu,\delta}(r,x)=-\mu(r-1)$ on $[\delta,1]\times
M$;
\item $H_{\mu,\delta}=\mu(1-\delta)$ on $\widehat
V\setminus [\delta,1]\times M$.
\end{itemize}
Our definition is such that $-H_{\mu,\delta}\in \Ad^0(\widehat
V,V)$ is as in the proof of Proposition~\ref{prop:alternative}. 
We claim the following sequence of isomorphisms.
\begin{eqnarray*}
FH_*^{(-\infty,a)}(H) & \simeq &
FH_*^{(-\infty,a)}(H_{\mu,\delta}-\eps) \\
& \simeq & FH_*^{(-\infty,a-\eps)}(H_{\mu,\delta})\\
& \simeq & FH_*^{(-\infty,a)}(H_{\mu,\delta})\\
& \simeq & FH^{-*}_{(-a,\infty)}(-H_{\mu,\delta}) \\
& \simeq & \widetilde{SH}^{-*}_{(-a,\infty)}(V) \\
& \simeq & SH^{-*}_{(-\infty,-a)}(V).
\end{eqnarray*} 
The first isomorphism holds is proved by a deformation argument: 
The Hamiltonian $H$ can be deformed outside $V$ via linear
Hamiltonians to 
the constant Hamiltonian $-\eps$, and the action of the newly created
$1$-periodic orbits does not cross the boundary of the action interval
$(-\infty,a)$. The second isomorphism holds trivially from the
definitions, and the third one holds because $|a|\notin
\mathrm{Spec}(M,\lambda)$ and $\eps<\eta_{|a|}/2$. The fourth
isomorphism is implied by Proposition~\ref{prop:Poincare}, the fifth
one holds by continuation because $\mu>|a|$, and the sixth one follows
from Proposition~\ref{prop:alternative}. 
\end{proof} 

\begin{proof}[Proof of Theorem~\ref{thm:main_exact}] 
It follows from the proof of Proposition~\ref{prop:SH_exact} that the
exact sequence~\eqref{eq:longextrunc} is compatible with the
morphisms induced by enlarging the action window, in the following
sense. Given $-\infty<a'<a<0<b<b'<\infty$ and a Hamiltonian $H$ as in
the proof of Proposition~\ref{prop:SH_exact} we have a commutative
diagram of short exact sequences of chain complexes 
\begin{equation}
\def\objectstyle{\scriptstyle}
\xymatrix
@C=15pt
{
0 & \ar[r] &  \!CF_*^{(-\infty,-a')}(H)\! \ar[r] \ar[d] &
\!CF_*^{(-\infty,b)}(H)\! \ar[r] \ar@{=}[d]& 
\!CF_*^{(a',b)}(H)\! \ar[r] \ar[d]
& 0 \\
0 & \ar[r] &  \!CF_*^{(-\infty,-a)}(H)\! \ar[r] \ar@{=}[d] &
\!CF_*^{(-\infty,b)}(H)\! \ar[r] \ar[d] &
\!CF_*^{(a,b)}(H)\! \ar[r] \ar[d] 
& 0 \\
0 & \ar[r] &  \!CF_*^{(-\infty,-a)}(H)\! \ar[r] & 
\!CF_*^{(-\infty,b')}(H)\! \ar[r] &
\!CF_*^{(a,b')}(H)\! \ar[r]
& 0
}
\end{equation}
Passing to Floer homologies and using the isomorphisms in the proof of
Proposition~\ref{prop:SH_exact}, we obtain a commutative diagram of
long exact sequences 
\begin{equation} \label{eq:big_diag}
\def\objectstyle{\scriptstyle}
\xymatrix
@C=15pt
{
\ldots \ar[r] &  \!SH^{-*}_{(-\infty,-a')}(V)\! \ar[r] \ar[d] &
\!SH_*^{(-\infty,b)}(V)\! \ar[r] \ar@{=}[d]& 
\!\Check{SH}_*^{(a',b)}(V)\! \ar[r] \ar[d]
& \!SH^{-*+1}_{(-\infty,-a')}(V)\! \ar[r] \ar[d] & \ldots \\
\ldots \ar[r] &  \!SH^{-*}_{(-\infty,-a)}(V)\! \ar[r] \ar@{=}[d] &
\!SH_*^{(-\infty,b)}(V)\! \ar[r] \ar[d] &
\!\Check{SH}_*^{(a,b)}(V)\! \ar[r] \ar[d] 
& \!SH^{-*+1}_{(-\infty,-a)}(V)\! \ar[r] \ar@{=}[d] & \ldots \\
\ldots \ar[r] &  \!SH^{-*}_{(-\infty,-a)}(V)\! \ar[r] & \!SH_*^{(-\infty,b')}(V)\! \ar[r] &
\!\Check{SH}_*^{(a,b')}(V)\! \ar[r]
& \!SH^{-*+1}_{(-\infty,-a)}(V)\! \ar[r] & \ldots
}
\end{equation}
Using Remark~\ref{rmk:exactness} and passing first to the inverse
limit as $a\to-\infty$, and then to the direct
limit as $b\to \infty$, we obtain the conclusion of
Theorem~\ref{thm:main_exact}.  
\end{proof}

\begin{proof}[Proof of Proposition~\ref{prop:map}]
 We claim that, for any $-\infty<a'<a<0<b<b'<\infty$ such that
$-a',-a,b,b'\notin\mathrm{Spec}(M,\lambda)$, there is a commutative
diagram  
\begin{equation} \label{eq:cd}
\xymatrix
@C=50pt
@R=15pt
{SH^{-*}_{(-\infty,-a')}(V) \ar[r] \ar[d] & SH_*^{(-\infty,b')}(V) \\
SH^{-*}_{(-\infty,-a)}(V) \ar[r] & SH_*^{(-\infty,b)}(V) \ar[u]
}
\end{equation}
in which the vertical maps are the continuation morphisms, and the
horizontal maps are the ones appearing in the long exact sequence of
Proposition~\ref{prop:SH_exact} with $a=-b$. This follows from the
commutative diagram below, which is obtained by rearranging the
leftmost commutative squares in~\eqref{eq:big_diag}:
$$
\xymatrix
@C=50pt
@R=15pt
{SH^{-*}_{(-\infty,-a')}(V) \ar[r] \ar[d] & SH_*^{(-\infty,b')}(V) \ar@{=}[d]
\\
SH^{-*}_{(-\infty,-a)}(V) \ar[r] \ar@{=}[d] & SH_*^{(-\infty,b')}(V) \\
SH^{-*}_{(-\infty,-a)}(V) \ar[r] & SH_*^{(-\infty,b)}(V) \ar[u]
}
$$
Choosing $b=-a=\rho>0$ small enough in~\eqref{eq:cd} and passing first
to the inverse limit as $a'\to-\infty$ and then to the 
direct limit as $b'\to\infty$, we obtain a commutative diagram 
$$
\xymatrix
@C=50pt
@R=15pt
{SH^{-*}(V) \ar[r] \ar[d] & SH_*(V) \\
SH^{-*}_{(-\infty,\rho)}(V) \ar[r] & SH_*^{(-\infty,\rho)}(V) \ar[u]
}
$$
By Lemma~\ref{lem:sing} and Lemma~\ref{lem:sing-coh}, 
the bottom entries of this diagram are
$SH^{-*}_{(-\infty,\rho)}(V)\simeq H^{-*+n}(V,\partial V)$ and 
$SH_*^{(-\infty,\rho)}(V)\simeq H_{*+n}(V,\partial V)$. Moreover, it
follows from the proof of Proposition~\ref{prop:main2} below that the
bottom map is the composition $H^{-*+n}(V,\partial V)\stackrel {PD}
\longrightarrow H_{*+n}(V) \stackrel {incl_*} \longrightarrow
H_{*+n}(V,\partial V)$ of the map induced by 
inclusion $V\hookrightarrow (V,\partial V)$ with the Poincar\'e
duality map. 
\end{proof}

We define two variants of the symplectic homology groups
$\Check{SH}_*(V)$, namely
\begin{eqnarray}
\label{eq:CheckSH+}
\Check{SH}_k^{\ge 0}(V) & := & \lim_{a \nearrow 0}
\Check{SH}_k^{(a,\infty)}(V), \\ 
\label{eq:CheckSH-}
\Check{SH}_k^{\le 0}(V) & := & \lim_{b \searrow 0} 
\Check{SH}_k^{(-\infty,b)}(V).
\end{eqnarray}

\begin{proof}[Proof of Proposition~\ref{prop:main2}]
The two diagrams in the statement of Proposition~\ref{prop:main2}
follow by specializing the commutative diagram~\eqref{eq:big_diag}. 
Let us choose $0<\rho<\min\, \mathrm{Spec}(M,\lambda)$. We set
$a=-\rho$, $b=\rho$ in~\eqref{eq:big_diag} and
let $a'\to-\infty$, $b'\to\infty$ to obtain the commutative diagram of
long exact sequences 
\bigbreak
\begin{equation} \label{eq:proof_main2_diag1} 
\def\objectstyle{\scriptstyle}
\xymatrix
@C=15pt
{
\ldots \ar[r] &  \!SH^{-*}(V)\! \ar[r] \ar[d] &
\!SH_*^{(-\infty,\rho)}(V)\! \ar[r] \ar@{=}[d]& 
\!\Check{SH}_*^{\le 0}(V)\! \ar[r] \ar[d]
& \!SH^{-*+1}(V)\! \ar[r] \ar[d] & \ldots \\
\ldots \ar[r] &  \!SH^{-*}_{(-\infty,\rho)}(V)\! \ar[r] \ar@{=}[d] &
\!SH_*^{(-\infty,\rho)}(V)\! \ar[r] \ar[d]& 
\!\Check{SH}_*^{(-\rho,\rho)}(V)\! \ar[r] \ar[d]
& \!SH^{-*+1}_{(-\infty,\rho)}(V)\! \ar[r] \ar@{=}[d] & \ldots \\
\ldots \ar[r] &  \!SH^{-*}_{(-\infty,\rho)}(V)\! \ar[r] &
\!SH_*(V)\! \ar[r] &
\!\Check{SH}_*^{\ge 0}(V)\! \ar[r] 
& \!SH^{-*+1}_{(-\infty,\rho)}(V)\! \ar[r] & \ldots
}
\end{equation}
Since $SH^{-*}_{(-\infty,\rho)}(V)\simeq
H^{-*+n}(V,M)$ and $SH_*^{(-\infty,\rho)}(V)\simeq
H_{*+n}(V,M)$ (cf. Lemma~\ref{lem:sing} and Lemma~\ref{lem:sing-coh}), the
top and bottom long exact  
sequences in~\eqref{eq:proof_main2_diag1} are the bottom exact
sequences in the diagrams of Proposition~\ref{prop:main2}. To prove the
proposition, we need to show that the middle exact sequence
in~\eqref{eq:proof_main2_diag1} is isomorphic to the homological
(resp.~cohomological) long
exact sequence of the pair $(V,M)$. This essentially follows
from~\cite[Proposition~4.45]{Schwarz}, as we explain now.

For our choice of parameters
$a$ and $b$, this last exact sequence arises by truncating
the range of the action such that only orbits of Type I-IV
for a Hamiltonian $H$ as in Figure~\ref{fig:Vshape} are taken into 
account. 
(Here we take $\eps<\rho$ for the parameter $\eps$ in
the definition of $H$ and the constant $\rho$ above). 
Moreover, with the notation in the proof of
Proposition~\ref{prop:SH_exact}, we have $III_-=III$ and
$III_+=\emptyset$. A deformation argument shows that it is enough to
consider such a Hamiltonian with slope $\mu=\rho$, and for which
$II=III=\emptyset$. Without loss of
generality we can further diminish $\rho$, and assume that $H$
is small enough in $C^2$-norm. Because $V$ is symplectically
aspherical, the Floer 
complex reduces to the Morse complex~\cite[Theorem~6.1]{HS}(see
also~\cite[Theorem~7.3]{SZ}). Our definition of the Floer differential
is such that we consider the Morse complex for the \emph{positive}
gradient vector field $\nabla H$. Equivalently, we are considering
Morse homology for the \emph{negative} gradient vector field $-\nabla
(-H)$ (see Figure~\ref{fig:Vshape_Morse}).  
\begin{figure}[htp] 
\centering
\includegraphics{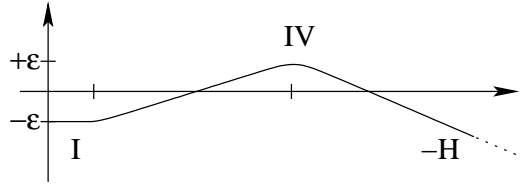}
\caption{Hamiltonian with small slope.} \label{fig:Vshape_Morse}
\end{figure} 
The middle exact sequence
in~\eqref{eq:proof_main2_diag1} is associated to the short exact
sequence of Morse complexes  
\begin{equation} \label{eq:short_Morse} 
0\to C_*^{(-2\rho, 0)}(-H) \to C_*^{(-2\rho,2\rho)}(-H)\to
C_*^{(0,2\rho)}(-H)\to 0. 
\end{equation}
Denote $\widehat V^r:=\{p\in \widehat V \, : \,
-H(p)\le r\}$, $r\in\R$. By~\cite[Prop.~4.45]{Schwarz}
the homology long
exact sequence associated to~\eqref{eq:short_Morse} 
is isomorphic to the long exact sequence of the triple
$(\widehat V^{2\rho},\widehat V^0, \widehat V^{-2\rho})$. By excision
and Poincar\'e duality, the latter is isomorphic to the long exact
sequence of the pair $(V,M)$. 
\end{proof}

\subsection{Definitions in $\wh W$ and dependence only on $\wh V$}
\label{sec:independence} 

Let us now assume that $M\hookrightarrow W$ is an exact contact
embedding of $(M,\xi)$ into the convex exact manifold
$(W,\lambda)$. We denote the bounded component of $W\setminus
M$ by $V$. 

Denote by $\Ad^0(\wh V;\wh W)$ the class of Hamiltonians
$H:\wh W\to \R$ satisfying 
\begin{itemize}
 \item $H\le 0$ on $V$, 
and $H=\mathrm{const}\geq 0$ ouside a compact set,
 \item the periodic orbits of $H$ other than constants at infinity
are transversally nondegenerate if nonconstant, and nondegenerate if
constant. 
\end{itemize}
We define the {\bf symplectic homology groups of $V$ in $W$} by
\begin{align*}
   SH_k(V;W) &:= \lim_{a\nearrow 0}\lim_H FH_k^{(a,\infty)}(H), \cr
   SH_k^+(V;W) &:= \lim_{a\searrow 0}\lim_H FH_k^{(a,\infty)}(H), \cr
   SH_k^-(V;W) &:= \lim_{a\nearrow 0}\lim_{b\searrow 0} \lim_H FH_k^{(a,b)}(H).
\end{align*}

\begin{proposition} \label{prop:SHinW}
 We have $SH_*^\dagger(V;W)\simeq SH_*^\dagger(V)$, for
$\dagger=\emptyset,+,-$. 
\end{proposition}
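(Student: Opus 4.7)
The plan is to exhibit, for each case $\dagger=\emptyset,+,-$, a cofinal subfamily of $\Ad^0(\wh V;\wh W)$ for which both the generating $1$-periodic orbits with action in the relevant window and the connecting Floer trajectories are localized to a neighbourhood of $V$ on which the data can be identified with data used to compute $SH_*^\dagger(V)$ on $\wh V$.

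First I would construct the cofinal family. Because $(W,\lambda)$ is complete and convex exact, the Liouville flow from $M$ is defined for all positive times in $\wh W$, so the embedding $(0,1]\times M\hookrightarrow V$ extends to a symplectic embedding $(0,R]\times M\hookrightarrow\wh W$ for every $R>1$. For parameters $\mu>0$ with $\mu\notin\Spec(M,\lambda)$, $R\gg 1$, $0<\delta<1$ and $\eps>0$ small, let $H=H_{\mu,R,\eps}\in\Ad^0(\wh V;\wh W)$ be a Hamiltonian which, up to smoothing, is a $C^2$-small Morse function of size $\eps$ on $V\setminus([\delta,1]\times M)$, equals $h(r)$ on $[\delta,R]\times M$ with $h$ strictly convex, transitioning from $-\eps$ at $r=1$ to slope $\mu$ and then levelling off to a constant $C_{\mu,R}=h(R)>0$, and equals $C_{\mu,R}$ on all of $\wh W\setminus([0,R]\times M)$ (perturbed to a $C^2$-small Morse function there to achieve nondegeneracy). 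Since outside a compact set an arbitrary element of $\Ad^0(\wh V;\wh W)$ is bounded, this family is cofinal with respect to the partial order $H\leq K$.

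Second I would analyse the $1$-periodic orbits of $H_{\mu,R,\eps}$ by types and actions, as in the proofs of Propositions~\ref{prop:alternative} and~\ref{prop:SH_exact}:
\begin{itemize}
\item constants in $V$, with action in a small neighbourhood of $\eps$, hence close to $0$;
\item nonconstant orbits on the levels $r\in[1,R]$ where $h'(r)\in\Spec(M,\lambda)$, with action $rh'(r)-h(r)\ge T_0+O(\eps)$;
\item constants in $\wh W\setminus([0,R]\times M)$, with action close to $-C_{\mu,R}\ll 0$.
\end{itemize}
In particular, for any fixed $a<0$ close to $0$ and any $b\in(0,\infty]$, choosing $\mu,R$ large and $\eps$ small the complex $CF^{(a,b)}(H_{\mu,R,\eps})$ contains no generator from the ``outer'' constants, and every generator lies in $V\cup([1,R_0]\times M)$ for some $R_0=R_0(\mu)<R$ independent of the choices on the outer region.

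Third I would confine Floer trajectories to the neck. On $[1,R]\times M$ the Hamiltonian is of the form $h(r)$; for cylindrical almost complex structures the maximum principle of Lemma~\ref{lem:lap1} (invoked in Section~\ref{sec:HamsuppV} precisely for this purpose) implies that the coordinate $r$ cannot attain an interior maximum along Floer cylinders, so trajectories between generators with $r\le R_0$ stay in $\{r\le R_0\}$. In the region where $H$ is constant the Floer equation reduces to the $J$-holomorphic equation, and since $\wh W$ admits plurisubharmonic exhaustion (from the convex exact structure), no Floer cylinder can venture into this outer region and return. Thus the subcomplex generated by orbits in $\{r\le R_0\}$ is closed under the differential and computes $FH^{(a,b)}(H_{\mu,R,\eps})$.

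Finally I would perform the comparison. Pick $K_{\mu,\eps}\in\Ad^0(\wh V)$ that agrees with $H_{\mu,R,\eps}$ on $V\cup([1,R_0]\times M)$ and is linear of slope $\mu$ on $[R_0,\infty)\times M$. Applying the same maximum principle on $\wh V$, the complex $CF^{(a,b)}(K_{\mu,\eps})$ with $b<rh'(r)-h(r)$ at $r=R_0$ is generated by the same orbits with the same Floer differential, yielding a canonical isomorphism $FH^{(a,b)}(H_{\mu,R,\eps})\cong FH^{(a,b)}(K_{\mu,\eps})$ compatible with continuation maps. Passing to the direct limit as $\mu,R\to\infty$ (and $\eps\to 0$) and then to the appropriate limits in the action window (respectively $a\nearrow 0$ with $b=\infty$ for $\dagger=\emptyset$; $a\searrow 0$ with $b=\infty$ for $\dagger=+$; $a\nearrow 0, b\searrow 0$ for $\dagger=-$) gives $SH_*^\dagger(V;W)\simeq SH_*^\dagger(V)$.

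The main obstacle is the combined maximum principle/confinement argument in $\wh W$: one must rule out Floer cylinders that dip into the outer region where $H$ is constant and $J$ is only required to tame $\omega$. This is handled exactly as in the stretch-of-the-neck/cylindrical $J$ arguments already invoked in Section~\ref{sec:HamsuppV} and in the proof of Theorem~\ref{thm:independence}, but the bookkeeping (and verification of cofinality) is what needs to be carried out carefully.
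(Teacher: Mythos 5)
Your overall plan (cofinal family, orbit analysis, confinement, comparison on $\wh V$) has the right shape, but the orbit analysis and the confinement step each contain a genuine gap.

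On the orbit analysis: your cofinal Hamiltonian is described as having $h$ ``strictly convex, transitioning from $-\eps$ at $r=1$ to slope $\mu$ and then levelling off to a constant'', which is self-contradictory --- the transition from slope $\mu$ back to slope $0$ forces $h''<0$ in a neighbourhood of the levelling. That concave region produces a family of nonconstant $1$-periodic orbits which your classification omits. For those orbits $h(r)$ is close to $C_{\mu,R}\gg 0$, not to $-\eps$, so the estimate $rh'(r)-h(r)\geq T_0+O(\eps)$ fails; instead the action runs roughly from $+\mu$ down to $-C_{\mu,R}$ as the period decreases, and in particular sweeps through every fixed action window $(a,b)$ around $0$. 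These ``returning'' orbits are exactly what distinguishes a Hamiltonian in $\Ad^0(\wh V;\wh W)$ (eventually constant) from one in $\Ad^0(\wh V)$ (eventually linear). A complete proof must either arrange the parameters so that their actions avoid $(a,b)$ or show that no Floer trajectory connects them to the inner generators; your sketch gives no such mechanism.

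On the confinement: the sentence ``since $\wh W$ admits plurisubharmonic exhaustion (from the convex exact structure), no Floer cylinder can venture into this outer region and return'' is not a valid argument. Plurisubharmonicity of an exhaustion controls curves escaping to infinity in $\wh W$; it does not prevent a $J$-holomorphic piece of a Floer cylinder from dipping into a bounded portion of $W\setminus\bigl(V\cup\psi(M\times[1,R])\bigr)$, which in general is not covered by the symplectization collar, so neither the radial coordinate $r$ nor the maximum principle of Lemma~\ref{lem:lap1} is available there. The paper's (terse) proof explicitly identifies the correct tool: Gromov's monotonicity principle, applied exactly as in the proof of Proposition~\ref{prop:indep} --- choose $R$ so large that $R\eps>b-a$, take $H$ constant and $J$ a fixed cylindrical structure on $\psi(M\times[2R,4R])$, and note that any piece of a Floer trajectory crossing this band is $J$-holomorphic there, hence has symplectic area at least $R\eps$, exceeding the available energy. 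Neither of the references you cite --- the maximum principle from Section~\ref{sec:HamsuppV} nor the SFT stretch-of-the-neck of Theorem~\ref{thm:independence} (which is a Bourgeois--Oancea argument used for a different statement) --- supplies this monotonicity estimate.
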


\begin{proof}
The main ingredients of the proof are the maximum
principle and Gromov's monotonicity principle for the area of
pseudo-holomorphic curves (see also~\cite[Lemma~1]{O}). The arguments
are very similar to the ones in the proof of
Proposition~\ref{prop:indep} below, and we leave the details to the
reader. 
\end{proof} 

Similarly to $\Ad^0(\wh V;\wh W)$, we define the class
$\Check\Ad^0(\wh V;\wh W)$ of admissible Hamiltonians $H:\wh W\to\R$
by requiring the following conditions:  
\begin{itemize} 
\item $H$ coincides with an admisible Hamiltonian
$H'\in\Check{\Ad}^0(\wh V)$ on $V\cup [1,R)\times M$ for some $R>1$,
\item $H=\mathrm{const}$ approximately equal to $a(R-1)$ on $\wh
W\setminus (V\cup[1,R)\times M)$. 
\end{itemize} 
We define the groups $\Check{SH}_k^\dagger(V;W)$,
$\dagger=\emptyset,\ge 0, \le 0$ by
formulas~\eqref{eq:CheckSH}, \eqref{eq:CheckSH+}, \eqref{eq:CheckSH-}
using direct limits over 
$H\to\infty$ in $\Check\Ad^0(\wh V;\wh W)$. We then have 
$$
\Check{SH}_k^\dagger(V;W)\simeq \Check{SH}_k^\dagger(V), \qquad
\dagger= \emptyset, \dagger =``\ge 0\mbox{''}, \mbox{or }\dagger =``\le 0 \mbox{''}. 
$$
Moreover, the statements of Theorem~\ref{thm:main_exact}
and Proposition~\ref{prop:main2} remain valid.

\section{Rabinowitz Floer homology} \label{sec:RFH}

Let us recall from~\cite{CF} the definition of Rabinowitz Floer
homology. 

On an exact symplectic manifold $(W,\lambda)$ with symplectic form
$\om=d\lambda$ define the {\bf Liouville vector field} $X$ by
$i_{X}\om=\lambda$. We say that $(W,\lambda)$ is {\bf complete
  and convex} if the following conditions hold:
\begin{itemize}
\item There exists a compact subset $K\subset W$ with smooth boundary
  such that $X$ points out of $K$ along $\p K$. 
\item The vector field $X$ is complete and has no critical
  points outside $K$. 
\end{itemize}
(This includes the condition of ``bounded topology'' in~\cite{CF}). 
Equivalently, $(W,\lambda)$ is complete and convex iff there exists an
embedding $\phi:N\times[1,\infty)\to W$ such 
that $\phi^*\lambda=r\alpha_N$, where $r$ denotes the coordinate on
$[1,\infty)$ and $\alpha_N$ is a contact form, and
such that $W\setminus\phi\bigl(N\times(1,\infty)\bigr)$ is
compact. (To see this, simply apply the flow of $X$ to $N:=\p
K$. cf.~\cite{CF}). 

Consider now a complete convex exact symplectic manifold $(W,\lambda)$
and a compact subset $V\subset W$ with smooth boundary $M=\p V$ such
that $\lambda|_M$ is a positive contact form with Reeb vector field
$R$. 
We abbreviate by $\mathcal{L}=C^\infty(S^1,W)$ the free loop space
of $W$. A {\bf defining Hamiltonian} for $M$ is a smooth function
$H:W\to\R$ with regular level set $M=H^{-1}(0)$ whose Hamiltonian
vector field $X_H$ (defined by $i_{X_H}\om=-dH$) has compact support
and agrees with $R$ along $M$. Given such a Hamiltonian, the {\bf
Rabinowitz action functional} is defined by  
$$
   A_H: \mathcal{L} \times \R \to \R,
$$
$$
   A_H(x,\eta):= \int_0^1 x^*\lambda - \eta \int_0^1 H(x(t))dt.
$$
Critical points of $A_H$ are solutions of the equations
\begin{equation}\label{crit1}
\left. \begin{array}{cc}
\partial_t x(t)= \eta X_H(x(t)),
& t \in \mathbb{R}/\mathbb{Z}, \\
\int_0^1H(x(t))dt=0. & \\
\end{array}
\right\}
\end{equation}
By the first equation $H$ is constant along $x$, so the second
equation implies $H(x(t))\equiv 0$. Since $X_H=R$ along $\Sigma$, 
the equations (\ref{crit1}) are equivalent to
\begin{equation}\label{crit2}
\left. \begin{array}{cc}
\partial_t x(t)= \eta R(v(t)),
& t \in \mathbb{R}/\mathbb{Z}, \\
x(t)\in \Sigma, & t \in \mathbb{R}/\mathbb{Z}. \\
\end{array}
\right\}
\end{equation}
So there are three types of critical points: closed Reeb orbits on $M$
which are positively parametrized 
and correspond to $\eta>0$, closed Reeb orbits on $M$ which
are negatively parametrized and correspond to $\eta<0$, and constant
loops on $M$ which correspond to $\eta=0$. The action of a critical
point $(x,\eta)$ is $A_H(x,\eta)=\eta$. 

A compatible almost complex structure $J$ on (part of) the symplectization
$\bigl(N\times\R_+,d(r\alpha_N)\bigr)$ of a contact manifold $(N,\alpha_N)$
is called {\bf cylindrical} if it satisfies:
\begin{itemize}
\item $J$ maps the Liouville vector field $r\p_r$ to the Reeb vector
  field $R$;
\item $J$ preserves the contact distribution $\ker\alpha_N$;
\item $J$ is invariant under the Liouville flow
  $(y,r)\mapsto(y,e^tr)$, $t\in\R$. 
\end{itemize}
A compatible almost complex structure $J$ on a complete convex exact
symplectic manifold $(W,\lambda)$ is called cylindrical if $\phi^*J$
is cylindrical on the collar
$\bigl(N\times[1,\infty),d(r\alpha_N)\bigr)$ at infinity.  
For a smooth family $(J_t)_{t\in S^1}$ of cylindrical almost complex
structures on $(W,\lambda)$ we consider the following metric $g=g_J$ on
$\mathcal{L}\times \mathbb{R}$. Given a point 
$(x,\eta) \in \mathcal{L}\times \mathbb{R}$
and two tangent vectors $(\hat{x}_1,\hat{\eta}_1),(\hat{x}_2,\hat{\eta}_2)
\in T_{(x,\eta)}(\mathcal{L}\times \mathbb{R})=\Gamma(S^1,x^*TW)
\times \mathbb{R}$ the metric is given by
$$
   g_{(x,\eta)}\big((\hat{x}_1,\hat{\eta}_1),(\hat{x}_2,\hat{\eta}_2)\big)
=\int_0^1\omega\big(\hat{x}_1(t),J_t(x(t))\hat{x}_2(t)\big)dt+
\hat{\eta}_1\cdot\hat{\eta}_2.
$$
The gradient of the Rabinowitz action functional $A_{H}$ with respect to the
metric $g_J$ at a point $(x,\eta) \in \mathcal{L}\times \mathbb{R}$ reads
$$
   \nabla A_H(x,\eta) = \nabla_J A_{H}(x,\eta)=
   \left(\begin{array}{c}
   -J_t(x)\big(\partial_t x-\eta X_{H}(x)\big)\\
   -\int_0^1H(x(t))dt.
   \end{array}\right)
$$
Hence (positive) gradient flow lines are solutions 
$(x,\eta)\in C^\infty(\mathbb{R}\times S^1,\widehat{V}) \times
C^\infty(\mathbb{R},\mathbb{R})$ of the partial differential equation
\begin{equation}\label{grad1}
\left.\begin{array}{c}
\partial_s x+J_t(x)\big(\partial_t x-\eta X_{H}(x)\big)=0\\
\partial_s\eta + \int_0^1H(x(t))dt=0.
\end{array}\right\}
\end{equation}
It is shown in~\cite{CF} that for $-\infty<a<b\leq\infty$ the
resulting truncated Floer homology groups 
$$
   RFH^{(a,b)}(M,W):=FH^{(a,b)}(A_H,J),
$$
corresponding to action values in $(a,b)$, are well-defined and 
do not depend on the choice of cylindrical $J$ and defining
Hamiltonian $H$. The {\bf Rabinowitz Floer homology} of $(M,W)$ is
defined as the limit 
$$
   RFH_*(M,W) := \lim_{\stackrel \longrightarrow \mu} \lim_{\stackrel
   \longleftarrow \lambda} RFH_*^{(-\lambda,\mu)}(M,W), \qquad
   \lambda,\mu\to\infty.
$$
By~\cite[Theorem~A]{CF-morse}, this definition is equivalent to the
original one in~\cite{CF}. 

\subsection{Independence of the ambient manifold}

Our first new observation on Rabinowitz Floer homology is

\begin{proposition}\label{prop:indep}
The Floer homology groups $RFH^{(a,b)}(M,W)$ for $-\infty<a<b<\infty$
depend only on the exact symplectic manifold $(V,\lambda)$ and not on
the ambient manifold $W$. 
\end{proposition}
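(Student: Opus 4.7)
The plan is to identify the two Floer complexes by arranging both the defining data and, crucially, all gradient trajectories between generators with action in $(a,b)$ so that they live in a common open subset of $W$ and of $\widehat V$. Critical points of $\cA_H$ are intrinsically associated to $(M,\lambda|_M)$, so there is a canonical bijection between generators in the two settings; the content of the proposition is that the differentials agree.

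First I would use the Liouville flow of $\lambda$ in $W$ to embed a collar $\phi:M\times(1-\delta_0,1+\delta_0)\hookrightarrow W$ with $\phi^*\lambda=r\alpha$, where $\alpha:=\lambda|_M$. The same formula furnishes a canonical collar inside $\widehat V$, so $U:=V\cup\phi\bigl(M\times[1,1+\delta_0)\bigr)$ sits naturally as an open subset in both $W$ and $\widehat V$. I would then choose a defining Hamiltonian $H$ for $M$ that depends only on $r$ in the collar, of the form $h(r)$ with $h(1)=0$ and $h'(1)=1$, and is compactly supported in $\phi\bigl(M\times[1-\delta_0/2,1+\delta_0/2]\bigr)$. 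This $H$ is simultaneously a compactly supported function on $W$ and on $\widehat V$. The almost complex structure $(J_t)_{t\in S^1}$ is taken cylindrical and $r$-invariant on the collar, with arbitrary cylindrical-at-infinity extensions outside; the two extensions on $W$ and on $\widehat V$ need not agree off $U$.

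Next I would prove that every gradient flow line $(x,\eta):\R\times S^1\to W\times\R$ of $\cA_H$ with asymptotic critical points of action in $(a,b)$ stays inside a fixed compact set $K\subset U$. On the region $\{(s,t):r\circ x(s,t)\geq 1+\delta_0/2\}$ the Hamiltonian $H$ vanishes, so the gradient equation (\ref{grad1}) degenerates to $\partial_s x+J_t(x)\partial_t x=0$, and cylindrical compatibility of $J$ with $d(r\alpha)$ makes $r\circ x$ subharmonic there. The maximum principle together with the asymptotic condition $r\to 1$ at $s=\pm\infty$ then forces $r\circ x\leq 1+\delta_0/2$ everywhere. A second application of the maximum principle on the convex end at infinity (using cylindricality of $J$ there) rules out escape of trajectories to infinity in $W$ or in $\widehat V$, while inside $V$ confinement is automatic. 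Combined with the $L^\infty$-bound on the Lagrange multiplier $|\eta(s)|$ along trajectories with action values in a bounded window, which is the central a~priori estimate of \cite{CF}, this yields uniform $C^0$-control.

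Once confinement is established, the moduli spaces of Floer trajectories between critical points with action in $(a,b)$ are literally the same when computed in $W$ or in $\widehat V$: they consist of maps into the common open set $U$, equipped with identical $H$ and $J$. Hence the chain complexes $CF^{(a,b)}(\cA_H,J)$ coincide, and so do their homologies. The main obstacle is the $L^\infty$-bound on $\eta$: unlike the local maximum principle for $r$, it is a global feature of the indefinite Rabinowitz functional and depends on the fine structure of $H$ near $M$, specifically the Lagrange-multiplier constraint $\int_0^1 H(x)\,dt=0$; with the estimate from \cite{CF} in hand, the proof is complete.
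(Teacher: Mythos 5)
Your plan is sound in outline, and the identification of generators, the choice of defining Hamiltonian, and the appeal to the $\eta$-bound from~\cite{CF} are exactly as in the paper. The gap is in the confinement of the $x$-component, specifically in the first maximum principle step. You want to conclude $r\circ x\leq 1+\delta_0/2$ by subharmonicity of $r\circ x$ on the region where $H$ vanishes, but the maximum principle only applies on a domain $\Omega\subset\R\times S^1$ whose entire boundary is mapped to the level $\{r=1+\delta_0/2\}$. In the ambient $W$, nothing prevents the trajectory from exiting the collar $\phi\bigl(M\times(1,1+\delta_0)\bigr)$ through its outer boundary into the rest of the unbounded component of $W\setminus M$, where the coordinate $r$ is undefined and where (by your own setup) $J$ is an arbitrary extension, not cylindrical. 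In that case $\partial\Omega$ has a piece sitting at the top of the collar, not at $\{r=1+\delta_0/2\}$, and the maximum principle is inconclusive. Enlarging the collar does not help without an a~priori reason the trajectory cannot still exit it, and the Liouville flow $\psi:M\times\R_+\hookrightarrow W$ need not cover the full unbounded component of $W\setminus M$ (e.g.~if $X$ has zeros there), so there is a genuinely uncontrolled intermediate region. Your second maximum principle at the convex end of $W$ does not reach into this region either.

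The paper's proof replaces the maximum principle by an area estimate that is local in $W$ and hence immune to this problem. Using Gromov's Monotonicity Lemma on the (rescaled) annulus $\psi\bigl(M\times[2R,4R]\bigr)$, one finds $\eps>0$ (independent of $R$) such that any $J$-holomorphic piece meeting the level $\psi(M\times\{3R\})$ and exiting the annulus has symplectic area at least $R\eps$. Since $X_H=0$ on this annulus, the restriction of any gradient flow line to it is $J$-holomorphic, and the gradient energy identity bounds its total area by the action difference, which is $\leq b-a$. Choosing $R$ with $R\eps>b-a$ thus forces the trajectory to stay inside $V\cup\psi\bigl(M\times[1,3R)\bigr)$, with no assumption on $J$ outside the annulus and no need for a globally defined exhaustion function. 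This is the missing ingredient in your argument; it is genuinely stronger than the local maximum principle because it controls excursions into the part of $W$ that is invisible to the symplectization coordinate.
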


\begin{proof}
Since the Liouville vector field $X$ is complete, its flow defines an embedding
$\psi:M\times\R_+\into W$ of the symplectization of
$(M,\lambda_M:=\lambda|_M)$ such that $\psi^*\lambda=r\lambda_M$
(see~\cite{CF}). Pick a cylindrical almost complex structure $J_M$ on
$M\times\R_+$. By Gromov's Monotonicity
Lemma~\cite[Proposition~4.3.1]{Sik}, there exists an $\eps>0$ 
such that every $J_M$-holomorphic curve in $M\times\R_+$ which meets the
level $M\times\{3\}$ and exits the set $M\times[2,4]$ has symplectic
area at least $\eps$. Rescaling by $R>1$, it follows that every
$J_M$-holomorphic curve which meets the 
level $M\times\{3R\}$ and exits the set $M\times[2R,4R]$ has symplectic
area at least $R\eps$. 

Now fix $-\infty<a<b<\infty$ and pick $R>1$ such that $R\eps>b-a$. 
Pick a loop $J_t$ of cylindrical almost complex structures on
$(W,\lambda)$ such that $\psi^*J_t=J_M$ over $M\times[2R,4R]$. Pick a
defining Hamiltonian $H$ which is constant outside
$V\cup\psi(M\times[1,2R])$. We claim that under these conditions the
first component $x$ of every gradient flow line $(x,\eta)$ of
$\nabla_JA_H$ connecting critical points with actions in the interval
$(a,b)$ remains in $V\cup\psi\bigl(M\times[1,3R)\bigr)$.  

To see this, we argue by contradiction. Thus suppose that $(x,\eta)$
is a gradient flow line with asymptotics $(x^\pm,\eta^\pm)$ having
actions in $(a,b)$ whose first component $x$ meets the level
$M\times\{3R\}$. Since the asymptotics of $x$ are contained in
$M\times\{1\}$ it exits the set $M\times[2R,4R]$. Let
$U\subset\R\times S^1$ be a connected component of
$x^{-1}(M\times[2R,4R])$ meeting the level $M\times\{3R\}$. Since
$X_H$ vanishes on $M\times[2R,4R]$, the first equation in~\eqref{grad1}
shows that $x|_U$ is $J_M$-holomorphic, hence by the preceding
discussion it has symplectic area at least $R\eps$. The following
contradiction now proves the claim:
\begin{align*}
   b-a
   &\geq A_H(x^+,\eta^+)-A_H(x^-,\eta^-) \cr
   &= \int_{-\infty}^\infty\|\nabla A_H(x,\eta)(s)\|^2ds \cr
   &\geq \int_U|\p_sx|^2ds\,dt \cr
   &= \int_Ux^*\om \cr
   &\geq R\eps. 
\end{align*}
The claim shows that the Floer homology group $RFH^{(a,b)}(M,W)$ can
be computed from critical points and gradient flow lines in the
completion $V\cup_\psi M\times[1,\infty)$ and is therefore independent
of the ambient manifold $W$. 
\end{proof}

In view of Proposition~\ref{prop:indep} we will denote from now on the
Floer homology groups $RFH^{(a,b)}(M,W)$ by $RFH^{(a,b)}(V)$, and the
Rabinowitz Floer homology by
$$
   RFH_*(V) = \lim_{\stackrel \longrightarrow \mu} \lim_{\stackrel
   \longleftarrow \lambda} RFH_*^{(-\lambda,\mu)}(V), \qquad
   \lambda,\mu\to\infty.
$$
We further introduce 
$$
RFH_*^{\ge 0}(V):= RFH_*^{(-\delta,+\infty)}(V), \qquad 
RFH_*^{\le 0}(V):= RFH_*^{(-\infty,\delta)}(V)
$$
and 
$$
RFH_*^0(V):=RFH_*^{(-\delta,\delta)}(V)
$$
for $\delta>0$ small enough. It then follows from the
definition~\cite{CF} that  
$$
RFH_*^0(V) = H_{*+n-1}(M). 
$$
We note that there are morphisms 
$$
H_{*+n-1}(M) \to RFH_*^{\ge 0}(V)
$$
and 
$$
RFH_*^{\le 0}(V) \to H_{*+n-1}(M) \stackrel \sim \to H^{-*+n}(M)
$$ 
induced by action truncation. However, there is no morphism between
$H_{*+n-1}(M)$ and $RFH_*(V)$ since the latter group is defined using
both a direct and an inverse limit.  

\begin{remark} \label{rmk:grading}
Rabinowitz Floer homology is $\Z$-resp.~$\Z_2$-graded under the same
conditions as symplectic homology, see Section~\ref{ss:grading}. 
The $\Z$-grading on $RFH_*$ (when it is defined) used in this paper is
obtained from the 
original one in~\cite{CF} by adding $\frac 1 2$ for the generators
with positive action and subtracting $\frac 1 2$ for the generators with
negative action. 
\end{remark} 

As usual for $\R$-filtered homology theories, given $-\infty\le
a<b<c\le\infty$ there is a long exact sequence of 
homology groups induced by truncation by the values of the Rabinowitz
action functional   
$$
\dots \to RFH_*^{(a,b)}(V) \to RFH_*^{(a,c)}(V) \to RFH_*^{(b,c)}(V) \to
RFH_{*-1}^{(a,b)}(V) \to \dots
$$

\section{Perturbations of the Rabinowitz action
  functional}\label{sec:perturbations} 

In this section we introduce a family of perturbations of
the Rabinowitz action functional which will be used later
to show that Rabinowitz Floer homology is isomorphic to
symplectic homology. These perturbations are perturbations
in the second variable $\eta$. In particular, the perturbed
Rabinowitz action functional is not linear any more in
$\eta$, so that the interpretation of $\eta$ as a Lagrange multiplier
is not any more true for the perturbed functional.

Assume that $(V,\lambda)$ is a compact exact symplectic manifold with
boundary $M=\p V$ such that $\lambda_M:=\lambda|_M$ is a positive
contact form. Denote by 
$$
   \hat V:=V\cup \bigl(M\times[1,\infty)\bigr)
$$
its completion and extend the 1-form $\lambda$ from $V$ to $\hat V$ by
$\lambda:=r\lambda_M$ on $M\times [1,\infty)$.  
Suppose further that
$H \in C^\infty(\hat V,\mathbb{R})$ is an autonomous Hamiltonian
and $b,c \in C^\infty(\mathbb{R},\mathbb{R})$ are smooth functions.
We abbreviate by $\mathcal{L}=C^\infty(S^1,\hat V)$ the free loop
space of $\hat V$. Consider the perturbed Rabinowitz action functional 
$$
   A_{H,b,c} \colon \mathcal{L}\times \mathbb{R} \to \mathbb{R}
$$
defined for $(x,\eta) \in \mathcal{L}\times \mathbb{R}$ by
$$
   A_{H,b,c}(x,\eta)=\int_0^1x^*\lambda-b(\eta)\int_0^1H(x(t))dt+c(\eta).
$$
Note that
$$
  A_{H,\eta,0}=A_H.
$$
The critical points of $A_{H,b,c}$ are pairs $(x,\eta)$ such that 
$$
\left\{\begin{array}{l} 
\dot x = b(\eta) X_H, \\ 
b'(\eta)\int H(x(t))dt = c'(\eta).
\end{array}\right.
$$
The gradient of the action functional $A_{H,b,c}$ with respect to the
metric $g_J$ induced by a circle $J_t$ of cylindrical almost complex
structures at a point $(x,\eta) \in \mathcal{L}\times \mathbb{R}$ reads
$$
   \nabla_J A_{H,b,c}(x,\eta)=
   \left(\begin{array}{c}
   -J_t(x)\big(\partial_t x-b(\eta)X_{H}(x)\big)\\
   -b'(\eta)\int_0^1H(x(t))dt+c'(\eta).
   \end{array}\right)
$$
Hence (positive) gradient flow lines are solutions 
$(x,\eta)\in C^\infty(\mathbb{R}\times S^1,\widehat{V}) \times
C^\infty(\mathbb{R},\mathbb{R})$ of the partial differential equation
\begin{equation}\label{grad}
\left.\begin{array}{c}
\partial_s x+J_t(x)\big(\partial_t x-b(\eta)X_{H}(x)\big)=0\\
\partial_s\eta + b'(\eta)\int_0^1H(x(t))dt - c'(\eta)=0.
\end{array}\right\}
\end{equation}
Compactness up to breaking for solutions of (\ref{grad}) of fixed
asymptotics was shown in \cite{CF} in the unperturbed case and for $H$
a defining Hamiltonian for $M$. In the general case, this involves the 
following uniform bounds:
\begin{itemize}
 \item a uniform $L^\infty$-bound on the loop $x$,
 \item a uniform $L^\infty$-bound on the Lagrange multiplier $\eta$,
 \item a uniform $L^\infty$-bound for the derivatives of $x$.
\end{itemize}
Given these bounds compactness up to breaking then follows from the
usual arguments in Floer homology. The bound on the derivatives of $x$
is standard once the uniform bounds on $x$ and $\eta$ are established: 
By usual bubbling analysis, an explosion of derivatives would give
rise to a nonconstant $J$-holomorphic sphere, which does not exist 
since the symplectic form $d\hat\lambda$ is exact. 

In the remainder of this section we will establish uniform bounds on
$x$ and $\eta$ under suitable hypotheses. 

Note that the Liouville flow defines an embedding $M\times\R_+
\into\hat V$, where we use the notation $\R_+=(0,\infty)$. In this
section we will restrict to {\bf radial Hamiltonians}
$$
   H(y,r)=h(r)
$$ 
depending only on the coordinate $r\in\R_+$. Here
$h:\R_+\to\R$ is a smooth function which is constant near $0$ and
$H$ is extended to $\hat V$ by this constant. Moreover, we assume
throughout that 
$$
   h'(r)\geq 0\qquad \text{for all }r\in\R_+.  
$$

\subsection{A Laplace estimate}
In contrast to~\cite{CF} we cannot always assume that
the Hamiltonian $H$ has fixed compact support. Instead of that
we also want to consider Hamiltonians which grow linearly
in the symplectization. In this case gradient flow lines
of the Rabinowitz action functional do not reduce to holomorphic
curves outside a compact set and hence we cannot use
convexity at infinity directly. Nevertheless we show in the following
subsections how we can obtain an $L^\infty$-estimate for gradient flow
lines which only depends on the energy of the flow line.

Consider the subset $M\times\R_+\subset\hat V$ as above. 
The symplectic form $\om=d\lambda$ is given on $M\times\R_+$
by 
$$
   \omega=d(r\lambda_M) = dr\wedge\lambda_M+r\,d\lambda_M.
$$ 
The Liouville vector field $X$ is given on
$M\times\R_+$ by 
$$
   X=r\frac{\partial}{\partial r}
$$ 
and its flow is the map
$$
   \phi^\rho(y,r)=(y,re^{\rho}), \quad 
   \rho \in \mathbb{R},\,\,(y,r) \in M\times\R_+.
$$
Fix a smooth family of cylindrical almost complex structures $J_t$. 
We are interested in {\bf partial gradient flow lines} of the
perturbed Rabinowitz action functional, i.e.~solutions 
$$
   w=(x,\eta) \in
   C^\infty([-T,T]\times S^1,\hat V) \times C^\infty([-T,T],\R)
$$
of~\eqref{grad} on some compact time interval $[-T,T]$. 
Here we assume the the Hamiltonian $H(y,r)=h(r)$ is radial. 
Using the formula
$$
   \nabla_t H(x) = h'(r)X(x)
$$
for the gradient of the metric $\omega(\cdot, J_t \cdot)$ on
$M\times\R_+$, we observe that a partial gradient flow line
$(x,\eta)$ satisfies at points where $x(s,t)\in M\times\R_+$
the equation   
\begin{equation}\label{grW}
\left.\begin{array}{c}
\partial_s x+J_t\partial_tx+b(\eta)h'(r)X(x)\\
\partial_s \eta + b'(\eta)\int_0^1h(r)dt - c'(\eta)=0. 
\end{array}\right\}
\end{equation}
We define $f:M\times\R_+\to\R$ by 
$$
   f(y,r):=r,
$$
and for a partial gradient flow line $w$ we define 
$$
   \rho(s,t):=\mathrm{ln}(f(x(s,t))) 
$$
whenever $x(s,t)\in M\times\R_+$. 
Our $L^\infty$-bounds for $x$ are based on the following inequality
for the Laplacian of $\rho$.  

\begin{lemma}\label{lem:lap1}
Let $H(x)=h(r)$ be a radial Hamiltonian and let $(x,\eta) \in 
C^\infty([-T,T]\times S^1,\hat V) \times C^\infty([-T,T],\R)$ be a partial
gradient flow line for $A_{H,b,c}$. Then at points $(s,t)$ where
$x(s,t)\in M\times\R_+$ the Laplacians of $f\circ x$ and $\rho$ satisfy
\begin{equation}\label{eq:lap-f}
   \Delta(f\circ x)
   = \langle \partial_s x,\partial_s x\rangle_t -
   \partial_s\big(h'(r)b(\eta)\big)f\circ x,
\end{equation}
\begin{equation}\label{eq:lap-rho}
   \Delta\rho \geq -\partial_s\big(h'(r)b(\eta)\big). 
\end{equation}
\end{lemma}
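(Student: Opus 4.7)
The calculation is local on $M\times\R_+$ and rests on the identity
$$
-df\circ J = \lambda
$$
valid there, with $f(y,r)=r$. This is a direct consequence of $J$ being cylindrical: decomposing $T(M\times\R_+)=\R X\oplus\R R\oplus\xi$ one has $JX=R$, $JR=-X$, $J\xi=\xi$, while $\lambda=r\lambda_M$ and $df=dr$ evaluate on $(X,R,\xi)$ identically as $(0,r,0)$.

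My first step is to rewrite the gradient-flow equation~\eqref{grW} in the equivalent form $\p_t x = J\p_s x + b(\eta)h'(r)R$ and then compute the partial derivatives of $u:=f\circ x$:
$$
u_t = df(\p_t x) = -\lambda(\p_s x), \qquad u_s = df(\p_s x) = \lambda(\p_t x) - b(\eta)h'(r)\,u,
$$
using $df\circ J = -\lambda$, $df(R)=0$ and $df(X)=r=u$. Differentiating and summing gives
$$
\Delta u = \bigl[\p_s\lambda(\p_t x) - \p_t\lambda(\p_s x)\bigr] - \p_s\bigl(b(\eta)h'(r)\bigr)\,u - b(\eta)h'(r)\,u_s.
$$
The bracket equals $d(x^*\lambda)(\p_s,\p_t) = \omega(\p_s x,\p_t x)$. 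Writing $\omega(\p_s x,\p_t x)=g(\p_s x,-J\p_t x)$ and substituting $-J\p_t x = \p_s x + b(\eta)h'(r)X$ from~\eqref{grW}, together with the identity $g(\p_s x,X)=\omega(\p_s x,R)=df(\p_s x)=u_s$, one finds $\omega(\p_s x,\p_t x)=|\p_s x|^2 + b(\eta)h'(r)\,u_s$. The two copies of $b(\eta)h'(r)u_s$ cancel, yielding~\eqref{eq:lap-f}.

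For~\eqref{eq:lap-rho} I would apply $\Delta\ln u = u^{-1}\Delta u - u^{-2}(u_s^2+u_t^2)$. Substituting~\eqref{eq:lap-f} with $u=r$ gives
$$
\Delta\rho = \frac{r|\p_s x|^2 - u_s^2 - u_t^2}{r^2} - \p_s\bigl(h'(r)b(\eta)\bigr),
$$
so the inequality reduces to the pointwise bound $r|\p_s x|^2 \ge u_s^2+u_t^2$. Decompose $\p_s x = aX+bR+v$ with $v\in\xi$ in the $g$-orthogonal splitting; noting the non-standard normalization $|X|_g^2=|R|_g^2=r$, one has $|\p_s x|^2 = (a^2+b^2)r+|v|^2$ and $u_s=ar$. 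Applying $\p_t x = J\p_s x + b(\eta)h'(r)R = -bX + (a+b(\eta)h'(r))R + Jv$ one gets $u_t = -br$ (the $b(\eta)h'(r)R$ correction drops out since $df(R)=0$). Hence
$$
r|\p_s x|^2 - u_s^2 - u_t^2 = (a^2+b^2)r^2 + r|v|^2 - a^2r^2 - b^2r^2 = r|v|^2 \ge 0,
$$
which proves~\eqref{eq:lap-rho}. The only real subtlety is tracking how the $a,b,v$ components of $\p_s x$ redistribute under multiplication by $J$ and then carefully using the metric normalization $|X|_g^2=r$ so that the deficit reduces exactly to the nonnegative contact-component term $r|v|^2$; the asymmetry introduced by the perturbation $b(\eta)h'(r)R$ in $\p_t x$ produces no obstruction precisely because $df$ annihilates $R$.
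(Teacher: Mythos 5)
Your proof is correct and takes essentially the paper's route: the identity $df\circ J=-\lambda$ on the cone is the unfolded form of the paper's $-d^c(f\circ x)$ computation, the relation $d(x^*\lambda)=x^*\omega$ replaces the Liouville calculation $x^*d\iota_X\omega=x^*\mathcal{L}_X\omega=x^*\omega$, and for the second inequality your orthogonal splitting $\p_s x=aX+bR+v$ is precisely what underlies the paper's projection bound $\|\p_s x\|^2\geq \langle\p_s x,X\rangle^2/\|X\|^2+\langle\p_s x,R\rangle^2/\|R\|^2$ (your version has the small bonus of exhibiting the deficit as exactly $r|v|^2\geq0$). One cosmetic caution: avoid reusing the letter $b$ for both the perturbation function $b(\eta)$ and the $R$-component of $\p_s x$.
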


\begin{proof}
If $\langle \cdot \rangle_t$ denotes the Riemannian metric 
induced from the cylindrical almost complex 
structure $J_t$, we note that on $M\times\R_+$ we have 
$$
   \langle X,X \rangle_t=f.
$$
All the following computations are done at a point $(s,t)$ where
$x(s,t)\in M\times\R_+$.  
We first compute $d^c(f(x))=d(f(x)) \circ i$ using the first
equation in (\ref{grW}):
\begin{eqnarray*}
-d^c(f\circ x)&=&-(df(x)\partial_t x)ds+(df(x)\partial_s x)dt\\
&=&-\big(df(x)(J_t(x)\partial_t x)\big)dt
-\big(df(x)(J_t(x)\partial_s x)\big)ds\\
& &\big(df(x)(\partial_s x+J_t(x)\partial_t x)\big)dt
+\big(df(x)(J_t(x)\partial_s x-\partial_t x)\big)ds\\
&=&-\big\langle \nabla_t f(x),J_t(x)\partial_t x\big\rangle_t dt
-\big\langle \nabla_t f(x), J_t(x)\partial_s x \big\rangle_t ds\\
& &-h'(r)b(\eta)\big\langle \nabla_t f(x),X(x) \big\rangle_t dt\\
& &-h'(r)b(\eta)\big\langle \nabla_t f(x),J_t(x)X(x) \big\rangle_t ds\\
&=&\omega\big(X(x),\partial_t x\big)dt
+\omega\big(X(x),\partial_s x\big)ds\\
& &-h'(r)b(\eta)\big\langle 
X(x),X(x) \big\rangle_t dt\\
&=&x^* \iota_X \omega-h'(r)b(\eta)f(x) dt.
\end{eqnarray*}
Applying $d$ we obtain
$$
   \Delta(f\circ x)ds \wedge dt =-dd^c(f\circ x)
   =x^* d\iota_X \omega-\partial_s\big(h'(r)b(\eta)f(x)\big)ds \wedge
   dt. 
$$
Using again the first equation in (\ref{grW}) we find
\begin{eqnarray*}
x^* d\iota_X \omega
&=&x^*\mathcal{L}_X \omega\\
&=&x^*\omega\\ 
&=&\omega\big(\partial_s
x,J_t(x)\partial_sx+h'(r)b(\eta)J_t(x)X(x)\big) ds \wedge dt\\
&=&\langle \partial_s x,\partial_s x\rangle_tds \wedge dt
+h'(r)b(\eta)\langle \partial_s x,\nabla_t f(x) \rangle_tds \wedge dt\\
&=&\langle \partial_s x,\partial_s x\rangle_tds \wedge dt
+h'(r)b(\eta)df(x)\partial_s x ds \wedge dt\\
&=&\langle \partial_s x,\partial_s x\rangle_t ds \wedge dt
+h'(r)b(\eta)\partial_s (f\circ x)ds \wedge dt,\\ 
\end{eqnarray*}
and hence the Laplacian of $f\circ x$ is given by
$$
   \Delta(f\circ x)
   = \langle \partial_s x,\partial_s x\rangle_t -
   \partial_s\big(h'(r)b(\eta)\big)f(x). 
$$
This proves the first statement in the lemma. 

Since $J_t$ interchanges the Reeb and the Liouville vector field on
$M\times\R_+$, i.e.~$R=J_t X$, we conclude that
$$
   \langle R,X \rangle_t=0, \quad ||R||_t^2=||X||_t^2=f.
$$
In particular, we can estimate the norm of $\partial_s x$
in the following way:
\begin{eqnarray*}
||\partial_s x||_t^2 &\geq& \frac{\langle \partial_s x,X \rangle^2_t}
{||X||_t^2}+\frac{\langle \partial_s x,R \rangle^2_t}
{||R||_t^2}\\ 
&=&\frac{\langle \partial_s x,\nabla_t f \rangle^2_t}
{f(x)}+\frac{\langle -J_t\partial_t x - h'(r)b(\eta)X,R \rangle^2_t}
{f(x)}\\ 
&=&\frac{(df(x)\partial_s x)^2}
{f(x)}+\frac{\langle J_t\partial_t x,R \rangle^2_t}
{f(x)}\\
&=&\frac{(\partial_s f(x))^2}
{f(x)}+\frac{\langle\partial_t x,X \rangle^2_t}
{f(x)}\\ 
&=&\frac{(\partial_s f(x))^2}
{f(x)}+\frac{(\partial_t f(x))^2}
{f(x)}
\end{eqnarray*}
Combining the above two expressions we obtain the following
estimate for the Laplacian of $f\circ x$
\begin{equation}\label{conv1}
\Delta (f\circ x) - \frac{(\partial_s f(x))^2}
{f(x)}-\frac{(\partial_t f(x))^2}
{f(x)}+ \partial_s\big(h'(r)b(\eta)\big)f(x)\geq 0.
\end{equation}
Replacing $f$ by $e^\rho$ and dividing by $f$ we obtain
\begin{eqnarray*}
0 &\leq&
\frac{\Delta e^\rho}{e^\rho}-\frac{(\partial_s(e^\rho))^2}{e^{2\rho}}
-\frac{(\partial_t(e^\rho))^2}{e^{2\rho}}+\partial_s\big(h'(r)b(\eta)\big)\\
&=&\Delta \rho+(\partial_s \rho)^2+(\partial_t \rho)^2
-(\partial_s \rho)^2-(\partial_t \rho)^2+\partial_s\big(h'(r)b(\eta)\big)\\
&=&\Delta \rho+\partial_s\big(h'(r)b(\eta)\big). 
\end{eqnarray*}
This proves the second statement and hence Lemma~\ref{lem:lap1}. 
\end{proof}

\subsection{$L^\infty$-bounds on the loop $x$}\label{ss:x-bound1}

To draw conclusions from Lemma~\ref{lem:lap1}, we now make 
the following assumptions on the functions $(H,b,c)$:
\begin{equation}\label{eq:H-lin}
   H(y,r) = A(r-R)+E \quad\text{for }r\geq R,\qquad A,E\geq 0,\ R\geq
   1. 
\end{equation}
\begin{equation}\label{eq:bc}
   \sup_{\eta\in\R}|b'(\eta)|=B<\infty, \quad
   \sup_{\eta\in\R}|c'(\eta)|=C< \infty.
\end{equation}

The crucial observation of the following lemma
is that we can get a uniform bound from below on the Laplacian of
$\rho$ along a partial gradient flow line provided only
that the derivatives of $b$ and $c$ are uniformly bounded, but
not necessarily $b$ and $c$ itself.

\begin{lemma}\label{lap}
Suppose that $(H,b,c)$ satisfy assumptions~\eqref{eq:H-lin}
and~\eqref{eq:bc} and let $D:=|\min H|$. Suppose further that
$(x,\eta) \in 
C^\infty([-T,T]\times S^1,W) \times C^\infty([-T,T],\R)$ is a partial
gradient flow line for $A_{H,b,c}$. Then at points $(s,t)$ where
$x(s,t)\in M\times [R,\infty)$ the Laplacian of $\rho \circ 
x$ satisfies 
$$
   \Delta(\rho\circ x) \geq -A^2B^2D-ABC.
$$
\end{lemma}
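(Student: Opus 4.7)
The plan is to combine the second inequality of Lemma~\ref{lem:lap1} with the evolution equation for the Lagrange multiplier $\eta$, exploiting the assumptions \eqref{eq:H-lin} and \eqref{eq:bc}.

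First, I would specialize Lemma~\ref{lem:lap1} to the region $M\times[R,\infty)$. Since $H(y,r)=h(r)=A(r-R)+E$ there, we have $h'(r)\equiv A$, so $h'(r)b(\eta)=Ab(\eta)$ depends only on $\eta$. The estimate \eqref{eq:lap-rho} therefore reduces to
\begin{equation*}
   \Delta(\rho\circ x) \;\geq\; -\,\partial_s\bigl(Ab(\eta)\bigr) \;=\; -\,A\,b'(\eta)\,\partial_s\eta.
\end{equation*}

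Next, I would substitute for $\partial_s\eta$ using the second equation in \eqref{grad}, namely
\begin{equation*}
   \partial_s\eta \;=\; c'(\eta) \;-\; b'(\eta)\!\int_0^1 H(x(s,t))\,dt.
\end{equation*}
Inserting this into the previous inequality yields
\begin{equation*}
   \Delta(\rho\circ x) \;\geq\; -\,A\,b'(\eta)\,c'(\eta) \;+\; A\,b'(\eta)^2\!\int_0^1 H(x(s,t))\,dt.
\end{equation*}

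Finally, I would apply the uniform bounds to each term. From \eqref{eq:bc}, $|b'(\eta)|\leq B$ and $|c'(\eta)|\leq C$, so the first term is bounded below by $-ABC$. For the second, since $H\geq -D$ everywhere on $\hat V$ by the definition $D=|\min H|$, the integral satisfies $\int_0^1 H(x(s,t))\,dt\geq -D$, so the second term is bounded below by $-A\,b'(\eta)^2 D\geq -A^2B^2D$ (where the extra factor $A$ can be absorbed in the regime $A\geq 1$ which is the only one relevant for the application). Adding the two estimates gives the claimed inequality.

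The computation is essentially algebraic once Lemma~\ref{lem:lap1} is in hand; the only conceptual point to check is that $\int_0^1 H(x(s,t))\,dt$ appearing in the $\eta$-equation involves the entire loop $x(s,\cdot)$, not just its portion in $M\times[R,\infty)$, so one genuinely needs the global lower bound $H\geq -D$ rather than a bound on the linear tail. No obstacle of substance arises.
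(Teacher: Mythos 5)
Your proof is correct and follows essentially the same route as the paper: specialize the Laplace estimate from Lemma~\ref{lem:lap1} to the region $r\geq R$ where $h'\equiv A$, substitute the $\eta$-equation from~\eqref{grad}, and apply the uniform bounds $|b'|\leq B$, $|c'|\leq C$, $H\geq -D$. The one discrepancy you flagged is real: the paper's displayed chain has an apparent typo, writing $A^2 b'(\eta)^2\int_0^1 H\,dt$ where the substitution only produces $A\,b'(\eta)^2\int_0^1 H\,dt$, which is why the stated constant is $-A^2B^2D-ABC$ rather than the sharper $-AB^2D-ABC$ that the computation actually yields. You correctly observe that the weaker stated bound nevertheless follows whenever $A\geq 1$, which holds in every application in the paper (the slope $A$ is always taken large); strictly speaking, though, the hypotheses~\eqref{eq:H-lin} only require $A\geq 0$, so it would be cleaner either to state the lemma with the constant $-AB^2D-ABC$, or to add $A\geq 1$ to the hypotheses, rather than to ``absorb'' the factor as you do. Your final remark — that $\int_0^1 H(x(s,t))\,dt$ runs over the entire loop, so one genuinely needs the global bound $H\geq -D$ — is a correct and worthwhile observation, implicit in the paper's argument.
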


\begin{proof}
Let $(s,t)$ be a point where $x(s,t)\in M\times [R,\infty)$. The
assumption on $h$ implies $h'(r)=A$ at this point, so the
estimate~\eqref{eq:lap-rho} reads
$$
   \Delta \rho \geq -A b'(\eta)\p_s\eta. 
$$
Using the second equation in (\ref{grW}), assumption~\eqref{eq:bc} and
$H(x)\geq -D$ we obtain from this the estimate
\begin{eqnarray*}
\Delta \rho 
&\geq& -A b'(\eta)\p_s\eta \\
&=& A^2 b'(\eta)^2\int_0^1 H(x)dt - A b'(\eta)c'(\eta) \\
&\geq& -A^2b'(\eta)^2D - A b'(\eta)c'(\eta) \\
&\geq& -A^2B^2D-ABC. 
\end{eqnarray*}
This proves Lemma~\ref{lap}.
\end{proof}

The crucial hypothesis for the following proposition is a uniform
bound on $x$ for {\bf nearly critical points} $(x,\eta)$. More
precisely, we assume that there exists a family of cylindrical
almost complex structures $J_t$ on $W$ such that for 
$(x,\eta) \in \mathcal{L}\times \R$ the following implication holds: 
\begin{equation}\label{tame}
||\nabla_J A_{H,b,c}(x,\eta)||_J\leq \epsilon
\quad \Longrightarrow \quad
\max_{t \in S^1}f(x(t)) \leq S.
\end{equation}
 We define the {\bf energy} of a partial gradient flow line
$w=(x,\eta)$ by 
$$
   \mathcal{E}_w := \int_{-T}^T||\nabla A_{H,b,c}(w)||^2 ds =
   A_{H,b,c}(w(T))-A_{H,b,c}(w(-T)).   
$$

\begin{proposition}\label{prop:x-bound1}
Suppose that the triple $(H,b,c) \in C^\infty(\R_+,\R)\times
C^\infty(\R)\times C^\infty(\R)$ satisfies assumptions~\eqref{eq:H-lin}
and~\eqref{eq:bc} as well as condition~\eqref{tame} for some
$(\eps,S)$. Let $w=(x,\eta) \in C^\infty([-T,T]\times S^1,W)
\times C^\infty([-T,T],\R)$ be a partial
gradient flow line of $\nabla_J A_{H,b,c}$
satisfying
\begin{equation}\label{beiT}
\sup_{t \in S^1}f(x(\pm T,t)) \leq S.
\end{equation}
Then for each
$(s,t) \in [-T,T]\times S^1$ the following estimate holds
$$
   f(x(s,t)) \leq \max(R,S) \exp\left(\frac{(A^2B^2D+ABC)
   \mathcal{E}_w^2}{2\epsilon^4}\right).
$$
\end{proposition}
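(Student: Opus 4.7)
The plan is to combine the Laplacian estimate of Lemma~\ref{lap} with the tameness hypothesis~\eqref{tame} via a two-dimensional maximum principle. The tameness constrains the ``bad'' $s$-times at which $x(s,\cdot)$ reaches into the far part of the symplectization, and the Laplacian estimate then converts this temporal localization into a pointwise bound on $\rho:=\ln f$.

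Set $K:=A^2B^2D+ABC$ and $\rho_0:=\ln\max(R,S)$, and assume $\rho(s_0,t_0)>\rho_0$ (otherwise the inequality is trivial). Consider the open superlevel set
$$\Omega:=\{(s,t)\in[-T,T]\times S^1:x(s,t)\in M\times\R_+,\ \rho(s,t)>\rho_0\}.$$
On $\Omega$ one has $f(x)>R\geq 1$, so $x\in M\times[R,\infty)$ and Lemma~\ref{lap} applies to give $\Delta\rho\geq-K$ there. The boundary hypothesis~\eqref{beiT} gives $\rho(\pm T,t)\leq\ln S\leq\rho_0$, so $\Omega$ does not meet $\{\pm T\}\times S^1$. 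For any $s$ in the projection $\pi(\Omega)\subset(-T,T)$ we have $\max_t f(x(s,t))>S$, and the contrapositive of~\eqref{tame} forces $\|\nabla A_{H,b,c}(w(s))\|_J>\epsilon$. Hence
$$\epsilon^2\,|\pi(\Omega)|\leq\int_{-T}^T\|\nabla A_{H,b,c}(w(s))\|^2\,ds=\mathcal{E}_w,$$
so every connected component $(s_1,s_2)$ of $\pi(\Omega)$ has length at most $L:=\mathcal{E}_w/\epsilon^2$ and lies strictly inside $(-T,T)$.

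Let $\Omega_0\subset(s_1,s_2)\times S^1$ be the connected component of $\Omega$ containing $(s_0,t_0)$, and on $\overline{\Omega_0}$ define
$$\psi(s,t):=\rho(s,t)-\rho_0-\tfrac{K}{2}(s-s_1)(s_2-s).$$
Then $\Delta\psi=\Delta\rho+K\geq 0$ inside $\Omega_0$, and $\psi\leq 0$ on $\partial\Omega_0$: at interior boundary points one has $\rho=\rho_0$ and the quadratic term is nonpositive, while at points with $s\in\{s_1,s_2\}$ the quadratic term vanishes and $\rho\leq\rho_0$ by maximality of the component $(s_1,s_2)$ in $\pi(\Omega)$. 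The subharmonic maximum principle then yields $\psi(s_0,t_0)\leq 0$, hence
$$\rho(s_0,t_0)\leq\rho_0+\tfrac{K}{2}(s_0-s_1)(s_2-s_0)\leq\rho_0+\tfrac{KL^2}{8}\leq\rho_0+\tfrac{K\mathcal{E}_w^2}{2\epsilon^4},$$
and exponentiating gives the stated inequality.

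The main delicate points I expect are: first, checking that $\Omega$ is confined to the symplectization, so that $\rho=\ln f$ and Lemma~\ref{lap} are meaningful on $\overline{\Omega_0}$; this relies on the choice $\rho_0\geq\ln R\geq 0$ together with the continuity of $x$. Second, establishing that $(s_1,s_2)$ is a \emph{maximal} interval in $\pi(\Omega)$, which is what gives $\rho(s_i,\cdot)\leq\rho_0$ on all of $S^1$ and hence the boundary inequality $\psi\leq 0$ on the vertical edges of $\Omega_0$. Everything else is a routine application of the subharmonic maximum principle on the compact set $\overline{\Omega_0}$.
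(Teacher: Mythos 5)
Your proof is correct. It uses the same three ingredients as the paper's proof (the Laplace estimate of Lemma~\ref{lap}, the tameness condition together with the energy bound to control a time interval, and a maximum principle with a quadratic barrier), but the decomposition is genuinely different. You work directly on the superlevel set $\Omega=\{\rho>\rho_0\}$: the contrapositive of~\eqref{tame} combined with the energy estimate shows $|\pi(\Omega)|\leq L:=\mathcal{E}_w/\epsilon^2$, and on the component $\Omega_0\subset(s_1,s_2)\times S^1$ the function $\psi=\rho-\rho_0-\tfrac{K}{2}(s-s_1)(s_2-s)$ is subharmonic everywhere (since $f>R$ throughout $\Omega_0$) and $\leq 0$ on $\partial\Omega_0$, so the conclusion follows at once from the maximum principle. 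The paper instead fixes $s_0$, finds the first forward/backward times $\sigma^\pm$ at which $\|\nabla A_{H,b,c}\|$ drops below $\epsilon$ (bounded by the same energy argument), and works on the rectangle $\mathcal{Z}_{s_0}=[s_0-\sigma^-,s_0+\sigma^+]\times S^1$ with the centered barrier $\chi=\rho+\tfrac{\nu}{2}(s-s_0)^2$; since Lemma~\ref{lap} only applies where $f\geq R$, the paper must run a threshold argument (any interior maximum of $\chi$ above $\ln R + \nu\mathcal{E}_w^2/(2\epsilon^4)$ would force $\Delta\chi>0$, a contradiction). Your route buys a cleaner maximum principle and in fact a sharper constant (the step $\tfrac{K}{2}(s_0-s_1)(s_2-s_0)\leq KL^2/8$ improves the paper's $\nu L^2/2$ by a factor of $4$), at the cost of a little care with the topology of $\Omega$ — exactly the two delicate points you flag, and both are handled correctly: $\rho$ is defined and $\geq\ln R$ on $\overline{\Omega_0}$ because $\rho_0\geq\ln R\geq 0$, and the endpoints $s_1,s_2$ of a component of the open set $\pi(\Omega)$ cannot themselves lie in $\pi(\Omega)$.
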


\begin{proof}
Fix $s_0 \in [-T,T]$. We abbreviate
$$
   \sigma^\pm(s_0)=\inf\big\{\sigma \in [0,T\mp s_0]: 
   ||\nabla A_{H,b,c}(w)(s_0\pm \sigma)||
< \epsilon\big\}.
$$
We claim that
\begin{equation}\label{sig}
\sigma^\pm(s_0) \leq \frac{\mathcal{E}_w}{\epsilon^2}.
\end{equation}
We prove this assertion only for $\sigma^+(s_0)$. Indeed,
$$
   \mathcal{E}_w
   = \int_{-T}^T||\nabla A_{H,b,c}(w)||^2 ds 
   \geq \int_{s_0}^{s_0+\sigma^+(s_0)}||\nabla A_{H,b,c}(w)||^2 ds 
   \geq \epsilon^2 \sigma^+(s_0),
$$
proving the claim.
It follows from (\ref{beiT}), the definition of $\sigma^\pm(s_0)$ and
condition~\eqref{tame} that
\begin{equation}\label{bro}
\max_{t \in S^1}\rho\big(x(s_0\pm \sigma^\pm(s_0),t)\big) \leq \ln S.
\end{equation}
We introduce the following finite cylinder
$$
   \mathcal{Z}_{s_0}=[s_0-\sigma^-(s_0),s_0+\sigma^+(s_0)]\times S^1. 
$$
For a constant 
$$
   \nu>A^2B^2D+ABC
$$ 
we introduce the function 
$\chi \in C^\infty\big(\mathcal{Z}_{s_0}\big)$ by
$$
   \chi(s,t):=\rho(x(s,t))+\frac{\nu (s-s_0)^2}{2}.
$$
Using (\ref{sig}) and (\ref{bro}) we estimate $\chi$ at the
boundary of the cylinder:
\begin{equation}\label{bch}
   \max_{\partial \mathcal{Z}_{s_0}} \chi \leq \ln S+
   \frac{\nu \mathcal{E}_w^2}{2\epsilon^4}.
\end{equation}
Lemma~\ref{lap} yields the following implication for
$(s,t)\in\mathcal{Z}_{s_0}$: 
$$
   \chi(s,t)\geq \ln R+\frac{\nu\mathcal{E}_w^2}{2\epsilon^4}
   \Longrightarrow f(x(s,t))\geq R\Longrightarrow
   \Delta\chi(s,t)=\Delta\rho(x(s,t))+\nu>0. 
$$
Thus $\chi$ cannot have an interior maximum bigger than $\ln
R+\frac{\nu\mathcal{E}_w^2}{2\epsilon^4}$. Combining this with the
boundary estimate~(\ref{bch}) yields 
$$
   \sup_{\mathcal{Z}_{s_0}} \rho\leq \sup_{\mathcal{Z}_{s_0}}\chi
   \leq \ln\max(R,S)+\frac{\nu\mathcal{E}_w^2}{2\epsilon^4}.
$$
This finishes the proof of Proposition~\ref{prop:x-bound1}.
\end{proof}

\subsection{$L^\infty$-bounds at nearly critical points} \label{sec:nearcrit}

In order to apply Proposition~\ref{prop:x-bound1}, we need to
establish condition~\eqref{tame} for given triples $(H,b,c)$. For the
unperturbed Rabinowitz functional this was proven in~\cite{CF}:

\begin{lemma}[\cite{CF}, proof of Proposition 3.2, Step~2]\label{lem:tame1}
Suppose that $(h,b,c)$ satisfy the following conditions:
\begin{enumerate}
\item $h(r)=r-1$ for $r\in[1-\delta,1+\delta]$; 
\item $b(\eta)=\eta$;
\item $c\equiv 0$.
\end{enumerate}
Then for there exists an $\eps=\eps(\delta)>0$ such that
condition~\eqref{tame} holds with $S=1+\delta$.  
\end{lemma}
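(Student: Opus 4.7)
The plan is to argue by contradiction: assume $(x,\eta) \in \cL \times \R$ satisfies $\Norm{\nabla_J A_H(x,\eta)}_J \leq \eps$ while $\max_t f(x(t)) > 1+\delta$, and derive a contradiction by choosing $\eps=\eps(\delta)>0$ small enough. The two components of the gradient give the a priori bounds
\[
\Norm{\p_t x - \eta X_H(x)}_{L^2(S^1)} \leq \eps, \qquad
\Bigl|\int_0^1 H(x(t))\,dt\Bigr| \leq \eps.
\]

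The key geometric fact to exploit is that $X_H = h'(r) R_\lambda$ is tangent to the level sets of $f=r$ on $M \times \R_+$, so $df(X_H) \equiv 0$ there. Consequently, wherever $x$ lies in the symplectization, $\p_t(f \circ x) = df(\p_t x - \eta X_H(x))$. I would then pass to the logarithmic radial coordinate $\rho = \ln f$ (compare Lemma~\ref{lem:lap1}), whose dual norm in the cylindrical metric satisfies $|d\rho|_{g_J}^2 = 1/r$. Restricting to a connected component $A_0 \subset S^1$ on which $x$ stays in the region $\{r \geq 1-\delta\}$, one obtains
\[
\int_{A_0} |\p_t \rho|^2 \, dt \leq \frac{1}{1-\delta}\int_{A_0}|\p_t x - \eta X_H(x)|^2\,dt \leq \frac{\eps^2}{1-\delta},
\]
and by Cauchy--Schwarz an oscillation bound on $\rho = \ln f$ of the form $\ln\tfrac{\max_{A_0} f}{\min_{A_0} f} \leq C\eps$ with $C = (1-\delta)^{-1/2}$.

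Two cases then close the argument. First, if $\min_t f(x(t)) \geq 1-\delta$, the loop stays in the region $\{r \geq 1-\delta\}$; the integral bound combined with the pointwise lower bound $H \geq h(\min_t f)$ will force $\min_t f \leq 1+\eps$ (using $h(r)=r-1$ on the collar together with a positive lower bound on $h$ beyond it), and the oscillation bound then gives $\max_t f \leq (1+\eps) e^{C\eps}$, which is strictly less than $1+\delta$ for $\eps$ small. Second, if $\min_t f(x(t)) < 1-\delta$, then continuity together with the hypothesized $\max_t f > 1+\delta$ forces $\rho$ to change by more than $\ln\tfrac{1+\delta}{1-\delta}$ on some component $A_0$ (between a boundary value $f=1-\delta$ and an interior maximum), again contradicting the oscillation bound once $\eps$ is small.

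The hardest part will be the careful handling of excursions of $x$ out of the symplectization coordinate patch, which I resolve by restricting the oscillation estimate to each connected component of $\{t : f(x(t)) \geq 1-\delta\}$ and splitting into the two cases above. Taking $\eps(\delta)$ to be the minimum of the thresholds obtained from both cases then yields the required implication~\eqref{tame} with $S=1+\delta$.
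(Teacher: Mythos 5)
The proposal is correct and, as far as one can tell, in the same spirit as the cited argument in~\cite{CF}; the paper itself only cites that reference and does not reproduce a proof, but the technique you use (lower‐bounding the first component of the gradient by $|\dot r|/\sqrt r$ in the cylindrical metric, i.e.\ by $|\p_t\rho|$ up to a factor $\sqrt r$, and converting the $L^2$ bound into an oscillation bound on $\rho=\ln r$) is precisely the one employed in the paper's own Lemma~\ref{lem:wild} and Lemma~\ref{lem:tame}. Your case split is sound: in the case $\min_t f\ge 1-\delta$ you exploit the bound $\bigl|\int_0^1 H(x)\,dt\bigr|\le\eps$ together with monotonicity of $h$ to pin $\min_t f\le 1+\eps$ and then propagate by the oscillation estimate; in the case $\min_t f<1-\delta$ the oscillation estimate alone gives a contradiction on a component of $\{f\ge 1-\delta\}$, and restricting to such components is exactly how one avoids excursions out of the symplectization chart. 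The only bookkeeping worth making explicit is that the standing hypothesis $h'\ge 0$ (stated once for the whole section) is used to exclude $\min_t f>1+\delta$, since it guarantees $h(r)\ge h(1+\delta)=\delta$ there, so one should also take $\eps<\delta$.
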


Before proving a corresponding result for other triples $(h,b,c)$, we
first make two general observations. 

Denote by $|\ |_t$ the metric
$\lambda\otimes\lambda+d\lambda(\cdot,J_t\cdot)$ on $M$ and by $\|\
\|_2$ the $L^2$-norm with respect to this metric. For
$A\notin\Spec(M,\lambda)$ denote by $\eta_A>0$ the distance from $A$
to $\Spec(M,\lambda)$. 

\begin{lemma}\label{lem:deltaA}
For each $A\notin\Spec(M,\lambda)$ there exists $\delta_A>0$ such that 
$$
   \|\dot y-aR(y)\|_2\geq\delta_A \quad\text{for all }y\in
   C^\infty(S^1,M) \text{ and }
   a\in[A-\frac{1}{2}\eta_A,A+\frac{1}{2}\eta_A].  
$$
\end{lemma}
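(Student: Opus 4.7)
The approach is an Arzel\`a--Ascoli compactness argument run by contradiction. Assuming the conclusion fails, there would exist sequences $y_n \in C^\infty(S^1,M)$ and $a_n \in [A - \tfrac12\eta_A, A + \tfrac12\eta_A]$ with $\|\dot y_n - a_n R(y_n)\|_2 \to 0$. The plan is to extract a subsequential limit $y^*$ satisfying $\dot y^* = a^* R(y^*)$ for some $a^* \in [A - \tfrac12\eta_A, A + \tfrac12\eta_A]$, so that $y^*$ is a closed Reeb trajectory of period $a^*$; this will contradict the defining property of $\eta_A$ as the distance from $A$ to $\Spec(M,\lambda)$.

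The key preliminary observation is that $|R|_t^2 = \lambda(R)^2 + d\lambda(R, J_t R) = 1 + 0 = 1$ on $M$, using $\lambda(R)=1$ and $\iota_R d\lambda|_M = 0$. Hence $\|a_n R(y_n)\|_2 = |a_n|$, and the assumption gives a uniform $L^2$-bound on $\dot y_n$. Combined with the fact that $y_n$ takes values in the compact manifold $M$, the sequence is uniformly bounded in $W^{1,2}(S^1, M)$. The Rellich embedding $W^{1,2}(S^1) \hookrightarrow C^0(S^1)$ then furnishes a subsequence with $y_n \to y^*$ in $C^0$ and $\dot y_n \rightharpoonup \dot y^*$ weakly in $L^2$; after a further extraction, $a_n \to a^* \in [A - \tfrac12\eta_A, A + \tfrac12\eta_A]$.

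To pass to the limit in the equation, I would use that $R$ is smooth and $y_n \to y^*$ in $C^0$, so $R(y_n) \to R(y^*)$ uniformly and hence in $L^2$; thus $a_n R(y_n) \to a^* R(y^*)$ strongly in $L^2$. The hypothesis $\|\dot y_n - a_n R(y_n)\|_2 \to 0$ then forces $\dot y_n \to a^* R(y^*)$ in $L^2$, and uniqueness of weak limits yields $\dot y^* = a^* R(y^*)$. Since the right-hand side is continuous, $y^*$ is $C^1$, and a standard ODE bootstrap promotes $y^*$ to $C^\infty$. If $a^* \neq 0$, then $y^*$ is a closed characteristic of (positive, up to sign conventions) period $a^*$, so $|a^*| \in \Spec(M,\lambda)$, contradicting $|a^* - A| \le \tfrac12\eta_A < \eta_A$.

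The one subtlety to flag is the degenerate case $a^* = 0$, in which $y^*$ is forced to be constant and no Reeb orbit is produced in the limit. This case arises only if $0 \in [A - \tfrac12\eta_A, A + \tfrac12\eta_A]$, i.e.~only if $\eta_A \ge 2A$; the natural remedy is to interpret $\eta_A$ as $\min(A, \operatorname{dist}(A,\Spec(M,\lambda)))$, so that the interval around $A$ is automatically separated from $0$ and the degenerate case cannot occur. Under this convention (which is compatible with the uses of the lemma later in the paper), the $a^* = 0$ branch is vacuous and the compactness argument above goes through as stated; this isolated technical point is the main obstacle in an otherwise routine proof.
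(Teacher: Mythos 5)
Your argument is exactly the one behind the paper's single-line proof (``an immediate consequence of the Arzel\`a--Ascoli theorem''), spelled out: the uniform $L^2$ bound on $\dot y_n$ via $|R|_t\equiv 1$, the compact embedding $W^{1,2}(S^1)\hookrightarrow C^0$, passage to the limit, and contradiction with $\eta_A$ being the distance from $A$ to $\Spec(M,\lambda)$. The subtlety you flag at $a^*=0$ is real: if $A\le\frac{1}{2}\eta_A$ then $0$ lies in $[A-\frac{1}{2}\eta_A,A+\frac{1}{2}\eta_A]$ and the lemma fails outright (take $a=0$ and $y$ constant, giving $\|\dot y-aR(y)\|_2=0$), so the statement tacitly requires the interval to be separated from the origin. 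In the paper's actual uses this is automatic because $A$ is always taken large (e.g.~$A\ge 10T_0$ in the proof of Theorem~\ref{thm:main1}), and your proposed convention for $\eta_A$ is a clean way to make the lemma correct as a standalone statement.
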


\begin{proof}
This is an immediate consequence of the Arzela-Ascoli theorem. 
\end{proof}

\begin{lemma}\label{lem:wild}
Suppose $h'(r)=1$ for $1\leq A\leq r\leq B$ and let $b,c$ be
arbitrary. Let $(x,\eta)\in\LL\times\R$ and suppose that for $x=(y,r)$
there are $t,t'\in S^1$ with $r(t)\leq A$ and $r(t')\geq B$. Then
$$
   \|\nabla A_{H,b,c}(x,\eta)\|_J\geq\frac{B-A}{\sqrt{B}}. 
$$
\end{lemma}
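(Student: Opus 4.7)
The plan is to discard the Lagrange-multiplier part of the gradient and work purely with its loop component. Since each $J_t$ is an isometry for the metric $g_t := \omega(\cdot, J_t \cdot)$,
$$
\|\nabla_J A_{H,b,c}(x,\eta)\|_J^2 \;\ge\; \int_0^1 |\partial_t x - b(\eta) X_H(x)|_{g_t}^2\, dt.
$$
On the slab $M \times [A,B]$ the radial hypothesis $h'(r) = 1$ gives $X_H = R$, the Reeb vector field of $M$ viewed on $M \times \R_+$. Writing $x(t) = (y(t), r(t))$ with $\dot y \in TM$, I decompose
$$
\partial_t x - b(\eta) X_H(x) \;=\; \bigl(\dot y - b(\eta) R(y)\bigr) + \dot r\,\partial_r
$$
and exploit the orthogonality of the two summands.

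The geometric inputs I need, already implicit in the proof of Lemma~\ref{lem:lap1}, are $g_t(\partial_r, TM) = 0$ and $|\partial_r|_{g_t}^2 = 1/r$; both follow from $J(r\partial_r) = R$, $J(\ker \lambda_M) = \ker \lambda_M$, and $\omega = dr\wedge\lambda_M + r\,d\lambda_M$. Pointwise orthogonality of the two summands then yields
$$
|\partial_t x - b(\eta) X_H(x)|_{g_t}^2 \;\ge\; \dot r^2 / r \qquad \text{whenever } r(t) \in [A,B].
$$

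The intermediate value theorem, applied to the hypothesis that $r$ takes values both $\le A$ and $\ge B$, produces a subinterval $I \subset S^1$ with $r(\partial I) = \{A,B\}$ and $r(I) \subset [A,B]$. Since $|I| \le 1$, Cauchy--Schwarz applied to $|\dot r|/\sqrt{r} = 2\,|\tfrac{d}{dt}\sqrt{r}|$ gives
$$
\int_I \frac{\dot r^2}{r}\,dt \;\ge\; \frac{1}{|I|}\left(\int_I \frac{|\dot r|}{\sqrt{r}}\,dt\right)^2 \;\ge\; 4(\sqrt{B} - \sqrt{A})^2,
$$
and hence
$$
\|\nabla_J A_{H,b,c}(x,\eta)\|_J \;\ge\; 2(\sqrt{B} - \sqrt{A}) \;=\; \frac{2(B-A)}{\sqrt{B} + \sqrt{A}} \;\ge\; \frac{B-A}{\sqrt{B}},
$$
which is the desired inequality. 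There is no substantive analytic obstacle: the argument rests entirely on the cylindrical geometry of the symplectization together with a one-variable Cauchy--Schwarz estimate. The only point worth checking carefully is the orthogonal decomposition, which is precisely what the first paragraph records.
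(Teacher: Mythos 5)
Your proof is correct and follows essentially the same route as the paper's: the same orthogonal decomposition into a radial part and a tangential part via the cylindrical metric identity $|\dot x - b X_H|_t^2 = \dot r^2/r + r|\dot y - bR|_t^2$, followed by discarding the tangential contribution and integrating the radial one over a subinterval on which $r$ traverses $[A,B]$. The only difference is cosmetic: the paper applies Cauchy--Schwarz in the form $\|f\|_{L^2(S^1)}\ge\|f\|_{L^1(S^1)}$ and then crudely replaces $\sqrt{r}$ by $\sqrt{B}$, whereas you integrate $|\dot r|/\sqrt{r}=2|\tfrac{d}{dt}\sqrt{r}|$ exactly and apply Cauchy--Schwarz on the subinterval, giving the marginally sharper constant $2(\sqrt{B}-\sqrt{A})\ge (B-A)/\sqrt{B}$; both yield the stated inequality.
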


\begin{proof}
Recall that on $M\times[1,\infty)$ the almost complex structure $J_t$
maps the Liouville vector field $X=r\p_r$ to the Reeb vector field $R$
and preserves the contact structure $\xi=\ker\lambda$. It follows that
the metric at $(y,r)\in M\times[1,\infty)$ is given by
$$
   |uX+vR+w|_t^2=r\Bigl[u^2+v^2+d\lambda(w,J_tw)\Bigr],\qquad
   u,w\in\R, w\in\xi. 
$$
So at points $(s,t)$ with $x(s,t)=(y(s,t),r(s,t))\in
M\times[A,B]$ we have
$$
   |\dot x-bX_H(x)|_t^2 = \frac{\dot r^2}{r}+r|\dot y-bR(y)|_t^2. 
$$
By assumption there exist $t_0<t_1$ with $r(t_0)=A$, $r(t_1)=B$ and
$r(t)\in[A,B]$ for all $t\in[t_0,t_1]$. Now the lemma follows from the
estimate 
\begin{align*}
   \|\nabla A_{H,b,c}(x,\eta)\|_J 
   &\geq \sqrt{\int_0^1|\dot x-b(\eta)X_H(x)|^2dt} \cr
   &\geq \int_0^1|\dot x-b(\eta)X_H(x)|dt \cr
   &\geq \int_{t_0}^{t_1}|\dot x-b(\eta)X_H(x)|dt \cr
   &\geq \int_{t_0}^{t_1}\frac{|\dot r|}{\sqrt{r}}dt \cr
   &\geq \int_{t_0}^{t_1}\frac{|\dot r|}{\sqrt{B}}dt \cr
   &\geq \frac{B-A}{\sqrt{B}}. 
\end{align*}
\end{proof}

Now for $A\notin\Spec(M,\lambda)$ let $b_A:\R\to\R$ be a smoothing of
the function 
\begin{equation} \label{eq:bA}
   b_A(\eta) = 
   \left\{\begin{array}{ll} 
   -A, & \eta\le-A,\\ 
   \eta, & -A\le\eta\le A,\\
   A, & A\le\eta,
   \end{array}\right.
\end{equation}
see Figure~\ref{fig:b} on page~\pageref{fig:b}. Here the smoothing is
done in such a way that $b_A'(\eta)=1$ whenever
$b_A(\eta)\in[-A+\frac{1}{2}\eta_A,A-\frac{1}{2}\eta_A]$. 

\begin{lemma}\label{lem:tame}
Suppose that $(h,b,c)$ satisfy the following conditions:
\begin{enumerate}
\item $h(r)=r-R+D$ for $r\geq R$ with constants $R\geq 1$ and $D\geq
  0$; 
\item $b=b_A$ as in~\eqref{eq:bA} with $A\notin\Spec(M,\lambda)$;
\item $|c'|\leq C$.
\end{enumerate}
Then condition~\eqref{tame} holds with 
$$
   \eps=\min(\delta_A,1/\sqrt{3}),\qquad S=3(R+C),
$$
where $\delta_A$ is the constant from Lemma~\ref{lem:deltaA}. 
\end{lemma}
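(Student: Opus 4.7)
The plan is to proceed by contradiction: assume $\|\nabla_J A_{H,b_A,c}(x,\eta)\|_J\leq\eps$ yet $\max_{t\in S^1}f(x(t))>S=3(R+C)$, and derive an impossibility. The first step confines the loop to the region $\{r>R\}$: if instead $r(x(t_0))\leq R$ at some $t_0$ while $r(x(t^*))\geq S$ at some $t^*$, then since $h'\equiv 1$ on $[R,\infty)$, Lemma~\ref{lem:wild} applied with the two levels $R$ and $S$ gives
$$
\|\nabla A\|_J\;\geq\;\frac{S-R}{\sqrt S}\;=\;\frac{2R+3C}{\sqrt{3(R+C)}}\;\geq\;\frac{2}{\sqrt 3}\;>\;\eps,
$$
using $R\geq 1$, $C\geq 0$ and $\eps\leq 1/\sqrt 3$, a contradiction. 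So $r(t)>R$ for all $t$, and in particular $H(x(t))=r(t)-R+D$ and $X_H(x(t))=R_\lambda(y(t))$ throughout, writing $x(t)=(y(t),r(t))$.

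The second step eliminates the regime where $|b_A(\eta)|$ is close to $\pm A$. In the cylindrical metric the formula from the proof of Lemma~\ref{lem:wild} specializes to
$$
\|\dot x-b_A(\eta)X_H(x)\|_J^2\;=\;\int_0^1\Big[\frac{\dot r^2}{r}+r\,|\dot y-b_A(\eta)R_\lambda|_{g_M}^2\Big]\,dt,
$$
where $g_M=\lambda_M\otimes\lambda_M+d\lambda_M(\cdot,J_t\cdot)|_{\ker\lambda_M}$ is the natural metric on $M$. Since $r\geq R\geq 1$, the first gradient bound forces $\|\dot y-b_A(\eta)R_\lambda\|_{L^2(g_M)}\leq\eps\leq\delta_A$. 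If $|b_A(\eta)|\in[A-\tfrac12\eta_A,A]$ then Lemma~\ref{lem:deltaA} (extended to negative multipliers by the time-reversal symmetry $y(t)\mapsto y(-t)$) forces this $L^2$-norm to be $\geq\delta_A$; combined with the strict inequality $\min r>R$, this actually gives $\|\nabla A\|_J^2>R\delta_A^2\geq\eps^2$, a contradiction. Hence $|b_A(\eta)|<A-\tfrac12\eta_A$, and by the defining smoothing property of $b_A$ we have $b_A'(\eta)=1$.

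The third step combines both gradient components. With $b_A'(\eta)=1$ the second component yields $|\int_0^1 H(x)\,dt-c'(\eta)|\leq\eps$, hence $\int_0^1 H(x)\,dt\leq\eps+C$; since $H(x)=r-R+D$ with $D\geq 0$ this becomes $\int_0^1 r\,dt\leq R+\eps+C$. The first component gives $\int_0^1\dot r^2/r\,dt\leq\eps^2$, and Cauchy--Schwarz yields $\int_0^1|\dot r|\,dt\leq\eps\sqrt{R+\eps+C}$; since $r$ is a closed loop, $\max r\leq\int_0^1 r\,dt+\tfrac12\int_0^1|\dot r|\,dt\leq R+\eps+C+\tfrac{\eps}{2}\sqrt{R+\eps+C}$. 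An elementary verification using $R\geq 1$, $C\geq 0$, $\eps\leq 1/\sqrt 3$ shows the right-hand side is strictly less than $3(R+C)=S$, contradicting $\max r>S$.

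The principal technical obstacle is Step 2: in the transition regime where $b_A'(\eta)$ is not bounded below, the second gradient component conveys no useful information, so the argument rests entirely on Lemma~\ref{lem:deltaA}. This is precisely the place where the nonresonance hypothesis $A\notin\Spec(M,\lambda)$ enters, via the constant $\delta_A$.
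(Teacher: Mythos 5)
Your proof is correct, and it rests on the same two ingredients as the paper's (Lemma~\ref{lem:wild} and Lemma~\ref{lem:deltaA}), but the decomposition into cases and the final contradiction are genuinely different. The paper's Step~1 applies Lemma~\ref{lem:wild} at the levels $2E$ and $3E$ (with $E=R+C$) and deduces the dichotomy ``$\max r\le 3E$ or $\min r\ge 2E$''; in the second branch, after forcing $b'(\eta)=1$ via Lemma~\ref{lem:deltaA}, the $\eta$-component of the gradient alone yields $\|\nabla A\|\ge\int(r-R+D)\,dt-C\ge 2E-R-C=E\ge 1$, an immediate contradiction with $\eps\le 1/\sqrt 3$. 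You instead apply Lemma~\ref{lem:wild} at the levels $R$ and $S=3E$ to push $\min r$ above $R$, and then, after the same reduction to $b'(\eta)=1$, you combine the $\eta$-component (to bound $\int r$) with a Cauchy--Schwarz estimate on $\int\dot r^2/r$ from the $x$-component (to bound the oscillation $\max r-\min r$). The paper's version gives a stronger intermediate conclusion ($\min r\ge 2E$ rather than $\min r>R$) and consequently a one-line ending, whereas yours needs the elementary numerical check at the end; both are sound, and your route has the mild virtue of using only the weaker a priori localization $\min r>R$. Your parenthetical remark about extending Lemma~\ref{lem:deltaA} to $a$ near $-A$ by time reversal is the right way to treat negative multipliers; the paper's formulation $|\,|b(\eta)|-A|\ge\eta_A/2$ implicitly does the same.
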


\begin{proof}
Let $(x,\eta)\in\LL\times\R$ with $||\nabla
A_{H,b,c}(x,\eta)||_J<\eps$.  
Recall from the proof of Lemma~\ref{lem:wild} that 
at points $(s,t)$ with $x(s,t)=(y(s,t),r(s,t))\in
M\times[R,\infty)$ we have
$$
   |\dot x-bX_H(x)|_t^2 = \frac{\dot r^2}{r}+r|\dot y-bR(y)|_t^2. 
$$
Now we prove the lemma in two steps. Set 
$$
   E:=R+C\geq R\geq 1.
$$ 
{\bf Step 1: }Either $\max r\leq 3E$ or $\min r\geq 2E$. 

Otherwise there exist $t,t'$ with $r(t)=2E$ and $r(t')=3E$, so
Lemma~\ref{lem:wild} with $A=2E$ and $B=3E$ yields
$$
   \|\nabla A_{H,b,c}(x,\eta)\|_J 
   \geq \frac{E}{\sqrt{3E}} = \sqrt\frac{E}{3}. 
$$
This contradicts the hypothesis $||\nabla
A_{H,b,c}(x,\eta)||_J<1/\sqrt{3}\leq \sqrt{E/3}$ and proves Step 1. 

If $\max r\leq 3E=S$ we are done, so assume from now on that $\min
r\geq 2E$. 

{\bf Step 2: }$|b(\eta)|\leq A-\frac{1}{2}\eta_A$. 

By hypothesis we have
\begin{align*}
   2E\delta_A^2
   &\geq \delta_A^2 \cr
   &> \|\nabla A_{H,b,c}(x,\eta)\|_J^2 \cr
   &\geq \int_0^1r|\dot y-b(\eta)R|^2dt \cr
   &\geq 2E\int_0^1|\dot y-b(\eta)R|^2dt,
\end{align*}
hence $\|\dot y-b(\eta)R(y)\|_2<\delta_A$. By definition of
$\delta_A$ this implies $|\,|b(\eta)|-A|\geq\frac{\eta_A}{2}$ and
Step 2 follows. 

By construction of $b$, Step 2 implies $b'(\eta)=1$. Using this and
the hypothesis $|c'|\leq C$ we estimate 
\begin{align*}
   \|\nabla A_{H,b,c}(x,\eta)\|_J 
   &\geq \Bigl|-\int_0^1H(x)dt+c'(\eta)\Bigr| \cr
   &\geq \int_0^1(r-R+D)dt-C \cr
   &\geq 2E-R-C = E. 
\end{align*}
But this contradicts the hypothesis $\|\nabla
A_{H,b,c}(x,\eta)\|_J<1\leq E$, so the second case in Step 1 cannot
occur and the Lemma~\ref{lem:tame} is proved. 
\end{proof}

\subsection{$L^\infty$-bounds on the Lagrange multiplier $\eta$}

In this section we establish bounds on the Lagrange multiplier $\eta$ along
gradient flow lines. For the unperturbed Rabinowitz functional and
defining Hamiltonians that are constant at infinity, such a bound was
proven in~\cite{CF}. The following result is a refinement of this. 

\begin{proposition}\label{prop:eta-bound1}
Suppose that the triple $(h,b,c)$ satisfies the following conditions:
\begin{enumerate}
\item $h(r)=r-1$ for $r\in[1-\delta,1+\delta]$ and
  condition~\eqref{eq:H-lin};  
\item $b(\eta)=\eta$;
\item $c\equiv 0$.
\end{enumerate}
Let $w=(x,\eta):\R\to\LL\times\R$ be a gradient flow line of $\nabla_J
A_{H,\eta,0}$ with asymptotic linits $(x^\pm,\eta^\pm)$. Then $x$ and
$\eta$ are uniformly bounded in terms of constants which only depend
on the actions $A_{H,\eta,0}(x^\pm,\eta^\pm)$, the constant $\delta$ and
the constants $R,A$ in~\eqref{eq:H-lin}.  
\end{proposition}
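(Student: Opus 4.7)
My plan is to bound $x$ first via Proposition~\ref{prop:x-bound1}, and then to bound $\eta$ using the resulting $L^\infty$ control together with the first gradient equation.

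For $x$: since $b(\eta)=\eta$ and $c\equiv 0$ we have $B=1$, $C=0$, and $D:=|\min H|$ is finite. Lemma~\ref{lem:tame1} supplies condition~\eqref{tame} with $\eps=\eps(\delta)$ and $S=1+\delta$. For a full gradient flow line, the asymptotic critical points $(x^\pm,\eta^\pm)$ of $A_H$ lie on $M$ (since the monotonicity of $h$ together with $h(r)=r-1$ near $r=1$ forces $h(r)=0$ iff $r=1$), so $f(x^\pm)=1\le S$. Restricting to $[-T,T]$ with $T$ large therefore gives $f(x(\pm T,t))\le S$ by continuity, and the energy $\mathcal{E}_w=A_H(w^+)-A_H(w^-)$ is bounded by the asymptotic actions. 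Proposition~\ref{prop:x-bound1} then yields an $L^\infty$ bound on $f\circ x$ depending only on $R,A,\delta,D$ and the asymptotic actions.

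For $\eta$: at a critical point $(x^c,\eta^c)$ one has $x^c\subset M$ and $A_H(x^c,\eta^c)=\eta^c\int_0^1\lambda(R)\,dt=\eta^c$, so $|\eta^\pm|=|A_H(w^\pm)|$ is bounded. To control $\eta(s)$ at interior times, I would substitute the first gradient equation $\p_tx=\eta X_H(x)+J(x)\p_sx$ into the definition of $A_H$ to obtain
$$
  A_H(w(s))=\eta(s)\int_0^1\bigl(dH(X)-H\bigr)(x(s,t))\,dt+\int_0^1\lambda\bigl(J(x)\p_sx\bigr)(s,t)\,dt,
$$
where $X$ is the Liouville vector field. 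The integrand $dH(X)-H=rh'(r)-h(r)$ equals $1$ on the collar $[1-\delta,1+\delta]$ and $AR-E$ on $\{r\ge R\}$. Since $A_H(w(s))$ is monotone in $s$ and hence bounded by the asymptotic actions, and since Step~1 gives pointwise control of $x$ (hence of $\lambda$ along $x$), this identity reduces bounding $\eta(s)$ to a pointwise bound on $\|\p_sx(s,\cdot)\|_{L^2(S^1)}$.

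The main obstacle is precisely this pointwise bound: the energy identity $\int_\R\|\nabla A_H(w(s))\|^2\,ds=\mathcal{E}_w$ controls $\|\p_sx(s,\cdot)\|_{L^2(S^1)}$ only in the $L^2(\R)$-sense in $s$, not pointwise. The resolution, following the proof of Proposition~3.2 in~\cite{CF}, is a contradiction argument: assume $|\eta(s_n)|\to\infty$ along some sequence; time-shift each $s_n$ to zero; apply Step~1 to get a uniform $L^\infty$ bound on the shifted loops; and extract via Arzel\`a--Ascoli a subsequential limit which is a critical point of $A_H$ with $|\eta|=\infty$, contradicting the identity $|\eta^c|=|A_H(x^c,\eta^c)|$ together with the bound on $A_H(w(s))$ along the flow. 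The essential new ingredient over~\cite{CF} is Step~1 itself: compactness of $\supp H$ in~\cite{CF} yielded convexity at infinity, whereas here the linear-at-infinity setting requires the iterated Laplace estimate of Proposition~\ref{prop:x-bound1} to bound $x$ uniformly.
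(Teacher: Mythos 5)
Your bound on the loop $x$ is essentially the paper's: combine Lemma~\ref{lem:tame1} (giving~\eqref{tame} with $\eps(\delta)$ and $S=1+\delta$) with Proposition~\ref{prop:x-bound1}, using that the asymptotics lie on $H^{-1}(0)=M\times\{1\}$. You also correctly isolate the paper's key observation that the $\eta$-bound of~\cite{CF} only requires a uniform bound on $|H(x)|$ along the flow line, and that producing this bound in the linear-at-infinity setting is exactly where the Laplace estimate of Proposition~\ref{prop:x-bound1} is needed.

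The gap is in your mechanism for the $\eta$-bound. The Arzel\`a--Ascoli contradiction you propose does not close: if $|\eta(s_n)|\to\infty$ and you reparametrize so that $\eta_n(0)=\eta(s_n)$, then --- precisely because of the $x$-bound from your Step~1 --- $|\p_s\eta|=\bigl|\int_0^1 H(x)\,dt\bigr|$ is uniformly bounded, so the $\eta_n$ are uniformly Lipschitz and diverge locally uniformly; the translated flow lines therefore have no limit in $\LL\times\R$, let alone a ``critical point with $|\eta|=\infty$,'' and there is nothing to contradict. The mechanism that the paper actually invokes (Corollary~3.5 of~\cite{CF}; it reappears in abstract form in Section~\ref{sec:dichotomy}, cf.~the proof of Proposition~\ref{dicho}) is a quantitative chase, not a compactness argument. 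The proof of Proposition~3.2 in~\cite{CF} supplies a fundamental lemma: there exist $\eps,c_1>0$ with $\|\nabla A_H(x,\eta)\|\le\eps\Rightarrow|\eta|\le c_1\bigl(|A_H(x,\eta)|+\|\nabla A_H(x,\eta)\|\bigr)$, valid for \emph{all} $(x,\eta)$. Given any $s_0$, the energy bound forces a time $\sigma$ with $|\sigma-s_0|\le\mathcal{E}_w/\eps^2$ and $\|\nabla A_H(w(\sigma))\|\le\eps$; the fundamental lemma together with the monotonicity of $A_H(w(\cdot))$ bounds $|\eta(\sigma)|$; and the Lipschitz estimate $|\p_s\eta|\le\sup|H(x)|$ propagates the bound to $s_0$ with loss $(\mathcal{E}_w/\eps^2)\sup|H(x)|$. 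This yields a bound depending only on the asymptotic actions, $\delta$, and the constants in~\eqref{eq:H-lin}, as required. Your identity for $A_H(w(s))$ and your observation about the lack of pointwise control of $\|\p_s x\|_{L^2(S^1)}$ are both accurate, but the correct resolution is the quantitative one above.
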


\begin{proof}
By Lemma~\ref{lem:tame1}, condition~\eqref{tame} holds for
$S=1+\delta$ and some $\eps=\eps(\delta)>0$. Since the asymptotic
limits $x^\pm$ lie on the level set $H^{-1}(0)=M\times\{1\}$, we have 
$r(s,t)\leq S$ for $|s|$ sufficiently large. Hence
Proposition~\ref{prop:x-bound1} provides a uniform bound on $x$ in
terms of the constants $R,S,A$ and the action difference
$\mathcal{E}_w=A_{H,\eta,0}(x^+,\eta^+)-A_{H,\eta,0}(x^-,\eta^-)$. 

The uniform bound on $\eta$ now follows from Corollary 3.5
in~\cite{CF}. In fact, the result in~\cite{CF} is stated for
Hamiltonians that are constant at infinity. But inspection of the
proof shows that it only uses a bound on $|H(x)|$ along the gradient
flow line, which we just established. 
The bound for $\eta$ only
depends on this bound, the actions $A_{H,\eta,0}(x^\pm,\eta^\pm)$ and the
constant $\delta$. 
\end{proof}

Next we observe that for suitable perturbations a bound on $\eta$
becomes in fact much easier:

\begin{lemma} \label{lem:eta-bound2}
For arbitrary $H$, suppose that there exists $A >0$ such that the
following conditions hold for the perturbations $b$ and $c$:
\begin{equation}\label{bc}
 \left.\begin{array}{cc}
b'(\eta)=0, & |\eta| \geq A,\\
c'(\eta)\le0, & \eta\le -A,\\
c'(\eta)\ge0, & \eta\ge A.
\end{array}\right\}
\end{equation}
Let $(x,\eta):\R\to\LL\times\R$ be a gradient flow line of $\nabla_J
A_{H,b,c}$ with asymptotic limits $(x^\pm,\eta^\pm)$. Then
$$
   |\eta(s)|\leq \max\{|\eta^+|,|\eta^-|,A\}\qquad \text{for all
   }s\in\R. 
$$
\end{lemma}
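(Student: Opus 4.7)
The proof plan relies on the observation that the gradient equation for $\eta$ decouples from $x$ as soon as $|\eta| \ge A$, because the assumption $b'(\eta)=0$ in that range kills the term involving $H(x)$. Concretely, the second equation in~\eqref{grad} rearranges to
$$
\partial_s \eta = c'(\eta) - b'(\eta)\int_0^1 H(x(t))\,dt,
$$
so whenever $\eta(s) \ge A$ the hypotheses give $\partial_s\eta(s) = c'(\eta(s)) \ge 0$, and whenever $\eta(s) \le -A$ they give $\partial_s\eta(s) = c'(\eta(s)) \le 0$. In other words, once $\eta$ exits $[-A,A]$ it is monotone in the outward direction.

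Set $N := \max\{|\eta^+|,|\eta^-|,A\}$ and argue by contradiction: suppose $\eta(s_0) > N$ for some $s_0 \in \R$. Since $\eta(s) \to \eta^+ \le N$ as $s \to +\infty$, the set $\{s > s_0 : \eta(s) \le N\}$ is nonempty; let $s_1$ be its infimum, which is finite. By continuity $\eta(s_1) = N$, and for $s \in [s_0,s_1)$ we have $\eta(s) > N \ge A$. By the monotonicity observation above, $\partial_s\eta \ge 0$ on $[s_0,s_1]$, whence $\eta(s_1) \ge \eta(s_0) > N$, contradicting $\eta(s_1) = N$. The symmetric argument, using that $\eta$ is non-increasing wherever $\eta < -A$ together with $\eta^- \ge -N$, rules out $\eta(s_0) < -N$.

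There is essentially no obstacle here: the key point, which I would emphasize in the write-up, is that the definition of the perturbations $(b,c)$ was engineered precisely so that the coupling to the loop variable $x$ switches off outside $[-A,A]$, turning the $\eta$-equation into a scalar ODE $\dot\eta = c'(\eta)$ for which $[-N,N]$ is forward- and backward-invariant in the appropriate senses. Thus the statement reduces to a one-dimensional maximum principle applied to the limiting asymptotic data, with no input needed from the almost complex structure or from any bound on $x$.
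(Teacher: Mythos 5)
Your proof is correct and is essentially the argument the paper gives, just written out in full: the paper observes in one line that the hypotheses on $(b,c)$ force $\partial_s|\eta|\geq 0$ whenever $|\eta|\geq A$, and concludes; you make this precise via the monotonicity-plus-asymptotics contradiction argument. Same key observation, same mechanism.
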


\begin{proof}
It follows from (\ref{bc}) and the second equation in (\ref{grad})
that if $|\eta| \geq R$ then $\partial_s |\eta|\geq 0$. This implies
the stated uniform bound on $|\eta(s)|$. 
\end{proof}

\subsection{Generalized Rabinowitz Floer homologies}

Consider quintuples $Q=(H,b,c,\alpha,\beta)$, where $H=h(r)$ is a radial
Hamiltonian, $b,c:\R\to\R$ are smooth functions, and
$-\infty<\alpha<\beta<\infty$. We fix a cylindrical almost complex
structure $J$ and define the $L^2$-gradient $\nabla_J A_{H,b,c}$ as
above. 
\begin{definition}\label{def:adm}
We call $(H,b,c,\alpha,\beta)$ {\bf
admissible} if the following conditions hold:
\begin{enumerate}
\item There are no critical points of $A_{H,b,c}$ with action $\alpha$
or $\beta$, and the set $\Crit^{(\alpha,\beta)}(A_{H,b,c})$ of critical points
with action in $(\alpha,\beta)$ is compact. 
\item The loop $x$ is uniformly bounded on all gradient flow lines
$(x,\eta):\R\to\LL\times\R$ of $A_{H,b,c}$ connecting critical points
with action in $(\alpha,\beta)$.  
\item The Lagrange multiplier $\eta$ is uniformly bounded on all
gradient flow lines $(x,\eta):\R\to\LL\times\R$ of $A_{H,b,c}$
connecting critical points with action in $(\alpha,\beta)$.  
\end{enumerate}
\end{definition}
As discussed at the beginning of this section, for an admissible
quintuple $(H,b,c,\alpha,\beta)$ the space of gradient flow lines of
$A_{H,b,c}$ connecting critical points with action in $(\alpha,\beta)$
is compact modulo breaking. So we can define its Floer homology
$$
   FH^{(\alpha,\beta)}(A_{H,b,c})
$$
as in Section~\ref{sec:RFH}. We omit the almost complex structure $J$
from the notation since the Floer homology does not depend on it. 

We call a homotopy $Q=\{Q_t\}_{t\in[0,1]}$ of quintuples
$Q_t=(H_t,b_t,c_t,\alpha_t,\beta_t)$ {\bf admissible} if all the $Q_t$
are admissible, the union
$\cup_{t\in[0,1]}\Crit^{(\alpha_t,\beta_t)}(A_{H_t,b_t,c_t})$ is compact, and
the bounds on $x$ and $\eta$ can be chosen uniformly in $t$. The
following two results follow by standard arguments in Floer
homology~\cite{Sa}.

\begin{proposition}\label{prop:phi}
An admissible homotopy $Q=\{Q_t\}_{t\in[0,1]}$ of quintuples 
$Q_t=(H_t,b_t,c_t,\alpha_t,\beta_t)$ induces an isomorphism
$$
   \phi_Q:FH^{(\alpha_0,\beta_0)}(A_{H_0,b_0,c_0}) \to
   FH^{(\alpha_1,\beta_1)}(A_{H_1,b_1,c_1}).
$$
The induced isomorphisms are functorial with respect to concatenation 
$$
   (Q\#R)_t := \begin{cases}
      Q_{2t} & t\in[0,1/2], \cr
      Q_{2t-1} & t\in[1/2,1] \cr
   \end{cases}
$$
and inverse $Q^{-1}_t:=Q_{1-t}$, namely 
$$
   \phi_{Q\#R}=\phi_R\circ\phi_Q,\qquad \phi_{Q^{-1}}=\phi_Q^{-1}. 
$$
\end{proposition}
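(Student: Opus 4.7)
The plan is to follow the standard continuation argument in Floer homology. Fix a smooth monotone cutoff function $\chi:\R\to[0,1]$ with $\chi(s)=0$ for $s\leq -1$ and $\chi(s)=1$ for $s\geq 1$, and consider the $s$-dependent Rabinowitz action functional $A_{Q_{\chi(s)}}=A_{H_{\chi(s)},b_{\chi(s)},c_{\chi(s)}}$ together with a generic $s$-dependent family of cylindrical almost complex structures $J_{s,t}$. For critical points $w_-$ of $A_{Q_0}$ and $w_+$ of $A_{Q_1}$ with actions in the respective windows, I would introduce the moduli space $\cM(w_-,w_+;Q)$ of finite-energy solutions of the corresponding non-autonomous perturbed gradient equation with these asymptotics, and define
$$
\phi_Q:CF^{(\alpha_0,\beta_0)}(A_{H_0,b_0,c_0})\longrightarrow CF^{(\alpha_1,\beta_1)}(A_{H_1,b_1,c_1})
$$
by counting, with signs, the rigid elements of $\cM(w_-,w_+;Q)$. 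Standard considerations then show that $\phi_Q$ is a chain map.

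The heart of the matter is compactness of $\cM(w_-,w_+;Q)$ modulo breaking. The uniform $L^\infty$-bounds on the loop $x$ and the Lagrange multiplier $\eta$ built into the admissibility of the homotopy $Q$ prevent solutions from escaping to infinity; together with the exactness of $d\lambda$, which rules out bubbling of $J$-holomorphic spheres, this yields $C^\infty_{\mathrm{loc}}$-compactness on $\R\times S^1$. The condition that no critical point appears at action levels $\alpha_t$ or $\beta_t$ for any $t\in[0,1]$ guarantees that limit broken configurations remain in the relevant action range.

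Invariance of the induced map on homology under the choice of $\chi$ and $J_{s,t}$ follows from the usual one-parameter-family argument producing chain homotopies, so that $\phi_Q$ depends only on the admissible homotopy class of $Q$. This immediately gives the two functoriality properties. For the concatenation $Q\#R$, one considers a family of cutoff profiles that stretches an increasingly long neck between the $Q$- and $R$-parts; in the limit the standard gluing analysis identifies the continuation map $\phi_{Q\#R}$ with $\phi_R\circ\phi_Q$. Applying the same invariance principle to $Q\#Q^{-1}$, which is admissibly homotopic to the constant path at $Q_0$ (for which the continuation map is manifestly the identity), shows that $\phi_{Q^{-1}}\circ\phi_Q=\id$, and symmetrically $\phi_Q\circ\phi_{Q^{-1}}=\id$; hence $\phi_Q$ is an isomorphism with inverse $\phi_{Q^{-1}}$.

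The hard part will be to check that all the auxiliary one- and two-parameter homotopies used in the preceding arguments remain admissible, i.e.~produce uniform $L^\infty$-bounds on $x$ and $\eta$. For $x$, this reduces via Proposition~\ref{prop:x-bound1} to verifying the tameness condition~\eqref{tame} uniformly along the family of intermediate quintuples, typically through Lemma~\ref{lem:tame} (or Lemma~\ref{lem:tame1} in the unperturbed region); for $\eta$ one invokes Lemma~\ref{lem:eta-bound2} whenever the perturbation satisfies~\eqref{bc}, and otherwise a variant of Proposition~\ref{prop:eta-bound1}. Once these estimates are in place uniformly in all homotopy parameters, the remainder of the argument is formal and follows the standard pattern in~\cite{Sa}.
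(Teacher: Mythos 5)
Your proposal is correct and takes essentially the same approach as the paper. The paper gives no detailed proof of Proposition~\ref{prop:phi}, stating only that it follows by standard arguments in Floer homology with a citation to~\cite{Sa}, and your outline (continuation map via an $s$-dependent interpolation, compactness from the uniform bounds built into admissibility, gluing for concatenation, and a homotopy-of-homotopies argument showing $Q\#Q^{-1}$ induces the identity) is exactly that standard argument spelled out.
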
 

\begin{proposition}\label{prop:iota}
Define a partial order on admissible quintuples by
$Q=(H,b,c,\alpha,\beta)\leq Q'=(H',b',c',\alpha',\beta')$ iff $\alpha\leq
\alpha'$, $\beta\leq \beta'$ and $(H,b,c)=(H',b',c')$. Then for $Q\leq Q'$ the
obvious inclusions induce homomorphisms
$$
   \iota_{QQ'}:FH^{(\alpha,\beta)}(A_{H,b,c}) \to
   FH^{(\alpha',\beta')}(A_{H,b,c}).  
$$
They are functorial in the following sense:
$$
   \iota_{QQ''}=\iota_{Q'Q''}\circ\iota_{QQ'} \text{ for }Q\leq Q'\leq
   Q'',\qquad \iota_{QQ}=\id. 
$$
Moreover, if $Q=\{Q_t\}_{t\in[0,1]}$ and $Q'=\{Q_t'\}_{t\in[0,1]}$ are
admissible homotopies with $Q_0\leq Q_0'$ and $Q_1\leq Q_1'$ then
$$
   \phi_{Q'}\circ\iota_{Q_0Q_0'} = \iota_{Q_1Q_1'}\circ\phi_Q. 
$$
\end{proposition}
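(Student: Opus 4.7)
\textbf{Proof plan for Proposition~\ref{prop:iota}.} My plan is to set everything up on the chain level, where the three claims become transparent, and then pass to homology.

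First I would define $\iota_{QQ'}$ on the chain complex. Since $Q \le Q'$ forces the Hamiltonian data $(H,b,c)$ to coincide, identify $CF^{(\alpha,\beta)}(A_{H,b,c})$ with the quotient $CF^{<\beta}/CF^{<\alpha}$, where $CF^{<a}$ denotes the vector space generated by critical points of $A_{H,b,c}$ of action strictly less than $a$. The inclusions $CF^{<\alpha}\subset CF^{<\alpha'}$ and $CF^{<\beta}\subset CF^{<\beta'}$ respect the differential because the moduli spaces $\cM(y,x)$ of gradient flow lines of $A_{H,b,c}$ are empty unless $A_{H,b,c}(y)\le A_{H,b,c}(x)$. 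Hence the composition
\[
   CF^{<\beta}/CF^{<\alpha}\;\twoheadrightarrow\;CF^{<\beta}/CF^{<\alpha'}\;\hookrightarrow\;CF^{<\beta'}/CF^{<\alpha'}
\]
is a chain map, and $\iota_{QQ'}$ is the induced map on homology. The relations $\iota_{QQ''}=\iota_{Q'Q''}\circ\iota_{QQ'}$ and $\iota_{QQ}=\id$ follow at once from the obvious composition identities for projections and inclusions of subquotients.

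Next I would prove the compatibility $\phi_{Q'}\circ\iota_{Q_0Q_0'}=\iota_{Q_1Q_1'}\circ\phi_Q$. Recall that $\phi_Q$ is defined at the chain level by counting $s$-dependent gradient flow lines of a smooth interpolation between $A_{H_0,b_0,c_0}$ and $A_{H_1,b_1,c_1}$; admissibility of $Q$ supplies the uniform $L^\infty$-bounds on $x$ and $\eta$ (in the spirit of Sections~\ref{ss:x-bound1}--\ref{sec:nearcrit}) that yield the required compactness. By the standard argument in Floer theory, the chain-homotopy class of the continuation map depends only on the Hamiltonian data at the endpoints $t=0,1$, so $\phi_Q$ and $\phi_{Q'}$ may be computed from one and the same $s$-dependent interpolation of Hamiltonian data, restricted respectively to the action windows $(\alpha_t,\beta_t)$ and $(\alpha_t',\beta_t')$. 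With this common choice the claimed compatibility becomes tautological: a single continuation chain map between the full complexes descends to each subquotient and automatically commutes with the projection/inclusion maps that define $\iota_{Q_0Q_0'}$ and $\iota_{Q_1Q_1'}$.

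The main obstacle is justifying this common choice of Hamiltonian path. Concretely, one must exhibit an admissible homotopy of homotopies connecting the Hamiltonian-data paths underlying $Q$ and $Q'$ which remains admissible on the enlarged action window $\bigl(\min(\alpha_t,\alpha_t'),\max(\beta_t,\beta_t')\bigr)$ throughout. Once this is set up, the uniform bounds established in Section~\ref{sec:perturbations} guarantee that compactness of the relevant moduli spaces persists under the enlargement, so that a single chain-level continuation map is well defined and restricts correctly on both pairs of subquotients; from this the desired intertwining follows at the chain level and descends to homology.
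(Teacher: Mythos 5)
The paper offers no proof of this proposition: it is stated alongside Proposition~\ref{prop:phi} with only the remark that both ``follow by standard arguments in Floer homology'' (citing Salamon), so there is no proof in the paper to compare against word for word. Your chain-level description of $\iota_{QQ'}$ as the projection $CF^{<\beta}/CF^{<\alpha}\twoheadrightarrow CF^{<\beta}/CF^{<\alpha'}$ followed by the inclusion into $CF^{<\beta'}/CF^{<\alpha'}$, together with the observation that the differential decreases action so each $CF^{<a}$ is a subcomplex, is exactly the argument being alluded to, and the resulting functoriality $\iota_{QQ''}=\iota_{Q'Q''}\circ\iota_{QQ'}$ and $\iota_{QQ}=\id$ is immediate. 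So parts one and two of your plan are complete and match the expected standard argument.

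The third assertion is where you over-reach. You invoke that ``the chain-homotopy class of the continuation map depends only on the Hamiltonian data at the endpoints $t=0,1$'', which is the familiar slogan for ordinary, untruncated Floer theory, but it is not what one may freely use here. The map $\phi_Q$ depends \emph{a priori} on the entire admissible path $Q$ --- on the interpolated Hamiltonian data $(H_t,b_t,c_t)$ \emph{and} on the path of action windows $(\alpha_t,\beta_t)$ --- because the admissibility conditions (no critical value at $\alpha_t$ or $\beta_t$, compactness of $\bigcup_t\Crit^{(\alpha_t,\beta_t)}$, and the uniform $L^\infty$-bounds on $x$ and $\eta$ from Sections~\ref{sec:perturbations}--\ref{sec:prep}) are precisely what make the moduli spaces compact and the chain-level map well defined. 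Independence of the interpolation is therefore not given; it must be proved by exhibiting an \emph{admissible} homotopy of homotopies, and in this paper that is genuinely nontrivial --- the whole point of Sections~\ref{sec:perturbations} and~\ref{sec:prep} is that generic interpolations fail admissibility. Likewise, the ``tautological'' restriction of a single chain map to the two pairs of subquotients conceals the real step: one needs admissibility of both $Q$ and $Q'$ (and of the connecting square) to rule out continuation trajectories or broken configurations escaping either family of moving windows, so that the same moduli counts define both $\phi_Q$ and $\phi_{Q'}$ and commute with the projections/inclusions. Your final paragraph actually names the right ingredient --- an admissible homotopy of homotopies, with bounds uniform over $[0,1]^2$ --- but frames it as a residual obstacle rather than as what it is: the substance of the ``standard argument'' the paper is invoking. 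Reorganize so that this parametrized admissibility argument carries the weight of part three, rather than the overstated independence claim.
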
 

Now we first consider the case of the unperturbed Rabinowitz
functional, i.e.~$b(\eta)=\eta$ and $c\equiv 0$, with Hamiltonians
$H=h(r)$ satisfying the following condition for some constants
$A,E\geq 0$ and $R\geq 1$ (which may depend on $h$):
\begin{equation}\label{eq:h}
   h(r)=r-1 \text{ near }r=1,\qquad h(r)=A(r-R)+E \text{ for }r\geq
   R. 
\end{equation}

\begin{proposition}\label{prop:RFH-lin}
For any $h$ satisfying~\eqref{eq:h} and for any
$\alpha,\beta\notin\Spec(M,\lambda)$, we have
$$
   FH^{(\alpha,\beta)}(A_{H,\eta,0}) = RFH^{(\alpha,\beta)}(V). 
$$
In particular, for any such $h$ we have 
$$
   RFH_*(V) = \lim_{\stackrel \longrightarrow \mu} \lim_{\stackrel
   \longleftarrow \lambda} FH_*^{(-\lambda,\mu)}(A_{H,\eta,0}), \qquad
   \lambda,\mu\to\infty.
$$
\end{proposition}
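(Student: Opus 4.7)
The strategy is to verify that the quintuple $Q=(H,\eta,0,\alpha,\beta)$ is admissible in the sense of Definition~\ref{def:adm}, so that $FH^{(\alpha,\beta)}(A_{H,\eta,0})$ is well-defined, and then to identify it with $RFH^{(\alpha,\beta)}(V)$ via an admissible homotopy to a compactly supported defining Hamiltonian, appealing to Proposition~\ref{prop:phi}.

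For admissibility, observe that critical points of $A_{H,\eta,0}$ satisfy $\dot x=\eta h'(r)R$ and $h(r)\equiv 0$ along $x$; choosing (without loss of generality) $h$ to be monotone outside $[1-\delta,1+\delta]$, this forces $x\subset M$ with $x$ either constant (at $\eta=0$) or a closed Reeb orbit of period $|\eta|$, and the action equals $\eta$. For $\alpha,\beta\notin\Spec(M,\lambda)$ the critical set with action in $(\alpha,\beta)$ is therefore compact. Lemma~\ref{lem:tame1} verifies condition~\eqref{tame} with $S=1+\delta$ and some $\epsilon=\epsilon(\delta)>0$. Since the asymptotics of any gradient flow line connecting such critical points lie on $M\equiv M\times\{1\}$, where $f=r=1\leq S$, hypothesis~\eqref{beiT} is met for $|T|$ large, and Proposition~\ref{prop:x-bound1} applies with $B=1$, $C=0$, $D=|\min h|$ to provide a uniform $L^\infty$-bound on the loop $x$ depending only on $R,S,A,D,\epsilon$ and on the energy $\mathcal{E}_w\leq\beta-\alpha$. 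Proposition~\ref{prop:eta-bound1} then delivers the uniform $L^\infty$-bound on the Lagrange multiplier $\eta$.

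To compare with $RFH^{(\alpha,\beta)}(V)$, pick a compactly supported defining Hamiltonian $H^{\mathrm{cpt}}=h^{\mathrm{cpt}}(r)$ with $h^{\mathrm{cpt}}(r)=r-1$ near $r=1$, vanishing only there, and equal to a positive constant for $r$ large. The convex interpolation $H_s:=(1-s)H^{\mathrm{cpt}}+sH$, $s\in[0,1]$, is a family $h_s(r)$ each of whose members satisfies $h_s(r)=r-1$ near $r=1$, vanishes only at $r=1$, and is linear at infinity with slope in $[0,A]$; in particular conditions~\eqref{eq:h} hold uniformly in $s$. Consequently the critical set of $A_{H_s,\eta,0}$ in the window $(\alpha,\beta)$ is $s$-independent (a fixed compact subset of closed characteristics on $M$), and the bounds from Propositions~\ref{prop:x-bound1} and~\ref{prop:eta-bound1} are uniform in $s$. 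Hence $Q_s=(H_s,\eta,0,\alpha,\beta)$ is an admissible homotopy, and Proposition~\ref{prop:phi} yields an isomorphism
$$FH^{(\alpha,\beta)}(A_{H,\eta,0})\simeq FH^{(\alpha,\beta)}(A_{H^{\mathrm{cpt}},\eta,0})=RFH^{(\alpha,\beta)}(V),$$
the last equality being the definition from~\cite{CF}. The second assertion then follows by taking the direct/inverse limits over $(\alpha,\beta)=(-\lambda,\mu)$ with $\lambda,\mu\to\infty$.

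The one genuine obstacle is ensuring that the constants in Propositions~\ref{prop:x-bound1} and~\ref{prop:eta-bound1} can be chosen uniformly along the homotopy $\{H_s\}$. For Proposition~\ref{prop:x-bound1}, the threshold $\epsilon=\epsilon(\delta)$ from Lemma~\ref{lem:tame1} depends only on the fixed parameter $\delta$, while the slopes, intercepts and reference radii of $h_s$ range over compact intervals; thus the quantities $A,B,C,D,R,S$ feeding into Proposition~\ref{prop:x-bound1} are uniformly bounded. For Proposition~\ref{prop:eta-bound1}, inspection of~\cite[Cor.~3.5]{CF} as invoked in its proof shows the $\eta$-bound depends only on $\delta$, on the action interval $(\alpha,\beta)$, and on an $L^\infty$-bound for $H(x)$ along the flow line (already established), all of which are uniform in $s$.
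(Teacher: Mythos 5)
Your proof is correct and follows essentially the same route as the paper: verify that $(H,\eta,0,\alpha,\beta)$ is admissible in the sense of Definition~\ref{def:adm} using Lemma~\ref{lem:tame1}, Proposition~\ref{prop:x-bound1} and Proposition~\ref{prop:eta-bound1}, then connect $H$ to a compactly supported defining Hamiltonian by an admissible homotopy and invoke Proposition~\ref{prop:phi}. The only difference is cosmetic: you unpack what Proposition~\ref{prop:eta-bound1} already bundles (its proof itself invokes Lemma~\ref{lem:tame1} and Proposition~\ref{prop:x-bound1} to bound $x$ before bounding $\eta$), and you spell out the convex interpolation and the uniformity of the constants along it, whereas the paper compresses this into the observation that the bounds of Proposition~\ref{prop:eta-bound1} are uniform along homotopies of such triples.
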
 

\begin{proof} 
Note that critical points of $A_{H,\eta,0}$ for $h(r)=r-1$ near $r=1$
correspond to closed Reeb orbits on $M$ and their action equals their
period, so the first condition in the definition of admissibility is
satisfied. Proposition~\ref{prop:eta-bound1} provides uniform bounds
on gradient flow lines $(x,\eta)$ between critical points with action
in $(\alpha,\beta)$ for any such triple $(h,\eta,0)$, and these bounds
are also uniform for homotopies of such triples. Hence all such
triples are admissible and connected by admissible homotopies, and the
result follows from Proposition~\ref{prop:phi}. 
\end{proof} 

\begin{remark} \rm Proposition~\ref{prop:RFH-lin} allows us to compute
Rabinowitz Floer homology using Hamiltonians that are linear at
infinity rather than constant at infinity as in the original
definition in~\cite{CF}. This last case is also included in the
statement of Proposition~\ref{prop:RFH-lin} and corresponds to $A=0$. 
\end{remark}

From now on we will always assume that $h$ satisfies
\begin{equation}\label{eq:h-lin1}
   h(r) = r-1\qquad \text{for }r\geq 2. 
\end{equation}
The second class of triples $(h,b,c)$ we wish to consider are those
satisfying the following conditions:
\begin{equation}\label{eq:hbc}
   \left\{\begin{array}{ll}
   h(r) = r-1 &\text{ for }r\geq 2, \\
   b=b_A &  \text{ as in Lemma~\ref{lem:tame} with }
   A\notin\Spec(M,\lambda), \\
   |c'(\eta)|\le C & \text{ for } \eta\in\R, \text{ and } 
    \left\{\begin{array}{ll} 
     c'(\eta)\leq 0 & \text{ for }\eta\leq -A, \\ 
     c'(\eta)\ge 0 &  \text{ for }\eta\geq A. 
    \end{array} \right.   
  \end{array} \right.
\end{equation}
By Lemma~\ref{lem:eta-bound2}, $|\eta|$ is bounded along gradient flow
lines by $\max\{|\eta^+|,|\eta^-|,A\}$. Lemma~\ref{lem:tame} and
Proposition~\ref{prop:x-bound1} provide a uniform bound on $x$ along
gradient flow lines. Hence admissibility of $(h,b,c)$ comes down to
the first condition in Definition~\ref{def:adm} and we have proved

\begin{proposition}\label{prop:RFH-flat}
A quintuple $(h,b,c,\alpha,\beta)$ with $(h,b,c)$
satisfying~\eqref{eq:hbc} is
admissible, and hence its Floer homology
$FH^{(\alpha,\beta)}(A_{H,b,c})$ is defined, provided that 
there are no critical points of $A_{H,b,c}$ with action $\alpha$
or $\beta$ and the set $\Crit^{(\alpha,\beta)}(A_{H,b,c})$ of critical points
with action in $(\alpha,\beta)$ is compact. 

A homotopy $(h_t,b_t,c_t,\alpha_t,\beta_t)$ with $(h_t,b_t,c_t)$
satisfying~\eqref{eq:hbc} is admissible, and hence
$FH^{(\alpha_0,\beta_0)}(A_{H_0,b_0,c_0})\cong 
FH^{(\alpha_1,\beta_1)}(A_{H_1,b_1,c_1})$, provided that 
there are no critical points of $A_{H_t,b_t,c_t}$ with action $\alpha_t$
or $\beta_t$ and provided the set
$\bigcup_{t\in[0,1]}\Crit^{(\alpha_t,\beta_t)}(A_{H_t,b_t,c_t})$ is compact.  
\end{proposition}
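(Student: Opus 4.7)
The plan is to verify the three conditions of Definition \ref{def:adm} for the quintuple $(h,b,c,\alpha,\beta)$ and then invoke the standard Floer-homology construction outlined at the beginning of Section \ref{sec:perturbations}; both parts of the proposition then follow by assembling results already proved in the preceding subsections.

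For the static statement, condition (1) of Definition \ref{def:adm} is part of the hypothesis. For condition (3) (uniform bound on $\eta$), I would observe that the shape assumptions \eqref{eq:hbc} on $(b,c)$ coincide with the hypotheses \eqref{bc} of Lemma \ref{lem:eta-bound2}: by construction $b=b_A$ is flat outside a slightly enlarged $[-A,A]$, and the sign requirements on $c'$ outside $[-A,A]$ are identical. Lemma \ref{lem:eta-bound2} then gives $|\eta(s)|\le \max\{|\eta^+|,|\eta^-|,A\}$ along any gradient flow line, and compactness of $\Crit^{(\alpha,\beta)}(A_{H,b,c})$ bounds $|\eta^{\pm}|$. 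For condition (2) (uniform $L^\infty$-bound on $x$) I would apply Proposition \ref{prop:x-bound1}: its structural hypotheses \eqref{eq:H-lin} and \eqref{eq:bc} are built into \eqref{eq:hbc}, the nearly-critical bound \eqref{tame} is exactly the conclusion of Lemma \ref{lem:tame} (with explicit constants $(\epsilon,S)$), the energy $\mathcal{E}_w=A_{H,b,c}(x^+,\eta^+)-A_{H,b,c}(x^-,\eta^-)$ is controlled by $\beta-\alpha$, and the boundary condition \eqref{beiT} on the asymptotic values of $x$ again follows from compactness of the critical set. With all three conditions of Definition \ref{def:adm} verified, bubbling is excluded by exactness of $\omega=d\lambda$, the standard $C^1$-bound on $x$ follows from the $L^\infty$-bounds on $(x,\eta)$ together with bubbling analysis, and the Floer complex $FH^{(\alpha,\beta)}(A_{H,b,c})$ is well defined.

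For the homotopy statement I would rerun the same verification with constants chosen uniformly in $t\in[0,1]$. By continuity of the family $(h_t,b_t,c_t)$ and compactness of $[0,1]$ one can pick uniform constants $A,B,C,R,E,\epsilon,S,\delta_A$; since $\Spec(M,\lambda)$ is discrete one can also arrange $A\notin\Spec(M,\lambda)$ throughout (after a small perturbation of the homotopy if necessary, which does not change its homotopy class). The assumption that $\bigcup_{t\in[0,1]}\Crit^{(\alpha_t,\beta_t)}(A_{H_t,b_t,c_t})$ is compact then makes the bounds on $\eta^{\pm}$ and on the asymptotic values of $x$ uniform in $t$, so Lemmas \ref{lem:eta-bound2} and \ref{lem:tame} together with Proposition \ref{prop:x-bound1} give the required uniform estimates along the $t$-parameter family of gradient flow lines. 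Admissibility of the homotopy being established, Proposition \ref{prop:phi} yields the claimed isomorphism on Floer homology.

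The analytic heart of the argument has already been carried out in Lemmas \ref{lem:eta-bound2} and \ref{lem:tame} and Proposition \ref{prop:x-bound1}, so there is no substantial obstacle; the only point needing care is the uniform choice of constants along the homotopy, which is a routine compactness matter.
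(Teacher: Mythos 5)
Your proof is correct and follows essentially the same route as the paper: the $\eta$-bound comes from Lemma~\ref{lem:eta-bound2} (whose hypotheses~\eqref{bc} are built into~\eqref{eq:hbc}), the $L^\infty$-bound on $x$ comes from combining Lemma~\ref{lem:tame} (which furnishes condition~\eqref{tame}) with Proposition~\ref{prop:x-bound1}, and the remaining admissibility condition is exactly the hypothesis on the critical set. Your remarks on uniform constants for the homotopy case and on bubbling/exactness are accurate elaborations of points the paper leaves implicit.
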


In general, the hypotheses of Proposition~\ref{prop:RFH-flat} may fail
for two reasons: 
\begin{itemize}
\item critical values may cross the end points of the intervals
  $[\alpha_t,\beta_t]$;
\item there may exist families of critical points $(x,\eta)$ with $x$
  constant and $\eta$ unbounded.  
\end{itemize}
Thus the Floer homology $FH^{(\alpha,\beta)}(A_{H,b,c})$ need not be
defined, and if it is it may depend on the quintuple
$(h,b,c,\alpha,\beta)$ even if $\alpha$ and $\beta$ are fixed. In
Section~\ref{sec:proof} we will construct specific homotopies
satisfying the hypotheses of Proposition~\ref{prop:RFH-flat} in order
to interpolate between Rabinowitz Floer homology and symplectic
homology.

\section{Preparation for the proof of the main result}\label{sec:prep}

In the previous section we studied the Floer homology of the perturbed
Rabinowitz functional $A_{H,b,c}$ in two cases: 
\begin{enumerate}
\item $b(\eta)=\eta$, $c\equiv 0$ and $h$ satisfying~\eqref{eq:h};
\item $(h,b,c)$ satisfying~\eqref{eq:hbc}. 
\end{enumerate}
The main result of this section is an interpolation between these two
classes: 

\begin{theorem}\label{thm:flatten}
Suppose $-\infty<\alpha<\beta<\infty$ satisfy
$\alpha,\beta\notin\Spec(M,\lambda)$, and $h$ satisfies $h'(r)\ge 0$
for $r>0$ and $h(r)=r-1$ for
$r\geq 1/2$. Then there exists a constant $A(\alpha,\beta)$ such
that for $b=b_A$ as in Lemma~\ref{lem:tame} with $A\geq
A(\alpha,\beta)$ we have
$$
   FH^{(\alpha,\beta)}(A_{H,\eta,0}) \cong
   FH^{(\alpha,\beta)}(A_{H,b,0}). 
$$
\end{theorem}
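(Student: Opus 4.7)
The plan is to prove the isomorphism by identifying the chain complexes of $A_{H,\eta,0}$ and $A_{H,b,0}$ directly in the action window $(\alpha,\beta)$, with a continuation argument inside the admissible class of Proposition~\ref{prop:RFH-flat} to handle the smoothing of $b_A$. I set
\[
A(\alpha,\beta):=\max\bigl(B(\alpha,\beta),|\alpha|,|\beta|\bigr)+1,
\]
where $B(\alpha,\beta)$ is the uniform $\eta$-bound of Proposition~\ref{prop:eta-bound1} for gradient flow lines of $A_{H,\eta,0}$ connecting critical points with action in $(\alpha,\beta)$, perturbed so that $A\notin\Spec(M,\lambda)$. Write $A_\pm:=A\pm\tfrac{1}{2}\eta_A$ for the identity and plateau thresholds of $b=b_A$.

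First I identify the critical points of $A_{H,b,0}$ with action in $(\alpha,\beta)$. On $|\eta|\le A_-$ one has $b(\eta)=\eta$, so both functionals yield exactly the closed Reeb orbits on $M=\{r=1\}$ of period in $(\alpha,\beta)$, which exhausts the critical set of $A_{H,\eta,0}$ in this window since its action equals the period. For $A_{H,b,0}$ any further critical point has $|\eta|>A_-$. In the smoothing window $A_-<|\eta|<A_+$ the relation $b'(\eta)\int H\,dt=0$ with $b'\neq 0$ places $x$ on level $r=1$ with period $|b(\eta)|\in[A_-,A]$; this interval lies at distance $\ge\tfrac{1}{2}\eta_A$ from $\Spec(M,\lambda)$, so no such orbit exists. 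On $|\eta|\ge A_+$ a direct computation using $h(r)=r-1$ for $r\ge 1/2$ yields action $\pm A$ for nonconstant critical loops on the linear part of $h$, and $\mp A\,h(0^+)$ for constant critical loops in the core of $\wh V$ (with $h(0^+)<0$); by the choice of $A(\alpha,\beta)$ both lie outside $(\alpha,\beta)$.

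Next I compare the gradient trajectories. Proposition~\ref{prop:eta-bound1} gives $|\eta(s)|\le B(\alpha,\beta)<A_-$ along every gradient flow line of $A_{H,\eta,0}$ connecting critical points with action in $(\alpha,\beta)$. For $A_{H,b,0}$, Lemma~\ref{lem:eta-bound2} with threshold $A_+$ yields $|\eta(s)|\le\max(|\eta^\pm|,A_+)=A_+$; since $|\eta^\pm|<A<A_+$, the plateau-trap argument in its proof (once $|\eta|$ enters the plateau it cannot leave without contradicting the asymptotic condition) upgrades this to the strict inequality $|\eta(s)|<A_+$. Lemma~\ref{lem:tame} together with Proposition~\ref{prop:x-bound1} supplies uniform $L^\infty$-bounds on the loop component $x$. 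On the region $|\eta|<A_-$ the perturbed and unperturbed gradient equations literally coincide, so any perturbed flow line that remains inside the identity region is automatically an unperturbed flow line and vice versa.

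The hard part will be ruling out perturbed trajectories whose $\eta$-component excursions into the smoothing window $[A_-,A_+]$, where the two sets of equations diverge. I would address this by an admissible homotopy in the sense of Proposition~\ref{prop:RFH-flat}: shrinking the smoothing width $\eta_A$ of $b_A$ to zero defines a family $(H,b_A^\tau,0,\alpha,\beta)$ of quintuples satisfying~\eqref{eq:hbc} with a common identity region $|\eta|\le A_-$, hence a common critical set in the action window, and with bounds on $(x,\eta)$ uniform in $\tau$ by Lemma~\ref{lem:tame}, Lemma~\ref{lem:eta-bound2} and Proposition~\ref{prop:x-bound1}. Proposition~\ref{prop:phi} then yields $FH^{(\alpha,\beta)}(A_{H,b_A^\tau,0})\cong FH^{(\alpha,\beta)}(A_{H,b,0})$ throughout. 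As the smoothing width tends to zero the plateau-trap argument becomes sharp and forces $|\eta(s)|\le A$ along trajectories; a standard compactness and breaking analysis, combined with the absence of nonconstant $J$-holomorphic bubbles due to exactness of $d\lambda$, confines the limiting moduli spaces to the identity region, where they coincide with those of $A_{H,\eta,0}$. Together with Proposition~\ref{prop:RFH-lin}, this identifies the two chain complexes and their boundary operators, establishing $FH^{(\alpha,\beta)}(A_{H,\eta,0})\cong FH^{(\alpha,\beta)}(A_{H,b,0})$.
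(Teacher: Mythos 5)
Your argument has a genuine gap. The central issue in this theorem is not compactness but rather showing that gradient flow lines of $A_{H,b,0}$ connecting critical points with action in $(\alpha,\beta)$ have their $\eta$-component confined to the \emph{interior} of the identity region $\{b(\eta)=\eta\}$, so that they literally coincide with flow lines of $A_{H,\eta,0}$. Your invocation of Lemma~\ref{lem:eta-bound2} gives at best $|\eta(s)|\leq A$, which is attained exactly at the edge of (or inside) the smoothing window where $b\neq\mathrm{id}$; you acknowledge this as ``the hard part'' and propose to close it by shrinking the smoothing width $\eta_A\to 0$. But that move does not work: the bound $|\eta|\leq A$ is sharp and remains sharp in the limit, so it never excludes $\eta$ from touching or crossing the non-identity region, no matter how thin that region becomes. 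Moreover, the uniform-in-$\tau$ compactness you would need for the homotopy is unavailable: the constant $\delta_A$ of Lemma~\ref{lem:deltaA} and the threshold $\eps(\delta)$ behind Lemma~\ref{lem:tame} depend on the smoothing window, so they are not controlled as $\eta_A\to 0$. Appealing to exactness to exclude bubbling is orthogonal to the problem — it gives derivative bounds, not an a priori pointwise gap between $\sup|\eta|$ and $A$.

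The paper takes a genuinely different route. Instead of trying to bound $\eta$ below $A$ by refining the Lagrange-multiplier estimate of Section~\ref{sec:nearcrit}, it replaces your step (iii) with a \emph{dichotomy} argument: it shows that $A_{H,b,0}$ is of Rabinowitz type in the sense of Definition~\ref{rabtype}, with a Rabinowitz quadruple $(\eps,D,c_1,c_2)$ whose constants $\eps,c_1,c_2$ are \emph{uniform over all} $b$ satisfying $0\leq b'\leq 1$, $b(\eta)=\eta$ for $|\eta|\leq D$. Here the $x$-bound comes from Proposition~\ref{prop:x-bound2} (the Kazdan--Warner estimate), not from Lemma~\ref{lem:tame}+Proposition~\ref{prop:x-bound1} as you use, precisely because Proposition~\ref{prop:x-bound2} is uniform in $b$ and does not invoke condition~\eqref{tame}, which is $b$-dependent. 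Proposition~\ref{dicho} then forces $|\eta(s)|$ along flow lines of small absolute action to lie either below $b_-(\kappa)$ or above $b_+(\kappa)$; since the asymptotic critical points have $|\eta|=|b(\eta)|<b_+(\kappa)$, only the lower branch occurs, and choosing $D$ large enough gives $b_-(\kappa)<D$. This is the mechanism that puts the whole flow line strictly inside the identity region. Your proposal does not reproduce this argument and has no substitute for it, so the identification of moduli spaces does not go through as written.
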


This result allows us to replace the original function $b(\eta)=\eta$
by a function $b_A$ which is constant at infinity. Its proof will
occupy the remainder of this section.

\subsection{An improved $L^\infty$-bound on the loop
  $x$}\label{ss:x-bound2} 

In this subsection we will derive from Lemma~\ref{lem:lap1} the
following $L^\infty$-bound on the loop $x$. In contrast to the
bounds in Section~\ref{ss:x-bound1}, this bound does not rely on
condition~\eqref{tame} and thus holds uniformly for all $b$ satisfying
$0\leq b'\leq 1$. On the other hand, this bound only works for
$c\equiv 0$. 

\begin{proposition}\label{prop:x-bound2}
Suppose the triple $(h,b,c)$ satisfies the following conditions:
\begin{enumerate}
\item $h(r)\equiv {\rm const}\in(-1,0)$ near $r=0$, $h(r)=r-1$ for
  $r\geq 2$, and $h'(r)\geq 0$ for all $r\in\R_+$;
\item $0\leq b'(\eta)\leq 1$ for all $\eta\in\R$;
\item $c\equiv 0$. 
\end{enumerate}
Let $w=(x,\eta):\R\to\LL\times\R$ be a solution of~\eqref{grad} whose
asymptotic limits $x^\pm$ are contained in $V\cup(M\times[1,2))$. Then
$x$ is uniformly bounded by a constant depending only on the action
difference $A_{H,b,c}(x^+,\eta^+)-A_{H,b,c}(x^-,\eta^-)$. 
\end{proposition}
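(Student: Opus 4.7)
The approach is to extract a bound on $\sup (r\circ x)$ from the Laplace estimate of Lemma~\ref{lem:lap1} in the cylindrical end $M\times[2,\infty)$, exploiting the structural restrictions $c\equiv 0$ and $0\leq b'\leq 1$. In this end $h'(r)\equiv 1$, so the estimate specializes to
$$
\Delta(r\circ x) \;=\; |\partial_s x|_t^2 \;-\; \partial_s\bigl(b(\eta)\bigr)\cdot r.
$$
The second component of~\eqref{grad} with $c\equiv 0$ reads $\partial_s\eta=-b'(\eta)\int_0^1 h(r(s,t))\,dt$, hence
$$
\partial_s\bigl(b(\eta)\bigr) \;=\; -\,b'(\eta)^2\int_0^1 h(r(s,t))\,dt.
$$

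Since the asymptotic limits lie in $V\cup(M\times[1,2))$, the supremum $M_0:=\sup_{s,t} r\circ x$ is attained at some interior point $(s_0,t_0)$. If $M_0\leq 2$ there is nothing to prove; assume $M_0>2$. At this interior maximum $\Delta r\leq 0$ forces $\partial_s(b(\eta))(s_0)\cdot M_0\;\geq\; |\partial_s x(s_0,t_0)|_t^2\;\geq\; 0$, which, combined with the identity above and $b'\geq 0$, yields the sign condition $\int_0^1 h(r(s_0,t))\,dt \leq 0$ whenever $b'(\eta(s_0))>0$. The degenerate case $b'(\eta(s_0))=0$ is handled separately via a perturbation argument, using that then $\partial_s\eta(s_0)=0$ and applying the strong maximum principle to $r$ on a neighbourhood of $(s_0,t_0)$.

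Since $h\geq h(0)>-1$ everywhere but $h(M_0)=M_0-1$ is large, the sign condition forces the loop $t\mapsto r(s_0,t)$ to descend from $M_0$ down to values $\leq 2$ on a subset of $S^1$ of Lebesgue measure bounded below independently of $M_0$. Combining the cylindrical-metric inequality $|\partial_t x|_t^2\geq (\partial_t r)^2/r$ established inside the proof of Lemma~\ref{lem:lap1} with Cauchy--Schwarz gives a lower bound on $\int_0^1 |\partial_t x(s_0,\cdot)|_t^2\,dt$ that grows with $M_0$. Rewriting $\partial_t x=-J_t(\partial_s x+b(\eta)X)$ via the Cauchy--Riemann equation and using $|X|_t^2=r$ together with $|b|$ bounded converts this into a lower bound on $\int_0^1 |\partial_s x(s_0,\cdot)|_t^2\,dt$ of the form $c_1 M_0-c_2$.

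The main obstacle is transferring this \emph{single-slice} bound on $|\partial_s x|^2$ into one in terms of the \emph{global} energy $\mathcal{E}=A_{H,b,c}(x^+,\eta^+)-A_{H,b,c}(x^-,\eta^-)$, which a priori only controls the double integral $\iint|\partial_s x|_t^2\,ds\,dt$. I would handle this by observing that $\partial_s r(s_0,t_0)=0$ at the interior maximum and that the Laplace formula together with the already-available bounds on $b,b',c'$ yields an $L^\infty$-bound on $\partial_s^2 r$ in a neighbourhood of $(s_0,t_0)$ depending only on $M_0$. This produces a strip $[s_0-\tau,s_0+\tau]\times S^1$, of width $\tau>0$ depending only on $M_0$ and the fixed data $(h,b)$, on which $r$ stays comparable to $M_0$ and the slice estimate persists. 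Integrating the lower bound $c_1 M_0-c_2$ over this strip and comparing with $\iint|\partial_s x|_t^2\leq\mathcal{E}$ gives an inequality $\tau(M_0)\cdot(c_1 M_0-c_2)\leq\mathcal{E}$, from which $M_0$ is bounded solely in terms of $\mathcal{E}$. This finishes the argument since outside the cylindrical end $r\leq 2$ trivially.
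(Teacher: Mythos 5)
Your plan diverges substantially from the paper's, which does not localize at an interior maximum but instead reformulates the Laplace inequality as a Kazdan--Warner type problem and invokes the dedicated maximum-principle statement Proposition~\ref{upb}, after bounding the measure of the ``$T$-wild set'' via Lemma~\ref{lem:wild} and the energy. More importantly, your argument has two genuine gaps.

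First, in passing from a lower bound on $\int_0^1|\partial_t x(s_0,\cdot)|_t^2\,dt$ to one on $\int_0^1|\partial_s x(s_0,\cdot)|_t^2\,dt$ you invoke ``$|b|$ bounded,'' but the hypothesis is only $0\le b'\le 1$; no bound on $b$ itself is assumed (indeed the proposition is applied in the proof of Theorem~\ref{thm:flatten} to homotopies that include $b(\eta)=\eta$ on arbitrarily long intervals). The term $b(\eta)^2 h'(r)^2 r$ that appears when you expand $\partial_t x = J_t\partial_s x + b(\eta)h'(r)R$ is therefore uncontrolled. This particular issue could be circumvented: the proof of Lemma~\ref{lem:lap1} establishes the direct estimate $\|\partial_s x\|_t^2 \ge (\partial_t f(x))^2/f(x)$, so the lower bound on the slice energy of $\partial_s x$ can be read off without any detour through $\partial_t x$ and without mentioning $b$ at all.

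The second gap is the fatal one. You need to upgrade a \emph{single-slice} lower bound $\int_0^1|\partial_s x(s_0,\cdot)|^2\,dt \gtrsim M_0$ to a bound on the $2$-dimensional energy integral. Your mechanism is an $L^\infty$-bound on $\partial_s^2 r$ near $(s_0,t_0)$ ``depending only on $M_0$,'' deduced from the Laplace formula. But~\eqref{eq:lap-f} only controls the sum $\partial_s^2 r+\partial_t^2 r = \langle\partial_s x,\partial_s x\rangle_t-\partial_s(h'(r)b(\eta))\,r$, and the right-hand side contains $|\partial_s x|^2$ at the point in question, which is precisely the quantity with no a priori pointwise bound. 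So no uniform width $\tau(M_0)$ for your strip follows. This transfer step is the real difficulty of the proposition, and the paper resolves it globally rather than locally: it shows via Lemma~\ref{lem:wild} that the set of $s$-slices on which $r$ traverses $[2,e^T]$ has measure $\le\mathcal E/C_T^2$, and then invokes the self-contained Kazdan--Warner estimate of Proposition~\ref{upb}, whose heart is a delicately constructed piecewise comparison function $\mu$ with $\partial_s^2\mu\ge\partial_s\beta$ and $\max\mu\le 4A\delta^2$. Your sketch does not supply a substitute for that argument, and absent it the proof does not close.
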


\begin{proof}
By the assumption on $b$ inequality~\eqref{eq:lap-rho} in
Lemma~\ref{lem:lap1} simplifies for $\rho(s,t)\geq\ln 2$ to 
\begin{equation}\label{problem1}
   \Delta\rho(s,t) \geq -\p_s\beta(s),\qquad \beta(s):=b(\eta(s)). 
\end{equation}
Since $c\equiv 0$, the second equation in~\eqref{grad} yields
\begin{align}\label{problem2}
   -\p_s\beta(s) &= -b'(\eta)\p_s\eta \cr
   &= b'(\eta)^2\int_0^1h\bigl(r(s,t)\bigr)dt \cr 
   &= b'(\eta)^2\int_0^1\Bigl(e^{\rho(s,t)}-1\Bigr)dt,
\end{align}
where for the last equality we redefine $\rho$ by $\rho:=\ln(h+1)$.
(This is possible since $h>-1$ and does not change $\rho$ for
$\rho\geq 2$.) The assumption on the asymptotic limits $x^\pm$ implies
$\rho(s,t)<\ln 2$ for $|s|$ large, so
$$
   \Om:=\{(s,t)\in\R\times S^1\mid \rho(s,t)>\ln 2\}
$$
has compact closure. If $b'\bigl(\eta(s)\bigr)=0$ for some $s\in\R$, then
the second equation in~\eqref{grad} implies that $\eta$, hence $\beta$, is constant.
Equation~\eqref{problem1} becomes $\Delta\rho\geq 0$, hence $\rho\leq\ln
2$ by the maximum principle and we are done. So assume from now on
that $b'\bigl(\eta(s)\bigr)>0$ for all $s\in\R$. Then we can uniquely
write $\eta$ as a function of $\beta$, and hence
$b'\bigl(\eta(s)\bigr)^2=f\bigl(\beta(s)\bigr)$ for some smooth 
function $f:\R\to[0,1]$. Then~\eqref{problem2} becomes
$$
   -\p_s\beta(s) =
   f\bigl(\beta(s)\bigr)\int_0^1\Bigl(e^{\rho(s,t)}-1\Bigr)dt. 
$$
Hence the pair $(\rho,\beta)$ satisfies the hypotheses of
Proposition~\ref{upb} below (with $A=1$).  

Pick any $T>\ln 2$ and define the \emph{$T$-wild set} $W_T$ by 
$$
W_T=\Big\{s \in \mathbb{R}: 
\,\,\exists\,\,t,t' \in S^1, \rho(s,t) > T,\,\,
\rho(s,t') < \ln 2\Big\}.
$$
Lemma~\ref{lem:wild} with $A=2$ and $B=e^T$ yields
$$
   \|\nabla A_{H,b,c}(x,\eta)(s)\|_J\geq\frac{e^T-2}{e^{T/2}} =:C_T \
\mbox{ for all } s\in W_T. 
$$
Thus we can estimate the Lebesgue measure $|W_T|$ in terms of the
action difference by
\begin{align*}
   \Delta &:= A_{H,b,c}(x^+,\eta^+)-A_{H,b,c}(x^-,\eta^-) \cr
   &= \int_{-\infty}^\infty \|\nabla A_{H,b,c}(x,\eta)(s)\|_J^2ds \cr 
   &\geq \int_{W_T} \|\nabla A_{H,b,c}(x,\eta)(s)\|_J^2ds \cr 
   &\geq |W_T|C_T^2,
\end{align*}
hence 
$$
   |W_T|\leq \frac{\Delta}{C_T^2} =:\delta. 
$$
Now Proposition~\ref{upb} below (with $A=1$) yields the uniform
estimate 
$$
   \max_{\mathbb{R} \times S^1} \rho \leq 
   T+4 \delta^2,
$$
where the right hand side only depends on the action difference
$\Delta$ (take for example $T=2$). This concludes the proof of
Proposition~\ref{prop:x-bound2} modulo Proposition~\ref{upb} below. 
\end{proof}

\subsection{An upper bound for a Kazdan-Warner type inequality}\label{ss:KW}
Assume that $\Omega$ is an open subset of the 
infinite cylinder
$\mathbb{R} \times S^1$ whose closure is compact,
$A$ is a positive real number, and
$f \in C^\infty(\mathbb{R},[0,A])$. 
We consider solutions 
$$
   (\rho,\beta) \in C^\infty(\mathbb{R}\times S^1,\mathbb{R})
   \times C^\infty(\mathbb{R},\mathbb{R})
$$ of the following Kazdan-Warner type inequality:
\begin{equation}\label{problem}
\left.\begin{array}{cc}
\Delta \rho(s,t) \geq -\partial_s \beta(s)=f(\beta(s))
\int_0^1\Big(e^{\rho(s,\tau)}-1\Big)d\tau, &
(s,t) \in \Omega\\
\rho|_{\mathbb{R}\times S^1 \setminus\Omega} \leq \ln 2.
\end{array}\right\}
\end{equation}
Here we understand that the limit in the last equation of
(\ref{problem}) as $s\to\infty$ is uniform in the $t$-variable. 
Given a solution $(\rho,\beta)$ of
(\ref{problem}) we introduce for a real number
$T > \ln 2$ the \emph{$T$-wild set} of
$(\rho,\beta)$ 
$$W_T=W_T(\rho,\beta) 
\subset \mathbb{R}$$
by 
$$
W_T=\Big\{s \in \mathbb{R}: 
\,\,\exists\,\,t,t' \in S^1, \rho(s,t) > T,\,\,
\rho(s,t') < \ln 2\Big\}.
$$
Assume that $\delta>0$ and
$T> \ln 2$. We will consider solutions $(\rho,\beta)$ of 
(\ref{problem}) for which the Lebesgue measure
of the $T$-wild set of $(\rho,\beta)$ satisfies
\begin{equation}\label{T-tame}
   |W_T| \leq \delta.
\end{equation}
The main result of this section is the following uniform
upper bound for the first factor of solutions of
(\ref{problem}) satisfying~\eqref{T-tame}.

\begin{proposition}\label{upb}
Assume that $\delta>0$, $T>\ln 2$, and
$(\rho,\beta)$ is a solution of (\ref{problem})
satisfying~\eqref{T-tame}. Then 
$$
   \max_{\mathbb{R} \times S^1} \rho \leq 
   T+4A \delta^2.
$$
\end{proposition}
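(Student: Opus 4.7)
\medskip

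\textbf{Proof proposal.} My plan is to apply the maximum principle to the subharmonic modification $\rho-\psi$, where $\psi(s)$ is a one-dimensional barrier carefully tuned to the structure of $W_T$.

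\emph{Step 1 (Location of the maximum.)} Suppose, toward establishing the bound, that $M^*:=\max\rho>T$ and let $(s_0,t_0)$ achieve this maximum. Since $\rho\leq\ln 2$ outside $\Omega$, the point $(s_0,t_0)$ lies in the interior of $\Omega$, so $\Delta\rho(s_0,t_0)\leq 0$. Combining this with the PDE and $f\geq 0$ forces
\[
   f(\beta(s_0))\,G(s_0)\leq 0, \qquad G(s_0):=\int_0^1(e^{\rho(s_0,\tau)}-1)\,d\tau.
\]
Hence $G(s_0)\leq 0$, i.e.\ $\int_0^1 e^{\rho(s_0,\tau)}\,d\tau\leq 1$. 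Since $\rho(s_0,t_0)>T>\ln 2$, the slice cannot have $\rho\geq\ln 2$ everywhere (otherwise the integral would be $\geq 2$), so there is some $t'$ with $\rho(s_0,t')<\ln 2$. This places $s_0\in W_T$.

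\emph{Step 2 (The barrier.)} Let $J^*$ be the connected component of $\{s:\max_t\rho(s,t)>T\}$ containing $s_0$; by definition $J^*\supset$ (the component of $s_0$ in $W_T$), with $M=T$ at $\partial J^*$. Define $\psi:\overline{J^*}\to\mathbb{R}$ as the unique solution of
\[
   -\psi''(s)=A\,\chi_{W_T}(s),\qquad \psi|_{\partial J^*}=0.
\]
Then $\psi\geq 0$ and is given by the Green's function formula $\psi(s)=A\int_{J^*\cap W_T}G_{J^*}(s,s')\,ds'$. On $J^*\setminus W_T$ one has $\min_t\rho(s,\cdot)\geq\ln 2$, hence $G(s)\geq 1$ and $F(s)\geq 0$; on $J^*\cap W_T$ one has $F\geq -A$ because $f\leq A$ and $G\geq -1$. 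In both cases $\Delta(\rho-\psi)=\Delta\rho+A\chi_{W_T}\geq 0$ on $\Omega$, so $\rho-\psi$ is subharmonic on $\Omega\cap(J^*\times S^1)$.

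\emph{Step 3 (Maximum principle.)} The boundary of $\Omega\cap(\overline{J^*}\times S^1)$ splits into $\partial\Omega\cap(\overline{J^*}\times S^1)$, on which $\rho\leq\ln 2$ and $\psi\geq 0$, giving $\rho-\psi\leq\ln 2\leq T$; and $(\partial J^*\times S^1)\cap\overline\Omega$, on which $\psi=0$ and $\rho\leq M(\partial J^*)=T$. Thus $\max(\rho-\psi)\leq T$, which yields
\[
   M^*\leq T+\psi(s_0).
\]

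\emph{Step 4 (Estimating $\psi(s_0)$.)} Using $|W_T\cap J^*|\leq\delta$ and the standard bound for the Dirichlet Green's function on an interval, together with the fact that $F_-$ is supported only where the slice integral $\int e^\rho\leq 1$ (and hence where $\rho$ is concentrated as a small bump around $t_0$ of measure $\leq e^{-T}$), one obtains
\[
   \psi(s_0)=A\int_{J^*\cap W_T}G_{J^*}(s_0,s')\,ds'\leq 4A\delta^2,
\]
which is the required conclusion.

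\emph{Main obstacle.} The favorable case is when the endpoints of the $W_T$-component $K$ containing $s_0$ already satisfy $M\leq T$; then a direct parabolic barrier $\psi(s)=\frac{A}{2}(s-s_-)(s_+-s)$ on $K$ gives $\psi(s_0)\leq A|K|^2/8\leq A\delta^2/8$ and we are done with room to spare. The genuine difficulty is the case in which some endpoint of $K$ is of ``Case~B'' type, i.e.\ $\min\rho=\ln 2$ while still $M>T$, forcing us to extend the barrier through intermediate stretches of $J^*\setminus W_T$. A naive Green's function estimate $G_{J^*}(s_0,s')\leq |J^*|/4$ gives only $\psi(s_0)\leq A\delta|J^*|/4$, which matches the claimed $4A\delta^2$ only when $|J^*|\leq 16\delta$. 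The hard part of the argument is therefore to show that the effective support of the ``bad'' contribution $F_-$ is concentrated within distance $O(\delta)$ of $s_0$; this uses the specific nonlinear form $F=f\cdot G$ and the fact that $F_-$ can only be nonzero on slices where $\int e^\rho\leq 1$, i.e.\ where the $t$-profile of $\rho$ is sharply peaked with small measure of high values.
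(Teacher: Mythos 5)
Your Steps 1--3 parallel the paper's Step~1: you compare $\rho$ with a one-dimensional barrier $\psi(s)$ whose second derivative dominates $-\partial_s\beta$ (the paper uses $\chi=\rho+\mu$ with $\Delta\rho+\partial_s^2\mu\geq 0$; you use $\rho-\psi$, which is the same idea up to sign), then apply the maximum principle on $\Omega\cap(J^*\times S^1)$. That part is sound, and the reduction $M^*\leq T+\psi(s_0)$ is correct. (A small technical point in Step 1: to conclude $G(s_0)\leq 0$ you need $f(\beta(s_0))>0$; if $f(\beta(s_0))=0$ then $\beta$ is constant by ODE uniqueness, $\Delta\rho\geq 0$ everywhere, and the maximum principle forces $\rho\leq\ln 2$, a contradiction, so this case is vacuous. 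This is easily fixed but should be noted.)

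Step~4, however, is a genuine gap, and you correctly flag it yourself. Your barrier is forced by $-\psi''=A\chi_{W_T}$, and the Green's function on $J^*$ only gives $\psi(s_0)\leq A\delta\,|J^*|/4$, which does not control $\psi(s_0)$ when $|J^*|\gg\delta$. Your proposed repair --- showing that the ``bad'' set is concentrated near $s_0$ --- is a heuristic and is not easy to substantiate; moreover, it targets the support of $F_-$ rather than $W_T$, which is what actually sources $\psi$, so it wouldn't close the estimate even if true. The missing structural input is the ODE for $\beta$: since $\partial_s\beta=-f(\beta)\kappa$, the paper decomposes $\mathrm{cl}(V_T)$ into intervals $I_j=[R_j,R^j]$ on which $\beta$ returns to its starting value, hence $\int_{I_j}\kappa\,ds=0$; combined with $\kappa\geq -1$ and $\kappa\geq 1$ on the tame part of $I_j$, this forces $|I_j^t|\leq|I_j^w|$ and so $\sum_j|I_j|\leq 2|W_T|\leq 2\delta$. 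With this, the paper's barrier $\mu$ has $\partial_s\mu$ supported in a set of measure $\leq 2\delta$ with $|\partial_s^2\mu|\leq A$, whence $\max\mu\leq 4A\delta^2$ with no dependence on $|J^*|$ at all. Without this cancellation argument --- or a substitute that exploits the equality $-\partial_s\beta=f(\beta)\kappa$ rather than only the crude pointwise bound $F\geq -A\chi_{W_T}$ --- the bound you seek does not follow from your Steps 1--3, and I do not see how your strategy as stated can be pushed through.
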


\begin{proof} 
We prove Proposition~\ref{upb} in two steps.
To formulate Step\,1 we introduce
the following superset of the $T$-wild set
$$V_T=\big\{s \in \mathbb{R}:\,\,\exists\,\,t 
\in S^1, \rho(s,t) > T\big\}.$$
Note that its closure $\mathrm{cl}(V_T)$ is compact since the
closure of $\Omega$ is compact. We abbreviate
$$\mathcal{Z}_T=\mathrm{cl}\big((V_T\times S^1) 
\cap \Omega\big)\subset \mathbb{R}\times S^1.$$

\begin{figure}[htp] 
\centering
\includegraphics[scale=1.3]{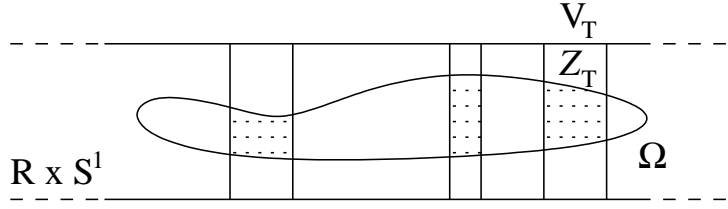}
\caption{Domains on $\R\times S^1$.} \label{fig:proof1}
\end{figure}

We further introduce the following space of functions
$$\mathcal{F}_T=\Big\{\mu \in 
C^2\big(\mathrm{cl}(V_T),[0,\infty)\big):
\Delta \rho+\partial^2_s \mu|_{\mathcal{Z}_T} \geq 0\Big\}.$$
We define the following number
$$c_T=\inf_{\mu \in \mathcal{F}_T}\max_{s \in \mathrm{cl}(V_T)}
\mu(s)
\in [0,\infty).$$
\\
\textbf{Step\,1: }$\max_{\mathbb{R}\times S^1}\rho \leq T+c_T$.
\\ \\
For $\epsilon>0$ pick $\mu \in \mathcal{F}_T$
such that
$$\max_{\mathrm{cl}(V_T)}\mu \leq c_T+\epsilon.$$
We abbreviate
$$\chi=\rho+\mu|_{\mathcal{Z}_T} \in C^2\big(\mathcal{Z}_T,
\mathbb{R}\big).$$
Note that by definition of $\mu$
$$\Delta \chi \geq 0$$
and hence, since $\mathcal{Z}_T$ is compact,
$$\max_{\mathcal{Z}_T}\chi=\max_{\partial \mathcal{Z}_T}
\chi.$$
Using this and the fact that, by definition, $\mu$ is nonnegative,
we estimate
\begin{eqnarray*}
\max_{\mathcal{Z}_T}\rho
&\leq& \max_{\mathcal{Z}_T}(\rho+\mu)\\
&=&\max_{\mathcal{Z}_T}\chi\\
&=&\max_{\partial \mathcal{Z}_T} \chi\\
&\leq& \max_{\partial \mathcal{Z}_T} \rho+
\max_{\partial \mathcal{Z}_T}\mu\\
&\leq&T+c_T+\epsilon.
\end{eqnarray*}
Since on the other hand
$$\max_{(\mathbb{R}\times S^1) \setminus \mathcal{Z}_T} \rho
\leq T \leq T+c_T+\epsilon$$
we conclude that
$$\max_{\mathbb{R}\times S^1}\rho \leq T+c_T+\epsilon.$$
Because $\epsilon$ was arbitrary Step\,1 follows. 
\\ \\
\textbf{Step\,2: }$c_T \leq 4A\delta^2$.
\\ \\
We first assume that 
$\mathrm{cl}(V_T)$ equals an interval
$[0,R]$ for $R>0$. 
We let $\mu:[0,R]\to [0,\infty)$ be the piecewise $C^2$ function which
is uniquely determined by the following conditions.
\begin{itemize}
 \item $\mu(0)=\partial_s \mu(0)=0$,
 \item $\partial^2_s \mu(s)=\max(\partial_s \beta(s),0)$ for
  every $s \in [0,R]$ at which $\partial_s \mu(s)=0$,
 \item $\partial^2_s \mu(s)=\partial_s \beta(s)$ for
  every $s \in [0,R]$ at which $\partial_s \mu(s)>0$.
\end{itemize}
Note that the function $\mu$ is monotone and hence, because
of its initial condition, nonnegative. Moreover, by construction
$\mu$ satisfies
\begin{equation} \label{eq:mu}
\partial^2_s \mu \geq \partial_s \beta
\end{equation}
and hence, by the inequality in the first line of (\ref{problem}),
we conclude that
$$\Delta \rho+\partial^2_s \mu|_{\mathcal{Z}_T} \geq 0.$$
Hence
$$\mu \in \mathcal{F}_T.$$
We set
$R^0=0$ and define recursively for $j \in \mathbb{N}$
\begin{eqnarray*}
   R_j&=&\inf\big\{s \in [R^{j-1},R]: \partial_s \beta(s)>0\big\},\\
   R^j&=&\inf\big\{s \in (R_j,R]: \beta(s)=\beta(R_j)\big\},
\end{eqnarray*}
where we understand here that the infimum of the empty
set is $R$. We refer to Figure~\ref{fig:proof2} for the construction
of the function $\mu$. We denote $I_j:=[R_j,R^j]$, $j\ge 1$, and it follows from
the definition that 
$$
\mathrm{supp}(\p_s\mu)\subset \bigcup_{j=1}^\infty I_j. 
$$
We see in particular that $\mu$ is of class $C^2$ at all points of
$[0,R]$, except for $R^j$, $j\ge 1$. However, the map $\mu$ can be
smoothened at these points while still preserving
condition~\eqref{eq:mu}. 

\begin{figure}[htp] 
\centering
\includegraphics[scale=1]{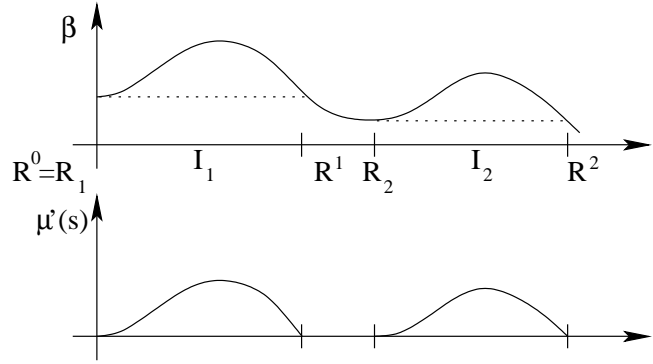}
\caption{The construction of the function $\mu$.} \label{fig:proof2}
\end{figure}

We further put
$$\Delta_j=R^j-R_j \geq 0.$$
We claim the following inequality
\begin{equation}\label{del}
\sum_{j=1}^\infty \Delta_j \leq 2\delta.
\end{equation}
To prove (\ref{del}) we introduce for $s \in \mathbb{R}$
$$
   \kappa(s):=\int_0^1\Big(e^{\rho(s,\tau)}-1\Big)d\tau.
$$
We claim that for every $j \in \mathbb{N}$
\begin{equation}\label{del1}
   \int_{R_j}^{R^j}\kappa(s) ds=0.
\end{equation}
To see this, first note that if $f(\beta(s_0))=0$ for some
$s_0\in[0,R]$, then the equality in the first line of
(\ref{problem}) implies that $\beta$ is constant on $[0,R]$, so
$R_1=R$ and there is nothing to prove. Hence we may assume
$f(\beta(s))>0$ for all $s\in[0,R]$. Pick a function $F:\R\to\R$
with $F'=1/f$. Now $\beta(R_j)=\beta(R^j)$ and 
the equality in the first line of (\ref{problem}) imply the claim:
$$
   0 = F\bigl(\beta(R_j)\bigr) - F\bigl(\beta(R^j)\bigr) 
   = -\int_{R_j}^{R^j}\frac{\p_s\beta}{f(\beta)} ds
   = \int_{R_j}^{R^j}\kappa(s) ds.
$$
Next we define the following four subsets of
$I_j=[R_j,R^j]$:
$$I_j^-=\{s \in I_j: \kappa(s)<0\},\quad
I_j^+=I_j \setminus I_j^-,\quad
I_j^w=I_j \cap W_T, \quad I_j^t=I_j \setminus I_j^w.$$
We observe that, by definition of the wild set and because $I_j\subset
\mathrm{cl}(V_T)$, we have 
\begin{equation}\label{del2}
\kappa|_{I_j^t} \geq 1.
\end{equation}
In particular,
\begin{equation}\label{del3}
I_j^t \subset I_j^+.
\end{equation}
Using (\ref{del1}--\ref{del3}) and the fact that
$\kappa \geq -1$ we estimate
\begin{equation}\label{del4}
0 =\int_{I_j} \kappa
= \int_{I_j^-}\kappa+\int_{I_j^+}\kappa
 \geq \int_{I_j^-}\kappa+\int_{I_j^t}\kappa
 \geq -|I_j^-|+|I_j^t|.
\end{equation}
Taking complements in (\ref{del3}) we obtain
\begin{equation}\label{del5}
I_j^- \subset I_j^w.
\end{equation}
Combining (\ref{del4}) and (\ref{del5}) we conclude
\begin{equation}\label{del6}
|I_j^t| \leq |I_j^w|.
\end{equation}
Hence we estimate using (\ref{del6}) and~\eqref{T-tame} 
$$\sum_{j=1}^\infty \Delta_j=\sum_{j=1}^\infty\big(|I_j^t|
+|I_j^w|\big) \leq 2\sum_{j=1}^\infty|I_j^w|
\leq 2|W_T|=2\delta,$$
proving the inequality claimed in (\ref{del}). By construction
of $\mu$ it holds that
\begin{equation}\label{supp}
\{s \, : \, \p^2_s\mu(s)>0\}\subset \mathrm{supp}\big(\partial_s
\mu\big) \subset \bigcup_{j=1}^\infty I_j.
\end{equation}
Moreover, we observe using the equation in the first line
of (\ref{problem}) and the assumption that the function
$f$ is bounded from above by $A$ that
\begin{equation}\label{ppb}
\max \big(\partial^2_s \mu\big) 
\leq \max\big(\max\, \partial_s \beta, 0\big) \leq A.
\end{equation}
From (\ref{del}),(\ref{supp}), and (\ref{ppb}) we deduce
\begin{equation}\label{pb}
\max\big(\partial_s \mu\big) \leq 2\delta A.
\end{equation}
Combining (\ref{pb}) again with (\ref{del}) and~\eqref{supp} we conclude
that
$$
\max \mu \leq 4A \delta^2,
$$
and hence
$$c_T \leq 4A \delta^2.$$
This finishes the proof of Step\,2 in the case that
$\mathrm{cl}(V_T)$ is an interval. In the general case
$\mathrm{cl}(V_T)$ is a countable union of intervals and, possibly,
accumulation points. Ignoring the latter, we can  
apply the previous construction to each component, always
setting $\mu=\p_s\mu=0$ at the left end point of each interval. This
concludes the proof of Proposition~\ref{upb} and hence of
Proposition~\ref{prop:x-bound2}.  
\end{proof}

\subsection{A useful dichotomy} \label{sec:dichotomy}
In this subsection we prove a general result which says that, under
certain hypotheses, the Morse complex of a functional
$f:X\times\R\to\R$ splits into two subcomplexes for small/large $\eta$
which are not connected by gradient flow lines. In the next subsection
we will apply this result to the Rabinowitz functional
$A_{H,b,c}:\LL\times\R\to\R$. 

Let $(X,g_X)$ be a (maybe infinite dimensional) Riemannian manifold and
$f \colon X \times \mathbb{R} \to \mathbb{R}$ a smooth function. We consider
on the manifold $X\times \mathbb{R}$ the Riemannian metric
$g=g_X \oplus g_\mathbb{R}$
where $g_\mathbb{R}$ is the standard metric on $\mathbb{R}$. We denote
by $\nabla f$ the gradient of $f$ with respect to the metric $g$ and by
$||\cdot||$ the norm with respect to the metric $g$. 
\begin{definition}\label{rabtype}
We say that $f$ is of {\bf Rabinowitz type} if there exists a 
quadruple of positive constants $(\epsilon,D,c_1,c_2)$ such that
for all $(x,\eta) \in X \times \mathbb{R}$ the following two
conditions hold
\begin{description}
 \item[\hspace{-.5cm}(i)] $||\nabla f(x,\eta)|| \leq \epsilon, \,\,
|\eta| \leq D\,\,\Longrightarrow |\eta|
\leq c_1\big(|f(x,\eta)|+||\nabla f(x,\eta)||\big)$;
\item[\hspace{-.5cm}(ii)] $|\partial_\eta f(x,\eta)| \leq c_2$ along gradient flow
  lines connecting critical points.
\end{description}
We refer to the quadruple $(\epsilon,D,c_1,c_2)$ as a 
{\bf Rabinowitz quadruple} for $f$.
\end{definition}
In the following we assume that $f$ is of Rabinowitz type and
$(\epsilon,D,c_1,c_2)$ is a fixed Rabinowitz quadruple for $f$.
Moreover, we assume without loss of generality that 
$$
D>c_1\epsilon.
$$
We define the functions $b_\pm:\R_+\to\R$, 
$$
   b_-(a):=c_1 (a+\epsilon)+\frac{2ac_2}{\epsilon^2}, \qquad
   b_+(a):=D-\frac{2ac_2}{\epsilon^2}.
$$
We further introduce the quantity
$$
   \kappa:=\frac{\epsilon^2(D-c_1\epsilon)}{c_1\epsilon^2+4c_2}
$$
and note that 
$$
   b_-(a) < b_+(a) \qquad\text{for }a<\kappa. 
$$
Now assume that $w=(x,\eta) \in C^\infty(\mathbb{R},X\times \mathbb{R})$ is
a gradient flow line of $f$, i.e. a solution of the ODE
$$
   \partial_s w(s)=\nabla f(w(s)), \quad s \in \mathbb{R}.
$$
For a gradient flow line we abbreviate
$$A_w=\sup_{s \in \mathbb{R}}\big|f(w(s))\big|.$$
We refer to $A_w$ as the {\bf absolute action} of $w$.
The following proposition shows that there is a dichotomy of gradient
flow lines of Rabinowitz type functions
of small absolute action into gradient flow lines
with large $|\eta|$ and small $|\eta|$.

\begin{proposition}\label{dicho}
Suppose that $f$ is of Rabinowitz type with Rabinowitz quadruple
$(\epsilon,D,c_1,c_2)$ and define $b_\pm(a),\kappa$ as above. Let
$w=(x,\eta) \in C^\infty(\mathbb{R},X\times \mathbb{R})$ be a gradient
flow line  of $f$ with absolute action $A_w=a<\kappa$. Then exactly one
of the following two cases holds:
\begin{description}
 \item[(i)] $|\eta(s)|\leq b_-(a)$ for all $s \in 
\mathbb{R}$;
 \item[(ii)] $|\eta(s)|> b_+(a)$ for all $s \in 
\mathbb{R}$.
\end{description}
\end{proposition}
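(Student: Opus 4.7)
My plan is to show that the set where $|\eta(s)|\leq b_+(a)$ is already the set where $|\eta(s)|\leq b_-(a)$, and then deduce the dichotomy from continuity of $\eta$.

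First I would record the elementary algebraic check
$$
   b_+(a)-b_-(a) = (D-c_1\epsilon) - a\,\frac{c_1\epsilon^2+4c_2}{\epsilon^2},
$$
which is strictly positive precisely when $a<\kappa$. So the intervals $\{|\eta|\leq b_-(a)\}$ and $\{|\eta|>b_+(a)\}$ are disjoint subsets of $\R_{\geq 0}$, separated by the gap $(b_-(a),b_+(a)]$. The whole proof reduces to verifying that $|\eta(s)|$ never takes a value in this gap.

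The core step is the following pointwise implication: if $s_0\in\R$ satisfies $|\eta(s_0)|\leq b_+(a)$, then in fact $|\eta(s_0)|\leq b_-(a)$. To prove it, first note that since $w$ is a gradient flow line, for any interval $[\sigma_1,\sigma_2]$ one has
$$
   \int_{\sigma_1}^{\sigma_2}\|\nabla f(w)\|^2\,ds \;=\; f(w(\sigma_2))-f(w(\sigma_1)) \;\leq\; 2a.
$$
Applied to $[\sigma_1,\sigma_2]=[s_0-2a/\epsilon^2,\,s_0+2a/\epsilon^2]$, this forces the existence of some $s_1$ in this interval with $\|\nabla f(w(s_1))\|\leq\epsilon$, for otherwise the integral would exceed $\epsilon^2\cdot 4a/\epsilon^2=4a$. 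Using hypothesis (ii) on the flow line, $\eta$ is $c_2$-Lipschitz in $s$, so
$$
   |\eta(s_1)|\;\leq\;|\eta(s_0)|+c_2|s_1-s_0|\;\leq\;b_+(a)+\frac{2ac_2}{\epsilon^2}\;=\;D.
$$
Hypothesis (i) then applies at $(x(s_1),\eta(s_1))$ and gives $|\eta(s_1)|\leq c_1(|f(w(s_1))|+\|\nabla f(w(s_1))\|)\leq c_1(a+\epsilon)$. Finally, transporting back by the same Lipschitz estimate,
$$
   |\eta(s_0)|\;\leq\;|\eta(s_1)|+\frac{2ac_2}{\epsilon^2}\;\leq\;c_1(a+\epsilon)+\frac{2ac_2}{\epsilon^2}\;=\;b_-(a),
$$
as desired.

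This implication shows that the image of the continuous function $s\mapsto|\eta(s)|$ avoids the open-closed interval $(b_-(a),b_+(a)]$. Since $a<\kappa$ ensures $b_-(a)<b_+(a)$, the complement of this gap in $\R_{\geq 0}$ has two connected components, $[0,b_-(a)]$ and $(b_+(a),\infty)$. Connectedness of $\R$ then forces the continuous function $|\eta|$ to land entirely in one of them, giving exactly the dichotomy (i) versus (ii). The only subtle point I expect is ensuring that hypothesis (ii) is genuinely available for our $w$ (it is stated for gradient flow lines connecting critical points), but this is precisely the setting in which Proposition~\ref{dicho} will be applied in Section~\ref{sec:proof}, and the Lipschitz bound on $\eta$ is the only way we used it.
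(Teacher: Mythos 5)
Your proof is correct and is essentially the paper's argument, just packaged in contrapositive form: you show the gap $(b_-(a),b_+(a)]$ is avoided by $|\eta|$, whereas the paper, for each $\sigma$, locates a nearby time $\sigma+\tau(\sigma)$ with $\|\nabla f\|\le\epsilon$ and splits into the two cases $|\eta(\sigma+\tau(\sigma))|>D$ or $\le D$; the estimates (energy bound to find the nearby small-gradient time, $c_2$-Lipschitz bound on $\eta$ from Definition~\ref{rabtype}(ii), then hypothesis~(i)) are the same. Your closing remark about hypothesis~(ii) requiring $w$ to connect critical points is a fair observation that the paper leaves implicit.
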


\begin{proof} 
For $\sigma \in \mathbb{R}$ we abbreviate
$$
   \tau(\sigma)=\inf\big\{\tau \geq 0: ||\nabla f(w(\sigma+\tau))||
   \leq \epsilon\big\}.
$$
Using the gradient flow equation we estimate
\begin{eqnarray*}
   \epsilon^2 \tau(\sigma)&\leq&\int_{\sigma}^{\sigma+\tau(\sigma)}
   ||\nabla f(w(s))||^2 ds\\
   &=& \int_{\sigma}^{\sigma+\tau(\sigma)}\frac{d}{ds}f(w(s))ds\\
   &=& f\big(w(\sigma+\tau(\sigma))\big) - f\big(w(\sigma)\big)\\
   &\leq&2A_w,
\end{eqnarray*}
implying that
\begin{equation}\label{lag1}
\tau(\sigma) \leq \frac{2A_w}{\epsilon^2}.
\end{equation}
Since the metric $g$ on $X \times \mathbb{R}$ is of product
form, the gradient flow equation for $\eta$ reads
$$
   \partial_s \eta=\partial_\eta f(w).
$$
Hence using assertion~(ii) in Definition~\ref{rabtype} we obtain that
\begin{equation}\label{lag2}
|\partial_s \eta| \leq c_2.
\end{equation}
Using (\ref{lag1}) and (\ref{lag2}) we get
\begin{equation}\label{lag3}
\big|\eta(\sigma)-\eta(\sigma+\tau(\sigma))\big| \leq
\int_{\sigma}^{\sigma+\tau(\sigma)}|\partial_s \eta| ds
\leq c_2 \tau(\sigma)
\leq \frac{2c_2 A_w}{\epsilon^2}.
\end{equation}
We distinguish the following two cases.
\\ \\
\textbf{Case 1:} $\big|\eta(\sigma+\tau(\sigma))\big|>D$.
\\ \\
In this case we obtain using (\ref{lag3}) that
$$\big|\eta(\sigma)\big| \geq \big|\eta(\sigma+\tau(\sigma))\big|
-\frac{2c_1A_w}{\epsilon^2}>D-\frac{2c_2A_w}{\epsilon^2}
=b_+(A_w).$$
\textbf{Case 2:} $\big|\eta(\sigma+\tau(\sigma))\big| \leq D$.
\\ \\
In this case we estimate using the assertion~(i) in
Definition~\ref{rabtype} and again inequality (\ref{lag3})
$$
   \big|\eta(\sigma)\big| \leq \big|\eta(\sigma+\tau(\sigma))\big|
   +\frac{2c_2A_w}{\epsilon^2}\leq c_1(A_w+\epsilon)+\frac{2c_2
   A_w}{\epsilon^2} = b_-(A_w).
$$
Since $b_+(A_w)>b_-(A_w)$, continuity of $\eta$ implies that
either $|\eta(s)| \leq b_-(A_w)$ for all $s \in \mathbb{R}$ or 
$|\eta(s)| >b_+(A_w)$ for all $s \in \mathbb{R}$. This proves 
Proposition~\ref{dicho}. 
\end{proof}

\subsection{Proof of Theorem~\ref{thm:flatten}}
Let us fix $-\infty<\alpha<\beta<\infty$ such that
$\alpha,\beta\notin\Spec(M,\lambda)$, and a function $h$ satisfying $h'(r)\ge 0$
for $r>0$ and $h(r)=r-1$ for 
$r\geq 1/2$. We consider Rabinowitz functionals $A_{H,b,c}$ for 
$H=h(r)$, $c\equiv 0$, and $b$ satisfying 
\begin{equation}\label{eq:b}
   0\leq b'(\eta)\leq 1,\qquad b(\eta)=\eta \text{ for }|\eta|\leq D, 
\end{equation}
with a constant 
$$
   D > 2\max(|\alpha|,|\beta|)
$$ 
to be determined later. 

Recall that critical points $(x,\eta)$ of $A_{H,b,0}$ with
$x=(y,r)$ satisfy 
$$
\left\{\begin{array}{l} 
\dot y = b(\eta) h'(r)R, \\ 
b'(\eta) h(r) = 0.
\end{array}\right.
$$
The action of such  a critical point is
$$
A_{H,b,0}(x,\eta)=b(\eta)(rh'(r)-h(r)).
$$ 
(in the region $\widehat V\setminus M\times \R_+$, the formula is to
be read via the convention $h'(0)=0$ and $h(0)=\mathrm{ct.}$). 
There are two types of critical points as above:  

Type~1: $b'(\eta)\neq 0$. In this case $h(r)=0$ and hence $r=1$ and
$A_{H,b,0}(x,\eta)= b(\eta)$. 

Type~2: $b'(\eta)=0$. In this case suppose first $r\geq 1/2$. Then the action 
$A_{H,b,0}(x,\eta)=b(\eta)$ cannot lie in the interval
$(\alpha,\beta)$ because $b(\eta)=\eta$ on this 
interval. Hence we must have $r\le 1/2$, so that $h(r)\le -1/2$ and
therefore $rh'(r)-h(r)\ge -h(r) \ge 1/2$. This implies $|A_{H,b,0}|\ge
|b(\eta)|/2\ge D/2$, and the action does not lie in $(\alpha,\beta)$. 

We thus proved that all critical points with action in
$(\alpha,\beta)$ satisfy $r= 1$.   
Hence by Proposition~\ref{prop:x-bound2} there exists a constant $c_2$
depending only on $\beta-\alpha$ such that $r(s,t)\leq c_2$ along all
gradient flow lines of $A_{H,b,0}$ connecting critical points
with action in $(\alpha,\beta)$. 

By the proof of Proposition~3.2 in~\cite{CF} there exist constants
$\eps,c_1>0$ such that for the unperturbed Rabinowitz functional the
following holds:
$$
   ||\nabla A_{H,\eta,0}(x,\eta)|| \leq \epsilon \Longrightarrow |\eta|
   \leq c_1\big(|A_{H,\eta,0}(x,\eta)|+||\nabla
   A_{H,\eta,0}(x,\eta)||\big). 
$$
Since $b(\eta)=\eta$ for $|\eta|\leq D$, for the perturbed Rabinowitz
functional this implies
$$
   ||\nabla A_{H,b,0}(x,\eta)|| \leq \epsilon,\ |\eta|\leq D
   \Longrightarrow |\eta| 
   \leq c_1\big(|A_{H,b,0}(x,\eta)|+||\nabla
   A_{H,b,0}(x,\eta)||\big). 
$$
This shows that $A_{H,b,0}$ is of Rabinowitz type in the sense of the
previous subsection with Rabinowitz quadruple
$(\eps,D,c_1,c_2)$. Moreover, note the crucial fact that the constants
$\eps,c_1,c_2$ work for all functions $b$ satisfying~\eqref{eq:b},
independently of $D$! Thus we can choose $D=D(\alpha,\beta)$ so large
that the quantities $\kappa$ and $b_\pm(a)$ defined as in the previous
section satisfy  
$$
   \kappa>\max(|\alpha|,|\beta|),\qquad
   b_-(\kappa)>\max(|\alpha|,|\beta|). 
$$
This implies that for $a<\kappa$ we have $b_-(a)<b_+(a)$ and 
$$
   b_+(a) > b_+(\kappa)\geq b_-(\kappa) > \max(|\alpha|,|\beta|). 
$$
On the other hand, the discussion above shows that all critical points
$(x,\eta)$ with action in $(\alpha,\beta)$ satisfy
$$
   A_{H,b,0}(x,\eta) = b(\eta) = \eta \in (\alpha,\beta). 
$$
Now we apply Proposition~\ref{dicho} to the functional
$A_{H,b,0}:U\to\R$ with 
$$
   U:= \{(x,\eta)\in\LL\times\R\mid
   A_{H,b,0}(x,\eta)\in(\alpha,\beta)\}. 
$$ 
The preceding discussion shows that case~(ii) in
Proposition~\ref{dicho} does not occur, so along every gradient flow
line connecting critical points in $(\alpha,\beta)$ we have
$$
   |\eta(s)|\leq b_-(\kappa).
$$
Now a short computation shows
$$
   b_-(\kappa) = D\left(\frac{1}{2}+O(\eps)\right) + O(\eps),
$$
where the implicit constants in $O(\eps)$ depend only on $c_1$ and
$c_2$. Assuming without loss of generality that $\eps$ is sufficiently
small, we may therefore assume $b_-(\kappa) < D$. The resulting
estimate  
$$
   |\eta(s)| < D
$$
shows that gradient flow lines of $A_{H,b,0}$ connecting critical
points with action in $(\alpha,\beta)$ stay in the region where
$b(\eta)=\eta$ and therefore agree with gradient flow lines of the
unperturbed Rabinowitz functional $A_{H,\eta,0}$. Hence the chain
complexes of $A_{H,b,0}$ and $A_{H,\eta,0}$ in the action interval
$(\alpha,\beta)$ coincide and we conclude
$$
   FH^{(\alpha,\beta)}(A_{H,\eta,0}) \cong
   FH^{(\alpha,\beta)}(A_{H,b,0}). 
$$
This conclusion holds for all functions $b$ satisfying~\eqref{eq:b},
in particular for functions $b=b_A$ as in Lemma~\ref{lem:tame} with
$A\geq D(\alpha,\beta)+1$. This concludes the proof of
Theorem~\ref{thm:flatten}.  \hfill $\square$

\section{Proof of the main result}\label{sec:proof}

In this section we prove Theorem~\ref{thm:main1} in the Introduction,
i.e. the isomorphism 
\begin{equation*} 
RFH_*(V) \simeq \Check{SH}_*(V).
\end{equation*}

For this we consider deformed 
Rabinowitz action functionals of the form
$$
A_{H,b,c}(x,\eta) = \int_0^1 x^*\lambda - b(\eta) \int_0^1
H(x(t)) dt + 
c(\eta),
$$
with $H:\wh V\to\R$ and $b,c:\R\to \R$ smooth
functions. 

We start with a summary of what we achieved in
Sections~\ref{sec:perturbations} and~\ref{sec:prep}. Let
$(\alpha,\beta)$ be a fixed action interval, such that
$|\alpha|,|\beta|\notin \mathrm{Spec}(M,\lambda)$. By definition, we have 
$RFH_*^{(\alpha,\beta)}(V)=FH_*^{(\alpha,\beta)}(A_{H,\eta,0})$ for a
defining Hamiltonian $H:\widehat V\to \R$ which is constant at
infinity. In the sequel, we only consider Hamiltonians of the form
$H=h(r)$, where $r\in\R_+$ is the second coordinate in the
symplectization $M\times \R_+$. By Proposition~\ref{prop:RFH-lin}, we
also have
$$
RFH_*^{(\alpha,\beta)}(V)=FH_*^{(\alpha,\beta)}(A_{H,\eta,0}),
$$ 
for a
Hamiltonian $H=h(r)$ with $h(r)=r-1$ for $r\ge 1/2$. By
Theorem~\ref{thm:flatten} we have 
\begin{equation} \label{eq:Hbzero}
FH_*^{(\alpha,\beta)}(A_{H,\eta,0})=FH_*^{(\alpha,\beta)}(A_{H,b,0}),
\end{equation}
with $b=b_A$ as in~\eqref{eq:bA} and 
$A\notin\mathrm{Spec}(M,\lambda)$ sufficiently large. Finally,
by Proposition~\ref{prop:RFH-flat} the right hand side
of~\eqref{eq:Hbzero} is isomorphic to 
$$
FH_*^{(\alpha,\beta)}(A_{\widetilde H,\widetilde b,c})
$$
for every triple $(\widetilde H,\widetilde b,c)$ satisfying~\eqref{eq:hbc} that
can be connected to $(H,b,0)$ by a homotopy of such triples during
which the critical points do not cross the boundary of the action
interval $(\alpha,\beta)$, and form a compact set. 

In this section we construct a homotopy to a triple $(\widetilde
H,1,c)$ for which $FH_*^{(\alpha,\beta)}(A_{\widetilde
H,1,c})=\check{SH}_*^{(\alpha,\beta)}(V)$. We achieve this in five
steps, as follows.  

\noindent {\bf Step~1.} We rescale $(H,b,0)$ to $(\mu H, b/\mu,0)$ for
a suitable $\mu>0$. 

\noindent {\bf Step~2.} We replace $(\mu H, b/\mu,0)$ by $(\mu H,
b/\mu,c)$ for a suitable $c$ (Figure~\ref{fig:c}). 

\noindent {\bf Step~3.} We replace $(\mu H,b/\mu,c)$ by $(K,b/\mu,c)$,
where $K$ is a flattening of $H$ near $r=1$ (Figure~\ref{fig:HK}).

\noindent {\bf Step~4.} We homotope the action functional
$A_{K,b/\mu,c}$ to $A_{|K|,1,c}$. 

\noindent {\bf Step~5.} By our special choice of $c$ and $K$, we have 
$$
FH_*^{(\alpha,\beta)}(A_{|K|,1,c})=FH_*^{(\alpha,\beta)}(|K|)
=\check{SH}_*^{(\alpha,\beta)}(V).
$$

\begin{proof}[Proof of Theorem~\ref{thm:main1}] 
Let $-\infty < \alpha < \beta < +\infty$ be fixed, such that
$|\alpha|,|\beta|\notin \mathrm{Spec}(M,\lambda)$. Given 
$0<\delta<1$ and $\mu>0$ we define the following
Hamiltonians. Let $H=H_\delta:\wh V\to\R$ be such that, up
to smooth approximation in the neighbourhood of $\{\delta\}\times M$,
it satisfies (Figure~\ref{fig:HK}) 
$$
\left\{\begin{array}{ll}
H\equiv \delta-1 & \mbox{on } V\setminus [\delta,1]\times M, \\
H(r,x)=h(r) & \mbox{on } [\delta,\infty)\times M, \qquad \qquad \mbox{with
} h(r)=r-1.
\end{array}\right.
$$
Let $K=K_{\delta,\mu}:\wh V\to \R$ be such that, up to smooth
approximation in the neighbourhood of $\{\delta\}\times M$ and
$\{1\}\times M$, it satisfies (Figure~\ref{fig:HK}) 
$$
\left\{\begin{array}{ll}
K\equiv \mu(\delta-1) & \mbox{on } V\setminus \, [\delta,1]\times M, \\
K(r,x)=k(r) & \mbox{on } [\delta,\infty)\times M, 
\end{array}\right.
$$
with $k(1)=k'(1)=0$ and 
$$
\left\{\begin{array}{cl}
k'>0 & \mbox{for } r\in]\delta,\infty)\setminus \{1\}, \\
k(r)=\mu(r-1) & \mbox{outside small neighbourhoods of } \delta \mbox{
and } 1. 
\end{array}\right. 
$$

\begin{figure}[htp] 
\centering
\includegraphics[scale=.6]{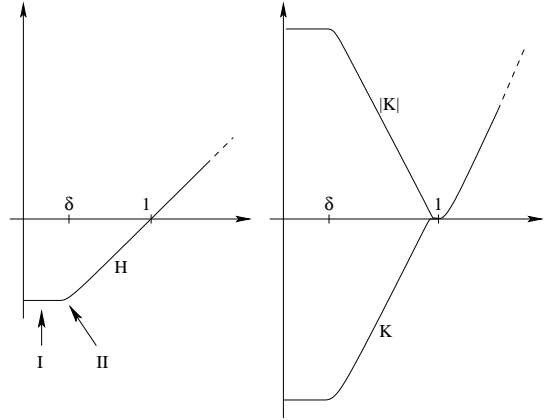}
\caption{The Hamiltonians $H$, $K$, and $|K|$.} \label{fig:HK} 
\end{figure} 

We define now $c:\R\to [0,\infty)$ to be an even function such that
(Figure~\ref{fig:c}) 
\begin{equation} \label{eq:c}
\left\{\begin{array}{l}
c(0)=0, \\
0<\sup c =  c_0<\infty,  \\
c'(\eta)=0 \mbox{ iff } \eta=0. 
\end{array}\right.
\end{equation} 

\begin{figure}[htp] 
\centering
\input{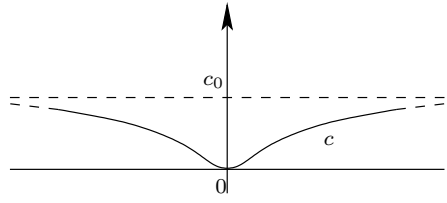}
\caption{A suitable function $c$.} \label{fig:c} 
\end{figure}

We recall that, given $A>0$, we defined in Section~\ref{sec:nearcrit} 
the function $b=b_A:\R\to \R$ such that, up to smooth
approximation in the neighbourhood of $\eta=\pm A$, it satisfies the
conditions (Figure~\ref{fig:b}) 
$$
\left\{\begin{array}{ll} 
b\equiv -A, & \eta\le-A,\\ 
b(\eta)=\eta, & -A\le\eta\le A,\\
b\equiv A, & A\le\eta.
\end{array}\right.
$$

\begin{figure}[htp] 
\centering
\input{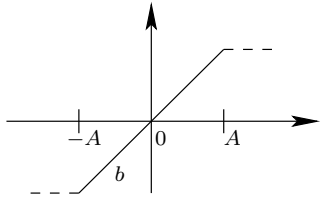}
\caption{The function $b$.} \label{fig:b} 
\end{figure} 



For any $p\notin\mathrm{Spec}(M,\lambda)$,
we recall that $\eta_p>0$ is the distance from $p$ to the closed set 
$\mathrm{Spec}(M,\lambda)$. Let $A(\alpha,\beta)>0$ be the constant
from Theorem~\ref{thm:flatten}, and let us denote $T_0=\min
\mathrm{Spec}(M,\lambda) >0$.  

\noindent {\bf Assumptions.} 
\begin{itemize}
\item \emph{(assumption on $\mu$ and $A$)} We require that
$\mu=A\notin\mathrm{Spec}(M,\lambda)$ and 
$$
\mu=A\ge 10\max\{|\alpha|,|\beta|,1,T_0,A(\alpha,\beta)\}.
$$ 
\hspace{-1.3cm} (we distinguish in notation between $\mu$ and $A$
since they play different roles).




\item \emph{(assumption on $\delta$)} We require $\delta \le \frac 1 2$.
\item \emph{(assumption on $b$)} The smoothing of $b$ takes place in small enough intervals near
$\pm A$, so that 
$$
0<b'(\eta) <1 \ \Longrightarrow \ b(\eta)\in [-A,-A+\eta_A) \, \cup \,
(A-\eta_A,A]. 
$$

\item Let 
$$
d_{A,\alpha,\beta}:=\min \{|p-q| \ : \ p\neq q\in
\mathrm{Spec}(M,\lambda) \cup \{ 0,\alpha,\beta \}, \ p,q\le A\} >0.
$$
Let $\eps_0>0$ be such that 
$$
\mu-\eta_\mu \le (1-\eps_0) \mu <\mu.
$$
Let 
\begin{equation} \label{eq:eps} 
\eps_{A,\alpha,\beta}:=d_{A,\alpha,\beta}/10A. 
\end{equation} 
It follows from the definition of $d_{A,\alpha,\beta}$ that
$\eps_{A,\alpha,\beta}$ satisfies the condition 
\begin{equation} \label{eq:eps-bis}
|r-1|<\eps_{A,\alpha,\beta} \ \Rightarrow \ |rp-p|<d_{A,\alpha,\beta},
\ \forall \ p\in \mathrm{Spec}(M,\lambda) \, \cap \, (0,A).
\end{equation} 
\item \emph{(assumption on $c$)} We require that 
$$
\sup |c|\le \min \{ \frac 1 2, \ d_{A,\alpha,\beta}/10 \}
$$ 
and 
$$
\sup |c'|\le \min\{ \frac 1 2, \ \eps_0
\eps_{A,\alpha,\beta}d_{A,\alpha,\beta}/10 A^2 \}. 
$$
\item \emph{(assumption on $k$)} Let $c_1:=\frac 1 2 \min\{|c'(\eta)| \ : \ |\eta -A| \le \eta_A
\mbox{ or } |\eta+A|\le \eta_A\}$. Let $\bar b(\eta):=b(\eta)/\mu$,
and let $c_2>0$ be such that
$$
|c'(\eta)/\bar b'(\eta)|\le c_2 \ \Rightarrow \ |\bar b(\eta)|< T_0/\mu.
$$ 
Let $c_3:=\min(c_1,c_2)$. We require that, in the neighbourhood of
$r=1$, we have 
$$
|k(r)|\ge c_3 \ \Rightarrow \ k'(r)=\mu.
$$
(This means that $k$ differs from a linear function only in
a small neighbourhood of $r=1$). 
\end{itemize} 

\bigskip 

Before proving Steps~1 to~5, we recall that critical points of
$A_{H,b,c}$ are pairs $(x,\eta)$ such that  
$$
\left\{\begin{array}{l} 
\dot x = b(\eta) X_H, \\ 
b'(\eta)\int H(x(t))dt = c'(\eta).
\end{array}\right.
$$
This is equivalent to the following: 
\renewcommand{\theenumi}{\arabic{enumi}}
\begin{enumerate} 
\item Either $b'(\eta)=0$, in which case $c'(\eta)=0$ and
  $\dot x  = b(\eta) X_H$,
\item or $b'(\eta)\neq 0$, in which case $\dot
  x=b(\eta)X_H$ and 
$$
x(t) \in H^{-1}(c'(\eta) / b'(\eta)), \quad t\in S^1.
$$
\end{enumerate} 
We refer to $(x,\eta)$ as a critical point of Type
(1), respectively Type (2). In case $H=h(r)$, critical points
$(x,\eta)$ appear on levels $r=\mathrm{ct.}$ and have action 
$$
A_{H,b,c}(x,\eta) = b(\eta)\big(rh'(r)-h(r)\big) +c(\eta).
$$
As above, we use the convention $h'(0)=0$ and $h(0)=H(x)$. 

\medskip\noindent  

\medskip\noindent{\bf Step~1.} 
{\it We have a canonical identification of complexes
$$
CF_*^{(\alpha,\beta)}(A_{H,b,0}) = CF_*^{(\alpha,\beta)}(A_{\mu
H,b/\mu,0}) 
$$
}

This follows directly from the equality $A_{H,b,0}=A_{\mu H,b/\mu,0}$.
\hfill{$\square$} 

\medskip\noindent  

{\bf Remark.} From now on the functions $\mu H$ and $b/\mu$ remain unchanged
  outside a compact set, so that moduli spaces of gradient
  trajectories are compact modulo breaking
  (Proposition~\ref{prop:RFH-flat}) and the Rabinowitz-Floer
  homology groups are well-defined.

\medskip\noindent{\bf Step~2.} 
{\it There is a chain map inducing an isomorphism in homology
$$
CF_*^{(\alpha,\beta)}(A_{\mu H,b/\mu,0}) \to
CF_*^{(\alpha,\beta)}(A_{\mu H,b/\mu,c}).  
$$
}

We denote $\bar H:=\mu H$, $\bar h:=\mu h$, $\bar b:=b/\mu$, and
consider the homotopy 
$A_s=A_{\bar H,\bar b,\rho(s)c}$, with $s\in\R$ and $\rho:\R\to [0,1]$
a smooth increasing function, equal to $0$ near $-\infty$ and equal to
$1$ near $+\infty$. To prove that the induced chain map is a
quasi-isomorphism it is enough to examine the action of the critical
points of each $A_s$, and show that none of these actions crosses the
boundary of the interval $(\alpha,\beta)$. We prove that the actions
of critical points of $A_{\bar H,\bar b,c}$ stay away from $\alpha$
and $\beta$ (the same computation shows that this holds true for any
$A_s$ since $0\le \rho\le 1$).  

A critical point $(x,\eta)$ of $A_{\bar H,\bar b,c}$ with $x$ on level
$r$ satisfies 
$$
\left\{\begin{array}{l} 
\dot x=\bar b(\eta)X_{\bar H},\\ 
\bar b'(\eta)\bar h(r)=c'(\eta),
\end{array}\right. 
$$
so that $|\eta|< A$ and $\bar b'(\eta)\neq 0$. 
We distinguish several cases according to the value of the strictly
increasing function $f(\eta)=c'(\eta)/\bar b'(\eta)$ on the interval
$(-A,A)$ (see Figure~\ref{fig:bc}). We
have $\bar h(r)=f(\eta)$ and the cases that we distinguish correspond to
the various types of orbits of $\bar H$. 

Case~1. $f(\eta)=\mu(\delta -1)\le -\frac \mu 2$. Then we must
have $\eta\in[-A,-A+\eta_A]$, because $f(\eta)=\mu c'(\eta)>-\frac \mu
2$ for $\eta\in[-A+\eta_A,0]$. As a consequence $\bar
b(\eta)$ is close to $-1$ (here we use for the first time that
$\mu=A$). Moreover 
$X_{\bar H}=0$, $x$ is constant and the action is $A_{\bar H,\bar
b,c}(x,\eta)=-\bar b(\eta)\bar h(r)+c(\eta)\le -\frac \mu 2 + \frac 1
2 < \alpha-1$.   

Case~2. $f(\eta)$ is close to $\mu(\delta -1)$ and the slope of
$\bar h$ varies between $0$ and $\mu$, so that $x$ lives on a level
$r$ close to $\delta$. As in Case~1 we get that $\eta\in[-A,-A+\eta_A]$,
$\bar 
b(\eta)$ is close to $-1$ and the action satisfies $A_{\bar H,\bar
b,c}(x,\eta) = \bar b(\eta) (\bar h'(r)r-h(r))+c(\eta)< -\bar
b(\eta)\bar h(r) +c(\eta) < \alpha-1$. 

%

Case~3. $f(\eta)$ is bigger than $\mu(\delta-1)$, and 
$\bar h'=\mu$. Then $\eta\in [-A+\eta_A,A-\eta_A]$, $\bar
b(\eta)=\eta/\mu$ and $\bar b'(\eta)=1/\mu$ (otherwise $\bar
b(\eta)\simeq \pm 1$ and, since $\mu\notin \mathrm{Spec}(M,\lambda)$,
there are no solutions of $\dot x=\bar b(\eta)\bar h'(x)R_\lambda = \bar
b(\eta)\mu R_\lambda$). Such a critical point $(x,\eta)$ satisfies
$$
\left\{\begin{array}{l} 
\dot x=\eta R_\lambda,\\ 
\bar H(x)=\mu c'(\eta),
\end{array}\right. 
$$
and the set of critical points in Case~5 is in bijective
correspondence with the set of critical points of $A_{H,b,0}=A_{\bar
H,\bar b,0}$ with action in $[-A+\eta_A,A-\eta_A]$ (the correspondence
assigns to $(x,\eta)$ the pair $(\widetilde x,\eta)$, where
$\widetilde x$ represents the same characteristic as $x$, but located
on the level $r=1$ instead of $r=\bar h^{-1}(\mu c'(\eta))=1+c'(\eta)$). The
action is $A_{\bar H,\bar b,c}(x,\eta) = \eta r - \eta c'(\eta)
+c(\eta)=\eta+c(\eta)$.  
By assumption we have $\sup |c|\le d_{A,\alpha,\beta}/2$, and this
ensures that  
$$
|A_{\bar H,\bar b,c}(x,\eta) - A_{\bar H, \bar b, 0}(\widetilde
x,0)|\le d_{A,\alpha,\beta}. 
$$
The action of the critical points therefore varies by at most
$d_{A,\alpha,\beta}$ during the deformation, and cannot cross the
bounds of the interval $(\alpha,\beta)$.  \hfill{$\square$}

\begin{figure}[htp] 
\centering
\input{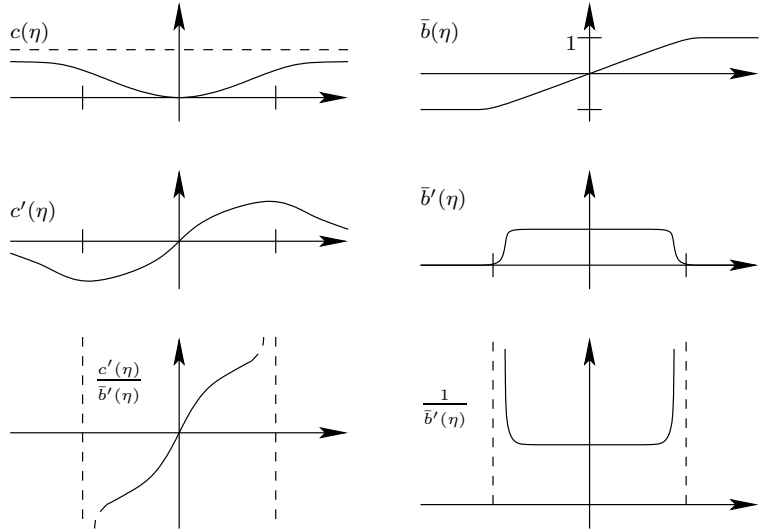}
\caption{The function $c'/\bar b'$.} \label{fig:bc} 
\end{figure}

\medskip\noindent  

\medskip\noindent{\bf Step~3.} 
{\it There is a chain map inducing an isomorphism in homology
$$
CF_*^{(\alpha,\beta)}(A_{\mu H,b/\mu,c}) \to
CF_*^{(\alpha,\beta)}(A_{K,b/\mu,c}).  
$$
}

We use the notation $\bar H=\mu H$, $\bar h=\mu h$, $\bar b=b/\mu$
from Step~2, and recall that $K=k(r)$ on $[\delta,\infty)$. Let 
$\rho:\R\to[0,1]$ be a smooth increasing function equal to $0$ near
$-\infty$ and equal to $1$ near $+\infty$. We consider the
homotopy $A_s=A_{H_s,\bar b,c}$, $s\in \R$ with $H_s=(1-\rho(s))\bar 
H+\rho(s)K$. To prove that the induced chain map is a
quasi-isomorphism we show that, under our assumptions on the
perturbation $k$ of $\bar h$, the critical 
points of $A_s$ stay fixed during the homotopy together with their
action. We prove this for $A_\infty=A_{K,\bar b,c}$, and the same
computations work for any $s\in\R$ since we do not use the fact that
$k'(1)=0$.  

A critical point $(x,\eta)$ of $A_{K,\bar b,c}$ with $x$ on level $r$
satisfies  
$$
\left\{\begin{array}{l} \dot x=\bar b(\eta)X_K,\\
\bar b'(\eta) k(r)=c'(\eta),
\end{array}\right. 
$$
so that $|\eta|<A$ and $\bar b'(\eta)\neq 0$. As in Step~2 we
distinguish several cases according to the value of
$f(\eta)=c'(\eta)/\bar b'(\eta)$, and it is clear that critical points
falling in Cases~1 and~2 of Step~2 are the same, with the same action
throughout the homotopy. 

We now examine critical points $(x,\eta)$ such that
$f(\eta)$ is bigger than $\mu(\delta-1)$. The critical points such
that $k'(r)=\mu$ are the same as those of $A_{\bar H,\bar b,c}$, and
have the same action. Thus, the relevant new situation
is when the value of $k'(r)$ is strictly smaller than $\mu$. 

We first claim that $\eta\in[-A+\eta_A,A-\eta_A]$. Otherwise $\bar
b(\eta)\simeq\pm 1$ and $|k(r)|=|c'(\eta)/\bar 
b'(\eta)|\ge c_1$. Then, by assumption, we have $k'(r)=\mu\notin
\mathrm{Spec}(M,\lambda)$, so 
that there are no solutions of $\dot x = \bar b(\eta) X_K=\bar b(\eta)
\mu R_\lambda$. 

We now claim that the only critical points $(x,\eta)$ such that $x$
lies on a level $r>\delta$ with $k'(r)<\mu$ are of the
form $(x,0)$ and satisfy $r=1$ (moreover, they have the same
action along the homotopy). Indeed, by our assumption on $k$
we must have $|k(r)|\le c_2$, so that $|\bar b(\eta)|\mu$ is smaller
than $T_0$ and any solution of the equation $\dot x = \bar
b(\eta)k'(r)R_\lambda$ must be constant. Thus, we either have
$k'(r)=0$ and $r=1$, or $\bar b(\eta)=0$ and $\eta=0$.  These
conditions are equivalent because $\bar b'(\eta)k(r)=c'(\eta)$, and
this completes the proof.  
\hfill{$\square$}

\medskip\noindent  

\medskip\noindent{\bf Step~4.} 
{\it There is a chain map inducing an isomorphism in homology
$$
CF_*^{(\alpha,\beta)}(A_{K,b/\mu,c}) \to
CF_*^{(\alpha,\beta)}(A_{|K|,1,c}).  
$$
}

For $0\le \eps \le 1$ we denote 
\begin{eqnarray*}
A_\eps(x,\eta)&:=&(1-\eps)A_{K,\bar b,c}(x,\eta)+\eps
A_{|K|,1,c}(x,\eta) \\
&=& \int x^*(r\lambda)-(1-\eps)\bar b(\eta)\int k(r) - \eps \int
|k(r)| + c(\eta).
\end{eqnarray*}
Let $\rho:\R\to[0,1]$ be a smooth increasing function, equal to $0$
near $-\infty$ and equal to $1$ near $+\infty$. We prove the following
claim, which implies that the chain map induced by the homotopy
$A_{\rho(s)}$, $s\in \R$ is a quasi-isomorphism. 

\begin{center}
\emph{For each critical point $(x^\eps,\eta^\eps)$ of $A_\eps$ there is a
critical point $(x^1,0)$ of $A_1$ such that 
$|A_\eps(x^\eps,\eta^\eps)-A_1(x^1,0)|\le d_{A,\alpha,\beta}$.}
\end{center}

The claim is obvious for $\eps=1$, so that we can assume without loss
of generality that $0\le \eps<1$. The equations for a critical point
$(x^\eps,\eta^\eps)$ of $A_\eps$ on level $r^\eps$ are
$$
\left\{\begin{array}{l}
\dot x^\eps = \big((1-\eps)\bar b
(\eta^\eps)k'(r^\eps)+\eps|k|'(r^\eps)\big)R_\lambda, \\
(1-\eps)\bar b'(\eta^\eps)k(r^\eps)=c'(\eta^\eps). 
\end{array}\right. 
$$
We denote $\bar b_\eps:=(1-\eps)\bar b + \eps$ and distinguish three
cases. 

{\it Case~I.} $\eta^\eps=0$. Then $r^\eps =1$, $k'(r^\eps)=|k|'(r^\eps)=0$,
$x^\eps=\mathrm{ct.}$ and $A_\eps(x^\eps,0)=0$. The claim holds with
$x^1:=x^\eps$.   

{\it Case~II.} $\eta^\eps>0$. Then $r^\eps>1$, $|k|(r^\eps)=k(r^\eps)$, we
have 
$$
\left\{\begin{array}{l} \dot
    x^\eps = \bar b_\eps (\eta^\eps)k'(r^\eps) R_\lambda, \\ (1-\eps)\bar
    b'(\eta^\eps) k(r^\eps)=c'(\eta^\eps), 
  \end{array}\right.
$$ 
and the action is $A_\eps(x^\eps,\eta^\eps)=\bar
b_\eps(\eta^\eps)(r^\eps k'(r^\eps)-k(r^\eps))+c(\eta^\eps)$. 

{\it Case~II.i.} If $0\le \eps < 1-\eps_0$ we distinguish the following
cases. 

{\it Case~II.i.1.} $\eta^\eps\in(0,A-\eta_A]$. Then $c'/(1-\eps)\bar b'\le\mu
\sup |c'| /\eps_0$ is so small that $\bar b_\eps
(\eta^\eps)k(r^\eps)\le \mu k(r^\eps)\le
d_{A,\alpha,\beta}/10$. Moreover, we have $\mu(r^\eps-1)\simeq
k(r^\eps) = c(\eta^\eps)/(1-\eps)\bar b'(\eta^\eps) \le \mu \sup
|c'|/\eps_0 \le d_{A,\alpha,\beta}/10\mu$, which implies $1< r^\eps \le
1+d_{A,\alpha,\beta}/10\mu^2 \le 1+\eps_{A,\alpha,\beta}$, where
$\eps_{A,\alpha,\beta} = d_{A,\alpha,\beta}/10\mu$ was introduced
in~\eqref{eq:eps}. Since $1\le r^\eps \le 1+\eps_{A,\alpha,\beta}$ we
obtain  $A_\eps(x^\eps,\eta^\eps)\simeq \bar 
b_\eps(\eta_\eps)k'(r^\eps)$ by~\eqref{eq:eps-bis}. The claim
therefore holds with $x^1$ being the 
$1$-periodic orbit of $k$ situated on a level close to $1$ and
corresponding to the positively parametrized characteristic underlying
$x^\eps$.  

{\it Case~II.i.2.} $\eta^\eps\in[A-\eta_A,A)$. We claim that this case
is impossible. Indeed, we would have $\bar b(\eta^\eps)\simeq
1$ and $\bar b_\eps(\eta^\eps)\simeq 1$, and in particular $x^\eps$ 
cannot be constant. There are now two cases. Either $k'(r^\eps)=\mu$,
which is excluded since $\mu\notin \mathrm{Spec}(M,\lambda)$, or
$k'(r^\eps)\in(0,\mu)$. We would then have $k(r^\eps)<c_2$, hence
$c'(\eta^\eps)/\bar b'(\eta^\eps)<(1-\eps)c_2\le c_2$, so that $\bar
b(\eta^\eps)<T_0/\mu$, which is again impossible if $x^\eps$ is not
constant. This proves the claim. 

\medskip 

{\it Case~II.ii.} If $1-\eps_0\le \eps<1$ then $1\ge\bar b_\eps(\eta^\eps)\ge
1-\eps_0$. Then, by our assumption on $\eps_0$, we must have 
$r^\eps\simeq 1$ (indeed, we
cannot have $k'(r^\eps)=\mu$ since $\bar b_\eps(\eta^\eps)\mu\notin
\mathrm{Spec}(M,\lambda)$). The claim therefore holds with $x^1$ chosen
as in Case~II.i.1.  

\medskip 

{\it Case~III.} $\eta^\eps<0$. This case is treated similarly to
Case~II. We have $r^\eps<1$, $|k|(r^\eps)=-k(r^\eps)$, and 
$$
\left\{\begin{array}{l} \dot
    x^\eps = (\bar b_\eps (\eta^\eps)-2\eps)k'(r^\eps) R_\lambda, \\
(1-\eps)\bar 
    b'(\eta^\eps) k(r^\eps)=c'(\eta^\eps), 
  \end{array}\right.
\Leftrightarrow
\left\{\begin{array}{l} \dot
    x^\eps = -(\bar b_\eps (\eta^\eps)-2\eps)|k|'(r^\eps) R_\lambda,
\\ (1-\eps)\bar 
    b'(\eta^\eps) k(r^\eps)=c'(\eta^\eps).
  \end{array}\right.
$$ 
The action is 
\begin{eqnarray*}
A_\eps(x^\eps,\eta^\eps)&=&(\bar
b_\eps(\eta^\eps)-2\eps)(r^\eps k'(r^\eps)-k(r^\eps))+c(\eta^\eps) \\
& = & -(\bar
b_\eps(\eta^\eps)-2\eps)(r^\eps |k|'(r^\eps)-|k|(r^\eps))+c(\eta^\eps).
\end{eqnarray*}

{\it Case~III.i.} If $0\le \eps < 1-\eps_0$ we distinguish the following
cases. 

{\it Case~III.i.1.} $\eta^\eps\in[-A+\eta_A,0)$. Then
$|c'|/(1-\eps)\bar b'\le\mu 
\sup |c'| /\eps_0$ is so small that $r^\eps\simeq 1$, $k(r^\eps)\simeq
0$ and $A_\eps(x^\eps,\eta^\eps)\simeq (\bar
b_\eps(\eta_\eps)-2\eps)k'(r^\eps)$. This last quantity belongs to
$\mathrm{Spec}(M,\lambda)$ because it is the action of $x^\eps$, and
the claim holds with $x^1$ being the 
$1$-periodic orbit of $|k|$ situated on a level close to $1$ and
smaller than $1$, corresponding to the negative parametrization of the
characteristic underlying $x^\eps$.  

{\it Case~III.i.2.} $\eta^\eps\in(-A,-A+\eta_A]$. Then $\bar
b(\eta^\eps)\simeq -1$ and $\bar b_\eps(\eta^\eps)-2\eps\simeq -1$. If
$x^\eps$ is not constant then, arguing as in II.i.2., we see that
$r^\eps\simeq \delta$ because $\mu\notin
\mathrm{Spec}(M,\lambda)$. The claim therefore holds with $x^1$
being the $1$-periodic orbit of $|k|$ situated on a level close to $\delta$ and
corresponding to the negative parametrization of the characteristic underlying
$x^\eps$. If $x^\eps$ is constant (and hence $r^\eps\le \delta$), the claim
holds with $x^1:=x^\eps$.  

\medskip 

{\it Case~III.ii.} If $1-\eps_0\le \eps<1$ then $-1\le \bar
b_\eps(\eta^\eps)-2\eps\le 
-1+\eps_0$. Then, by the assumption on $\eps_0$, we must have either 
$r^\eps\simeq 1$, or $r^\eps \simeq \delta$, or $r^\eps\le \delta$. In
all three cases the claim holds with $x^1$ chosen as in Case~III.i. 
\hfill{$\square$}

\medskip \noindent {\bf Step~5.}
{\it For any Hamiltonian $\widetilde H$, there is a canonical identification of
Floer complexes 
$$
CF_*^{(\alpha,\beta)}(A_H)\cong CF_*^{(\alpha,\beta)}(A_{H,1,c}).
$$
This holds in particular for $\widetilde H=|K|$.
}

To identify the
generators of the two complexes we note that the critical points
of $A_{\widetilde H,1,c}$ are pairs $(x,\eta)$ satisfying 
$\dot x = X_{\widetilde H}$ and $c'(\eta)=0$, 
while the second equation is equivalent to $\eta=0$. A gradient line
for $A_{\widetilde H,1,c}$ is a pair $(u,\eta)$ with $u$ a  
gradient line of $A_{\widetilde H}$ and $\eta:\R\to\R$ a map satisfying $\dot
\eta = c'(\eta)$ and $\lim_{s\to\pm\infty} \eta(s)=0$. Since
$c'(\eta)\neq 0$ for $\eta\neq 0$ we deduce $\eta\equiv 0$, which
shows that the Floer differentials are also canonically identified.  
\hfill{$\square$}

We have thus proved that $RFH_*^{(\alpha,\beta)}(V)\simeq
FH_*^{(\alpha,\beta)}(A_{|K|})$. On the other hand $|K|-\eps\in
\Check{\Ad}^0(\widehat V)$ for any $\eps>0$ and, because the slope
$\mu$ is bigger than $\max(|\alpha|,|\beta|)$, we infer that 
$FH_*^{(\alpha,\beta)}(A_{|K|})\simeq
\Check{SH}_*^{(\alpha,\beta)}(V)$. This proves the Theorem. 
\end{proof}

{\bf Remarks on the proof of Theorem~\ref{thm:main1}.} 

1. The phenomenon underlying Step~4 is that {\it the
action oscillates close to the period.} This principle holds
for all the action estimates in the proof. 

2. Another recurrent phenomenon is that {\it the
$\eta$-component of the critical points coagulates at $\pm A$,
respectively in $(-A+\eta_A,A-\eta_A)$}. These two cases correspond to
$r\simeq \delta$, respectively $r\simeq 1$. 
In all our action estimates we repeatedly used that $\bar
b(\eta)\simeq 1$ for $\eta\simeq A$, respectively $\bar
b'(\eta)=1/\mu$ on the interval $(-A+\eta_A,A-\eta_A)$.


\end{document}